\documentclass[11pt,reqno]{amsart}

\usepackage{graphicx,color,xcolor} 
\usepackage{amssymb,amsmath,amsthm,amsfonts,mathrsfs}
\usepackage{properfan}
\usepackage{hyperref}

\makeatletter
\def\part{\@startsection{part}{0}%
  \z@{\linespacing\@plus\linespacing}{.5\linespacing}%
  {\normalfont\large\scshape\centering}} 
\makeatother

\nc{\NC}{\on{NC}}
\nc{\e}{\tsl{e}}
\nc{\Pol}{\on{Pol}}
\renewcommand{\tikzdot}{\hackcenter{\begin{tikzpicture}[scale=0.375]
    \draw (0,0) -- (0,1.5);
    \fill (0,0.75) circle (6pt);
\end{tikzpicture}}}
\renewcommand{\tikzcrossing}{\hackcenter{\begin{tikzpicture}[scale=0.375]
    \draw (0,0) -- (1.5,1.5);
    \draw (1.5,0) -- (0,1.5);
\end{tikzpicture}}}
\newcommand{\tikzcrossbr}{\hackcenter{\begin{tikzpicture}[scale=0.375]
    \draw[black](0,0)--(1.5,1.5);
    \draw[red](1.5,0)--(0,1.5);
\end{tikzpicture}}}
\newcommand{\tikzcrossrb}{\hackcenter{\begin{tikzpicture}[scale=0.375]
    \draw[red](0,0)--(1.5,1.5);
    \draw[black](1.5,0)--(0,1.5);
\end{tikzpicture}}}
\newcommand{\tikzcrossdistant}{\hackcenter{\begin{tikzpicture}[scale=0.375]
    \draw[black](0,0)--(1.5,1.5);
    \draw[cyan](1.5,0)--(0,1.5);
\end{tikzpicture}}}
\nc{\Wlie}{\mathfrak{W}}
\tikzset{squigs/.style={decorate, decoration={snake,amplitude=.25mm,segment length=0.95mm,
                       post length=0mm,pre length=0mm}}} 

\nc{\CW}{\cal{W}}
\renewcommand{\f}{\tsl{f}}
\nc{\Seq}{\on{Seq}}
\nc{\Cor}{\on{Cor}}

\nc{\Inc}{\on{Inc}}
\nc{\E}{\tsl{E}}

\nc{\sha}{{\easycyrsymbol{\cyrsh}}}
\newcommand{\h}{\tsl{h}}

\renewcommand{\te}[1]{\textrm{#1}}

\title{The coherent KLR sheaf} 
\author{Matthew Hase-Liu} 
\address{Department of Mathematics, MIT, Cambridge, MA 02139, USA}
\email{\href{mailto:mhaseliu@mit.edu}{mhaseliu@mit.edu}}

\author{Fan Zhou} 
\address{Department of Mathematics, Columbia University, New York, NY 10027, USA}
\email{\href{mailto:fz2326@columbia.edu}{fz2326@columbia.edu}}

\pgfplotsset{compat=1.14}
\begin{document}

\maketitle

\vspace*{-0.25in}

\vspace*{0.15in}




\begin{abstract}
    We construct the KLR algebra (in type $A$) as affine sections of a coherent sheaf $\mathcal{E}$ on a product of projective spaces $\mathbb{P}^n$. It was shown by Elias-Qi that the Lie algebra $\mathfrak{sl}_2$ acts on such KLR algebras; our construction geometrically explains this action as stemming from the $\mathfrak{sl}_2$-action on the twisting sheaves $\mathcal{O}(k)$ of $\mathbb{P}^1$ and gives the maximal finite-dimensional $\mathfrak{sl}_2$-submodule of KLR as global sections of $\mathcal{E}$. We furthermore extend the Elias-Qi action to a Witt action. Kleshchev-Loubert-Miemietz showed that the KLR of type $A$ has a graded affine cellular structure; our construction also gives a geometric incarnation of this.
\end{abstract}

\textcolor{white}{$\Db$}\vspace{-1em}

\tableofcontents

\section{Introduction}
The Khovanov-Lauda-Rouquier (KLR) algebras, also called quiver Hecke algebras, were introduced independently by Khovanov-Lauda in \cite{KL09} and \cite{KLII} and by Rouquier in \cite{rouquier20082}. They are certain graded algebras which categorify the upper half of the quantum group. To be more precise, for each $\BN$-linear combination $\alpha$ of simple roots of a Lie algebra $\glie$, the KLR algebra $\CR_\alpha$ is a diagrammatically defined graded algebra such that its Grothendieck group is isomorphic to the $\alpha$-th weight space of the upper half of Lusztig's integral form of the quantum group\footnote{Classically this is not how most people phrase things; here we implicitly appeal to the finite homological dimension of KLR. },
\[K_0(\Mod_\te{fg} \CR_\alpha)\cong {{U}}_q^+(\glie)^\alpha.\]
The KLR algebras are moreover equipped with a monoidal (or horizontal) product
\[\CR_\alpha\otimes\CR_{\alpha'}\lto\CR_{\alpha+\alpha'},\] 
so that the Grothendieck group of the direct sum $\CR=\bigoplus_\alpha\CR_\alpha$ receives a ring structure coming from the induction tensor product of modules, $\Ind_{\CR_\alpha\otimes\CR_{\alpha'}}^{\CR_{\alpha+\alpha'}} M\otimes N$. Then
\[K_0(\Mod_\te{fg} \CR)\cong {{U}}_q^+(\glie).\]   The KLR algebras were originally shown to be nontrivial in \cite{KL09} by showing that they act faithfully on 

Moreover, certain modules over the KLR decategorify to certain special bases of the quantum group. Khovanov-Lauda \cite{KL09} conjectured and Varagnolo-Vasserot and Rouquier \cite{varagnolo2011canonical},\cite{rouquier2012quiver} proved that Grothendieck classes of the projective indecomposables and the simple modules correspond to the Lusztig canonical and dual canonical bases, respectively. 

It was later shown by \cite{kleshchev2013affine} (and \cite{kleshchevloubert2015affine} in all finite types) that the KLR algebra $\CR_\alpha$ admits a affine cellular structure in the sense of \cite{koenig2012affine}, and that the representation theory of KLR in type $A$ is affine highest weight (or affine quasi-hereditary) in the sense of \cite{Kleshchev_2015}. This was also shown by \cite{BrundanKleshchevMcNamara_2014},\cite{Kato_2014}. \cite{BrundanKleshchevMcNamara_2014},\cite{Kato_2014},\cite{McNamaraKLRI} then showed that the standard modules and proper standard modules for this structure decategorify to the PBW and dual PBW bases, respectively. 

Shortly after \cite{KL09}, Varagnolo-Vasserot \cite{varagnolo2011canonical} proved Khovanov-Lauda's conjecture that the projective indecomposables correspond to the canonical bases by constructing the KLR algebra as equivariant Ext algebras of the direct sum of Lusztig sheaves on a certain quiver representation space. This perverse/constructible geometry is not something we consider in the present paper; however it would be very interesting to study the relationship between the two.

Much later, Elias-Qi \cite{EQ23} constructed a $\sl_2$-action on KLR which acts by derivations, but they note that there was not yet a geometric explanation for this action. It is natural to search then for a geometric origin of this action; we do so in this paper. Elias-Qi \cite{EQ23} considered the notion of an ``$\sl_2$-core'', namely the maximal finite-dimensional $\sl_2$-submodule. They showed that this $\sl_2$-core of KLR is a (monoidal) subalgebra, and they conjectured that it injects into the semisimplification of KLR, which is to say that it intersects the graded Jacobson radical trivially. Henceforth we refer to this conjecture as the \textit{Elias-Qi conjecture}. 

Grlj-Lauda \cite{lauda2025action} recently constructed a half-Witt algebra action on the KLR (in fact the construct it on the KLR 2-category); in one color, there exist parameters for which their action agrees with that of Elias-Qi \cite{EQ23}, but even in two colors, and even when restricting to a $\sl_2$-action, the two actions do not match. We construct a half-Witt algebra action on KLR in this paper differing from the \cite{lauda2025action} one; our action agrees with the action from \cite{EQ23}, and moreover can be extended to a full Witt algebra action after localizing the KLR appropriately.

Part of the significance of such $\sl_2$ or Witt actions is their ties to link homology. Khovanov-Rozansky showed in \cite{khovanov2016positive} that the half-Witt $\Wlie_{\ge 0}$ acts on triply graded link homology, and later \cite{guerinroz2025action},\cite{qi2022symmetries},\cite{qi2023symmetries} showed that this action arises from a half-Witt action on $\gl_N$-foams. The action of \cite{lauda2025action} is compatible with foamation, but differs from the \cite{EQ23} action; it is hence natural to study the relationship between the Witt action we give in this paper (and indeed the $\sl_2$-action of \cite{EQ23}) and link homology, but we have not done so in this paper.

\subsection{Main results}
We work in type $A_N$; let $I$ denote the index set of vertices for the corresponding quiver, and let us fix $\alpha=\sum_a n_a\alpha_a \in \BN I$. Let 
\[U\coloneqq \prod_{a\in I}\BA^{n_a}\subset \prod_{a\in I} \BP^{n_a}\eqqcolon X.\]
We produce a coherent sheaf, $\CE$, which we dub the ``coherent KLR sheaf'', on the product of projective spaces $X$, by using Serre's twisting sheaves on copies of $\BP^1$. To be more precise, we construct a sheaf $\ol\CP$ on $Y=(\BP^1)^n$ as a direct sum of box products of certain line bundles; then $\CE$ is the subsheaf of $\shend(\pi_*\ol\CP)$ preserving a certain sheaf $\CQ$, where $\pi\colon Y\lto X$ is the symmetrizing map. The line bundles $\CO(k)$ of $\BP^1$ admit a natural $\sl_2$-action, which produces a natural $\sl_2$-action on $\ol\CP$ and therefore $\shend(\pi_*\ol\CP)$; it turns out $\CE$ is preserved under this action.
\begin{mytheorem}{A}[\ref{thm:E=R}, \ref{thm:A_N monoidal}, \ref{thm:A_N sl2 action}, \ref{thm:globalsectioniscore}]\label{thm:A}
    The coherent sheaf of algebras $\CE$ on $X=\prod_{a\in I} \BP^{n_a}$ has
    \[\CE(U)\cong\CR_\alpha,\]
    and there is a geometric monoidal product on $\CE$ corresponding to the usual monoidal product on $\CR_\alpha$. As a result, one may identify the image of $\CR_\alpha$ inside $\End_{\Sym}(\Pol)$ as 
    \[\CR_\alpha=\{\phi\in\End_{\Sym}(\Pol):\phi(Q)\subseteq Q\},\]
    where $Q=\bigoplus_\beta Q_\beta\Pol_\beta$ for certain polynomials $Q_\beta$.
    
    Moreover there is a natural $\sl_2$-action $\sl_2\actson \CE$ coming from the Borel-Weil-Bott $\sl_2$-actions on line bundles over $\BP^1$; over the affine chart $U$, this recovers the $\sl_2$-action of Elias-Qi \cite{EQ23}, and over the entire space $X$ we obtain the $\sl_2$-core:
    \[\CE(X)=\Cor_{\sl_2}(\CR_{\alpha}).\]
\end{mytheorem}

Here it is worth noting that the affine chart $U$ is \textit{not} $\SL_2$-stable! Nevertheless, $\CE(U)$ inherits a natural $\sl_2$-action. Indeed, for any algebraic group $G$ over $\BC$ acting on a scheme $X$ and $\CF$ a $G$-equivariant $\mathcal O_X$-module, differentiating the equivariant structure gives an action of $\mathfrak g=\Lie(G)$ by first-order differential operators on $\CF$. Consequently, $\Gamma(U,\CF)$ is endowed with a natural $\mathfrak g$-action for every open subset $U\subset X$, whether or not $U$ is $G$-stable.

In fact, by using this (coherent) geometric interpretation, we were able to extend the $\sl_2$-action of Elias-Qi further.
\begin{mytheorem}{B}[\ref{thm:halfwittacts}, \ref{thm:fullwittacts}]\label{thm:B}
    There is a half-Witt-action $\Wlie_{\ge-1}\actson \CR_\alpha$ extending the $\sl_2$-action of Elias-Qi \cite{EQ23}. 

    By inverting the dots of KLR to obtain a diagrammatic algebra $\CR_\alpha^\times$ (which one might call a ``Laurent KLR algebra'') with ``negative/hollow dots'', one can extend the action $\Wlie_{\ge-1}$ to $\Wlie\actson \CR_\alpha^\times$. 
\end{mytheorem}

Finally, the representation theory of KLR algebras of type $A$ can be very nicely phrased in terms of the affine cellular structure of \cite{kleshchev2013affine}. We give a geometric analogue of this as well.
\begin{mytheorem}{C}[\ref{thm:M is Verma on affine}, \ref{thm:cellular E}]\label{thm:C}
    Fix a total order on the simple roots $\alpha_1>\alpha_2>\cdots>\alpha_N$, giving a lexicographic order $\pi_0>\cdots>\pi_\ell$ on the root partitions of $\alpha$. There is a filtration
    \[0\subset\CE_{\ge\pi_0}\subset\cdots\subset\CE_{\ge\pi_\ell}=\CE\] 
    such that evaluating on the affine chart $U$ gives the affine cellular filtration of Kleshchev-Loubert-Miemietz \cite{kleshchev2013affine}. As a result, one may similarly identify the image of the cell ideal inside $\End_{\Sym}(\Pol)$ as
    \[\CR_{\ge\pi}=\{\phi\in\End_{\Sym}(\Pol):\phi(Q)\subseteq Q_{\ge\pi}\}\] 
    for a certain explicit subspace $Q_{\ge\pi}$.
    
    Moreover this filtration is stable under the $\sl_2$-action, as well as under the $\Wlie_{\ge-1}$- or $\Wlie$-action, when restricted to appropriate affines.
\end{mytheorem}

While we were unable to prove the Elias-Qi conjecture in this paper, we believe the results of this paper bring us closer to a proof, because our coherent geometry elucidates where Elias-Qi's $\sl_2$-action is coming from. 

Geometrically, the significance of our construction is as follows. The affine chart $U$ is naturally a space of colored effective divisors on $\BA^1$. Indeed, we have \[U=\prod_{a\in I}\BA^{n_a}\cong \prod_{a\in I}\Sym^{n_a}\BA^1.\] So  a point of $U$ records, for each color $a$, an unordered collection of $n_a$ points of $\BA^1$, or, equivalently, a monic polynomial of degree $n_a$ whose roots are precisely those points.

This description also gives a precise connection with the construction of Varagnolo--Vasserot. The coordinate ring of $U$ is naturally identified with the equivariant cohomology ring \[\CO(U)\cong H^\bullet_{\prod_{a\in I}\GL_{n_a}}(\text{pt}; \BC).\] Concretely, the generators of the left-hand side are elementary symmetric functions in the points of each colored divisor, whereas the generators of the right-hand side are the universal Chern classes: the $i$th elementary symmetric function is mapped to the $i$th Chern class under this identification, and both have degree $2i$ (under the KLR grading). Varagnolo--Vasserot \cite{varagnolo2011canonical} realize $\CR_\alpha$ as an equivariant Ext algebra over the right-hand side. Our construction compactifies the spectrum of this ring, rather than the quiver representation space itself on which their Lusztig sheaves live. In particular, since $\BP^{n_a}\cong \Sym^{n_a}\BP^1$, we may regard \[X\cong \prod_{a\in I}\Sym^{n_a}\BP^1\] as the corresponding projective compactification, obtained by allowing the points of the colored divisors to go to $\infty$. 

The color-wise addition maps on symmetric powers, which take unions of
effective divisors, together with the necessary line bundle twists,
provide the geometric origin of the monoidal product in Theorem \ref{thm:A}. Also, the diagonal $\SL_2$-action
is induced by the usual action of $\SL_2$ on $\BP^1$ by projective
changes of coordinate. The equality
\[
\CE(X)=\Cor_{\sl_2}(\CR_\alpha)
\]
in Theorem \ref{thm:A} says that the locally finite part of $\CR_\alpha$ under the $\sl_2$-action consists precisely of the elements which extend regularly over the entire compactified space. Similarly, the half-Witt and full Witt actions of
Theorem \ref{thm:B} arise from polynomial vector fields on $\BA^1$ and Laurent vector fields on $\BG_m$, respectively, together with their induced actions on the relevant line bundles. Finally, the geometric content of Theorem \ref{thm:C} is that the KLM cellular filtration is realized by support conditions on colored divisors: the cell layer indexed by a root partition is supported where the coincidences prescribed by its interval roots occur.

\begin{remark} While we work over $\BC$ throughout this article, essentially all of the results continue to hold over other fields. In particular, the construction of $\CE_\alpha$, its
identification with $R_\alpha$ on the dominant affine chart, the
(geometric) monoidal product, and the cellular filtration are defined
integrally and hence extend over an arbitrary field $k$. Likewise, the 
$\SL_2$-linearization is independent of the characteristic or field.

However, the identification of global sections with the $\sl_2$-core requires a modification in positive
characteristic. By viewing $R_{\alpha,k}$ inside the meromorphic sections of
$\CE_{\alpha,k}$, let $\Cor_{\SL_2}(R_{\alpha,k})$ denote the sum of its
finite-dimensional algebraic $\SL_{2,k}$-subrepresentations. Equivalently, in the present geometric setting this is the locally finite part for the full distribution algebra $\operatorname{Dist}(\SL_{2,k})$, including its divided power operators. The same
geometric argument then gives
\[\CE_{\alpha,k}(X_{\alpha,k})=\Cor_{\SL_2}(R_{\alpha,k}).
\]
In characteristic zero, $\operatorname{Dist}(\SL_2)=U(\sl_2)$, so this recovers the usual $\sl_2$-core. In characteristic $p$, using the distribution algebra is essential, since Frobenius $p$th powers are invisible to the ordinary infinitesimal $\sl_2$-action but are detected by its divided power operators. Finally, the half-Witt and
full Witt actions extend in their present normalization when
$\operatorname{char}(k)\ne 2$.
\end{remark}

\subsection{The motivating example for $\NH_1$}
To give the reader a very brief glimpse of our story, we give here the example in type $A_1$ with 1 string. This example is admittedly very trivial, perhaps even too trivial to glean anything useful; but this is where our journey started, and so perhaps it is worth sharing. We will not define the KLR or the nil-Hecke yet, as the definition is not necessary for this example.

Let us consider $\NH_1=\BC[x]$; following the notation of \cite{KMZ}, we denote by $\wh\NC_1$ its graded semisimplification. $\BC[x]$ can be considered as the sections over $\BA^1$ of the sheaf of endomorphisms of $\CO_{\BP^1}$. We would like to witness that 
\[\wh\NC_1=\BC=\BC[x]\cap\BC[x^{-1}]=\NH_1^+\cap\NH_1^-\subset\BC[x,x^{-1}]=\NH_1^\circ.\]

Let us consider two copies of $\BA^1$, $U^+=\Spec\BC[x]$ and $U^-=\Spec\BC[x^{-1}]$, which should be thought of as Spec of symmetric polynomials in one variable. These spaces can be glued together along $U^\circ=\Spec\BC[x,x^{-1}]$ to form $X=\BP^1$. 

The rank $1!$ free sheaf $\CO_{U^\pm}$ on $U^\pm=\Spec\BC[x^{\pm 1}]$ has the property that
\[\shom(\CO_{U^\pm},\CO_{U^\pm})(U^\pm)=\NH_1^\pm.\] 
These two sheaves $\CO_{U^\pm}$ can be glued together with the trivial transition function to give the structure sheaf $\CO_{\BP^1}$, which has sections over $U^+\cap U^-$
\[\CO_{\BP^1}(U^+\cap U^-)=\BC[x,x^{-1}]=\NH_1^\circ.\] 
Then the global sections of $\CO_{\BP^1}$ are those which can be defined on both $U^+$ and $U^-$, where the intersection is taken inside sections over $U^+\cap U^-$, which in turn lives inside the stalk at the generic point:
\[\CO_{\BP^1}(\BP^1)=\CO_{\BP^1}(U^+)\cap\CO_{\BP^1}(U^-)\subset \CO_{\BP^1}(U^+\cap U^-)\subset \CO_{\BP^1,\eta}.\] 
This geometrically realizes that $\wh\NC_1=\NH_1^+\cap \NH_1^-\subset\NH_1^\circ$. 

\subsection{Outline}
In the interest of making this paper accessible and easy to read, we have split the paper into two parts. In Part \ref{part:I} we discuss the 1-color case, namely that of the nil-Hecke; only in Part \ref{part:II} do we tackle the full multi-color KLR. 

\subsection{Future directions}
There are many interesting follow-up questions one could ask, which we intend to study in future papers. What are monoidal generators and relations for the $\sl_2$-core, $\CE(X)$? Could one say something about algebraic stratifications from this point of view, and perhaps even find a Koszul nil-algebra such as in \cite{zhou2024bgg,zhou2026bgg}? Could the structures in this paper be used to carry out the minimal categorification program of \cite{KMZ} in higher rank? Could the coherent program of the present paper be extended to other (affine cellular) algebras appearing in categorification, such as Soergel calculus? Does this coherent geometry give rise to other Lie algebra actions? What is the relationship to Hodge theory? What is the relationship to the established geometric constructions of KLR, such as \cite{varagnolo2011canonical}? Perhaps the answers to many of these questions are related to one another. 

\subsection{Acknowledgements} 
We would like to thank Mikhail Khovanov and Joshua Sussan for providing many insights. We would also like to thank Alvaro Martinez, whose past work with F.Z. motivated parts of this paper. We would also like to thank Pablo Alvarez for introducing us to Kazhdan-Laumon and for very helpful discussions; Amal Mattoo for some helpful discussions regarding $\BP^1$; You Qi for a helpful discussion regarding ties to the existing geometric theory; and Felix Roz for some discussions on other $\sl_2$-actions. We would also like to thank Ben Elias and Sabin Cautis for helpful comments on a draft.

\subsubsection{AI disclosure}
We made use of LLMs in developing and checking proofs for this paper to great effect. Their assistance substantially accelerated the project, allowing us to complete in weeks work that would normally have taken months. The AI models we used were\footnote{Ranked here chronologically, but also in increasing order of effectiveness.}: Gemini 3.1 Pro, Fable 5 (for the few days it was available to us), Opus 5, and ChatGPT 5.6 Sol (and 6 Astra for a few days). With the exception of the idea of considering the space $Z_\pi$, which was suggested by Fable\footnote{Interestingly, though Fable wrote down the spaces $Z_\pi$ and the maps $\mu_\pi$, it did not realize the significance was the relation to the `depth' ring $\Lbd_\pi$ of the affine cellular structure of KLM, which we later realized.}, all key ideas in this paper were human-generated, from overarching philosophies, such as the consideration of coherent sheaves on projective spaces and the construction of the geometric monoidal product using concatenation, to technical philosophies, such as the construction of sheaves such as $\wh\CP$ from reverse-engineering the $\sl_2$-action and the construction of the KLR sheaf inside the endomorphism sheaf $\shend(\varpi_*\wh\CP)$ as the subsheaf preserving $\varpi_*\wh\CQ$. The proofs with nontrivial AI input are the proofs of Proposition \ref{prop:Q as cap} and Lemma \ref{lem:verma is generically simple}. Proposition \ref{prop: torsion_free_core_is_glob_sect} was also originally written to only handle what is needed for this paper; we fed this argument into AI and asked it to generalize it as much as possible, and the resulting proof it suggested is what is included here. Notably, the only nontrivial AI input in that proof was the suggestion to use Tag 01PQ from the Stacks Project; the key idea of translating by elements of the group and observing the connection with the global sections is due to us. We are, of course, solely responsible for the correctness of the paper, and all text in the paper is human-generated.  


\part{The $A_1$ case: nil-Hecke}\label{part:I}
In this Part we briefly describe our coherent story in the simplest case, namely that of 1-color KLR, also known as the nil-Hecke. This Part is considerably shorter and simpler than Part \ref{part:II}, where the full multi-color case is treated, but we hope that this simpler Part 1 will be more convincing in showcasing some of our ideas. Some proofs will be deferred to Part \ref{part:II} for a full proof of the general case. 

Let us briefly recall the nil-Hecke. The nil-Hecke algebra, denoted $\NH_n$, is a non-commutative algebra defined via generators and relations. In terms of algebraic symbols, it is generated by $\{\pd_i,x_i:1\le i\le n\}$ subject to the relations
\begin{align*}
    \pd_i x_j &= x_j\pd_i \quad \text{if $j\neq i,i+1$}, & x_i \pd_i - \pd_i x_{i+1}&=1,\\
    \pd_i\pd_j &= \pd_j\pd_i \quad \text{if $|i-j|>1$}, & \pd_i x_i - x_{i+1} \pd_i &=1,\\
    x_i x_j &=   x_j x_i, & \pd_i\pd_{i+1}\pd_i &= \pd_{i+1}\pd_i\pd_{i+1},\\
     & &\pd_i^2&= 0.
\end{align*}
However it is often easier to work with this algebra diagrammatically; that is to say, we draw algebra elements as linear combinations of pictures, and treat multiplication as stacking pictures on top of one another\footnote{Our convention will be that $xy$ means $x$ goes on top of $y$.}. The generators would be drawn as
\[ \hackcenter{\begin{tikzpicture}[scale=0.375]
        \node at (-2,1) {$x_i \, = \, $}; 
        \draw (0,0)--(0,2);
        \node at (1.6,1) { $\cdots$};
        \draw (3,0)--(3,2);
        \draw (5,0)--(5,2);
        \draw(7,0)--(7,2);
        \node at (8.6,1) {$\cdots$};
        \draw (10,0)--(10,2);
        \fill (5,1) circle (6pt);
        \node at (5,-0.5) {\tiny $i^\te{th}$};
    \end{tikzpicture}}\ ,\qquad 
    \hackcenter{\begin{tikzpicture}[scale=0.375]
      \node at (-2,1) {$\pd_i \, = \, $};
        \draw (0,0)--(0,2);
       \node at (1.6,1) { $\cdots$};
        \draw (3,0)--(3,2);
        \draw (5,0)--(7,2);
        \node at (5,-0.6) {\tiny $i^\te{th}$};
        \draw(7,0)--(5,2);
        \node at (7,-0.6) {\tiny $(i\!+\!1)^\te{th}$};
        \draw(9,0)--(9,2);
        \node at (10.6,1) {$\cdots$};
        \draw (12,0)--(12,2);
    \end{tikzpicture}},\]
and some of the relations would be drawn as:
\begin{align*}
    \begin{diagram}
        \draw (0,0) ..controls(2.5,2).. (0,4);
        \draw (2,0)..controls(-0.5,2)..(2,4);
    \end{diagram}
    &=0,\qquad \begin{diagram}
        \draw (0,0)--(4,4);
    \draw (4,0)--(0,4);
    \draw (2,0) ..controls(-0.5,2).. (2,4);
    \end{diagram}
    =
    \begin{diagram}
        \draw (0,0)--(4,4);
    \draw (4,0)--(0,4);
    \draw (2,0) ..controls(4.5,2).. (2,4);
    \end{diagram},\qquad 
    \begin{raisediagram}[-0.5em]
        \node at (0,1.5) {\tiny $\cdots$};
        \draw (3-2,0) ..controls(4-2,1.5).. (4-2,3);
        \draw (4-2,0)..controls(3-2,1.5)..(3-2,3);
        \node at (3,1.5) {\tiny $\cdots$};
        \draw (8-4,3)..controls(9-4,1.5)..(9-4,0);
        \draw (9-4,3)..controls(8-4,1.5)..(8-4,0);
        \node at (6,1.5) {\tiny $\cdots$};
        \node at (1,-0.5) {\tiny $i^\te{th}$};
        \node at (4,-0.5) {\tiny $j^\te{th}$};
    \end{raisediagram}=\begin{raisediagram}[-0.5em]
        \node at (0,1.5) {\tiny $\cdots$};
        \draw (3-2+3,0) ..controls(4-2+3,1.5).. (4-2+3,3);
        \draw (4-2+3,0)..controls(3-2+3,1.5)..(3-2+3,3);
        \node at (3,1.5) {\tiny $\cdots$};
        \draw (8-4-3,3)..controls(9-4-3,1.5)..(9-4-3,0);
        \draw (9-4-3,3)..controls(8-4-3,1.5)..(8-4-3,0);
        \node at (6,1.5) {\tiny $\cdots$};
        \node at (1,-0.5) {\tiny $i^\te{th}$};
        \node at (4,-0.5) {\tiny $j^\te{th}$};
    \end{raisediagram},
\end{align*}
\[\begin{diagram}
    \draw(0,0)--(2,2);
    \draw(0,2)--(2,0);
    \fill (0.4,1.6) circle (6pt);
\end{diagram}\:-\:\begin{diagram}
    \draw(0,0)--(2,2);
    \draw(0,2)--(2,0);
    \fill (1.6,0.4) circle (6pt);
\end{diagram}\ =\ \begin{diagram}
    \draw(0,0)--(2,2);
    \draw(0,2)--(2,0);
    \fill (0.4,0.4) circle (6pt);
\end{diagram}\:-\:\begin{diagram}
    \draw(0,0)--(2,2);
    \draw(0,2)--(2,0);
    \fill (1.6,1.6) circle (6pt);
\end{diagram}\ =\ \begin{diagram}
    \draw(0,0)--(0,2);
    \draw(2,0)--(2,2);
\end{diagram}\ .\]

There is moreover a ``monoidal product'' on the nil-Hecke algebras,
\[\otimes\colon \NH_n\otimes\NH_m\lto\NH_{n+m},\]
which diagrammatically simply places two diagrams horizontally side-by-side. Algebraically this sends for instance $\pd_i\otimes 1\lmto \pd_i$ and $1\otimes \pd_j\lmto \pd_{n+j}$. The induction tensor product along this monoidaly product, namely $\Ind_{\NH_n\otimes\NH_m}^{\NH_{n+m}}M\otimes N$, categorifies the multiplication in the quantum group $\dot{{U}}_q^+(\sl_2)$. 


In \cite{KMZ} a finite-dimensional semisimplification $\wh\NC_n$, standing for ``(double) nil-Coxeter'', of $\NH_n$ was constructed as a subalgebra, $\wh\NC_n\linj \NH_n$. This is so-called because it is generated by both positive crossings and ``negative crossings'', namely by $\pd_i^+=\pd_i$ and $\pd_i^-=-x_i\pd_ix_{i+1}$. It was shown that this subalgebra is moreover the $\sl_2$-core of \cite{EQ23}. 


In this Part we will construct the nil-Hecke algebra $\NH_n$ as affine sections of a coherent sheaf on $\BP^n$.

\section{The spaces and the sheaves}
Here we set the stage for our coherent geometry and introduce the relevent coherent objects. To avoid too many parentheses, we will often write for instance $\BP^{1,\times n}$ for $(\BP^1)^n$. 

\subsection{The base space}
Let $x_1,\cdotsc,x_n$ denote the dot variables for nil-Hecke. Though in principle these variables live on a cover space, for this exposition it is easier to start on the base. Let $\e_i$ denote the usual $i$-th elementary symmetric polynomial in the variables $x_1,\cdotsc,x_n$, and let $\e_i^-=\e_i(x_1^{-1},\cdotsc,x_n^{-1})$ denote the $i$-th elementary symmetric polynomial in the variables $x_1^{-1},\cdotsc,x_n^{-1}$. Let
\begin{align*}
    \Sym_n=\Sym_n^+&=\BC[\e_1,\cdotsc,\e_n],\\
    \Sym_n^-&=\BC[\e_1^-,\cdotsc,\e_n^-],\\
    \Sym_n^\times&=\BC[\e_1,\cdotsc,\e_n,\e_n^-].
\end{align*}
Note that
\begin{align*}
    \Sym_n^-&=\BC\Big[\frac{\e_{n-1}}{\e_n},\cdotsc,\frac{1}{\e_n} \Big].
\end{align*}

Let
\[X_n^+=\Spec\Sym_n^+,\qquad X_n^-=\Spec\Sym_n^-,\]
and let us glue these spaces along
\[X_n^\times=\Spec \Sym_n^\times\] 
to obtain
\[X_n^\circ=X_n^+\cup_{X_n^\times} X_n^-,\] 
a space covered by two affine charts.
\begin{proposition}\label{prop: Xn description}
    The space $ X_n^\circ$ can be described as
    \[ X_n^\circ=\BP^n\sm\BP^{n-2},\]
    where the removed copy of $\BP^{n-2}\subset \BP^n=\Proj\BC[\E_0,\cdotsc,\E_n]$ corresponds to the subspace where $\E_0=\E_n=0$. 
\end{proposition}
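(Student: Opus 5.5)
We identify both charts with standard affine charts of $\BP^n=\Proj\BC[\E_0,\cdotsc,\E_n]$ and check that the gluings agree. Throughout we set $\e_0=1$. Consider the standard charts $D_+(\E_0)$ and $D_+(\E_n)$. Their coordinate rings are $\BC[\E_1/\E_0,\cdotsc,\E_n/\E_0]$ and $\BC[\E_0/\E_n,\cdotsc,\E_{n-1}/\E_n]$.

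Define $\Sym_n^+\to \CO(D_+(\E_0))$ by $\e_i\mapsto \E_i/\E_0$. This is an isomorphism, since $\e_1,\cdotsc,\e_n$ are algebraically independent generators of $\Sym_n^+$. So $X_n^+\cong D_+(\E_0)$.

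For the other chart, use the identity $\e_i^-=\e_{n-i}/\e_n$, with the convention $\e_0^-=1$. This is already noted in the description $\Sym_n^-=\BC[\e_{n-1}/\e_n,\cdotsc,1/\e_n]$. The elements $\e_1^-,\cdotsc,\e_n^-$ are algebraically independent, being the elementary symmetric polynomials in the independent variables $x_i^{-1}$. Hence $\e_i^-\mapsto \E_{n-i}/\E_n$ gives an isomorphism $X_n^-\cong D_+(\E_n)$.

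Next we check the overlap. The intersection $D_+(\E_0)\cap D_+(\E_n)$ has coordinate ring $\BC[\E_1/\E_0,\cdotsc,\E_n/\E_0][\E_0/\E_n]$. Under the first identification this is $\Sym_n^+[\e_n^{-1}]=\BC[\e_1,\cdotsc,\e_n,\e_n^-]=\Sym_n^\times$. Here we use $\e_n^-=1/\e_n$, since $\e_n^-=\prod x_i^{-1}$.

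It remains to see that the two inclusions $\Sym_n^\pm\hookrightarrow\Sym_n^\times$ match the restriction maps of $\BP^n$. For the $+$ chart this is immediate. For the $-$ chart, the element $\E_{n-i}/\E_n$ equals $(\E_{n-i}/\E_0)(\E_0/\E_n)$ in the intersection, which corresponds to $\e_{n-i}\cdot\e_n^{-1}=\e_i^-$. This is exactly the relation used inside $\Sym_n^\times\subset\BC[x_1^{\pm1},\cdotsc,x_n^{\pm1}]$, so the gluing data coincide.

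Therefore $X_n^\circ\cong D_+(\E_0)\cup D_+(\E_n)$. Its complement in $\BP^n$ is $V_+(\E_0,\E_n)$, which is a linear $\BP^{n-2}$ (empty when $n=1$).

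The main obstacle is the compatibility on the overlap. Specifically, we must confirm that $\Sym_n^\times$ really is the localization $\Sym_n^+[\e_n^{-1}]$, and that it contains $\Sym_n^-$ through the stated identity $\e_i^-=\e_{n-i}/\e_n$. Both follow by expanding $\prod_j(1+tx_j^{-1})=\e_n^{-1}\prod_j(x_j+t)$.
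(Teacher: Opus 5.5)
Your proof is correct and follows the same approach as the paper: you identify $X_n^+$ and $X_n^-$ with the standard charts $\{\E_0\neq 0\}$ and $\{\E_n\neq 0\}$ of $\BP^n$, check that their overlap is $X_n^\times$, and observe that the complement of their union is $V_+(\E_0,\E_n)\cong\BP^{n-2}$. The only difference is that you spell out the gluing compatibility via $\e_i^-=\e_{n-i}/\e_n$, which the paper's brief argument leaves implicit.
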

\begin{proof}
    Recall that $\BP^n$ consists of tuples $(a_0\colon\cdots\colon a_n)$ such that at least one $a_i$ is non-zero. The locus where $a_0\ne 0$ (resp. where $a_n\ne 0$) can be identified with $X_n^+$ (resp. with $X_n^-$). Their intersection is moreover precisely $X_n^\times$, and the complement of their union (where $a_0 = a_n = 0$) is isomorphic to a copy of $\BP^{n-2}$, as desired.
\end{proof}

Hence $X_n^\circ$ naturally is a open subvariety of
\[X_n\coloneqq \BP^n=\Proj\BC[\E_0,\cdotsc,\E_n],\] 
where we have chosen homogeneous coordinates $\E_i$, with the convention that
\[\e_i=\frac{\E_i}{\E_0}.\] 
Frequently we refer to $X_n^+$ as the ``dominant affine chart'' of $X_n^\circ$ and $X_n$.

Note well that $X_n^\circ $ is obtained from $X_n$ by removing a codimension-2 space; hence, when we work with line bundles on $X_n^\circ$, we will appeal to Hartog's theorem and simply consider $X_n$ instead.

\subsection{The cover space}
Let 
\begin{align*}
    \Pol_n=\Pol^+_n&=\BC[x_1,\cdotsc,x_n],\\
    \Pol_n^-&=\BC[x_1^{-1},\cdotsc,x_n^{-1}],\\
    \Pol_n^\times&=\BC[x_1,\cdotsc,x_n,x_1^{-1},\cdotsc,x_n^{-1}].
\end{align*}
Let
\[Y_n^+=\Spec\Pol_n^+,\qquad Y_n^-=\Spec\Pol_n^-,\] 
and let us glue them along 
\[Y_n^\times=\Spec\Pol_n^\times\]
to obtain the space
\[Y_n^\circ =Y_n^+\cup_{Y_n^\times} Y_n^-.\] 
Similarly to the base space case, this naturally sits inside 
\[Y_n=\BP^{1,\times n},\] 
as we shall see.
\begin{proposition}
    The space $ Y_n^\circ$ can be described as
    \[ Y_n^\circ=\BP^{1,\times n}\mathbin{\bigg\backslash}\bigcup_{n(n-1)} \BP^{1,\times n-2},\] 
    where the removed space is the union of points $(p_1,\cdotsc,p_n)$ such that $p_i=0,\ p_j=\infty$ for some $i,j$.

    Moreover, the natural $S_n$-cover $\pi\colon \BP^{1,\times n}\lto\BP^n$ restricts to an $S_n$-cover of $\pi\colon  Y_n^\circ \lto  X_n^\circ$.
\end{proposition}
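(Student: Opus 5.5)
The plan is to mirror the proof of Proposition \ref{prop: Xn description}, now on each factor of $\BP^{1,\times n}$. Write the $i$-th factor as $\Proj\BC[X_i,Z_i]$ with affine coordinate $x_i=X_i/Z_i$, so that $\{Z_i\neq0\}=\Spec\BC[x_i]$ and $\{X_i\neq0\}=\Spec\BC[x_i^{-1}]$. Then $Y_n^+=\Spec\Pol_n^+$ is the open locus where no $p_i=\infty$, and $Y_n^-=\Spec\Pol_n^-$ is the locus where no $p_i=0$. Their intersection is the locus where every $p_i\in\BG_m$, which is $\Spec\Pol_n^\times=Y_n^\times$. The gluing maps are the restrictions of the identity on $\BP^{1,\times n}$, so $Y_n^\circ$ embeds as the open subscheme $Y_n^+\cup Y_n^-$.

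The complement of $Y_n^+\cup Y_n^-$ consists of points not in $Y_n^+$ (some $p_j=\infty$) and not in $Y_n^-$ (some $p_i=0$). Necessarily $i\neq j$, since a single coordinate cannot be both $0$ and $\infty$. Each such condition, for an ordered pair $(i,j)$ with $i\ne j$, cuts out a copy of $\BP^{1,\times n-2}$ by fixing two coordinates. There are $n(n-1)$ ordered pairs, which gives the stated description.

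For the second claim, take $\pi$ to be the quotient $\BP^{1,\times n}\to\BP^{1,\times n}/S_n=\Sym^n\BP^1\cong\BP^n$. This sends $(p_1,\dots,p_n)$, with $p_i=(X_i:Z_i)$, to the coefficients of $\prod_i(Z_it-X_i)$. Concretely,
\[\E_k=(-1)^k e_k\text{-type expression}\cdot\prod \text{(appropriate $Z$'s)},\]
so that $\E_0=\prod_i Z_i$ and $\E_n=\pm\prod_i X_i$. Choosing the signs to match the convention $\e_k=\E_k/\E_0$, on $Y_n^+$ we get $\E_k/\E_0=\e_k(x_1,\dots,x_n)$, the $k$-th elementary symmetric polynomial. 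The key observation is that $\pi^{-1}(\{\E_0\ne0\})=\{\text{all }Z_i\neq0\}=Y_n^+$ and $\pi^{-1}(\{\E_n\neq0\})=\{\text{all }X_i\ne0\}=Y_n^-$. Hence $\pi^{-1}(X_n^\circ)=Y_n^\circ$ by Proposition \ref{prop: Xn description}. Restriction of a finite $S_n$-quotient map to the preimage of an open set is again a finite $S_n$-quotient, which gives the $S_n$-cover $Y_n^\circ\to X_n^\circ$. On the charts this is just the fundamental theorem of symmetric polynomials, $(\Pol_n^\pm)^{S_n}=\Sym_n^\pm$; for the minus chart one uses the variables $x_i^{-1}$ and the identity $\e_k^-=\e_{n-k}/\e_n$.

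The main obstacle is the sign and normalization bookkeeping that identifies $\Sym^n\BP^1$ with $\BP^n$ compatibly with the convention $\e_i=\E_i/\E_0$. In particular, one must check that $\E_n/\E_0=\e_n$, so that the minus chart matches $\Sym_n^-=\BC[\e_{n-1}/\e_n,\dots,1/\e_n]$. Everything else is a direct check of where coordinates vanish.
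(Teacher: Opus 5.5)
Your proof is correct and is essentially the paper's argument: both come down to observing that $\E_0$ and $\E_n$ pull back to $\prod_i Z_i$ and $\pm\prod_i X_i$. Hence $\pi^{-1}(X_n^\circ)=Y_n^+\cup Y_n^-$, whose complement is the union of the $n(n-1)$ loci $\{p_i=0,\ p_j=\infty\}$ with $i\neq j$. You are more explicit than the paper in identifying the glued space $Y_n^\circ$ with this open subscheme and in noting that the restricted map is still the $S_n$-quotient. The sign bookkeeping you flag as the main obstacle disappears if you use $\prod_i(Z_it+X_i)$, which gives $\E_k/\E_0=\e_k$ directly.
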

\begin{proof}
    Recall that the map $\BP^{1,\times n}\to\BP^n$ is given by taking a tuple $([a_0\colon b_0],\ldots, [a_n\colon b_n])$ to the $n+1$ coefficients of the polynomial $(a_0t+b_0)\cdots(a_nt+b_n)$. The preimage of $ X_n^\circ$ under this map consists of tuples of points such that $a_0\cdots a_n$ and $b_0\cdots b_n$ are not simultaneously zero. The complement of this in $Y_n$ consists of tuples of points such that there are distinct indices $i$ and $j$ with $a_i=0$ and $b_j=0$. The result follows.
\end{proof}
Note again that $Y_n^\circ$ is obtained from $Y_n$ by removing a codimension-2 subspace. We may also think of $x_i$ as the coordinate on the $i$-th factor of $\BP^1$ in $Y_n$. 

Let
\[\pi\colon Y_n\lto X_n\cong \Sym^n\BP^1 \] 
denote the symmetrization map, which we have seen restricts to a cover of $X_n^\circ$ by $Y_n^\circ$. 

\subsection{The sheaves above}
By Hartog's theorem, any line bundle on a smooth variety defined away from a space of codimension 2 can be uniquely extended to the whole space. Hence, to define a line bundle on $Y_n^\circ$ is equivalent to defining a line bundle on $Y_n=\BP^{1,\times n}$. 

Define the line bundle on $Y_n$
\[\CV_n\coloneqq \CO_{\BP^1}(n-1)\boxtimes\CO_{\BP^1}(n-2)\boxtimes\cdots\boxtimes\CO_{\BP^1}.\]
Note that the space of global sections
\[V_n\coloneqq \Gamma(\CV_n)\cong\BC\{ x_1^{k_1}\cdots x_n^{k_n}: 0\le k_i\le n-i\}\] 
is a space of dimension $n!$. 

\begin{remark}
    It is worth noting that the involution $\tau$ of equation 13 in \cite{KMZ} can be geometrically interpreted as the transition function of sections of $\CV_n$; this is part of how we guessed the sheaf. 
\end{remark}

\subsection{The sheaves below}
$ X_n^\circ$ is obtained from $X_n=\BP^n$ by removing a space of codimension 2, so as above, by Hartog's theorem it suffices to consider bundles on $X_n$. 

Since the base space is constructed from symmetric polynomials, it is natural to try to construct a free sheaf on $X_n$ of rank $n!$, mimicking the realization of $\Pol_n$ as a free $\Sym_n$-module of rank $n!$. Hence let us define the trivial vector bundle on $X_n$
\[\CP_n\coloneqq \CO_{\BP^n}\otimes \Gamma(\CV_n).\] 
Note $\Gamma(\CV_n)=\Gamma(\CP_n)=V_n$.
\begin{proposition}
    The pushforward sheaf $\pi_*\CV_n$ on $X_n$ is a degree $n!$ vector bundle and the natural evaluation map $\CP_n\lto \pi_*\CV_n$ is an isomorphism.
\end{proposition}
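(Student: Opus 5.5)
The plan is to prove the statement in three steps: $\pi_*\CV_n$ is locally free of rank $n!$, it has global sections $V_n$, and it is globally generated by them (equivalently, trivial). The evaluation map $\CP_n=\CO_{\BP^n}\otimes V_n\to\pi_*\CV_n$ is then a surjection of rank-$n!$ vector bundles, hence an isomorphism.

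\emph{Local freeness.} The map $\pi\colon\BP^{1,\times n}\to\BP^n$ is the quotient by $S_n$. It is finite, being a quotient by a finite group, and flat, since it is a finite surjective morphism between smooth varieties of the same dimension (miracle flatness). Its degree is $n!$. So for any line bundle $\CL$ on $Y_n$, the pushforward $\pi_*\CL$ is locally free of rank $n!$. In particular $\pi_*\CV_n$ is a vector bundle of rank $n!$, and
\[\Gamma(X_n,\pi_*\CV_n)=\Gamma(Y_n,\CV_n)=V_n.\]
Thus the evaluation map is defined, and it is an isomorphism on global sections.

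\emph{Triviality.} It suffices to show that $\pi_*\CV_n$ is trivial. I would do this chart by chart. On the dominant chart $X_n^+$ (where $\E_0\neq 0$), the restriction of $\CV_n$ to $\pi^{-1}(X_n^+)=Y_n^+$ is trivialized by the standard frame, so $\pi_*\CV_n(X_n^+)=\Pol_n$ as a $\Sym_n$-module. The evaluation map becomes
\[\Sym_n\otimes V_n\to\Pol_n,\]
with $V_n$ spanned by the monomials $x_1^{k_1}\cdots x_n^{k_n}$ with $0\le k_i\le n-i$. This is exactly the classical Artin basis of $\Pol_n$ over $\Sym_n$, so the map is an isomorphism on this chart.

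\emph{Other points: the main obstacle.} The main difficulty is extending this beyond $X_n^+$. The cleanest route I would try first is to use $\SL_2$-equivariance. Each $\CO(k)$ on $\BP^1$ is $\SL_2$-linearized, so $\CV_n$ is $\SL_2$-equivariant. However, $\CV_n$ is not $S_n$-equivariant, so one must check that $\pi_*$ still makes sense equivariantly. It does: $\pi$ commutes with the diagonal $\SL_2$-action, and $\pi_*$ of an equivariant sheaf along an equivariant map is equivariant. The evaluation map $\CO\otimes V_n\to\pi_*\CV_n$ is then $\SL_2$-equivariant, so the locus where it fails to be an isomorphism, namely the zero locus of its determinant, is closed and $\SL_2$-stable.

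Every point of $\BP^n=\Sym^n\BP^1$ is a divisor of degree $n$ on $\BP^1$. Some element of $\SL_2$ moves that divisor off $\infty$, that is, into the chart $X_n^+$. So the $\SL_2$-translates of $X_n^+$ cover $X_n$. Since the evaluation map is an isomorphism on $X_n^+$, it is an isomorphism everywhere.

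As a fallback, one can argue by degree. Both bundles have rank $n!$. On $X_n^+$ the determinant of the evaluation map is a unit, so it is a section of $\det(\pi_*\CV_n)$ vanishing only on a multiple of the hyperplane $\E_0=0$. One then shows that $\det\pi_*\CV_n\cong\CO$ via Grothendieck–Riemann–Roch, or by checking that the determinant is nonvanishing on the opposite chart $X_n^-$ using the $x_i\mapsto x_i^{-1}$ symmetry.
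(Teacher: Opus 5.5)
Your argument is correct, but it takes a different route from the paper.

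The paper never works on a chart. It applies Horrocks' splitting criterion, checking $H^i(\BP^n,\pi_*\CV_n(k))=0$ for $1\le i\le n-1$ and all $k$ via the projection formula and K\"unneth on $\BP^{1,\times n}$. The key observation is that the twists $n+k-1,\dots,k$ are all $\ge 0$, all $\le -2$, or include $-1$. This gives $\pi_*\CV_n\cong\bigoplus_i\CO(a_i)$. Then $H^0(\pi_*\CV_n(-1))=0$ forces $a_i\le 0$, and $h^0(\pi_*\CV_n)=n!$ forces $a_i=0$.

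You instead import the Artin basis to get the isomorphism on $X_n^+$. You then globalize using $\SL_2$-equivariance of the evaluation map and the fact that every degree-$n$ divisor can be translated off $\infty$. This is exactly the $U_p$-translation trick the paper itself uses in Part II, e.g.\ in Proposition \ref{prop:Q as cap} and Theorem \ref{thm:A_N monoidal}.

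What each route buys:
\begin{itemize}
\item Your route is more elementary and more uniform with the rest of the paper. It avoids both Horrocks' theorem and the case-by-case cohomology bookkeeping.
\item The paper's route is purely cohomological and does not presuppose the Artin basis. Restricted to $X_n^+$, its conclusion actually reproves that $\{x_1^{k_1}\cdots x_n^{k_n}:0\le k_i\le n-i\}$ is a $\Sym_n$-basis of $\Pol_n$.
\end{itemize}

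Your fallback also works, and can be stated most cleanly as follows. By the Artin basis in the variables $x_i^{-1}$, the evaluation map is an isomorphism on $X_n^-$ as well. So the zero locus of its determinant, a section of the line bundle $\det\pi_*\CV_n$, is either empty or of pure codimension one. But it lies in the codimension-two set $X_n\setminus X_n^\circ$, so it is empty.
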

\begin{proof}
    The first claim is clear because $\pi$ is finite of degree $n!$. For the second claim, it suffices to show that $\pi_* \CV_n$ is isomorphic to $\CO_{\BP^n}^{\oplus n!}$, since any surjective map of vector bundles of the same rank are necessarily isomorphic.

    We first use Horrock's criterion to verify that $\pi_* \CV_n$ splits into line bundles, i.e. for all $1\le i \le n-1$ and $k\in \BZ$, we require $H^i(\BP^n,\pi_*\CV_n(k))=0$. By the projection formula, \[\pi_*\CV_n(k) = \pi_*(\CV_n \otimes \pi^*\CO_{\BP^n}(k)) = \pi_*(\CO_{\BP^1}(n+k-1)\boxtimes\CO_{\BP^1}(n+k-2)\boxtimes\cdots\boxtimes\CO_{\BP^1}(k)),\] and using that the higher pushforwards along a finite morphism vanish, we have \[H^i(\BP^n,\pi_*\CV_n(k)) = H^i(\BP^{1,\times n}, \CO_{\BP^1}(n+k-1)\boxtimes\CO_{\BP^1}(n+k-2)\boxtimes\cdots\boxtimes\CO_{\BP^1}(k)).\] By the K\"unneth formula, this cohomology group splits into a tensor product of cohomology groups of line bundles on $\BP^1$, and we claim that there is always a vanishing term. Indeed, for $k\ge 0$, since $i\ge 1$, there is always a cohomology group of degree at least $1$, which vanishes on any line bundle on $\BP^1$ of degree at least $k\ge 0$. For $k \le -n-1$, since $i \le n-1$, there is always a cohomology group of degree zero, which vanishes on any line bundle on $\BP^1$ of negative degree. For $-n \le k \le -1$, the consecutive sequence of integers $n+k-1, \ldots, k$ contains the integer $-1$, and we have $H^j(\BP^1, \CO_{\BP^1}(-1))=0$ for any $j$. 
    
    Since every line bundle on $\BP^n$ is of the form $\CO_{\BP^n}(a)$ for an integer $a$, we may then write $\pi_*\CP_n = \bigoplus_{i=1}^{n!} \CO_{\BP^n}(a_i)$, where the $\CO_{\BP^n}(a_i)$ are line bundles on $\BP^n$. Twisting by $-1$ and taking global sections, we have 
    \begin{align*}
        H^0(\BP^n,\pi_*\CV_n(-1))&=H^0(\BP^1,\CO_{\BP^1}(n-2))\otimes \cdots \otimes H^0(\BP^1,\CO_{\BP^1}(-1))=0,\\
        &=H^0\left(\BP^n,\bigoplus_{i=1}^{n!}\CO_{\BP^n}(a_i-1)\right)=\bigoplus_{i=1}^{n!}H^0(\BP^n,\CO_{\BP^n}(a_i-1)),
    \end{align*}
    which means that $a_i\le 0$ for all $i$. 

    Finally, $H^0(\BP^n,\pi_*\CV_n)$ has rank $n!$, so we must have $a_i\ge 0$ for all $i$ too. The result follows.
\end{proof}

Note well that 
\[\CP_n(X_n^+)\cong \Sym_n\otimes V_n\cong \Pol_n,\] 
which is why we have chosen the symbol $\CP$ for it. 

\section{The nil-Hecke sheaf}
Recalling that $\NH_n=\End_{\Sym_n}(\Pol_n)$, it is then natural to define the sheaf
\[\CE_n\coloneqq\shend_{\CO}(\CP_n)=\CO_{\BP^n}\otimes \End_\BC V_n.\] 
Note that it is automatic that
\[\CE_n(X_n^+)=\End_{\Sym_n}(\Pol_n)=\NH_n,\] 
so that affine sections over the dominant affine chart $X_n^+$ recovers the classical nil-Hecke algebra. In fact, the affine sections over $X_n^-$ recover the \textit{negative} nil-Hecke algebra of \cite{KMZ}:
\[\CE_n(X_n^-)=\End_{\Sym_n^-}(\Pol_n^-)=\NH_n^-.\] 
\begin{theorem}\label{thm:nilhecke old main}
    The global sections of $\CE_n$ as
    \[\CE_n(X_n^\circ)=\CE_n(X_n^+)\cap \CE_n(X_n^-)\subset \CE_n(X_n^+\cap X_n^-)\subset \CE_{n,\eta},\] 
    where the intersection is taken inside the generic stalk, realizes the double nil-Coxeter $\wh\NC_n$ of \cite{KMZ} as a subalgebra,
    \[\wh\NC_n=\NH_n^+\cap\NH_n^-\subseteq \NH_n^\circ.\] 

    On the other hand, there is a surjection
    \[\CE_n(X_n^+)\lsurj \CE_n\rv_0\cong\CE_n(X_n^\circ)\] 
    realizing the double nil-Coxeter as a quotient. 
\end{theorem}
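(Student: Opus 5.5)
The plan is to exploit the triviality of $\CE_n=\CO_{\BP^n}\otimes\End_\BC V_n$ on $X_n=\BP^n$, together with Hartogs. First, since $X_n^\circ$ is covered by the two affine charts $X_n^+$ and $X_n^-$ and $\CE_n$ is a sheaf, the sheaf axiom gives directly
\[\CE_n(X_n^\circ)=\CE_n(X_n^+)\cap\CE_n(X_n^-)\subset\CE_n(X_n^\times)\subset\CE_{n,\eta}.\]
Here all restriction maps are injective because $\CE_n$ is locally free on an integral scheme. Using the identifications $\CE_n(X_n^\pm)=\NH_n^\pm$ already noted, together with $\CE_n(X_n^\times)=\End_{\Sym_n^\times}(\Pol_n^\times)=\NH_n^\circ$ (which follows by localizing $\Pol_n\cong\Sym_n\otimes V_n$), this reads $\CE_n(X_n^\circ)=\NH_n^+\cap\NH_n^-$ inside $\NH_n^\circ$.

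Next I compute the global sections. Since $\BP^{n-2}$ has codimension $2$ and $\CE_n$ is locally free on a normal variety, Hartogs gives $\CE_n(X_n^\circ)=\CE_n(X_n)=H^0(\BP^n,\CO)\otimes\End_\BC V_n=\End_\BC V_n$, which is an $(n!)^2$-dimensional algebra. To identify this with $\wh\NC_n$, I would invoke \cite{KMZ}: there $\wh\NC_n$ is shown to equal $\NH_n^+\cap\NH_n^-$ inside $\NH_n^\circ$, or at least to be a subalgebra of both, isomorphic to a matrix algebra of size $n!$. If only containment is available, I would argue by dimension. The generators $\pd_i^+=\pd_i\in\NH_n^+$ and $\pd_i^-=-x_i\pd_ix_{i+1}$ both preserve $\Pol_n^+$ and $\Pol_n^-$; for the latter, conjugating by the inversion $x\mapsto x^{-1}$ exchanges them up to sign. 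Hence $\wh\NC_n\subseteq\NH_n^+\cap\NH_n^-\cong\End_\BC V_n$, and equality follows because both sides have dimension $(n!)^2$. This dimension comparison is the main obstacle: it relies on the KMZ dimension count for $\wh\NC_n$, or on directly checking that $\wh\NC_n$ acts on the $n!$-dimensional space $V_n$ irreducibly, so that Burnside forces it to be all of $\End V_n$.

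For the quotient statement, let $0=(1:0:\cdots:0)$ be the origin of $X_n^+$, corresponding to the ideal $\mathfrak m=(\e_1,\dots,\e_n)\subset\Sym_n$. The fiber map is
\[\CE_n(X_n^+)=\Sym_n\otimes\End V_n\lsurj \CE_n\rv_0=\End V_n,\]
which is surjective and is simply reduction modulo $\mathfrak m$, i.e.\ $\NH_n\to\NH_n/\mathfrak m\NH_n$. Composing the global-section inclusion with this map gives $\End V_n\to\End V_n$. For the constant sheaf this composite is the identity, because global sections are constant in the trivialization. Hence the fiber is canonically $\cong\CE_n(X_n^\circ)\cong\wh\NC_n$, and the inclusion $\wh\NC_n\linj\NH_n$ splits the surjection.

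One subtlety I would check is that the trivialization $\CP_n\cong\CO\otimes V_n$ is compatible with the identification $\CP_n(X_n^-)\cong\Pol_n^-$. In other words, the transition function on $X_n^\times$ relating the bases $V_n$ and its "negative" counterpart must be the one induced by $\pi_*\CV_n$, namely the KMZ involution $\tau$ mentioned in the remark. This compatibility follows from the isomorphism $\CP_n\cong\pi_*\CV_n$ proved above.
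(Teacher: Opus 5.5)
Your argument is correct in substance and more explicit than the paper's proof, which simply defers the whole statement to results 2.23--2.29 of \cite{KMZ}. You get the geometric content directly from the triviality $\CE_n=\CO_{\BP^n}\otimes\End_\BC V_n$:
the sheaf axiom for the two-chart cover gives the intersection description;
$H^0(X_n^\circ,\CO)=\BC$ gives $\CE_n(X_n^\circ)=\End_\BC V_n$;
the fiber at $0$ is the same $\End_\BC V_n$, and the composite $\CE_n(X_n^\circ)\to\CE_n(X_n^+)\to\CE_n|_0$ is the identity.
This proves the quotient statement and identifies $\NH_n^+\cap\NH_n^-$ with a copy of $\Mat_{n!}\BC$ without using \cite{KMZ}. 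The citation is then needed only to match this copy with $\wh\NC_n$, a division of labour the paper's one-line proof leaves implicit.

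One step in your fallback is wrong as written. The operator $\pd_i^-=-x_i\pd_ix_{i+1}$ does not preserve $\Pol_n^-=\BC[x_1^{-1},\dots,x_n^{-1}]$, since $\pd_i^-(1)=x_i$. Also, plain inversion conjugates $\pd_i$ to $-x_ix_{i+1}\pd_i$, which is not $\pm\pd_i^-$. The lattice that matters is $\CP_n(X_n^-)$, which in the $X_n^+$-frame is $x^\rho\Pol_n^-$ with $x^\rho=x_1^{n-1}x_2^{n-2}\cdots x_{n-1}$. The correct intertwiner is $\tau(f)=x^\rho f(x_1^{-1},\dots,x_n^{-1})$. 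It is an involution, swaps $\Pol_n^+$ and $x^\rho\Pol_n^-$, and satisfies $\tau\pd_i\tau=\pd_i^-$ exactly. So both generators do lie in $\NH_n^+\cap\NH_n^-$; your last paragraph anticipates this, but the second paragraph should be phrased this way.

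With $\tau$ in hand you can also close the dimension step without \cite{KMZ}:
\begin{itemize}
\item $\NH_n^+\cap\NH_n^-$ is exactly $\{\phi\in\NH_n:\phi(V_n)\subseteq V_n\}$, and restriction to $V_n$ is injective on it.
\item $V_n$ is irreducible under the $\pd_i^\pm$. For nonzero $v\in V_n$, take $w$ of maximal length in the Schubert expansion of $v$; then $\pd_w v$ is a nonzero multiple of $1$. Conversely, $\tau(V_n)=V_n$ and $\tau(1)=x^\rho=\mathfrak S_{w_0}$, so the products $\tau\pd_u\tau$ of the $\pd_i^-$, applied to $1$, span $V_n$.
\end{itemize}
Burnside then gives $\wh\NC_n=\End_\BC V_n=\NH_n^+\cap\NH_n^-$.
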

To summarize, one has the picture
\begin{center}
\begin{tikzcd}
    \CE_n(X_n^\circ)\arrow[hookrightarrow]{r}\arrow[swap]{dr}{\sim} & \CE_n(X_n^+)\arrow[twoheadrightarrow]{r} & \CE_n\rv_0\arrow{dl}{\sim}\\
    & \CE_n(X_n^+)^{\BG_m}\cong\Mat_{n!}\BC
\end{tikzcd}
\end{center}
\begin{remark}
    One way to think of things is as follows. There is a $\BG_m$-action on $\CP_n=\CO_{\BP^n}\otimes V_n$ acting only on $\CO_{\BP^n}$ such that both global sections $\CE_n(X_n^\circ)$ and the fiber at zero $\CE_n\rv_0$ can be realized as $\CO_{\BP^n}^{\BG_m}\otimes \End_\BC V_n\cong\Mat_{n!}\BC$, given by the space of $\BG_m$-invariants in the structure sheaf.
\end{remark}
\begin{proof}[Proof of Theorem \ref{thm:nilhecke old main}]
    This is immediate from the identification of the results 2.23 through 2.29 of \cite{KMZ}. 
\end{proof}
\begin{remark}
    It is possible to obtain diagrammatics for this story; this is done in Theorem \ref{thm:laurent KLR}. 
\end{remark}

\section{Monoidal product}\label{sec:1color monoidal}
Let \[\wh{\sha}_{n,m}\colon \BP^{1,\times n}\times\BP^{1,\times m}\xrightarrow{\sim}\BP^{1,\times(n+m)}\]
be concatenation of the ordered factors, and let
\[c_{n,m}\colon\BP^n\times\BP^m\longrightarrow\BP^{n+m}\]be the induced addition map on symmetric powers. Thus, $\pi_{n+m}\circ\wh{\sha}_{n,m}=c_{n,m}\circ(\pi_n\times\pi_m).$ Let
\[\mathcal L_{n,m}=\CO_{\BP^n}(m)\]and
\[\mathcal W_{n,m}=(\CP_n\otimes\mathcal L_{n,m})\boxtimes\CP_m.\]
Since $\pi_n^*\CO_{\BP^n}(1)\cong\CO(1,\ldots,1)$, we have $\wh{\sha}_{n,m}^*\CV_{n+m}\cong (\CV_n\otimes\pi_n^*\mathcal L_{n,m})\boxtimes\CV_m.$ Pushing forward and applying the projection formula gives \[c_{n,m,*}\mathcal W_{n,m}\cong\CP_{n+m}.\]

We then have the natural composition \[\mu_{n,m}\colon c_{n,m,*}(\CE_n\boxtimes\CE_m)\longrightarrow
c_{n,m,*}\shend(\mathcal W_{n,m})\longrightarrow\shend(c_{n,m,*}\mathcal W_{n,m})
\cong\CE_{n+m},\] where the second arrow is induced by the counit of $c_{n,m}^*\dashv c_{n,m,*}$.

\begin{theorem}\label{thm:A1-monoidal}
The morphisms
\[\mu_{n,m}\colon c_{n,m,*}(\CE_n\boxtimes \CE_m)\lto \CE_{n+m}\] 
are $\SL_2$-equivariant and associative, with $\CE_0=\BC$ as their
unit.  On the two standard affine charts they recover the usual
nil-Hecke homomorphisms
\[\NH_n^\pm\otimes\NH_m^\pm\longrightarrow\NH_{n+m}^\pm\]
given by horizontal concatenation.  On global sections they induce
\[\wh\NC_n\otimes\wh\NC_m\longrightarrow\wh\NC_{n+m}.\]
\end{theorem}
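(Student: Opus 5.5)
The plan is to reduce everything to explicit formulas on the dominant chart, and to get the remaining claims from naturality of the constructions. First I will make the isomorphism $c_{n,m,*}\mathcal W_{n,m}\cong\CP_{n+m}$ concrete. Since $c_{n,m}$ is finite, the counit $c^*c_*\to\mathrm{id}$ induces a map $c_*\shend(\mathcal W)\to\shend(c_*\mathcal W)$: a local endomorphism $\phi$ of $\mathcal W$ over $c^{-1}(V)$ acts on $(c_*\mathcal W)(V)=\mathcal W(c^{-1}V)$ by $\phi$ itself. Hence $\mu_{n,m}$ is simply
\[
f\otimes g\;\longmapsto\;\big(\text{the action of }f\boxtimes g\text{ on sections of }\mathcal W\big),
\]
with sections read through the isomorphism above.

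Over the preimage of the dominant chart $X_{n+m}^+$ I will trivialize $\mathcal L_{n,m}=\CO(m)$ by $\E_0^m$. Then
\[
\mathcal W(c^{-1}X^+_{n+m})=\Pol_n\otimes\Pol_m=\Pol_{n+m}
\]
as modules over $\Sym_n\otimes\Sym_m\supset\Sym_{n+m}$. This identification is just $\wh\sha^*$ applied to the sections of $\CV_{n+m}$, namely polynomials in $x_1,\dots,x_{n+m}$. So $\mu_{n,m}$ sends $f\otimes g$ to the operator $f\otimes g$ on $\Pol_n\otimes\Pol_m$. In the faithful polynomial representation this is exactly horizontal concatenation: $x_i\otimes1\mapsto x_i$, $\pd_i\otimes 1\mapsto\pd_i$, $1\otimes x_j\mapsto x_{n+j}$, $1\otimes\pd_j\mapsto\pd_{n+j}$. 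The two affine charts need slightly different handling.

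\begin{itemize}
\item \textbf{Negative chart.} Here one must trivialize $\CO(m)$ by $\E_n^m$ instead. The extra factor $x_1^m\cdots x_n^m$ coming from $\mathcal L$ matches how $\CV_{n+m}$ is trivialized over $Y^-$: its first $n$ factors have degrees shifted by $m$. The same computation in the variables $x_i^{-1}$ then gives concatenation for $\NH^-$.
\item \textbf{Subtle point.} It is not simply dividing by a monomial: the chart $X^-_{n+m}$ pulls back to $X_n^-\times X_m^-$, and I will check this explicitly.
\end{itemize}

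For $\SL_2$-equivariance, every ingredient is $\SL_2$-equivariant: $\wh\sha$, $c$, $\pi$, the linearizations of $\CO(k)$ on $\BP^1$, and hence $\CV$ and $\mathcal L$. Because of this, the isomorphism $c_*\mathcal W\cong\CP_{n+m}$ is equivariant: it is obtained by pushing forward and applying the projection formula to an equivariant isomorphism of line bundles on $\BP^{1,\times(n+m)}$. The one thing to verify is that the chosen linearization of $\CO_{\BP^n}(m)$ pulls back to that of $\CO(m,\dots,m)$. This holds because $\pi^*\CO(1)\cong\CO(1,\dots,1)$ is canonical and $\SL_2$ has no nontrivial characters, so linearizations are unique. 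The counit map is natural, hence equivariant.

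For associativity, both bracketings are obtained from the single concatenation $\BP^{1,\times n}\times\BP^{1,\times m}\times\BP^{1,\times k}\to\BP^{1,\times(n+m+k)}$. The twists agree, since $\mathcal L_{n,m}\otimes\mathcal L_{n,m+k}$-type bookkeeping gives $\CO(m+k)\boxtimes\CO(k)$ either way. It suffices to check the identification on the dominant chart, where both sides give $f\otimes g\otimes h$ acting on $\Pol_{n+m+k}$. Since $\CE$ is torsion-free, in fact a trivial bundle, agreement on a dense open implies agreement everywhere. The unit statement is immediate from $c_{0,m}=\mathrm{id}$.

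Finally, for global sections, apply $\Gamma$ to $\mu$ and use $\Gamma(c_*(\CE_n\boxtimes\CE_m))=\Gamma(\CE_n)\otimes\Gamma(\CE_m)$ by K\"unneth. Then invoke Theorem~\ref{thm:nilhecke old main}, which identifies these global sections with $\wh\NC$. Alternatively, the image lies in $\NH^+\cap\NH^-$ by the two chart computations. I expect the main obstacle to be the negative-chart bookkeeping of the twist $\mathcal L_{n,m}$.
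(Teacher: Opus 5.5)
Your proposal is correct and follows essentially the same route as the paper. The argument is the same: compute $\mu_{n,m}$ on the charts $c_{n,m}^{-1}(X^\pm_{n+m})=X^\pm_n\times X^\pm_m$, where $\mathcal L_{n,m}$ trivializes and $\Pol^\pm_{n+m}\cong\Pol^\pm_n\otimes\Pol^\pm_m$; get associativity from the dominant chart plus torsion-freeness of $\CE_{n+m+k}$; and take global sections. Your justification of $\SL_2$-equivariance via uniqueness of linearizations ($\SL_2$ has no nontrivial characters), and your explicit twist bookkeeping on the negative chart, spell out what the paper asserts in one line.
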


\begin{proof}
For either sign $\epsilon$, one has\[c_{n,m}^{-1}(X_{n+m}^\epsilon)=X_n^\epsilon\times X_m^\epsilon.\] On this chart $\mathcal L_{n,m}$ is trivial, and the preceding bundle
isomorphism identifies
\[\Pol_{n+m}^\epsilon\cong\Pol_n^\epsilon\otimes_{\BC}\Pol_m^\epsilon.\]
Under this identification, $\mu_{n,m}(x\otimes y)$ acts by $x$ on the
first $n$ variables and by $y$ on the last $m$ variables.  This is
exactly horizontal concatenation. Associativity follows on the dominant affine chart from associativity of horizontal concatenation, and hence globally because $\CE_{n+m+r}$ is torsion-free (in fact, locally free). The monoidal unit assertion is also clear from this description. Taking global sections gives the final assertion.

Note well that all calculations above in this section are $\SL_2$-equivariant; hence the monoidal product is also $\SL_2$-equivariant. 
\end{proof}

\section{$\sl_2$-action}\label{sec:1color sl2}
It is a classical result that $\sl_2$ acts on the twisting sheaves $\CO(n)$ of $\BP^1$ by
\begin{align*}
    e&\lmto -x^2\pd_x+nx,\\
    h&\lmto 2x\pd_x-n,\\
    f&\lmto \pd_x.
\end{align*}
Hence the sheaf $\CP_n=\CO(n-1)\boxtimes\cdots\boxtimes\CO(0)$, as well as its space of global sections $V_n=\Gamma(\CP_n)$, naturally receives an $\sl_2$-action. 

Via $\CP_n=\CO_{\wh X_n}\otimes V_n$, the sheaf $\CP_n$ then also receives an $\sl_2$-action. 
\begin{remark}
    We must warn the reader that this action $\sl_2\actson\CP_n$ is \textit{not} the usual action of $\sl_2\actson \Pol_n$; instead, it is twisted. To avoid confusion, we will use diagrammatics to denote algebra elements and algebraic symbols to denote module elements.
\end{remark}

The sheaf $\CE_n$ of endomorphisms then also receives an $\sl_2$-action 
    \[\sl_2\actson \CE_n=\shom_\CO(\CP_n,\CP_n)\] 
via $(X\cdot \phi)(\sq)=X\cdot (\phi(\sq))-\phi(X\cdot \sq)$. 

It turns out this action on $\CE_n$ is the geometric version of the action of Elias-Qi. 
\begin{theorem}\label{thm:sl2action}
    The action $\sl_2\actson \CE_n$ restricts to the Elias-Qi action $\sl_2\actson \CE_n(X_n^+)=\NH_n^+$.
\end{theorem}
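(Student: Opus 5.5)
The plan is to reduce the statement to a check on generators, using a faithful model of $\CE_n(X_n^+)$.

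First, I will put the whole action on a concrete module. The isomorphism $\CP_n\cong\pi_*\CV_n$ is the evaluation map of an $\SL_2$-linearized bundle, so it is $\SL_2$-equivariant. Hence the $\sl_2$-action on $\CP_n(X_n^+)\cong\Pol_n$ is the action on $\CV_n(Y_n^+)=\Pol_n$ obtained by differentiating the linearization of $\CO(n-1)\boxtimes\cdots\boxtimes\CO(0)$. By the formulas of Section \ref{sec:1color sl2}, applied factor by factor, this is
\[e\lmto\sum_{i=1}^n\big(-x_i^2\pd_{x_i}+(n-i)x_i\big),\qquad h\lmto\sum_{i=1}^n\big(2x_i\pd_{x_i}-(n-i)\big),\qquad f\lmto\sum_{i=1}^n\pd_{x_i}.\]
These are differential operators, so they make sense on sections over $X_n^+$ even though $X_n^+$ is not $\SL_2$-stable. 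The induced action on $\CE_n(X_n^+)=\End_{\Sym_n}(\Pol_n)=\NH_n$ is by commutators, $X\cdot\phi=[X,\phi]$, computed inside $\End_\BC(\Pol_n)$.

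Second, I will use derivations to reduce to generators. The commutator action is by derivations, and the Elias--Qi action on $\NH_n$ is also by derivations. Since $\NH_n$ is generated by the $x_j$ and $\pd_i$ and acts faithfully on $\Pol_n$, it suffices to compare $[X,x_j]$ and $[X,\pd_i]$ for $X\in\{e,h,f\}$ with the Elias--Qi values.

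For the dots this is immediate:
\[[f,x_j]=1,\qquad [h,x_j]=2x_j,\qquad [e,x_j]=-x_j^2.\]
These match the Elias--Qi formulas, possibly after the harmless normalization $e\mapsto -e$, which I will fix at the start by comparing conventions.

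For the crossings, $[f,\pd_i]=0$, since $\sum_k\pd_{x_k}$ commutes with divided differences. Also $[h,\pd_i]=-2\pd_i$, since $\pd_i$ lowers degree by one and the constant terms commute with it.

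The main computation is $[e,\pd_i]$. I will split it into the quadratic part and the linear twist. The quadratic part $\sum_k x_k^2\pd_{x_k}$ commutes with $\pd_i$ except through the variables $x_i,x_{i+1}$. Checking on $\Pol_n$, I expect
\[\Big[\sum_k x_k^2\pd_{x_k},\pd_i\Big]=-(x_i+x_{i+1})\pd_i+(\text{a multiple of }1\text{ or of }s_i\text{-terms}).\]
The linear twist contributes
\[\big[(n-i)x_i+(n-i-1)x_{i+1},\pd_i\big].\]
Using $x_i\pd_i-\pd_ix_{i+1}=1$, this equals
\[(n-i)\big(x_i\pd_i-\pd_ix_i\big)+(n-i-1)\big(x_{i+1}\pd_i-\pd_ix_{i+1}\big).\]
This is a combination of $\pd_i x_i$-type terms and a constant, so it should exactly cancel the non-$\NH$-looking piece of the quadratic part. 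That cancellation is automatic in principle, because the result must lie in $\End_{\Sym}(\Pol_n)$, and the twists $\CO(n-i)$ were reverse-engineered for this purpose. What remains is to express the answer in diagrams, as a combination of dotted crossings $x_i\pd_i$, $\pd_ix_{i+1}$ and the identity, and match it with the Elias--Qi formula for $e$ on a crossing.

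I expect this last matching to be the main obstacle. Elias--Qi's action on crossings involves a choice of normalization parameters, and the indices $n-i$ versus $n-i-1$ must produce precisely their choice. So I will carry out the $[e,\pd_i]$ computation carefully on a test polynomial before writing the general identity.
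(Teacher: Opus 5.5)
Your route is correct, but it differs from the paper's. The paper never computes in $\NH_n$ directly. It uses the $\SL_2$-equivariance of the geometric monoidal product (Theorem \ref{thm:A1-monoidal}) to reduce to the monoidal generators in $\NH_1$ and $\NH_2$. It then evaluates $(X\cdot\phi)(v)=X\cdot\phi(v)-\phi(X\cdot v)$ on the bases $\{1\}$ of $V_1$ and $\{1,x_1\}$ of $V_2$, where the only twist involved is $\CO(1)\boxtimes\CO$.

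You instead use faithfulness on $\Pol_n$ and the derivation property to reduce to the algebra generators $x_j,\pd_i$ of $\NH_n$. You then take commutators with the explicit operators for the full twist $\CO(n-1)\boxtimes\cdots\boxtimes\CO(0)$. This is more elementary and self-contained, since it needs no monoidal machinery. It also shows directly why the position-dependent twists are harmless: only the relative twist between strands $i$ and $i+1$ enters. The paper handles the same phenomenon through $\mathcal L_{n,m}$ and a central term dropping out of the commutator. The paper's route, in exchange, carries over uniformly to multi-colour KLR without tracking the twist for each colour sequence.

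Your heuristic for the last step is off, although the step itself goes through. There is nothing that needs to cancel. For any polynomial twist $m$ and any $\phi\in\NH_n$, the commutator $[-\sum_k x_k^2\pd_{x_k}+m,\phi]$ is automatically $\Sym_n$-linear, because $\sum_k x_k^2\pd_{x_k}$ preserves $\Sym_n$. So landing in $\End_{\Sym}(\Pol_n)$ puts no constraint on the twist.

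Concretely, $[\sum_k x_k^2\pd_{x_k},\pd_i]=-(x_i+x_{i+1})\pd_i$ exactly, with no extra terms; differentiate $(f-s_if)/(x_i-x_{i+1})$ to see this. Since $(n-i-1)(x_i+x_{i+1})$ commutes with $\pd_i$, the twist contributes exactly $[x_i,\pd_i]=-s_i$. Hence
\[[e,\pd_i]=(x_i+x_{i+1})\pd_i-s_i=2x_i\pd_i-1=x_i\pd_i+\pd_ix_{i+1},\]
which is exactly what the paper obtains for $e$ on the crossing (dots at top-left and bottom-right).

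The twist's actual role is to select this particular derivation. Without it you would get $x_i\pd_i+\pd_ix_{i+1}+s_i$, which still lies in $\NH_n$ but is not the Elias--Qi formula.
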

\begin{proof}
    Since the monoidal product in Section \ref{sec:1color monoidal} is $\SL_2$-equivariant, it suffices to check the $\sl_2$-action on the monoidal generators, namely the dot and the crossing. We emphasize that we use algebraic symbols, such as $x_i$, for vectors in the module $\Pol$, and diagrammatic symbols for elements of the algebra $\CE$. 

    Let us first remark that the $\sl_2$-action on $\CP_1=\CO_{\BP^1}$ is simply the classical one, namely $e\lmto -x^2\pd_x$, $h\lmto 2x\pd_x$, $f\lmto \pd_x$. 

    Let us compute the $\sl_2$-action on the dot. This is the generator for $\NH_1$, which acts upon $\Pol_1$ which is of rank $1$ over $\Sym_1$, so to determine what $X\cdot\tikzdot$ is, it suffices to check the action on the only basis element which is $1\in V_1$. Let us compute:
    \begin{align*}
        (e\cdot \tikzdot)(1)&=e\cdot \tikzdot(1)-\tikzdot(e\cdot 1)=e\cdot x_1=-x_1^2\\
        &\hspace*{275pt}\implies e\cdot\tikzdot\:=-\:\begin{diagram}
            \draw(0,0)--(0,1.5);
            \fill(0,0.5)circle(6pt);
            \fill(0,1)circle(6pt);
        \end{diagram}\:,\\
        (h\cdot\tikzdot)(1)&=h\cdot\tikzdot(1)-\tikzdot(h\cdot 1)=h\cdot x_1=2x_1\\
        &\hspace*{275pt}\implies h\cdot\tikzdot\:=2\:\tikzdot\:,\\
        (f\cdot\tikzdot)(1)&=f\cdot\tikzdot(1)-\tikzdot(f\cdot 1)=f\cdot x_1=1\\
        &\hspace*{275pt}\implies f\cdot\tikzdot\:=\:\begin{diagram}
            \draw(0,0)--(0,1.5);
        \end{diagram}\:.
    \end{align*}

    Next let us compute the $\sl_2$-action on $\CP_2(X_2^+)$. Recall that elements of this are elements of $\pi_*(\CO(1)\boxtimes\CO)(X_2^+)$; in particular the $\sl_2$-action on the two factors will be different. $\CP_2(X_2^+)$ has two basis elements over $\CO_{\wh X_2}$, namely $1=\pi_*(1\boxtimes 1)$ and $x_1=\pi_*(x_1\boxtimes 1)$. We will suppress the $\pi_*$ below for brevity. Let us compute:
    \begin{align*}
        e\cdot 1&=(e\cdot_{\CO(1)} 1)\boxtimes 1+1\boxtimes (e\cdot_{\CO} 1)=x_1\boxtimes1+0\\
        &\hspace*{275pt}\implies e\cdot 1=x_1,\\ 
        e\cdot x_1&=(e\cdot_{\CO(1)} x_1)\boxtimes 1+x_1\boxtimes (e\cdot_{\CO} 1)=0+0\\
        &\hspace*{275pt}\implies e\cdot x_1=0,\\
        h\cdot 1&=(h\cdot_{\CO(1)} 1)\boxtimes 1+1\boxtimes (h\cdot_{\CO} 1)=-1\boxtimes 1+0\\
        &\hspace*{275pt}\implies h\cdot 1=-1,\\
        h\cdot x_1&=(h\cdot_{\CO(1)} x_1)\boxtimes 1+x_1\boxtimes (h\cdot_{\CO} 1)=2x_1\boxtimes 1+x_1\boxtimes (-1)\\
        &\hspace*{275pt}\implies h\cdot x_1=x_1,\\
        f\cdot 1&=(f\cdot_{\CO(1)} 1)\boxtimes 1+1\boxtimes (f\cdot_{\CO} 1)=0+0\\
        &\hspace*{275pt}\implies f\cdot 1=0,\\
        f\cdot x_1&=(f\cdot_{\CO(1)} x_1)\boxtimes 1+x_1\boxtimes (f\cdot_{\CO} 1)=1\boxtimes 1+0\\
        &\hspace*{275pt}\implies f\cdot x_1=1.
    \end{align*}
    
    Then we can compute the $\sl_2$-action on the crossing. Again, to know what $X\cdot\tikzcrossing$ is, it suffices to check the action on $1,x_1$, which are the basis elements in $V_2$. Compute:
    \begin{align*}
        (e\cdot\tikzcrossing)(1)&=e\cdot\tikzcrossing(1)-\tikzcrossing(e\cdot 1)=0-\tikzcrossing(x_1)=-1,\\
        (e\cdot \tikzcrossing)(x_1)&=e\cdot\tikzcrossing(x_1)-\tikzcrossing(e\cdot x_1)=e\cdot 1-\tikzcrossing(0)=x_1\\
        &\hspace*{275pt}\implies e\cdot\tikzcrossing=\:\begin{diagram}
            \draw(0,0)--(1.5,1.5);
            \draw(1.5,0)--(0,1.5);
            \fill (0.35,1.15) circle(6pt);
        \end{diagram}\:+\:\begin{diagram}
            \draw(0,0)--(1.5,1.5);
            \draw(1.5,0)--(0,1.5);
            \fill (1.15,0.35) circle(6pt);
        \end{diagram}\:;\\
        (h\cdot\tikzcrossing)(1)&=h\cdot\tikzcrossing(1)-\tikzcrossing(h\cdot 1)=0-\tikzcrossing(-1)=0,\\
        (h\cdot \tikzcrossing)(x_1)&=h\cdot\tikzcrossing(x_1)-\tikzcrossing(h\cdot x_1)=h\cdot 1-\tikzcrossing(x_1)=-1-1=-2\\
        &\hspace*{275pt}\implies h\cdot\tikzcrossing=-2\:\tikzcrossing\:;\\
        (f\cdot\tikzcrossing)(1)&=f\cdot\tikzcrossing(1)-\tikzcrossing(f\cdot 1)=0-\tikzcrossing(0)=0,\\
        (f\cdot\tikzcrossing)(x_1)&=f\cdot\tikzcrossing(x_1)-\tikzcrossing(f\cdot x_1)=f\cdot 1-\tikzcrossing(1)=0\\
        &\hspace*{275pt}\implies f\cdot \tikzcrossing=0.
    \end{align*}
    This concludes the check.
\end{proof}
The Cartan action above can be exponentiated to obtain a $\BG_m$-action on $\CE_n$ which recovers the KLR grading on $\NH_n=\CE_n(X_n^+)$.
\begin{remark}
    Note in the proof above that the Cartan action on $\Pol_n=\CP_n(X_n^+)$ does \textit{not} recover the standard grading -- indeed, $h\cdot 1=-1\neq 0$ for $n=2$. However, as both copies of $\CP_n$ inside $\shend(\CP_n)$ have this grading shift, they cancel out and the final identification with $\NH_n$ has the correct grading. 
\end{remark}

It turns out this $\sl_2$-action can be extended to a half-Witt action, or even a full-Witt action if one is willing to introduce ``negative dots''. We delay this discussion to the full general case in Section \ref{sec:fullWitt}.  

\subsection{The core is global sections}
One of the main consequences of our coherent construction of KLR is that the $\sl_2$-core admits a very nice geometric description.
\begin{theorem}
Under the identification \[\NH_n=\CE_n(X_n^+),\] we have \[\Cor_{\sl_2}(\NH_n)=\CE_n(X_n)=\wh\NC_n.\]
\end{theorem}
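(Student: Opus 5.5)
The equality $\CE_n(X_n)=\wh\NC_n$ is already contained in Theorem \ref{thm:nilhecke old main}: by Hartog's theorem $\CE_n(X_n)=\CE_n(X_n^\circ)$, since $\CE_n$ is locally free and $X_n\sm X_n^\circ$ has codimension $2$. So the content is the equality $\Cor_{\sl_2}(\NH_n)=\CE_n(X_n)$, and I would prove both inclusions using the $\SL_2$-equivariant structure on $\CE_n=\CO_{\BP^n}\otimes\End_\BC V_n$. Here $\SL_2$ acts on $\BP^n=\Sym^n\BP^1$ by Möbius transformations and on $V_n$ via the twisting sheaves. By Theorem \ref{thm:sl2action}, the differentiated action on $\CE_n(X_n^+)$ is the Elias--Qi action.

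For the inclusion $\CE_n(X_n)\subseteq\Cor_{\sl_2}(\NH_n)$, note that $\CE_n(X_n)=\End_\BC V_n$ is finite-dimensional. It is $\SL_2$-stable because $X_n$ is, and restriction to $X_n^+$ is injective and $\sl_2$-equivariant by the remark that differentiating the equivariant structure commutes with restriction. Hence its image is a finite-dimensional $\sl_2$-submodule of $\NH_n$, so it lies in the core.

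For the reverse inclusion, I would take $\phi$ in a finite-dimensional $\sl_2$-submodule $M\subset\NH_n$ and show $\phi$ extends regularly over $X_n$. In characteristic zero, $M$ integrates to an $\SL_2$-representation, and by uniqueness of integration on the torsion-free sheaf this is compatible with the translation action on meromorphic sections of $\CE_n$. The key step is the Weyl element $w=\begin{pmatrix}0&-1\\1&0\end{pmatrix}$, which sends $x\mapsto -1/x$ and swaps the charts $X_n^+$ and $X_n^-$. Because $M$ is an $\SL_2$-module, $w\cdot\phi\in M\subset\CE_n(X_n^+)$, so $\phi=w^{-1}\cdot(w\cdot\phi)$ is regular on $w^{-1}X_n^+=X_n^-$. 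Therefore $\phi\in\CE_n(X_n^+)\cap\CE_n(X_n^-)=\CE_n(X_n^\circ)=\CE_n(X_n)$, which is exactly the description in Theorem \ref{thm:nilhecke old main}.

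The main obstacle is justifying that the integrated $\SL_2$-action on $M$ agrees with geometric translation of sections, which a priori only makes sense on the meromorphic sections $\CE_{n,\eta}$. I would handle this as follows.

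1. Consider the $\sl_2$-action on $\CE_{n,\eta}=\BC(\e_1,\dotsc,\e_n)\otimes\End V_n$.
2. Check that $f$ and $e$ act locally nilpotently on $M$; this holds by finite-dimensionality, since $f$ and $e$ act on a finite-dimensional $\sl_2$-module.
3. Express $w=\exp(f)\exp(-e)\exp(f)$.
4. Verify that $\exp(te)$, $\exp(tf)$ act on sections by pullback along the corresponding unipotent automorphisms of $\BP^n$, twisted by the linearization. It suffices to check this on the finite-dimensional sections over $X_n$ together with $\CO$-linearity: both sides are then derivation-exponentials satisfying the same ODE in $t$ on a locally nilpotent vector, and hence agree.

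This is the analogue of the general argument via translation, cf. Proposition \ref{prop: torsion_free_core_is_glob_sect}, which I would invoke if available; otherwise I would carry out the nil-Hecke case directly as above.
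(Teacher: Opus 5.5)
Your proposal is correct and is essentially the paper's argument: integrate the finite-dimensional submodule $M$ to an $\SL_2$-action, translate by a group element moving $\infty$, and conclude from local freeness of $\CE_n$ via Hartogs. The only difference in that core step is the choice of group element: you take the Weyl element, so that regularity on $X_n^+$ transports to regularity on $X_n^-$, whereas the paper takes any $g$ with $g\infty\neq\infty$ and argues that the polar divisor $H_\infty$ would move to $H_{g\infty}$, which meets $X_n^+$.

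Beyond that, your proposal adds two things. Your Taylor-expansion justification that the integrated action agrees with geometric translation of rational sections fills in a point the paper takes for granted in this proof and only addresses in Proposition \ref{prop: torsion_free_core_is_glob_sect}. Your route also makes the identification with $\wh\NC_n=\NH_n^+\cap\NH_n^-$ explicit.
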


\begin{proof}
In one direction, note that the space $\CE_n(X_n)$ is finite-dimensional and $\sl_2$-stable, so it is contained in $\Cor_{\sl_2}(\NH_n)$. 

Conversely, let $M\subset\CE_n(X_n^+)$ be some finite-dimenisonal $\sl_2$-submodule. Over $\BC$, it is well known that the action on $M$ integrates to a natural $\SL_2$-action. 

Then, for any $p\in \BP^1$, let \[H_p=\{D\in \Sym^n\BP^1\colon p\in \Supp D\}\subset X_n.\] For example, $X_n\setminus X_n^+$ is $H_\infty$. If $s\in M$ does not extend to a function on $X_n$, then, by local freeness of $\mathcal{E}_n$, $s$ must have a pole along $H_\infty$. We may choose $g\in \SL_2$ that does not fix $\infty$. This ensures that $g\cdot s\in M$ has a pole along $H_{g\infty}$. Since $H_{g\infty}\ne H_\infty$, its generic point lies in $X_n^+$, which contradicts the assumption that $g\cdot s\in M \subset \CE_n(X_n^+)$. 

So in fact every element of $M$ must actually extend to $X_n$, which proves the reverse inclusion.
\end{proof}

\part{The $A_N$ case: KLR}\label{part:II}
The main algebraic tool in the 1-color case is that of the isomorphism from $\NH_n$ to $\End_{\Sym_n}(\Pol_n)$. This tool made the 1-color story very simple, perhaps even too simple. We lose this tool when we generalize to more colors. Indeed, \cite{KL09} showed that the KLR algebra acts faithfully upon the projective module $\Pol_\alpha=\bigoplus_\beta \Pol_\beta$, and that the action commutes with the commutative subring $\Sym_\alpha=\pr*{\bigotimes_i \Sym_{n_i}}\pr*{\sum_\beta 1_\beta}$. However, this gives only an injection
\[\CR_\alpha\linj \End_{\Sym_\alpha}(\Pol_\alpha)\cong\End_{n!}(\Sym_\alpha),\] 
which is not a surjection. Our new main algebraic tool will be to identify the image of $\CR_\alpha$ under this map.

Let us fix some conventions. Our convention for the 2-color KLR, corresponding to the quiver
\[\begin{tikzcd}[column sep=small]
  1 \arrow[dash]{r} & \textcolor{red}{2}
\end{tikzcd},\]
will be that
\[\begin{diagram}
        \draw (0,0) ..controls(2.5,2).. (0,4);
        \draw[red] (2,0)..controls(-0.5,2)..(2,4);
    \end{diagram}
    \:=\:\begin{diagram}
        \draw(0,0)--(0,4);
        \draw[red](2,0)--(2,4);
        \fill (0,2) circle (6pt);
    \end{diagram}
    \:-\:
    \begin{diagram}
        \draw(0,0)--(0,4);
        \draw[red](2,0)--(2,4);
        \fill[red] (2,2) circle (6pt);
    \end{diagram}\:
    ,\qquad 
    \begin{diagram}
        \draw (0,0)--(4,4);
    \draw (4,0)--(0,4);
    \draw[red] (2,0) ..controls(-0.5,2).. (2,4);
    \end{diagram}
    \:-\:
    \begin{diagram}
        \draw (0,0)--(4,4);
    \draw (4,0)--(0,4);
    \draw[red] (2,0) ..controls(4.5,2).. (2,4);
    \end{diagram} \:= \:\begin{diagram}
        \draw(0,0)--(0,4);
        \draw[red](2,0)--(2,4);
        \draw(4,0)--(4,4);
    \end{diagram}\:,\]
    \[\begin{diagram}
        \draw[red] (0,0) ..controls(2.5,2).. (0,4);
        \draw (2,0)..controls(-0.5,2)..(2,4);
    \end{diagram}
    \:=\:\begin{diagram}
        \draw[red](0,0)--(0,4);
        \draw(2,0)--(2,4);
        \fill (2,2) circle (6pt);
    \end{diagram}
    \:-\:
    \begin{diagram}
        \draw[red](0,0)--(0,4);
        \draw(2,0)--(2,4);
        \fill[red] (0,2) circle (6pt);
    \end{diagram}\:
    ,\qquad 
    \begin{diagram}
        \draw[red] (0,0)--(4,4);
    \draw[red] (4,0)--(0,4);
    \draw (2,0) ..controls(-0.5,2).. (2,4);
    \end{diagram}
    \:-\:
    \begin{diagram}
        \draw[red] (0,0)--(4,4);
    \draw[red] (4,0)--(0,4);
    \draw (2,0) ..controls(4.5,2).. (2,4);
    \end{diagram} \:= -\:\begin{diagram}
        \draw[red](0,0)--(0,4);
        \draw(2,0)--(2,4);
        \draw[red](4,0)--(4,4);
    \end{diagram}\:.\]
Accordingly, the action on the polynomial representation (whose vectors are drawn with an orange line beneath them, to distinguish them from elements of the KLR algebra) will be such that
\[\begin{diagram}
  \draw[red](0,0)--(2,2);
  \draw(2,0)--(0,2);
  \draw[red](0,-0.5)--(0,0);
  \draw(2,-0.5)--(2,0);
  \draw[orange](-0.5,-0.5)--(2.5,-0.5);
\end{diagram}\:=\:
\begin{diagram}
  \draw(0,-0.5)--(0,1);
  \draw[red](2,-0.5)--(2,1);
  \fill (0,0.5) circle (6pt);
  \draw[orange](-0.5,-0.5)--(2.5,-0.5);
  \draw[white](0,1)--(0,2);
\end{diagram}\:-\:
\begin{diagram}
  \draw(0,-0.5)--(0,1);
  \draw[red](2,-0.5)--(2,1);
  \fill[red] (2,0.5) circle (6pt);
  \draw[orange](-0.5,-0.5)--(2.5,-0.5);
  \draw[white](0,1)--(0,2);
\end{diagram}\:,\qquad \begin{diagram}
  \draw(0,0)--(2,2);
  \draw[red](2,0)--(0,2);
  \draw(0,-0.5)--(0,0);
  \draw[red](2,-0.5)--(2,0);
  \draw[orange](-0.5,-0.5)--(2.5,-0.5);
\end{diagram}\:=\:
\begin{diagram}
  \draw[red](0,-0.5)--(0,1);
  \draw(2,-0.5)--(2,1);
  \draw[orange](-0.5,-0.5)--(2.5,-0.5);
  \draw[white](0,1)--(0,2);
\end{diagram}\:.
\]
Our convention of the polynomial representation is that
\[\deg\:\begin{diagram}
  \draw[red](0,-0.5)--(0,1);
  \draw(2,-0.5)--(2,1);
  \draw[orange](-0.5,-0.5)--(2.5,-0.5);
\end{diagram}\:=+1,\qquad \deg \:\begin{diagram}
  \draw(0,-0.5)--(0,1);
  \draw[red](2,-0.5)--(2,1);
  \draw[orange](-0.5,-0.5)--(2.5,-0.5);
\end{diagram}\:=0.\]

\section{A recollection of KLR and KLM}
In this section we recall some constructions from categorification.

\subsection{The KLR algebra}
For the reader who is not too familiar with KLR algebras, we give here a concise overview. The KLR algebra is defined as follows: the monoidal generators are 
\begin{center}
\begin{tabular}{ c c c c c c }
 & $\begin{diagram}
    \draw (0,0)--(0,2);
    \node at (0,-0.5) {\tiny $i$};
\end{diagram}$ & $\begin{diagram}
    \draw (0,0)--(0,2);
    \fill (0,1) circle (5pt);
    \node at (0,-0.5) {\tiny $i$};
\end{diagram}$ & 
$\begin{diagram}
    \draw (0,0)--(2,2);
    \draw (2,0)--(0,2);
    \node at (0,-0.5) {\tiny $i$};
    \node at (2,-0.5) {\tiny $i$};
\end{diagram}$ & 
$\begin{diagram}
    \draw (0,0)--(2,2);
    \draw (2,0)--(0,2);
    \node at (0,-0.5) {\tiny $i$};
    \node at (2,-0.5) {\tiny $i\pm 1$};
\end{diagram}$ & 
$\begin{diagram}
    \draw (0,0)--(2,2);
    \draw (2,0)--(0,2);
    \node at (0,-0.5) {\tiny $i$};
    \node at (2,-0.5) {\tiny $j$};
\end{diagram}$
\\[1em]
    degree & 0 & $2$ & $-2$ & \hspace*{-6.5pt}$1$ & $0$ 
\end{tabular}
\end{center}
    where $|j-i|>1$.
The important relations are
\begin{align*}
    {}\raisebox{-0.5em}{$\hackcenter{\begin{tikzpicture}[scale=0.375]
    \draw (0,0)--(2,2);
    \draw (2,0)--(0,2);
    \node at (0,-0.5) {\tiny $i$};
    \node at (2,-0.5) {\tiny $i$};
    \fill (0.5,1.5) circle (5pt);
\end{tikzpicture}}$}
-
\raisebox{-0.5em}{$\hackcenter{\begin{tikzpicture}[scale=0.375]
    \draw (0,0)--(2,2);
    \draw (2,0)--(0,2);
    \node at (0,-0.5) {\tiny $i$};
    \node at (2,-0.5) {\tiny $i$};
    \fill (1.5,0.5) circle (5pt);
\end{tikzpicture}}$}
&=
\raisebox{-0.5em}{$\hackcenter{\begin{tikzpicture}[scale=0.375]
    \draw (0,0)--(2,2);
    \draw (2,0)--(0,2);
    \node at (0,-0.5) {\tiny $i$};
    \node at (2,-0.5) {\tiny $i$};
    \fill (0.5,0.5) circle (5pt);
\end{tikzpicture}}$}
-
\raisebox{-0.5em}{$\hackcenter{\begin{tikzpicture}[scale=0.375]
    \draw (0,0)--(2,2);
    \draw (2,0)--(0,2);
    \node at (0,-0.5) {\tiny $i$};
    \node at (2,-0.5) {\tiny $i$};
    \fill (1.5,1.5) circle (5pt);
\end{tikzpicture}}$}
=
\raisebox{-0.5em}{$\hackcenter{\begin{tikzpicture}[scale=0.375]
    \draw (0,0)--(0,2);
    \draw (2,0)--(2,2);
    \node at (0,-0.5) {\tiny $i$};
    \node at (2,-0.5) {\tiny $i$};
\end{tikzpicture}}$},\\ 
{}\raisebox{-0.5em}{$\hackcenter{\begin{tikzpicture}[scale=0.375]
    \draw (0,0)..controls(2.5,2)..(0,4);
    \draw (2,0)..controls(-0.5,2)..(2,4);
    \node at (0,-0.5) {\tiny $i$};
    \node at (2,-0.5) {\tiny $i$};
\end{tikzpicture}}$}
&=0,\\ 
{}\raisebox{-0.5em}{$\hackcenter{\begin{tikzpicture}[scale=0.375]
    \draw (0,0)..controls(2.5,2)..(0,4);
    \draw (2,0)..controls(-0.5,2)..(2,4);
    \node at (0,-0.5) {\tiny $i$};
    \node at (2,-0.5) {\tiny $i\pm 1$};
\end{tikzpicture}}$}
&=\pm 
\raisebox{-0.5em}{$\hackcenter{\begin{tikzpicture}[scale=0.375]
    \draw (0,0)--(0,4);
    \draw (2,0)--(2,4);
    \node at (0,-0.5) {\tiny $i$};
    \node at (2,-0.5) {\tiny $i\pm 1$};
    \fill (0,2) circle (5pt);
\end{tikzpicture}}$}
\mp 
\raisebox{-0.5em}{$\hackcenter{\begin{tikzpicture}[scale=0.375]
    \draw (0,0)--(0,4);
    \draw (2,0)--(2,4);
    \node at (0,-0.5) {\tiny $i$};
    \node at (2,-0.5) {\tiny $i\pm 1$};
    \fill (2,2) circle (5pt);
\end{tikzpicture}}$},\\ 
{}\raisebox{-0.5em}{$\hackcenter{\begin{tikzpicture}[scale=0.375]
    \draw (0,0)--(4,4);
    \draw (4,0)--(0,4);
    \draw (2,0) ..controls(-0.5,2).. (2,4);
    \node at (0,-0.5) {\tiny $i$};
    \node at (2,-0.5) {\tiny $i\pm 1$};
    \node at (4,-0.5) {\tiny $i$};
\end{tikzpicture}}$}
-
\raisebox{-0.5em}{$\hackcenter{\begin{tikzpicture}[scale=0.375]
    \draw (0,0)--(4,4);
    \draw (4,0)--(0,4);
    \draw (2,0) ..controls(4.5,2).. (2,4);
    \node at (0,-0.5) {\tiny $i$};
    \node at (2,-0.5) {\tiny $i\pm 1$};
    \node at (4,-0.5) {\tiny $i$};
\end{tikzpicture}}$}
&=\pm
\raisebox{-0.5em}{$\hackcenter{\begin{tikzpicture}[scale=0.375]
    \draw (0,0)--(0,4);
    \draw (4,0)--(4,4);
    \draw (2,0)--(2,4);
    \node at (0,-0.5) {\tiny $i$};
    \node at (2,-0.5) {\tiny $i\pm 1$};
    \node at (4,-0.5) {\tiny $i$};
\end{tikzpicture}}$};
\end{align*}
the less important relations are 
\begin{align*}
    \raisebox{-0.5em}{$\begin{diagram}
        \draw (0,0)..controls(2.5,2)..(0,4);
    \draw (2,0)..controls(-0.5,2)..(2,4);
    \node at (0,-0.5) {\tiny $i$};
    \node at (2,-0.5) {\tiny $j$};
    \end{diagram}$}
    &=
    \raisebox{-0.5em}{$\begin{diagram}
        \draw (0,0)--(0,4);
        \draw (2,0)--(2,4);
         \node at (0,-0.5) {\tiny $i$};
    \node at (2,-0.5) {\tiny $j$};
    \end{diagram}$}
    \quad\te{ for }|i-j|>1,\\ 
    \raisebox{-0.5em}{$\begin{diagram}
        \draw (0,0)--(2,2);
    \draw (2,0)--(0,2);
    \node at (0,-0.5) {\tiny $i$};
    \node at (2,-0.5) {\tiny $j$};
    \fill (1.5,1.5) circle (5pt);
    \end{diagram}$}
    &=
    \raisebox{-0.5em}{$\begin{diagram}
        \draw (0,0)--(2,2);
    \draw (2,0)--(0,2);
    \node at (0,-0.5) {\tiny $i$};
    \node at (2,-0.5) {\tiny $j$};
    \fill (0.5,0.5) circle (5pt);
    \end{diagram}$}
    \quad\te{ for }i\neq j,\\ 
    \raisebox{-0.5em}{$\begin{diagram}
        \draw (0,0)--(2,2);
    \draw (2,0)--(0,2);
    \node at (0,-0.5) {\tiny $i$};
    \node at (2,-0.5) {\tiny $j$};
    \fill (0.5,1.5) circle (5pt);
    \end{diagram}$}
    &=
    \raisebox{-0.5em}{$\begin{diagram}
        \draw (0,0)--(2,2);
    \draw (2,0)--(0,2);
    \node at (0,-0.5) {\tiny $i$};
    \node at (2,-0.5) {\tiny $j$};
    \fill (1.5,0.5) circle (5pt);
    \end{diagram}$}
    \quad\te{ for }i\neq j,\\ 
    \raisebox{-0.5em}{$\begin{diagram}
        \draw (0,0)--(4,4);
    \draw (4,0)--(0,4);
    \draw (2,0) ..controls(-0.5,2).. (2,4);
    \node at (0,-0.5) {\tiny $i$};
    \node at (2,-0.5) {\tiny $j$};
    \node at (4,-0.5) {\tiny $k$};
    \end{diagram}$}
&=
    \raisebox{-0.5em}{$\begin{diagram}
        \draw (0,0)--(4,4);
    \draw (4,0)--(0,4);
    \draw (2,0) ..controls(4.5,2).. (2,4);
    \node at (0,-0.5) {\tiny $i$};
    \node at (2,-0.5) {\tiny $j$};
    \node at (4,-0.5) {\tiny $k$};
    \end{diagram}$}
    \quad\te{ for }(j,k)\neq (i\pm 1,i).
\end{align*}

Given a nonnegative sum of simple roots $\alpha=\sum_i n_i\alpha_i$, $\CR_\alpha$ is the subalgebra such that there are $n_i$ strings colored by $i$ for each $i$.

Sometimes it is convenient to have algebraic notation for these diagrams. For an ordered sequence of colors $\beta=(\beta_1,\cdotsc,\beta_n)$, we let $e^\beta$ denote the diagram of propagating strings with colors indicated by $\beta$:
\[e^\beta=\raisebox{-0.5em}{$\begin{diagram}
    \draw (0,0)--(0,2);
    \node at (0,-0.5) {\tiny $\beta_1$};
    \draw (1,0)--(1,2);
    \node at (1,-0.5) {\tiny $\beta_2$};
    \node at (2.1,1) {$\cdots$};
    \draw (3,0)--(3,2);
    \node at (3,-0.5) {\tiny $\beta_n$};
\end{diagram}$}.\]
$\psi_i$ denotes the crossing of the $i$-th and $(i+1)$-th strings, and $y_i$ denotes a dot on the $i$-th string.


\subsection{The algebraic affine cellular structure of KLM}
In this section let us quickly recall the affine cellular structure on a type $A$ KLR algebra, due to \cite{kleshchev2013affine}. 


We will adhere to `highest-weight notation' for cellular algebras, namely so that $A^{\ge\lbd}$ is an ideal of $A$. 

For type $A$, let the simple roots be $\{\alpha_i\}$. Recall that the set of positive roots is 
\[\Phi^+=\{\alpha_i+\alpha_{i+1}+\cdots+\alpha_j:i\le j\}.\] 
In what follows we will identify a positive root $\alpha_i+\cdots+\alpha_j$ with a sequence (purely a piece of combinatorial data) as follows:
\[\alpha_i+\cdots+\alpha_j \llra (j,j-1,\cdotsc,i).\] 
By choosing a total order on the set of simple roots, for instance by declaring that $\alpha_1>\alpha_2>\cdots$ (note well that we choose this to be opposite to the way we wrote down positive roots earlier, as $\alpha_i+\cdots+\alpha_j$ for $i<j$!), we can obtain a lexicographic total order on $\Phi^+$ by comparing them as sequences. 
\begin{example}
    In the case of $\sl_3$, the positive roots are
    \[\Phi^+=\{\alpha_1>\alpha_2+\alpha_1>\alpha_2\},\] 
    ordered as above. We can also think of this as
    \[(1)>(21)>(2).\] 
\end{example}

The poset describing the cellular structure on $\CR_\alpha$ is that of the poset of ``root partitions of $\alpha$'', $\Pi_\alpha$, namely the set of ways to write $\alpha$ as an ordered sum (largest on the left) of positive roots; this set is (totally) ordered by lexicographically comparing the sequences associated with each ordered sum of positive roots.
\begin{example}
    For the root $\alpha=2\alpha_1+2\alpha_2$, there are 3 such root partitions, namely
    \[\Pi_{2\alpha_1+2\alpha_2}=\{(1)(1)(2)(2)>(1)(21)(2)>(21)(21)\}.\] 
    Note well that the parenthetical notation is extraneous; given an appropriate sequence of 1's and 2's, there is only one way to interpret it as an ordered sum of positive roots.
\end{example}

Having said what the poset is, we should also describe the basis. To do this we must define some sets and elements. Associated to $\pi$ we have the KLR idempotent $e^\pi$, given by straight strings in an order prescribed by $\pi$; namely, if $\pi=\sum n_i\beta_i$ for $\beta_i$ positive roots, then let
\[e^\pi\coloneqq e^{n_1\beta_1}\otimes\cdots\otimes e^{n_r\beta_r},\] 
where for $\beta=\alpha_j+\cdots+\alpha_i$ $e^{n\beta}$ is defined as 
\[e^{n\beta}\coloneqq \underbrace{e^{j,j-1,\cdotsc,i}\otimes\cdots\otimes e^{j,j-1,\cdotsc,i}}_{n}.\]

Given a root partition $\pi=\sum n_i \beta_i$ for $\beta_i$ positive roots, let $S_\pi$ denote 
\[S_\pi\coloneqq \prod_i S_{|\beta_i|}^{\times n_i};\] 
if we think of each $\beta_i$ in the sum $\pi=\sum n_i \beta_i$ as corresponding to a thick ``cable'' of strings (i.e. each cable is a grouping of the KLR strings $e^{b,b-1,\cdots,a}$, where $\beta_i=\alpha_b+\cdots+\alpha_a$), this $S_\pi$ is the group of permutations corresponding to twisting each thick cable internally. Let $S^\pi$ denote the set of minimal length representatives of the coset $S_n/S_\pi$. For each $w\in S_n/S_\pi$ we can consider the KLR diagram $\psi_w e^\pi$. 

Another crossing diagram we can define is the ``full twists'' crossing $\psi_\pi$, namely the one obtained by taking the idempotent $e^\pi=e^{n_1\beta_1}\otimes\cdots e^{n_r\beta_r}$ and putting the longest permutation in $S_{n_i}$ on each $e^{n_i\beta_i}$, where $S_{n_i}$ attaches to $e^{n_i\beta_i}$ by thinking of permutations as permuting the thick cables corresponding to $\beta_i$. In other words,
\[\psi_\pi\coloneqq (\psi_{n_1\beta_1}\otimes \cdots \psi_{n_r\beta_r})e^\pi,\] 
where $\psi_{n\beta}$ is defined by fulling permuting the $n$ thick cables.

Let $\Sym^\pi$ denote the ring of ``symmetric functions'' associated to $\pi$. This is a ring which is isomorphic to
\[\Sym^\pi\cong \bigotimes_i\Sym_{n_i},\] 
sitting inside KLR by thinking of each $\Sym_{n_i}$ as a copy of symmetric polynomials in $n_i$ variables sitting on the $n_i$ thick cables associated to $\beta_i$, where for each cable we place the dot on the right-most KLR string. Frequently in the following we write $\Lbd_\pi\coloneqq \Sym^\pi$. 

Finally, let $\ul{y}^\pi$ be
\[\ul{y}^\pi\coloneqq (\ul{y}^{n_1\beta_1}\otimes\cdots\otimes \ul{y}^{n_r\beta_r})e^\pi,\] 
where $\ul{y}^{n_i\beta_i}$ is the image of $y_1^{n-1}y_2^{n-2}\cdots y_{n-1}$ inside of $\Pol_{n_i}$, which in turn is inside of $\Pol^\pi$. 

Then the basis theorem of KLM is as follows.
\begin{theorem}[KLM]
    Let $u,w\in S^\pi$ and let $\{f_\lbd\}_\lbd$ form a basis of $\Sym^\pi$. Then the elements
    \[c_{u,w,\lbd}^\pi\coloneqq \psi_u f_\lbd \ul{y}^\pi \psi_\pi \ul{y}^\pi \psi_w^\dag\] 
    form an affine cellular basis for $\CR_\alpha$, which is to say that they form a basis and for any $x\in\CR_\alpha$ we have
    \[x c_{u,w,\lbd}^\pi \equiv \sum_{v,\mu} r_{x,u,v,\mu,\lbd} c_{v,w,\mu}^\pi \mod \CR_\alpha^{>\pi},\] 
    where $r_{x,u,v,\mu}$ is a constant in $\BC$ which depends on $x,v,u,\mu,\lbd$ but not on $w$. Alternatively, we can write \[x c_{u,w,\lbd}^\pi \equiv \sum_{v,\mu} r_{x,u,v} \cdot c_{v,w,\lbd}^\pi \mod \CR_\alpha^{>\pi},\]   where $r_{x,u,v}\in\Sym^\pi$ is a coefficient depending on $x,u,v$ but not on $\lbd$, and the action is $r\cdot c_{v,w,\lbd}^\pi=\psi_u rf_\lbd \ul y^\pi \psi_\pi \ul y^\pi \psi_w^\dag$. 
\end{theorem}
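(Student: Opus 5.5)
This theorem is imported from \cite{kleshchev2013affine}, so the plan is to reconstruct its proof along the lines there. The argument has three stages: spanning, linear independence, and the cellular multiplication rule. The strategy is to filter $\CR_\alpha$ by the two-sided ideals $\CR_\alpha^{\ge\pi}$ spanned by elements factoring through the idempotents $e^\sigma$ with $\sigma\ge\pi$. Then each subquotient $\CR_\alpha^{\ge\pi}/\CR_\alpha^{>\pi}$ should be identified with $\Delta(\pi)\otimes_{\Lbd_\pi}\Delta(\pi)^\dag$, where $\Delta(\pi)$ is the standard (proper Verma-type) module $\CR_\alpha e^\pi\ul y^\pi\psi_\pi\ul y^\pi$ modulo higher cells. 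This module is free over $\Lbd_\pi$ with basis indexed by $S^\pi$.

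\emph{Spanning.} Start from the Khovanov--Lauda basis $\psi_w y^{a} e^\beta$ of $\CR_\alpha$. Use the Mackey-type decomposition of induction from parabolic subalgebras, which follows from the monoidal product, together with the fact that a word $\beta$ not of the shape of some $e^\pi$ can be straightened, modulo terms through lexicographically larger idempotents, into diagrams passing through some $e^\pi$. This uses the cuspidal/root-word combinatorics of type $A$: the relation between adjacent colors lets one "pull" strings into the ordered root cables $(j,j-1,\dots,i)$. Inside the cell for $\pi$, the local algebra on $e^\pi$ modulo higher terms is $\bigotimes_i \NH_{n_i}\otimes(\text{cuspidal part})$. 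For each root $\beta$ the cuspidal algebra modulo higher cells is just $\BC[y]$, with the dot on the rightmost string. The nil-Hecke factor $\NH_{n_i}$ is a matrix algebra over $\Sym_{n_i}$, which is precisely $\End_{\Sym_n}(\Pol_n)$. Its matrix units take the form $b\,\ul y\,\psi_{w_0}\,\ul y\, b'$, where $\ul y\psi_{w_0}$ is the standard primitive idempotent up to dots. This produces the middle factor $f_\lbd\ul y^\pi\psi_\pi\ul y^\pi$, and the outer $\psi_u,\psi_w^\dag$ come from coset representatives $S^\pi$.

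\emph{Independence.} Count graded dimensions. The graded rank of $\CR_\alpha$, namely $\sum_{\beta,\gamma}$ of the Khovanov--Lauda bilinear form, equals $\sum_\pi (\mathrm{grdim}\,\Delta(\pi))^2\,\mathrm{grdim}\,\Lbd_\pi$. This identity follows from the PBW basis in ${U}_q^+$: the PBW basis is orthogonal for Lusztig's form, and root vectors of a single root have norm $1/(1-q^2)$, with divided powers matching $\Sym_{n}$. Equality of graded dimensions plus spanning gives a basis. The same conclusion can be reached by acting faithfully on $\Pol_\alpha$, which is the viewpoint this paper prefers later.

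\emph{Cellularity.} The multiplication rule is formal once the subquotient is identified as a bimodule: left multiplication by $x$ acts on the left tensor factor $\psi_u f_\lbd\ul y^\pi$ through the $\Lbd_\pi$-free module $\Delta(\pi)$, giving coefficients $r_{x,u,v}\in\Lbd_\pi$ independent of $w$ and of $\lbd$. The main obstacle is the straightening step in spanning. It requires showing that any idempotent $e^\beta$ whose word is not a concatenation of ordered root cables is congruent modulo $\CR^{>\pi}$ to combinations through $e^\pi$, and that the cuspidal quotient for a root is exactly $\BC[y]$. I would handle this by induction on the height of the root, using the adjacent-color quadratic relation to slide a mismatched string leftward.
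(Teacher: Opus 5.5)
The paper does not prove this statement. It is quoted from \cite{kleshchev2013affine} and used later as a black box: freeness of $\Delta_\pi$ over $\Lbd_\pi$, and the identities $e_\pi\overline{\CR}e_\pi\cong\Lbd_\pi$ and $\overline{\CR}e_\pi\overline{\CR}\cong\CR_{\ge\pi}/\CR_{>\pi}$ from KLM's Theorem 5.23. So there is no internal argument to compare against. The architecture you describe is the standard one: cell ideals generated by the $e^\sigma$, cell layers $\Delta_\pi\otimes_{\Lbd_\pi}\Delta_\pi^{\dagger}$, spanning plus a graded dimension count, and then a formal multiplication rule. The gap is that the spanning step, which carries essentially all the content, is asserted rather than proved, and your one-line plan for it does not reach the hard part.

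Spanning needs two inputs.

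\emph{The highest-word property.} For every word $\nu$ one needs $e^\nu\in\sum_{\sigma:\,i_\sigma\ge\nu}\CR_\alpha e^\sigma\CR_\alpha$, where $i_\sigma$ is the word of $\sigma$. This makes the filtration exhaustive and kills the internal cable twists $\psi_v e^\pi$ ($1\ne v\in S_\pi$) modulo $\CR_{>\pi}$. Together with the quadratic relation, it also gives $e^{c}(\CR_\gamma/\CR_{>\gamma})e^{c}=\BC[y]$ for a root $\gamma$ with cuspidal word $c$. It is usually deduced from Kleshchev--Ram's theorem that every simple module has highest word a non-increasing concatenation of cuspidal words. Indeed, if $e^\nu\notin J$, a simple quotient $L$ of $(\CR_\alpha/J)e^\nu$ has highest word $i_\tau\ge\nu$, so $e^\tau\in J$ acts nonzero on $L$ although $J$ kills $L$. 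Your ``slide a mismatched string leftward by induction on height'' names no decreasing invariant and gives no control over correction terms, so as stated it is not an argument.

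\emph{The cable algebra.} You simply assert that $e^\pi(\CR_\alpha/\CR_{>\pi})e^\pi$ is the cable algebra $\bigotimes_i\NH_{m_i}$, with $m_i$ the root multiplicities of $\pi$. This does not follow from single-root facts. Take $\alpha=2\alpha_1+2\alpha_2$ and $\pi=(21)(21)$.
\begin{itemize}
\item The word $2211$ occurs in $\Delta_\pi$, so $e^{2211}\notin\CR_{>\pi}$.
\item The cable-breaking diagram $\psi_2\psi_1\psi_2e^{2121}$, which swaps only the two $2$-strings, factors through $e^{2211}$.
\item It therefore cannot be pushed into a higher cell. It must instead be shown to reduce into the cable algebra: here the $(2,1,2)$ braid relation gives $\pm e^{2121}$ plus a term through $e^{1221}\in\CR_{>\pi}$.
\end{itemize}
You invoke a Mackey decomposition but never say what it is applied to, or why the non-block terms drop out. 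The missing idea is cuspidality plus a Lyndon-word lemma:
\begin{itemize}
\item By the first input, $\overline{\CR}e^\pi$ (with $\overline{\CR}=\CR_\alpha/\CR_{>\pi}$) is a quotient of $\mathrm{Ind}(\Delta_{\gamma_1}\boxtimes\cdots\boxtimes\Delta_{\gamma_r})$, where $\gamma_1\ge\cdots\ge\gamma_r$ are the cables of $\pi$ with repetition.
\item The words of this induced module are shuffles of the cuspidal words $c_1\ge\cdots\ge c_r$.
\item $c_1\cdots c_r$ is the top word of that shuffle product and occurs exactly $\prod_i m_i!$ times, namely via block permutations of equal cables.
\item Hence $e^\pi\overline{\CR}e^\pi$ is spanned by cable crossings times cable polynomials. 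You would still have to check the nil-Hecke relations among these modulo $\CR_{>\pi}$.
\end{itemize}

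Two smaller points. First, your dimension identity is off by a factor of $(\mathrm{grdim}\,\Lbd_\pi)^2$. It should read $\mathrm{grdim}\,\CR_\alpha=\sum_\pi(\mathrm{grdim}\,\overline{\Delta}_\pi)^2\,\mathrm{grdim}\,\Lbd_\pi=\sum_\pi(\mathrm{grdim}\,\Delta_\pi)^2/\mathrm{grdim}\,\Lbd_\pi$. Deriving it from PBW orthogonality also requires identifying $\mathrm{ch}\,\overline{\Delta}_\pi$ with the dual PBW element; in type $A$ this reduces to the cuspidal character being the single word $(j,\dots,i)$. Second, the alternative ``act faithfully on $\Pol_\alpha$, as the paper does later'' is circular, since Theorems \ref{thm:M is Verma on affine} and \ref{thm:cellular E} take this theorem as input. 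Faithfulness alone also does not make the particular elements $c^\pi_{u,w,\lbd}$ linearly independent.
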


Moving forward, let us fix our notation/convention to be so that for instance in type $A_3$ we have $(1)>(21)>(2)>(321)>(32)>(3)$, and let us fix a labeling on all root partitions of $\alpha$ such that
\[\pi_0>\cdots>\pi_\ell,\] 
where $\ell+1$ is the total number of root partitions. We oftentimes write $[b,a]$ for the sequence $(b,b-1,\cdotsc,a)$, with $[a,a]=(a)$.

\section{A small 2-color example}\label{sect:2color}
In this subsection we discuss the smallest 2-color case, i.e. $\alpha=\alpha_1+\alpha_2$, to illustrate the story and give the diagrammatics.

Thankfully it is not difficult to describe the image if $n_1=n_2=1$. Let us fix the ordered basis $\{1_{21},1_{12}\}$ of $\Pol_{1,1}$ over $\Sym_{1,1}$ to start with.
\begin{lemma}
    At $n_1=n_2=1$, the image of the above morphism $\CR\linj \End_{(1+1)!}(\Sym_{1,1})$ is 
    \[\tbt{\BC[x,y]}{\BC[x,y]}{(x-y)\BC[x,y]}{\BC[x,y]}.\]
    The elements $1_{21}$ and $1_{12}$ of $\CR_{1,1}$ correspond to $E_{11}$ and $E_{22}$ respectively, and the elements $\psi_{21}^{12}$ and $\psi_{12}^{21}$ correspond to $(x-y)E_{21}$ and $E_{12}$ respectively. 
\end{lemma}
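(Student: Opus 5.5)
The plan is a direct computation in the polynomial representation, followed by a rank/basis comparison. For $\alpha=\alpha_1+\alpha_2$ there are two idempotents, $1_{12}$ and $1_{21}$. Each summand $\Pol_{12},\Pol_{21}$ is a free module of rank one over $\Sym_{1,1}=\BC[x,y]$, where $x$ is the dot on the black (color $1$) strand and $y$ the dot on the red (color $2$) strand. So $\End_{\Sym_{1,1}}(\Pol_{1,1})\cong\Mat_2(\BC[x,y])$ with respect to the ordered basis $\{1_{21},1_{12}\}$.

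\textbf{Step 1: images of the generators.}
\begin{itemize}
\item The idempotents act as the projections $E_{11},E_{22}$.
\item The dots act by $x$ and $y$ on each summand, so their images are the diagonal matrices $xI$ and $yI$. These span the central copy of $\BC[x,y]$.
\item The crossings are read off from the displayed action on the polynomial representation. The crossing from $12$ to $21$ sends the generator of $\Pol_{12}$ to the generator of $\Pol_{21}$, giving $E_{12}$ (column $2$ goes to row $1$).
\item The crossing from $21$ to $12$ sends the generator of $\Pol_{21}$ to (black dot) $-$ (red dot) times the generator of $\Pol_{12}$, giving $(x-y)E_{21}$.
\end{itemize}
As a sanity check, the composites $E_{12}\cdot(x-y)E_{21}=(x-y)E_{11}$ and $(x-y)E_{21}E_{12}=(x-y)E_{22}$ match the double-crossing relations displayed earlier.

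\textbf{Step 2: the image lies in the claimed set.} Let $S=\tbt{\BC[x,y]}{\BC[x,y]}{(x-y)\BC[x,y]}{\BC[x,y]}$. This $S$ is a subring: the only potentially bad product is (row 2)$\cdot$(column 1), and it always lands in $(x-y)\BC[x,y]$ because every path into the $(2,1)$ entry passes through the $(2,1)$ slot. Since $S$ is a subring containing the images of all generators, the image of $\CR$ is contained in $S$.

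\textbf{Step 3: the reverse inclusion.} $S$ is generated as a $\BC[x,y]$-module by $E_{11}$, $E_{22}$, $E_{12}$ and $(x-y)E_{21}$. Each of these is the image of a KLR element, and polynomial multiples are obtained by multiplying with dots. Hence the image equals $S$.

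Injectivity of $\CR_{1,1}\to\End(\Pol_{1,1})$ is the faithfulness result of \cite{KL09} recalled at the start of Part~\ref{part:II}, so we obtain an isomorphism onto $S$.

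The only real obstacle is getting the sign and direction conventions right in Step 1, since the action formula depends on the color order and on which side the dot sits. I would resolve this by checking against the displayed relation that the double crossing on $21$ equals (black dot) $-$ (red dot), that is, $(x-y)E_{11}$.
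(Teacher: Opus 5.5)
Your proposal is correct, and it is the same direct computation the paper has in mind: the paper states this lemma without proof, and later recovers it as the $n_1=n_2=1$ case of Theorem~\ref{thm:E=R}, where $Q=\Pol_{21}\oplus(x-y)\Pol_{12}$. Your conventions also agree with the paper's displayed action: in the basis $\{1_{21},1_{12}\}$ the crossing $21\to12$ is $(x-y)E_{21}$, the crossing $12\to21$ is $E_{12}$, and the two inclusions $\mathrm{Im}\subseteq S\subseteq\mathrm{Im}$ follow exactly as you argue.
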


Let us first philosophize briefly. The core principle is that the $\sl_2$-action of \cite{EQ23} (and the Witt action of \cite{lauda2025action}) should be obtained from the geometric picture; hence we expect products of $\BP^1$'s to be involved. The KLR degrees should match the $\hlie$-action, which tells us what the sums of twists should be. Using the $e$-action allows us to further pin down the correct twists.

Let us first consider the space 
\[X\coloneqq \BP^1\times\BP^1.\] 
Letting $\BP^1=U\sqcup_{U\cap U^-}U^-$ be the construction of $\BP^1$ from two affine charts, $ X$ is then glued from 4 affine charts, namely $X^{\eps_1\eps_2}\coloneqq U^{\eps_1}\times U^{\eps_2}$ for $\eps_i\in\{+,-\}$. Since there is only one string of each color, we do not expect there to be a non-trivial cover in the sense of e.g. $\NH_2$ having $\BP^1\times\BP^1\lto \BP^2$. Let the first copy of $\BP^1$ be $\BP^1=\Proj\BC[X_0,X_1]$, so that the coordinate on $U$ is $x=X_1/X_0$, and let the second copy of $\BP^1$ be $\BP^1=\Proj\BC[Y_0,Y_1]$, so that the coordinate on $U$ is $y=Y_1/Y_0$. For instance, $U^{++}\subset X$ is then the chart where $X_0=Y_0=1$. 

There are sheaves on $\wh X$,
\[\CO(a,b)\coloneqq \CO(a)\boxtimes\CO(b).\] 
Moving forward, we will compare rational sections of such sheaves on different charts inside the ambient space of the generic stalk, $\CO(a,b)_\eta$, after fixing a frame at infinity. Under this comparison, we have, for $U$ any of the four affine charts, 
\[\CO(a,b)(U)=X_0^{-a}Y_0^{-b}\BC[X_0,X_1,Y_0,Y_1].\]

Consider the section 
\[\Delta\coloneqq  X_1Y_0-X_0Y_1\in H^0(\CO(1,1)),\]
multiplication by which can be considered as a map
\[\Delta\colon \CO(-1,0)\linj \CO(0,1).\]

Consider the sheaf (this sheaf is reverse-engineered by using the Elias-Qi action)
\[\CP\coloneqq \CO(0,-1)\oplus \CO(0,0)\] 
on $ X$. Then $\shend\CP$ looks like
\[\shend\CP=\tbt{\CO(0,0)}{\CO(0,-1)}{\CO(0,1)}{\CO(0,0)}.\] 
This is the geometric analogue of $\End_{\Sym}\Pol$, inside which KLR lives. The geometric analogue of KLR is the subsheaf $\CE$ fixing the subsheaf $\CQ\subset\CP$ given by
\[\id\otimes \Delta\colon \CO(0,-1)\oplus \CO(-1,-1)\linj \CO(0,-1)\oplus \CO(0,0).\] 
$\CE$ can be explicitly described as
\[\CE\coloneqq \tbt{\CO}{\CO(0,-1)}{\Img\Delta}{\CO}\cong \tbt{\CO}{\CO(0,-1)}{\CO(-1,0)}{\CO}.\] 
By taking the embedding $\CO(a,b)(U)\linj \CO_{\BP^1\times\BP^1,\eta}=\BC(x,y)$ via $s\lmto X_0^{-a}Y_0^{-b}s$, we can compute the space of sections of this sheaf over each affine chart (with $X_i=Y_j=1$) by identifying the image with
\[\tbt{A}{Y_0B}{X_0\frac{\Delta}{X_0Y_0}C}{D}\] 
for $A,B,C,D\in\BC[X_0,X_1,Y_0,Y_1]$.
By so doing we may compute that (here e.g. $X^{+-}$ is the chart on which $X_0=Y_1=1$)
\begin{align*}
    \CE(X^{++})
    &=\tbt{\BC[x,y]}{\BC[x,y]}{(x-y)\BC[x,y]}{\BC[x,y]},\\
    \CE(X^{+-})&=\tbt{\BC[x,y^{-1}]}{\frac{1}{y}\BC[x,y^{-1}]}{(x-y)\BC[x,y^{-1}]}{\BC[x,y^{-1}]},\\
    \CE(X^{-+})&=\tbt{\BC[x^{-1},y]}{\BC[x^{-1},y]}{\pr*{1-\frac{y}{x}}\BC[x^{-1},y]}{\BC[x^{-1},y]},\\
    \CE(X^{--})&=\tbt{\BC[x^{-1},y^{-1}]}{\frac{1}{y}\BC[x^{-1},y^{-1}]}{\pr*{1-\frac{y}{x}}\BC[x^{-1},y^{-1}]}{\BC[x^{-1},y^{-1}]}.
\end{align*}
Note that $\CE(X^{++})\cong\CR_{1,1}$ inside of $\End_{2}(\Sym_{1,1})$. Hence $\CE$ is a sheaf version of the KLR $\CR_{1,1}$. 
\begin{proposition}
    The sheaf $\CE$ has $\CE(X^{++})=\CR_{1,1}$, and is hence a sheaf version of $\CR_{1,1}$. Moreover the global sections satisfy $\CE(\wh X)=\BC\oplus\BC\subset \CE(X^{++})$, which is the semisimplification $\wt\CR_{1,1}$ of $\CR_{1,1}$. 
\end{proposition}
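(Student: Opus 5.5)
The plan has two parts: identify the sections over $X^{++}$ with $\CR_{1,1}$, then compute the global sections entrywise.

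\emph{Step 1: sections over $X^{++}$.} On $X^{++}$ every $\CO(a,b)$ is trivialized by $X_0=Y_0=1$, so $\CE(X^{++})$ is the matrix algebra displayed above. This is exactly the image of $\CR_{1,1}\linj \End_2(\Sym_{1,1})$ from the preceding Lemma, so $\CE(X^{++})=\CR_{1,1}$. We should also check that $\CE$ is a sheaf of algebras. Since $\CE$ is defined as the stabilizer in $\shend\CP$ of the subsheaf $\CQ$, it is closed under composition. Concretely, the product of the two off-diagonal entries lands in $\CO(-1,0)\otimes\CO(0,-1)\to\CO(-1,-1)\xrightarrow{\Delta}\CO$, which is contained in the diagonal $\CO$.

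\emph{Step 2: global sections.} We compute $\Gamma(X,\CE)$ entrywise, using the isomorphism $\CE\cong\begin{pmatrix}\CO&\CO(0,-1)\\ \CO(-1,0)&\CO\end{pmatrix}$ as sheaves of $\CO$-modules. By K\"unneth:
\begin{itemize}
\item $H^0(\CO)=\BC$ for each diagonal entry;
\item $H^0(\CO(0,-1))=H^0(\BP^1,\CO)\otimes H^0(\BP^1,\CO(-1))=0$;
\item $H^0(\CO(-1,0))=0$ likewise.
\end{itemize}
Hence $\CE(X)=\BC E_{11}\oplus\BC E_{22}$.

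Inside $\CE(X^{++})$ these are the scalar multiples of the idempotents $1_{21}$ and $1_{12}$. The restriction map is injective because $\CE$ is torsion-free (indeed locally free), so $\CE(X)$ embeds in $\CE(X^{++})$.

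\emph{Step 3: matching the semisimplification.} It remains to identify $\BC 1_{21}\oplus\BC 1_{12}$ with $\wt\CR_{1,1}$. Set $J=\CR_{1,1}^{>0}$, the span of all elements of positive degree. Here $\CR_{1,1}$ is nonnegatively graded: dots have degree $2$ and the mixed crossings have degree $1$. Its degree-zero part is spanned by $1_{21}$ and $1_{12}$.

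$J$ is a nilpotent-modulo-grading ideal; concretely it is the graded Jacobson radical, since every graded simple module is one-dimensional and killed by positive-degree elements. Therefore $\CR_{1,1}/J\cong\BC\oplus\BC$. The composite $\CE(X)\to\CR_{1,1}\to\CR_{1,1}/J$ is an isomorphism, which gives the claim.

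The only delicate point is bookkeeping rather than mathematics. One must check that the identification of $\Img\Delta\cong\CO(-1,0)$ is compatible with the chart computations above, i.e.\ that the off-diagonal entries genuinely have no global sections rather than constant ones. The chart formulas $(x-y)\BC[x,y]$ versus $(1-y/x)\BC[x^{-1},y]$ already show this directly: no polynomial multiple of $x-y$ stays regular at $x=\infty$ while remaining in the first-variable-degree-zero part. So one can confirm the vanishing by intersecting the four chart descriptions inside $\BC(x,y)$, as in the nil-Hecke example.
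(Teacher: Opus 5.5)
Your proof is correct and follows the paper's implicit argument. You identify $\CE(X^{++})$ with the image of $\CR_{1,1}$ from the preceding lemma, and you compute global sections entrywise; your K\"unneth vanishing $H^0(\CO(0,-1))=H^0(\CO(-1,0))=0$ amounts to the paper's intersection of the four chart descriptions inside $\BC(x,y)$, and your grading argument identifying $\BC 1_{21}\oplus\BC 1_{12}$ with the quotient by the graded Jacobson radical supplies a step the paper only asserts.
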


We may extend the graphical calculus of $\CR_{1,1}$ using $\CE$ as a rubric. Since $\CE$ is a subsheaf of $\End(\CP)$, we expect the gradings of the polynomial representations to match. In other words, we expect that the action of the usual KLR on the usual polynomial representation to be a specialization of the generic stalk KLR on the generic stalk polynomial representation. 

On the chart $X^{-+}$, the predicted action is:
\[\begin{diagram}
  \draw[red](0,0)--(2,2);
  \draw[squigs](2,0)--(0,2);
  \draw[red](0,-0.5)--(0,0);
  \draw[squigs](2,-0.5)--(2,0);
  \draw[orange](-0.5,-0.5)--(2.5,-0.5);
\end{diagram}\:=\:
\begin{diagram}
  \draw[squigs](0,-0.5)--(0,1);
  \draw[red](2,-0.5)--(2,1);
  \draw[orange](-0.5,-0.5)--(2.5,-0.5);
  \draw[white](0,1)--(0,2);
\end{diagram}\:-\:
\begin{diagram}
  \draw[squigs](0,-0.5)--(0,1);
  \draw[red](2,-0.5)--(2,1);
  \draw[orange](-0.5,-0.5)--(2.5,-0.5);
  \draw[white](0,1)--(0,2);
  \fill[red](2,0.5)circle(6pt);
  \draw[black](0,0.5)circle(6pt);
\end{diagram}\:,\qquad 
\begin{diagram}
  \draw[squigs](0,0)--(2,2);
  \draw[red](2,0)--(0,2);
  \draw[squigs](0,-0.5)--(0,0);
  \draw[red](2,-0.5)--(2,0);
  \draw[orange](-0.5,-0.5)--(2.5,-0.5);
\end{diagram}\:=\:
\begin{diagram}
  \draw[red](0,-0.5)--(0,1);
  \draw[squigs](2,-0.5)--(2,1);
  \draw[orange](-0.5,-0.5)--(2.5,-0.5);
  \draw[white](0,1)--(0,2);
\end{diagram}\:.
\]
Since then $\deg \:\begin{diagram}
  \draw[red](0,0)--(2,2);
  \draw[squigs](2,0)--(0,2);
\end{diagram}\:+\deg \:\begin{diagram}
  \draw[red](0,-0.5)--(0,1);
  \draw[squigs](2,-0.5)--(2,1);
  \draw[orange](-0.5,-0.5)--(2.5,-0.5);
\end{diagram}\:=\deg \:
\begin{diagram}
  \draw[squigs](0,-0.5)--(0,1);
  \draw[red](2,-0.5)--(2,1);
  \draw[orange](-0.5,-0.5)--(2.5,-0.5);
\end{diagram}\:$ and 
$\deg \:\begin{diagram}
  \draw[squigs](0,0)--(2,2);
  \draw[red](2,0)--(0,2);
\end{diagram}\:+\deg \:
\begin{diagram}
  \draw[squigs](0,-0.5)--(0,1);
  \draw[red](2,-0.5)--(2,1);
  \draw[orange](-0.5,-0.5)--(2.5,-0.5);
\end{diagram}\:=\deg \:\begin{diagram}
  \draw[red](0,-0.5)--(0,1);
  \draw[squigs](2,-0.5)--(2,1);
  \draw[orange](-0.5,-0.5)--(2.5,-0.5);
\end{diagram}\:$, we may conclude that
\[\deg \:\begin{diagram}
  \draw[red](0,0)--(2,2);
  \draw[squigs](2,0)--(0,2);
\end{diagram}\:=-1,\qquad \deg \:\begin{diagram}
  \draw[squigs](0,0)--(2,2);
  \draw[red](2,0)--(0,2);
\end{diagram}\:=+1.\]

Similarly on $X^{+-}$ we have
\[\begin{diagram}
  \draw[red,squigs](0,0)--(2,2);
  \draw[black](2,0)--(0,2);
  \draw[red,squigs](0,-0.5)--(0,0);
  \draw[black](2,-0.5)--(2,0);
  \draw[orange](-0.5,-0.5)--(2.5,-0.5);
\end{diagram}\:=\:
\begin{diagram}
  \draw[squigs](0,-0.5)--(0,1);
  \draw[red](2,-0.5)--(2,1);
  \fill (0,0.5) circle (6pt);
  \draw[orange](-0.5,-0.5)--(2.5,-0.5);
  \draw[white](0,1)--(0,2);
\end{diagram}\:-\:
\begin{diagram}
  \draw[squigs](0,-0.5)--(0,1);
  \draw[red](2,-0.5)--(2,1);
  \fill[red] (2,0.5) circle (6pt);
  \draw[orange](-0.5,-0.5)--(2.5,-0.5);
  \draw[white](0,1)--(0,2);
\end{diagram}\:,\qquad 
\begin{diagram}
  \draw[black](0,0)--(2,2);
  \draw[red,squigs](2,0)--(0,2);
  \draw[black](0,-0.5)--(0,0);
  \draw[red,squigs](2,-0.5)--(2,0);
  \draw[orange](-0.5,-0.5)--(2.5,-0.5);
\end{diagram}\:=\:
\begin{diagram}
  \draw[red,squigs](0,-0.5)--(0,1);
  \draw[black](2,-0.5)--(2,1);
  \draw[orange](-0.5,-0.5)--(2.5,-0.5);
  \draw[white](0,1)--(0,2);
  \draw[red](0,0.5)circle(6pt);
\end{diagram}\:.
\]
Again degree considerations show that
\[\deg \:\begin{diagram}
  \draw[red,squigs](0,0)--(2,2);
  \draw[black](2,0)--(0,2);
\end{diagram}\:=+1,\qquad \deg \:\begin{diagram}
  \draw[black](0,0)--(2,2);
  \draw[red,squigs](2,0)--(0,2);
\end{diagram}\:=-1.\]

Note that neither of the above charts given algebras which are isomorphic to $\CR_{1,1}$ up to grading flip. However, the following chart will.

Lastly on $X^{--}$ we have
\[\begin{diagram}
  \draw[red,squigs](0,0)--(2,2);
  \draw[black,squigs](2,0)--(0,2);
  \draw[red,squigs](0,-0.5)--(0,0);
  \draw[black,squigs](2,-0.5)--(2,0);
  \draw[orange](-0.5,-0.5)--(2.5,-0.5);
\end{diagram}\:=\:
\begin{diagram}
  \draw[squigs](0,-0.5)--(0,1);
  \draw[red](2,-0.5)--(2,1);
  \draw[orange](-0.5,-0.5)--(2.5,-0.5);
  \draw[white](0,1)--(0,2);
\end{diagram}\:-\:
\begin{diagram}
  \draw[squigs](0,-0.5)--(0,1);
  \draw[red](2,-0.5)--(2,1);
  \draw[orange](-0.5,-0.5)--(2.5,-0.5);
  \draw[white](0,1)--(0,2);
  \fill[red](2,0.5)circle(6pt);
  \draw[black](0,0.5)circle(6pt);
\end{diagram}\:,\qquad 
\begin{diagram}
  \draw[black,squigs](0,0)--(2,2);
  \draw[red,squigs](2,0)--(0,2);
  \draw[black,squigs](0,-0.5)--(0,0);
  \draw[red,squigs](2,-0.5)--(2,0);
  \draw[orange](-0.5,-0.5)--(2.5,-0.5);
\end{diagram}\:=\:
\begin{diagram}
  \draw[red,squigs](0,-0.5)--(0,1);
  \draw[black,squigs](2,-0.5)--(2,1);
  \draw[orange](-0.5,-0.5)--(2.5,-0.5);
  \draw[white](0,1)--(0,2);
  \draw[red] (0,0.5) circle (6pt);
\end{diagram}\:.
\]
Again degree considerations show that
\[\deg \:\begin{diagram}
  \draw[red,squigs](0,0)--(2,2);
  \draw[black,squigs](2,0)--(0,2);
\end{diagram}\:=-1,\qquad \deg \:\begin{diagram}
  \draw[black,squigs](0,0)--(2,2);
  \draw[red,squigs](2,0)--(0,2);
\end{diagram}\:=-1.\]
Note that the gradings are now flipped from those of $\CR_{1,1}$, which is a good sign. 

Finally we can describe the gluing data of these four algebras by comparing their actions on the generic stalk of the polynomial sheaf. We find that
\begin{align*}
    \:\begin{diagram}
  \draw[red,squigs](0,0)--(2,2);
  \draw[black](2,0)--(0,2);
\end{diagram}\:&=\:\begin{diagram}
  \draw[red](0,0)--(2,2);
  \draw[black](2,0)--(0,2);
\end{diagram}\:,\\
\:\begin{diagram}
  \draw[red](0,0)--(2,2);
  \draw[squigs](2,0)--(0,2);
\end{diagram}\:&=\:\begin{diagram}
  \draw[red](0,0)--(2,2);
  \draw[black](2,0)--(0,2);
  \draw[black](0.5,1.5)circle(6pt);
\end{diagram}\:,\\
\:\begin{diagram}
  \draw[red,squigs](0,0)--(2,2);
  \draw[black,squigs](2,0)--(0,2);
\end{diagram}\:&=\:\begin{diagram}
  \draw[red](0,0)--(2,2);
  \draw[black](2,0)--(0,2);
  \draw[black](0.5,1.5)circle(6pt);
\end{diagram}\:,\\
\:\begin{diagram}
  \draw[black](0,0)--(2,2);
  \draw[red,squigs](2,0)--(0,2);
\end{diagram}\:&=\:\begin{diagram}
  \draw[black](0,0)--(2,2);
  \draw[red](2,0)--(0,2);
  \draw[red](0.5,1.5) circle (6pt);
\end{diagram}\:;\\
\:\begin{diagram}
  \draw[squigs](0,0)--(2,2);
  \draw[red](2,0)--(0,2);
\end{diagram}\:&=\:\begin{diagram}
  \draw[black](0,0)--(2,2);
  \draw[red](2,0)--(0,2);
\end{diagram}\:;\\
\:\begin{diagram}
  \draw[black,squigs](0,0)--(2,2);
  \draw[red,squigs](2,0)--(0,2);
\end{diagram}\:&=\:\begin{diagram}
  \draw[black](0,0)--(2,2);
  \draw[red](2,0)--(0,2);
  \draw[red] (0.5,1.5) circle (6pt);
\end{diagram}\:.
\end{align*}
Note that all dots, hollow or not, commute with all crossings. It is now also diagrammatically clear that none of the crossings are globally defined, which gives a diagrammatic interpretation of Proposition 7.2.

Earlier we had mentioned that the sheaf $\CP$ was chosen by reverse-engineering starting from the Elias-Qi action. Indeed, we may again compute here that for instance
\[e\cdot \:\begin{diagram}
  \draw[black](0,0)--(2,2);
  \draw[red](2,0)--(0,2);
\end{diagram}\:=-\:\begin{diagram}
  \draw[black](0,0)--(2,2);
  \draw[red](2,0)--(0,2);
  \fill[red] (0.5,1.5) circle (6pt);
\end{diagram}\:.\]

\subsubsection{The cellular structure}
``Cellular algebras'', first introduced by \cite{graham1996cellular}, are algebras with a certain filtration by ``cell ideals''. In a certain sense this type of structure can be thought of as an algebraic attempt to mimic stratification techniques from geometry, such as the Bruhat stratification of the flag variety, which gives rise to the highest weight structure of category $\CO$. 

The KLR algebras are affine cellular due to \cite{kleshchev2013affine}. In the case of $\CR=\CR_{1,1}$, the cellular structure is easy to state without introducing too much terminology/machinery. There is a certain ideal $\CR_{\ge(1)(2)}$ defined by
\[\CR_{\ge(1)(2)}\coloneqq \BC[x,y]\set*{ \:\begin{diagram}
    \draw(0,0)--(0,2);
    \draw[red](2,0)--(2,2);
\end{diagram}\:,
\:\begin{diagram}
    \draw(0,0)--(2,2);
    \draw[red](2,0)--(0,2);
\end{diagram}\:,
\:\begin{diagram}
    \draw[red](0,0)--(2,2);
    \draw[black](2,0)--(0,2);
\end{diagram}\:,
\:\begin{diagram}
    \draw[red](0,0)--(0,2);
    \draw[black](2,0)--(2,2);
    \fill (2,1) circle (5pt);
\end{diagram}\:-\:\begin{diagram}
    \draw[red](0,0)--(0,2);
    \draw[black](2,0)--(2,2);
    \fill[red] (0,1) circle (5pt);
\end{diagram}\:,
}\subset \CR.\]
Note that, on the affine patch $X^{++}$, we have (here $R=\BC[x,y]$)
\[\Img \CR_{\ge(1)(2)}=\tbt{(x-y)R}{R}{(x-y)R}{R}.\]
Recall $\CE$ was characterized as the subsheaf of $\shend\CP$ preserving $\CP_{(21)}\coloneqq \CO(0,-1)\oplus\CO(-1,-1)$. Then we can let
\[\CE_{\ge(1)(2)}\coloneqq\te{the subsheaf sending }\CO(0,-1)\oplus \CO(-1,-1)\te{ to }\CO(-1,-2)\oplus\CO(-1,-1).\] 
This subsheaf $\CE_{\ge(1)(2)}\subset\CE$ has the property that
\[\CE_{\ge(1)(2)}(X^{++})=\CR_{\ge(1)(2)}.\]

\section{The spaces and the sheaves}
\subsection{The symmetrizing cover space}
Now let us return to the general type $A_N$ case. Fix $\alpha=\sum_a n_a\alpha_a$. Let
\[Y=\prod_a \BP^{1,\times n_a},\]
and let
\[Y^\circ=\prod_a \pr*{\BP^{1,\times n_a}\mathbin{\bigg\backslash}\bigcup_{n_a(n_a-1)} \BP^{1,\times (n_a-2)}},\]
where the removed space on the $a$-th factor is the union of points $(p_1,\cdotsc,p_{n_a})$ such that $p_i=0,\ p_j=\infty$ for some $i,j$. This space is obtained from $\BP^{1,\times n}$ by removing a codimension-2 subspace, and it is covered by $2^N$ affine charts.

Let $x_{a,1},\cdotsc,x_{a,n_a}$ denote the variables attached to color $a$, and let $\eps\in\{+,-\}^N$ be a sequence of $N$ signs. Let 
\[\Pol_\alpha^\eps=\BC[x_{1,1}^{\eps_1},\cdotsc,x_{1,n_1}^{\eps_1},\cdotsc,x_{N,1}^{\eps_N},\cdotsc,x_{N,n_N}^{\eps_N}],\] 
and let
\[Y^\eps= \Spec \Pol_\alpha^\eps;\] 
these affine charts cover $Y^\circ$.

\subsection{The base space}
Let $\e_{a,i}=\e_{a,i}^+$ denote the $i$-th elementary symmetric polynomial in the variables of color $a$, and let $\e_{a,i}^-$ denote the $i$-th elementary symmetric polynomial in $x_{a,1}^{-1},\cdotsc,x_{a,n_a}^{-1}$. Let $\E_{a,0},\cdotsc,\E_{a,n_a}$ be homogeneous projective coordinates such that $\frac{\E_{a,i}}{\E_{a,0}}=\e_{a,i}$, and let 
\[\Sym_\alpha^\eps=\BC[\e_{1,1}^{\eps_1},\cdotsc,\e_{1,n_1}^{\eps_1},\cdotsc,\e_{N,1}^{\eps_N},\cdotsc,\e_{N,n_N}^{\eps_N}]\] 
denote the tensors of rings of symmetric functions in the variables $x_{a,i}^{\eps_a}$. Let 
\[X=\prod_a \BP^{n_a},\]
and let
\[X^\circ=\prod_a (\BP^{n_a}\sm \BP^{n_a-2}),\] 
where $\BP^{n_a}=\Proj \BC[\E_{a,0},\cdotsc,\E_{a,n_a}]$. This space is obtained from $X$ by removing a codimension-2 subspace, and it is also covered by $2^N$ affine charts, namely
\[X^\eps=\Spec \Sym_\alpha^\eps.\]
Let the ``dominant affine chart'' be $U\coloneqq X^{+^N}$, and let the ``antidominant affine chart'' be $U^-\coloneqq X^{-^N}$. It is not hard to see that $U\cap U^-=\bigcap_\eps X^\eps$.

Recall that the positive roots of type $A_N$ are $\alpha_{[b,a]}\coloneqq \alpha_b+\alpha_{b-1}+\cdots+\alpha_{a+1}+\alpha_a$. Let $\pi$ be a root partition which has $m_{[b,a]}=m_{[b,a]}(\pi)$ copies of the positive root $\alpha_{[b,a]}$ (note $\sum (b-a+1)m_{[b,a]}=n$ and $\sum_{[b,a]\ni c}m_{[b,a]}=n_c$). Let 
\[Z_\pi\coloneqq \prod_{\text{positive roots}} \BP^{m_{[b,a]}}=\prod_{b\ge a} \BP^{m_{[b,a]}},\] 
let
\begin{align*}
    \mu_\pi\colon Z_\pi&\lto X\\
    (f_{[b,a]})_{[b,a]}&\lmto \Bigpr{\prod_{[b,a]\ni c} f_{[b,a]}}_c,
\end{align*}
and let $\CB_\pi=\mu_{\pi,*} \CO_{Z_\pi}$. 

Let $X_\pi=\Img\mu_\pi$ be the scheme-theoretic image, living inside $\prod_a \BP^{n_a}$. By abuse of notation, we also write $\mu_\pi$ to refer to the map $Z_\pi\to X_\pi$. By construction of the map, the (closed points of the) image can be thought of as tuples $(g_c)_c$ such that $\deg\gcd(g_a,g_b)\ge \sum_{[d,c]\ni a,b}m_{[d,c]}$ for any colors $a,b$. This is a necessary but insufficient condition; note that not all tuples with this property are in the image. 

It will be sometimes useful for us to think of  points of $\BP^{n_a}$ as  points of $\Sym^{n_a}\BP^1$, i.e. length-$n_a$ effective divisors on $\BP^1$, so that points of $X$ are tuples of effective divisors on $\BP^1$, with one divisor of length $n_a$ for each color $a$. Similarly we would think of points of $\BP^{m_{[b,a]}}$ as length-$m_{[b,a]}$ effective divisors on $\BP^1$. Under this identification, the map $\mu_\pi$ then becomes 
\begin{align*}
    \mu_\pi\colon Z_\pi&\lto X_\pi\\ 
    (D_{[b,a]})_{[b,a]}&\lmto \Bigpr{\sum_{[b,a]\ni c}D_{[b,a]} }_c;
\end{align*}
then points in the image satisfy that the divisors for colors $a$ and $b$ share at least $\sum_{[d,c]\ni a,b}m_{[d,c]}$ points in common, for any colors $a,b$.

Let
\[X_{\le\pi_k}=\bigcup_{r=k}^\ell X_{\pi_r}\] 
be the scheme-theoretic union. 

\begin{lemma}\label{lem: finiteness_mu}
    The morphism $\mu_\pi\colon Z_\pi\to X_\pi$ is finite and birational. In particular, this means that $X_\pi$ is integral and $Z_\pi$ is its normalization via $\mu_\pi$. Moreover, the morphism is equivariant for the diagonal $\SL_2$-action.
\end{lemma}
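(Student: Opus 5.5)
We prove finiteness first, then birationality, and finally equivariance.

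\emph{Finiteness.} The map $\mu_\pi\colon Z_\pi\to X$ is a product, over colors $c$, of multiplication-of-polynomials maps. On homogeneous coordinates it is given by polynomials of multidegree $(1,\dots,1)$ in the relevant factors. In the divisor description it is the map $(D_{[b,a]})\mapsto(\sum_{[b,a]\ni c}D_{[b,a]})_c$ on symmetric products of $\BP^1$. Since $Z_\pi$ is projective, $\mu_\pi$ is proper, so it suffices to show it is quasi-finite.

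Fix a point $(D_c)_c$ of $X$. Each $D_{[b,a]}$ in a preimage satisfies $D_{[b,a]}\le D_c$ for any $c\in[b,a]$. Each $D_c$ has only finitely many effective subdivisors of a given degree, so there are only finitely many choices of $D_{[b,a]}$. Hence $\mu_\pi$ is quasi-finite, and proper plus quasi-finite gives finite. A more algebraic alternative is to note that $\mu_\pi^*\CO_X(1,\dots,1)$ is ample on $Z_\pi$: its restriction to the factor $\BP^{m_{[b,a]}}$ is $\CO(b-a+1)$, and a morphism pulling back an ample bundle to an ample bundle is finite when proper. Finiteness onto the scheme-theoretic image $X_\pi$ follows.

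\emph{Integrality and birationality.} The image $X_\pi$ is the scheme-theoretic image of an integral scheme, so it is integral. It remains to exhibit a dense open subset of $X_\pi$ over which $\mu_\pi$ is an isomorphism. Take the open set $Z_\pi^{\circ}\subset Z_\pi$ where all the divisors $D_{[b,a]}$ are reduced and pairwise disjoint across different roots (and within each, distinct points). On this locus $(D_{[b,a]})$ can be recovered from its image by inclusion–exclusion on supports. For a root $[b,a]$, $D_{[b,a]}$ consists exactly of the points lying in $D_c$ for all $c\in[b,a]$ but in neither $D_{a-1}$ nor $D_{b+1}$. This works because a point belonging to the colors of an interval $[b,a]$ determines that interval uniquely, since the colors carried by a point form exactly one interval.

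So $\mu_\pi$ is injective on $Z_\pi^\circ$, and $\mu_\pi^{-1}(\mu_\pi(Z_\pi^\circ))=Z_\pi^\circ$ by the same support argument. This gives a finite morphism that is injective on points over a dense open set. We then need it to be an isomorphism there. The step I expect to be the main obstacle is checking that it is unramified, i.e.\ generically étale, so that the degree is $1$. In characteristic zero, a finite morphism of integral varieties that is generically injective on points is birational, since the degree of the function field extension equals the generic fiber cardinality. Over general $k$ I would instead check injectivity of the differential. At a point with distinct roots, the coordinates of $D_{[b,a]}$ (its root positions) are recovered as local coordinates of $X$ restricted to the image. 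Concretely, each root position of $D_{[b,a]}$ equals a root position of $D_a$, which is a regular function near the point because the roots of $D_a$ are simple. This gives a local inverse on $\mu_\pi(Z_\pi^\circ)$.

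Finite plus birational onto an integral scheme, with $Z_\pi$ normal (a product of projective spaces), identifies $Z_\pi$ with the normalization of $X_\pi$.

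\emph{Equivariance.} The diagonal $\SL_2$ acts on every $\Sym^m\BP^1$ by moving points. Adding divisors commutes with this action, so $\mu_\pi$ is $\SL_2$-equivariant. It follows that the scheme-theoretic image $X_\pi$ is $\SL_2$-stable.
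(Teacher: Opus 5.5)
Your argument is correct and follows the same route as the paper: properness plus finite fibers (via the finitely many subdivisors of each $D_c$) gives finiteness, an isomorphism over the locus where the $D_{[b,a]}$ are reduced and pairwise disjoint gives birationality, integrality and the normalization statement come from $Z_\pi$ being integral and normal, and equivariance holds because adding divisors commutes with the diagonal $\SL_2$-action. You justify the birationality step more carefully than the paper, which simply asserts the isomorphism over that locus, and over $\BC$ your degree-counting argument suffices; your general-field aside would need to be phrased \'etale-locally, since the roots of a polynomial with simple roots are \'etale-local but not Zariski-local functions of its coefficients.
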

\begin{proof}
    We first verify that $\mu_\pi$ has finite fibers. Let $(D_1,\ldots, D_N)$ be a closed point of $X_\pi$. Then, for any tuple of $(f_{[b,a]})_{[b,a]}$ mapping to this point, each $f_{[b,a]}$ is supported on the union of the supports of the $D_c$. Moreover, at each closed point $p$ of $\BP^1$, we have the equality \[\operatorname{mult}_p(D_c)=\sum_{b\geq c\geq a}\operatorname{mult}_p(f_{[b,a]}),\] which evidently has finitely many (non-negative) solutions. So $\mu_\pi$ has finite fibers. Since $Z_\pi$ is projective, we have $\mu_\pi$ is proper with finite fibers, so it is actually finite.

    Next, on the open locus $V$ where distinct factors have pairwise disjoint support and each multiplicity at a point in the support is one, $\mu_\pi$ is an isomorphism from $\mu_{\pi}^{-1}(V)\to V$, i.e. $\mu_\pi$ is birational. Since $Z_\pi$ is integral, its scheme-theoretic image $X_\pi$ is also integral. Moreover, $Z_\pi$ is a product of projective spaces and hence normal. In other words, $Z_\pi$ is the normalization of $X_\pi$ via $\mu_\pi$. 

    Finally, the diagonal action of $\SL_2$ on $\BP^1$ induces actions on all of its symmetric powers and hence on both $Z_\pi$ and $X$. If $\mu_\pi((f_{[b,a]}))=(D_1,\ldots,D_N)$ with $D_c=\sum_{b\ge c\ge a}f_{[b,a]},$ then, for $g\in \SL_2$,
\[\mu_\pi((gf_{[b,a]}))_c=\sum_{b\ge c\ge a}gf_{[b,a]}=g\left(\sum_{b\ge c\ge a}f_{[b,a]}\right)=gD_c.\]
Thus $\mu_\pi$ is $\SL_2$-equivariant.
\end{proof}

\subsection{The disjoint union covers}
Because the KLR algebra has many orthogonal idempotents $e^\beta$ in it, it is convenient to introduce a disjoint union cover of the spaces $X$ and $Y$ to capture this. Let $\Seq=\Seq_\alpha$ be the set of sequences of colors with $n_a$ instance of color $a$, which we call ``$a$-strings''. Let
\begin{align*}
    \wh X&=\bigsqcup_{\beta\in\Seq} X,\\
    \wh Y&=\bigsqcup_{\beta\in\Seq} Y,
\end{align*}
and similarly let $\wh X^\circ=\bigsqcup_\beta X^\circ$ and $\wh Y^\circ=\bigsqcup_\beta Y^\circ$. $\wh X$ and $\wh Y$ have obvious maps to $X$ and $Y$ respectively given by stacking the copies together, and there is an obvious symmetrizing map $\wh\pi\colon\wh Y\lto \wh X$ given by $\pi$ on each copy. We record all this in the following diagram:
\begin{center}
    \begin{tikzcd}
        \wh Y\arrow[twoheadrightarrow]{r}{\wh\sigma}\arrow[twoheadrightarrow,swap]{d}{\wh\pi}\arrow[twoheadrightarrow,"\varpi" description]{dr}& Y\arrow[twoheadrightarrow]{d}{\pi}\\
        \wh X\arrow[twoheadrightarrow,swap]{r}{\sigma}& X
    \end{tikzcd}\ .
\end{center}
Note that each of maps in the diagram above, in particular $\varpi$, is finite (because each is a quotient by a finite group) and flat (by applying miracle flatness, observing that the sources and targets are smooth of the same dimension). Let $\wh U=\varpi^{-1}(U)$ be the ``dominant affine chart'' on the cover $\wh Y$, and similarly let $\wh U^-=\varpi^{-1}(U^-)$.

\subsection{The sheaves above}
Moving forward, the sheaves we consider will be built from line bundles, so we will appeal to Hartog's theorem and conflate e.g. $X$ and $X^\circ$. 

Recall that the dominant root partition is $\pi_0=(1)^{n_1}(2)^{n_2}\cdots (N)^{n_N}$, which we in abuse of notation conflate with the color sequence $1^{n_1}\cdots N^{n_N}$. For any color sequence $\beta\in\Seq$, let $r_i(\beta)$ be the number of $(\beta_i-1)$-strings to the right of the $i$-th string. Let 
\[\wh\CP_{\pi_0}\coloneqq \CO(n_1-1,n_1-2,\cdotsc,0,n_2-1,n_2-2,\cdotsc,0,\cdotsc,n_N-1,n_N-2,\cdotsc,0)\] 
be a sheaf on $\wh Y_{\pi_0}=Y$. Let us define a line bundle on $\wh Y$ by 
\[\wh\CP=\bigsqcup_{\beta\in\Seq}\wh\CP_{\pi_0}\otimes\CO\pr*{-\prod_i y_i^{r_i(\beta)}e^\beta}.\]
Alternatively, letting $r_{a,i}(\beta)$ denote the number of $(a-1)$-strings to the right of the $i$-th $a$-string in $\beta$, we have 
\[\wh\CP=\bigsqcup_\beta \CO\big(n_1-1-r_{1,1}(\beta),n_1-2-r_{1,2}(\beta),\cdotsc,0-r_{1,n_1}(\beta),n_2-1-r_{2,1}(\beta),n_2-2-r_{2,2}(\beta),\cdotsc,0-r_{2,n_2}(\beta)  ,\cdotsc\big).\]
Let $r(\beta)=\sum_i r_i(\beta)=\sum_{a,i}r_{a,i}(\beta)$. 

Note that the black twists are unaffected by changing $\beta$\footnote{This is because we are beginning with the color sequence $\pi_0$ where all blacks are to the left, so all other color sequences are obtained by applying crossings where red runs from bottom-right to top-left; and $e\in\sl_2$ acts on such crossings by adding a red dot, which twists the line bundle down by 1 in the appropriate place.}. Another way to think of this is that $\wh\CP_\beta=\wh\CP_{\pi_0}\otimes \CO\pr*{-\prod_i y_i^{r_i(\beta)}}$.

We say $(a,i)\prec_\beta (b,j)$ if $b=a+1$ and the $i$-th string of color $a$ is to the left of the $j$-th string of color $b$ in $\beta$. Let
\[T_\beta\coloneqq \set*{\big( (a,i),(b,j)\big):(a,i)\prec_\beta (b,j)},\] 
with $t_\beta\coloneqq |T_\beta|$, and let
\[Q_\beta\coloneqq \prod_{(a,i)\prec_\beta (a+1,j) } (x_{a,i}-x_{a+1,j}).\] 
More generally, if $T$ is some set of pairs $((a,i),(b,j))$, we let $Q_T=\prod_{((a,i),(b,j))\in T}(x_{a,i}-x_{b,j})$. Given $T_\beta$ and $w\in S_\alpha$, let $w_a\coloneqq w\rv_{S_{n_a}}$ denote the restriction of $w$ to the $a$-th factor and let
\[wT_\beta=\set*{ \big((a,w_a(i)),(b,w_b(j))\big): ((a,i),(b,j)\in T_\beta  }.\]

Consider the subsheaf of $\wh\CP$ defined by 
\[\wh\CQ=\wh\CP\otimes \bigsqcup_{\beta\in\Seq}\CO\pr*{-\prod_{\substack{ i,j:\beta_i\to \beta_j}}(y_i-y_j)e^\beta}=\wh\CP\otimes\bigsqcup_\beta \CO(-Q_\beta ),\] 
where $Q_\beta =\prod_{\substack{ i,j:\beta_i\to \beta_j}}(y_i-y_j)e^\beta$. $\wh\CP$ and $\wh\CQ$ will be the sheaves from which the KLR sheaf is built.

But we need more sheaves still to describe the geometric cellular filtration on KLR. Let us take $\alpha_1>\alpha_2>\cdots$, so that for instance $(1)(21)(2)$ is a root partition in type $A_2$. Recall that root partitions are totally ordered, so let us order them $\pi_0>\cdots>\pi_\ell$. Let $S^{\pi}$ denote the set of minimal length coset representatives in $S_{n}/S_{\pi}$, and let $\Seq_{\pi}\coloneqq S^{\pi}\cdot\pi$, namely the color sequences obtainable by hitting $\pi$ with $S^{\pi}$. Given $w\in S^{\pi}$, we let $w_a\in S_{n_a}$ denote the element obtained by considering only the $a$-colored strings (or "$a$-strings"). We say $(j(b)i(a))\in\pi$ if the $i$-th $a$-string and the $j$-th $b$-string are in a thick cable of $\pi$. Then let
\[\wh\CI_{\pi} = \bigsqcup_{\beta\in\Seq} \bigcap_{w\in S^{\pi}:w\pi=\beta} \wan{x_{a,w_a(i)}-x_{b,w_b(j)}:(j(b)i(a))\in\pi},\] 
where the intersection over the empty set is defined to be $\CO$. Let
\[\wh\CI_{\le\pi_k}=\bigcap_{r=k}^\ell \wh\CI_{\pi_r},\]
and define
\[\wh\CQ_{\ge\pi_k} = \wh\CQ\otimes \wh\CI_{<\pi_k}.\] Here $\wh\CI_{<\pi_k}=\wh\CI_{\le\pi_{k+1}}$, except at $k=\ell$, where $\wh\CI_{\le\pi_{\ell+1}}=\bigcap_{\emptyset}=\CO$. 
\begin{remark}
    The opposing inequality signs above are an unfortunate consequence of our choice of notation, but we want the filtration of spaces to be $X_{\le\pi_\ell}\subset\cdots\subset X_{\le\pi_0}$, corresponding to ideals $\wh\CI_{\le\pi_0}\subset\cdots\subset\wh\CI_{\le\pi_\ell}$.
\end{remark}

\subsection{The sheaves below}
Consider the sheaf 
\[\CP\coloneqq \varpi_* \wh\CP\] 
on $X$, which is to play the role of the polynomial representation; indeed, its affine sections are certainly isomorphic to the polynomial representation. Also let
\[\CQ_{\ge\pi_k}\coloneqq \varpi_* \wh\CQ_{\ge\pi_k},\]
with 
\[\CQ\coloneqq\varpi_*\wh\CQ.\] The following lemma is clear:\begin{lemma}\label{lem:SL2-equivariance}
The diagonal action of $\SL_2$ on the products of projective lines
induces $\SL_2$-linearizations of $\widehat{\CP}$ and
$\widehat{\CQ}$. The ideals $\widehat{\CI}_\pi$ are
$\SL_2$-stable, and the morphisms $\varpi$ and $\mu_\pi$ are
$\SL_2$-equivariant. Consequently, $\CP$, $\CQ$, and all the
filtration sheaves $\CQ_{\ge\pi_k}$ are $\SL_2$-equivariant.
\end{lemma}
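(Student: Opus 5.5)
The proof will rest on one standard fact: for a line bundle $\CO(k)$ on $\BP^1$, the action $\SL_2\actson\BP^1$ comes with a canonical $\SL_2$-linearization. Concretely, $\SL_2$ acts linearly on $\BC^2$, so it acts on $\Sym^k$ of the dual and on the graded ring $\BC[X_0,X_1]$, and hence on every twist $\CO(k)$, including negative $k$. Differentiating this linearization gives the formulas for $e,h,f$ recalled in Section~\ref{sec:1color sl2}. We will then build up the statement in three steps.

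First, the linearizations of $\wh\CP$ and $\wh\CQ$. Box products of linearized line bundles on the factors of $Y=\prod_a\BP^{1,\times n_a}$ are linearized for the diagonal action, so each summand $\wh\CP_\beta$ is $\SL_2$-linearized. On the disjoint union $\wh Y$ we take the action that preserves each component. For $\wh\CQ$, we will check that each factor $x_{a,i}-x_{b,j}$ in $Q_\beta$ is really the section $\Delta_{(a,i),(b,j)}=X^{(a,i)}_1X^{(b,j)}_0-X^{(a,i)}_0X^{(b,j)}_1$ of $\CO(1,1)$ on the two relevant factors. Since $\SL_2$ acts on $\BC^2\otimes\BC^2$ and this section is its $\Lambda^2$-part, it spans the determinant line, so it is an $\SL_2$-invariant section. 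It follows that multiplication by $\prod\Delta$ is an $\SL_2$-equivariant injection $\wh\CQ\hookrightarrow\wh\CP$, once the twist on $\wh\CQ$ is adjusted accordingly. This is exactly how $\CQ$ was defined in Section~\ref{sect:2color}.

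Second, stability of the ideals. Each generator $x_{a,w_a(i)}-x_{b,w_b(j)}$ of $\wh\CI_\pi$ is, up to the invertible frame, the invariant section $\Delta$ above. Equivalently, it generates the ideal sheaf of the diagonal $\{p_{(a,i)}=p_{(b,j)}\}$, which is an $\SL_2$-stable closed subscheme because $\SL_2$ acts diagonally. Sums and intersections of $\SL_2$-stable ideal sheaves are again stable. Therefore $\wh\CI_\pi$, $\wh\CI_{\le\pi_k}$, and $\wh\CQ_{\ge\pi_k}=\wh\CQ\otimes\wh\CI_{<\pi_k}$ (taken as a subsheaf of $\wh\CQ$) are all $\SL_2$-stable. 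We will treat $\wh\CQ\otimes\wh\CI$ as the image in $\wh\CQ$, to avoid any flatness issues.

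Third, equivariance of the maps and of the pushforwards. The map $\varpi$ is a quotient by the finite group $\coprod S_\alpha$ acting on $\wh Y$. The $S_\alpha$-action permutes the factors, while $\SL_2$ acts diagonally, so the two actions commute. Hence $\varpi$ is $\SL_2$-equivariant for the induced action on $X=\prod\Sym^{n_a}\BP^1$. Equivariance of $\mu_\pi$ is Lemma~\ref{lem: finiteness_mu}. Finally, the pushforward of an equivariant sheaf along an equivariant morphism is equivariant: apply $\varpi_*$ to the isomorphism $a^*\wh\CF\cong p^*\wh\CF$ on $\SL_2\times\wh Y$, using flat base change along $\SL_2\times\wh Y\to\SL_2\times X$. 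This base change applies because $\varpi$ is finite and the action and projection maps are flat. The cocycle condition is preserved by functoriality. This yields the equivariant structures on $\CP$, $\CQ$, and $\CQ_{\ge\pi_k}$, with inclusions that are equivariant.

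The only step needing care is the second: that the sections $x-y$ are honest $\SL_2$-invariant sections once the correct twists are fixed. We would verify this directly on $\Delta$ using $\det g=1$.
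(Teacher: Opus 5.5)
Your proposal is correct. It carries out the routine verification that the paper leaves implicit: the paper states this lemma as ``clear'' with no proof, and the only ingredient it proves separately is the $\SL_2$-equivariance of $\mu_\pi$ in Lemma~\ref{lem: finiteness_mu}, which you cite. Reading both the factors of $Q_\beta$ and the generators of $\wh\CI_\pi$ as the invariant section $\Delta\in H^0(\CO(1,1))$, i.e.\ as ideal sheaves of diagonals, is the correct global meaning of the paper's chart-level notation; with that reading, stability and the pushforward step (base change in the cartesian action square) are standard.
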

Roughly speaking this is to play the role of the projective representation $\CR e_{\pi_k}$, where $e_{\pi_k}$ is the minimal idempotent associated with $\pi_k$. Note that $\CQ(U)=\bigoplus_\beta Q_\beta\cdot \Pol_\beta$, where $\Pol_\beta= q^{r(\beta)}\cdot\Pol$. 

On the base, let $\CI_{X_\pi}$ denote the subsheaf of $\CO_X$ cutting out $X_\pi$, and let $\CI_{\le\pi_k}$ denote the subsheaf of $\CO_X$ cutting out $X_{\le\pi_k}$. More precisely, we have
\[\CI_{\le\pi_k}=\bigcap_{r=k}^{\ell}\CI_{X_{\pi_r}}.\]
We also write
\[\CI_{<\pi_k}\coloneqq\CI_{\le\pi_{k+1}}=\bigcap_{r=k+1}^{\ell}\CI_{X_{\pi_r}},\]with $\CI_{<\pi_\ell}=\CO_X$. We should warn that $\varpi_*\wh\CI_{\le\pi_k}$ is \textit{not} $\CI_{\le\pi_k}$.

\begin{lemma}\label{lem:Yhat filt vs X filt}
Let $\widehat Y_{\le\pi_k}\subset \widehat Y$ be the closed
subscheme cut out by $\widehat{\CI}_{\le\pi_k}$. Its
scheme-theoretic image under $\varpi$ is $X_{\le\pi_k}$. Similarly, if $\wh Y_\pi$ is the closed subscheme cut out by $\wh\CI_\pi$, its scheme-theoretic image under $\varpi$ is $X_\pi$. 
\end{lemma}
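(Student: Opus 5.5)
The plan is to reduce everything to a statement about reduced closed subsets, using that $\varpi$ is finite and that the ideals $\wh\CI_\pi$ are intersections of linear (hence prime) ideals. Fix a root partition $\pi$ and a component $\beta\in\Seq$ of $\wh Y$. For each $w\in S^\pi$ with $w\pi=\beta$, let $L_w\subset Y$ be the closed subscheme cut out by
\[\wan{x_{a,w_a(i)}-x_{b,w_b(j)}:(j(b)i(a))\in\pi}.\]
Each $L_w$ is a product of diagonals in products of $\BP^1$'s. Concretely, $L_w\cong\prod_{\text{cables of }\pi}\BP^1=\prod_{b\ge a}\BP^{1,\times m_{[b,a]}}$, one $\BP^1$ for each thick cable. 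In particular $L_w$ is integral and its ideal is prime.

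An intersection of prime (hence radical) ideal sheaves is radical, and it cuts out the scheme-theoretic union. Hence on the $\beta$-component, $\wh Y_\pi$ is the \emph{reduced} union $\bigcup_{w\pi=\beta}L_w$. The same reasoning shows that $\wh Y_{\le\pi_k}$ is the reduced union of all $L_w$ for all $\pi_r$ with $r\ge k$ and all $\beta$.

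The next step is to compute the image of each $L_w$. Compose the inclusion $L_w\hookrightarrow Y$ with $\pi\colon Y\to X$, identifying points of $\BP^{n_c}$ with effective divisors of length $n_c$ on $\BP^1$. A point of $L_w$ assigns a point $p_\gamma\in\BP^1$ to each cable $\gamma$. Since every string of color $c$ lies in exactly one cable, the color-$c$ divisor of its image is $\sum_{\gamma\ni c}p_\gamma$. Therefore $\pi|_{L_w}$ factors as
\[L_w=\prod_{b\ge a}\BP^{1,\times m_{[b,a]}}\xrightarrow{\ \prod \text{sym}\ }\prod_{b\ge a}\BP^{m_{[b,a]}}=Z_\pi\xrightarrow{\ \mu_\pi\ }X .\]
I will check this on coordinates: the product of linear forms is the sum of divisors, which is exactly the formula defining $\mu_\pi$. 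The symmetrization maps are surjective, so the set-theoretic image of $L_w$ equals $\mu_\pi(Z_\pi)$. This set is closed because $\mu_\pi$ is finite by Lemma \ref{lem: finiteness_mu}.

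Now pass from closed sets to scheme-theoretic images. Since $\varpi$ is finite, hence closed, and quasi-compact, the scheme-theoretic image of a reduced closed subscheme is the reduced induced structure on its set-theoretic image. Moreover, for a finite union of reduced subschemes, the ideal of the image is $\ker(\CO_X\to\varpi_*\CO_{\bigcup L_w})=\bigcap_w\ker(\CO_X\to\varpi_*\CO_{L_w})$. So the image of the union is the scheme-theoretic union of the images. Each $L_w$ has image the reduced subscheme on $\mu_\pi(Z_\pi)$. By Lemma \ref{lem: finiteness_mu}, $X_\pi$ is integral, so it equals this reduced subscheme. Also, some $L_w$ is nonempty, namely the one with $w=1$ on the component $\beta=\pi$. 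Hence the image of $\wh Y_\pi$ is $X_\pi$.

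For $\wh Y_{\le\pi_k}$, the same intersection-of-kernels argument gives the scheme-theoretic union $\bigcup_{r\ge k}X_{\pi_r}=X_{\le\pi_k}$, which is reduced as a union of reduced subschemes. The main obstacle I expect is the index bookkeeping in the factorization through $\mu_\pi$: I must verify that the identifications imposed by $w$ on the $\beta$-component match the cable structure of $\pi$, so that each cable contributes its single point to exactly the colors it contains. The reducedness and image-of-union formalities are standard.
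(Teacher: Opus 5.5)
Your proposal is correct and follows essentially the same route as the paper. Both arguments write $\wh Y_\pi$ on each $\beta$-component as the union of the diagonal loci $L_w$, factor each $L_w$ through the symmetrization onto $Z_\pi$ followed by $\mu_\pi$, use the identity coset on $\beta=\pi$ for nonemptiness, and pass to $X_{\le\pi_k}$ by taking scheme-theoretic images of unions. The one difference is that you fix the scheme structure via reducedness (intersections of prime ideal sheaves), whereas the paper sandwiches the image between $X_\pi$ (which every component lands in) and the image of the identity component; your version also avoids the paper's loose phrasing, since the component-wise maps to $Z_\pi$ need not agree on overlaps and so do not glue to a map from the union to $Z_\pi$.
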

\begin{proof}First fix a root partition $\pi$. On the component of $\widehat Y$
indexed by a color sequence $\beta$, the ideal
$\widehat{\CI}_\pi$ is an intersection of ideals indexed by the
elements $w\in S^\pi$ satisfying $w\pi=\beta$. It follows that its
zero locus is the scheme-theoretic union of the corresponding
diagonal loci, i.e. the variables on each such locus belonging to the
same interval factor of $\pi$ are equal. The resulting morphism to
$X$ therefore factors through $\mu_\pi\colon Z_\pi\longrightarrow X_\pi.$

On the other hand, we have the following finite surjective morphism \[\prod_{b\ge a}(\BP^1)^{m_{[b,a]}}
\longrightarrow\prod_{b\ge a}\Sym^{m_{[b,a]}}\BP^1=Z_\pi.\]Observe that the source maps to one of the diagonal loci cut out by
$\widehat{\CI}_\pi$, and its composite with $\varpi$ is the morphism $\mu_\pi$. More explicitly, a point of the source is a collection of points $p_{[b,a],t}\in\BP^1$, one for each
occurrence $1\le t\le m_{[b,a]}$ of each interval $[b,a]$.
For every such occurrence, assign the same point $p_{[b,a],t}$ to
the variables of colors $b,b-1,\ldots,a$. The variables belonging to each interval occurrence are the same, so
this point lies on the diagonal locus indexed by the identity element
of $S^\pi$, and hence in the subscheme cut out by
$\widehat{\CI}_\pi$. After applying $\varpi$, the corresponding divisor of color $c$ is
\[\sum_{b\ge c\ge a}\sum_{t=1}^{m_{[b,a]}}[p_{[b,a],t}],\] which is exactly the divisor obtained by first symmetrizing the points
$p_{[b,a],t}$ to obtain an element of $Z_\pi$ and then applying
$\mu_\pi$.
It follows that the
scheme-theoretic image of the subscheme cut out by
$\widehat{\CI}_\pi$ is exactly $X_\pi$.

Finally, by definition, \[\widehat{\CI}_{\le\pi_k}=
\bigcap_{r=k}^{\ell}\widehat{\CI}_{\pi_r}\]
cuts out the scheme-theoretic union of the corresponding closed
subschemes. Taking scheme-theoretic images therefore gives the desired result.    
\end{proof}
\begin{proposition}\label{prop:Q as cap}
For every root partition $\pi$, we have $\widehat{\CQ}\cap(\widehat{\CI}_\pi\otimes\widehat{\CP})=\widehat{\CQ}\otimes\widehat{\CI}_\pi$ as subsheaves of $\widehat{\CP}$, and $\varpi_*(\widehat{\CQ}\otimes\widehat{\CI}_\pi)=\CQ\cap(\CI_{X_\pi}\CP)$ as subsheaves of $\CP$. 

Consequently, $\widehat{\CQ}\cap(\widehat{\CI}_{<\pi_k}\otimes\widehat{\CP})=\widehat{\CQ}\otimes\widehat{\CI}_{<\pi_k}$
and $$\CQ_{\ge\pi_k}=\CQ\cap(\CI_{<\pi_k}\CP).$$
\end{proposition}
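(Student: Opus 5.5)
The plan is to work component by component on $\widehat Y$. On the component indexed by $\beta$ we use three facts: $\widehat{\CP}$ is a line bundle, $\widehat{\CQ}=\widehat{\CP}\otimes\CO(-Q_\beta)$ is a principal twist by the product $Q_\beta$ of linear forms $x_{a,i}-x_{a+1,j}$, and $\widehat{\CI}_\pi$ is an intersection of ideals $J_w=\langle x_{a,w_a(i)}-x_{b,w_b(j)}\rangle$, each generated by linear forms. After trivializing $\widehat{\CP}$ locally, the first claim becomes the ideal identity
\[(Q_\beta)\cap \widehat{\CI}_\pi = Q_\beta\cdot\widehat{\CI}_\pi\]
in a polynomial ring (and its Laurent/negative-chart variants). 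Since $Q_\beta$ is a nonzerodivisor, this identity is equivalent to $\widehat{\CI}_\pi:Q_\beta=\widehat{\CI}_\pi$. Colon ideals commute with intersections, so it suffices to prove $J_w:Q_\beta=J_w$ for each $w$. Each $J_w$ is the ideal of a linear subspace, hence prime. The identity therefore reduces to showing that no factor $x_{a,i}-x_{a+1,j}$ of $Q_\beta$ lies in $J_w$. This is the key combinatorial step. Concretely, we must check that whenever $(a,i)\prec_\beta(a+1,j)$, the $i$-th $a$-string and the $j$-th $(a+1)$-string of $\beta=w\pi$ do not lie in a common thick cable of $\pi$. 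The reason is that within a cable $[b,a']$ the colors appear in decreasing order $b,b-1,\dots,a'$, so an $(a+1)$-string always sits to the left of the $a$-string in its cable. Since $w\in S^\pi$ is a minimal coset representative, $w$ preserves the internal order within cables, so this configuration survives in $\beta$. I would also confirm that the generated ideal of diagonal equalities within cables does not accidentally contain $x_{a,i}-x_{a+1,j}$ through transitivity; this holds because the linear span is spanned by differences within connected blocks. I expect this combinatorial check to be the main obstacle.

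For the second claim, apply $\varpi_*$ to the first. Because $\varpi$ is finite and flat, $\varpi_*$ is exact and commutes with intersections of subsheaves of $\widehat{\CP}$. We therefore need $\varpi_*(\widehat{\CI}_\pi\otimes\widehat{\CP})\cap\CQ=\CI_{X_\pi}\CP\cap\CQ$. The inclusion $\CI_{X_\pi}\CP\subseteq\varpi_*(\widehat{\CI}_\pi\widehat{\CP})$ follows from Lemma \ref{lem:Yhat filt vs X filt}: the pullback of $\CI_{X_\pi}$ vanishes on $\widehat Y_\pi$, and $\widehat{\CI}_\pi$ is radical, being an intersection of primes. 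The reverse inclusion after intersecting with $\CQ$ is the delicate direction. Here I would use the fact that $\CQ$ is preserved by the symmetric group actions hidden in the KLR. The plan is to show that a section of $\CQ$ vanishing on $\widehat Y_\pi$ componentwise can be written as a $\Sym$-combination in which the coefficient functions lie in $\CI_{X_\pi}$. The natural tool is that $\CO_{X_\pi}\to\mu_{\pi,*}\CO_{Z_\pi}=\CB_\pi$ is injective (Lemma \ref{lem: finiteness_mu}, by integrality). I would prove the equality generically on $X_\pi$, where $\mu_\pi$ is an isomorphism and the statement is local and free, and then extend it using torsion-freeness and reflexivity. If this fails, the fallback is to check it on the dominant chart by an explicit basis argument via the KLM basis.

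The consequences follow formally. We have $\widehat{\CI}_{<\pi_k}=\bigcap_{r>k}\widehat{\CI}_{\pi_r}$ and $(Q_\beta)\cap\bigcap_r J_w=Q_\beta\bigcap_r J_w$ by the same colon argument applied to all the primes $J_w$ involved at once. Likewise $\CI_{<\pi_k}=\bigcap_r\CI_{X_{\pi_r}}$, and intersections commute with $\varpi_*$. Together these give $\CQ_{\ge\pi_k}=\CQ\cap(\CI_{<\pi_k}\CP)$. One point needs care: $\CI_{<\pi_k}\CP$ versus $\bigcap_r\CI_{X_{\pi_r}}\CP$. Since $\CP$ is locally free, hence flat, these agree.
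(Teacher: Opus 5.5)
Your proof of the first identity is the paper's argument. On the $\beta$-sheet, $\widehat\CI_\pi$ is the intersection of the primes $J_\lambda$ over groupings $\lambda$ compatible with $\beta$. No factor of $Q_\beta$ lies in such a $J_\lambda$, so $(I_{\pi,\beta}:Q_\beta)=I_{\pi,\beta}$ and $Q_\beta R\cap I_{\pi,\beta}=Q_\beta I_{\pi,\beta}$. The easy inclusion $\CQ\cap(\CI_{X_\pi}\CP)\subseteq\varpi_*(\widehat\CQ\otimes\widehat\CI_\pi)$ and the formal passage to $\widehat\CI_{<\pi_k}$ and $\CI_{<\pi_k}$ are also fine.

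The gap is the reverse inclusion $\varpi_*(\widehat\CQ\otimes\widehat\CI_\pi)\subseteq\CI_{X_\pi}\CP$, i.e.\ $Q_\beta I_{\pi,\beta}\subseteq I_{X_\pi}R$ on the dominant chart for every $\beta$. Nothing in your sketch produces it:
\begin{itemize}
\item writing a section as a $\Sym$-combination with coefficients in $\CI_{X_\pi}$ only restates the goal;
\item injectivity of $\CO_{X_\pi}\to\CB_\pi$ and birationality of $\mu_\pi$ play no role;
\item $\CI_{X_\pi}\CP$ is not reflexive, so reflexivity cannot be used to extend.
\end{itemize}
Above all, the argument must use $Q_\beta$ on the \emph{incompatible} diagonal loci, because $I_{\pi,\beta}$ alone is not contained in $I_{X_\pi}R$. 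For example, take $\alpha=\alpha_1+\alpha_2$, $\pi=(21)$, $\beta=12$, with $x=x_{1,1}$ and $y=x_{2,1}$. No $w\in S^\pi$ has $w\pi=\beta$, so $I_{\pi,\beta}=R\not\subseteq(x-y)=I_{X_\pi}R$, and only the factor $Q_\beta=x-y$ repairs this.

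Two ingredients are missing.

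(i) On each sheet, the full preimage of $X_\pi\cap U$ is the \emph{reduced} union of all diagonal loci. That is, $I_{X_\pi}R=\bigcap_\lambda J_\lambda$, where $\lambda$ runs over all groupings of the labelled variables into the interval factors of $\pi$, compatible with $\beta$ or not. Set-theoretically this is clear. Scheme-theoretically, the paper deduces that $R/I_{X_\pi}R$ is reduced from three facts: $\Lbd/I_{X_\pi}$ is a domain ($X_\pi$ is integral); $R/I_{X_\pi}R$ is free over it; and the symmetrization is \'etale over the generic point of $X_\pi$, where each $D_c$ is reduced.

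(ii) $Q_\beta\in J_\lambda$ for every $\lambda$ incompatible with $\beta$. If an interval group's variables do not occur in $\beta$ in decreasing color order, the group contains adjacent colors $c,c+1$ with the $c$-variable to the left. That difference is a factor of $Q_\beta$ and lies in $J_\lambda$.

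Together these give $(I_{X_\pi}R:Q_\beta)=\bigcap_{\lambda\text{ compatible}}J_\lambda=I_{\pi,\beta}$, hence $Q_\beta R\cap I_{X_\pi}R=Q_\beta I_{\pi,\beta}$.

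Your idea of checking at the generic point of $X_\pi$ and extending by torsion-freeness can be made rigorous. Since $\CP\otimes\CO_{X_\pi}$ is locally free over the integral scheme $X_\pi$, membership in $\CI_{X_\pi}\CP$ may be tested after $\otimes_\Lbd K_\pi$. But at $\eta_\pi$ you need exactly (i) and (ii): $R\otimes_\Lbd K_\pi$ is the product of the function fields of the $V(J_\lambda)$, and $Q_\beta I_{\pi,\beta}$ vanishes in the factors with $\lambda$ incompatible only because of (ii).

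Finally, do the local checks on an affine cover of all of $\widehat Y$, not just $\widehat Y^\circ$; the paper uses the $\SL_2$-translates $\varpi^{-1}(U_p)$ of $\widehat U$. These subsheaves are not reflexive, so Hartogs does not apply.
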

The key point of this proposition is that, on the component of $\widehat Y$ indexed by a color sequence $\beta$, the pre-image of $X_\pi$ is a union of diagonal loci. The equation $Q_\beta$ vanishes identically on exactly those diagonal loci which are incompatible with $\beta$, and is a non-zero divisor on the union of the compatible loci. Thus taking the colon ideal by $Q_\beta$ removes precisely the incompatible components, giving the asserted identities\footnote{Another way to think of this is that the list of generators of $J_\lbd$ is a regular sequence, and appending $Q_\beta$ to the end of this list gives still a regular sequence.}.
\begin{proof}
The inclusions $\widehat{\CI}_\pi\subset \CO_{\widehat Y}$ and $\widehat{\CQ}\subset \widehat{\CP}$
give a canonical inclusion
\[
\widehat{\CQ}\otimes\widehat{\CI}_\pi
\hookrightarrow\widehat{\CP}.
\]
Since $\varpi$ is finite, pushing forward gives a canonical inclusion
\[\varpi_*(\widehat{\CQ}\otimes\widehat{\CI}_\pi)\hookrightarrow\varpi_*\widehat{\CP}=\CP.\]
Thus for each of the four equalities, both sides are globally defined
subsheaves of the same sheaf, and it is enough to check that their restrictions
agree on an open cover.

On the dominant affine chart, write
\[\Lbd\coloneqq \BC[\e_{a,i}\colon 1\le a\le N,\ 1\le i\le n_a]=\CO_X(U^+)\]
and \[R\coloneqq \BC[x_{a,i}\colon 1\le a\le N,\ 1\le i\le n_a].
\]
For each color sequence $\beta$, the $\beta$-component of $\varpi^{-1}(U)$ has coordinate ring $R$ and the symmetrizing morphism is induced by the inclusion
$\Lbd\subseteq R$. The ring $R$ is finite free over $\Lbd$.

Consider all ways $\lambda$ of grouping the labeled variables
$x_{a,i}$ into the interval factors prescribed by $\pi$, using every
variable exactly once. For such a grouping, let
$J_\lambda\subseteq R$ be the ideal generated by $x_{c,i_c}-x_{c+1,i_{c+1}}$ whenever the two variables belong to the same interval factor. Clearly $R/J_\lambda$ is a polynomial ring, which is a domain, so $J_\lambda$ is a prime ideal.

Let \[I_{X_\pi} \coloneqq \mathcal I_{X_\pi}(U)\subset \Lbd.\] Then, we claim that \[I_{X_\pi} R=\bigcap_\lambda J_\lambda.\] To see this, consider the finite flat map $\Spec R\to \Spec \Lbd$, and observe that the preimage of $X_\pi \cap U$ in $\Spec R$ is the union of closed subsets $V(J_\lambda)$ set-theoretically.

Since each $J_\lambda$ is a prime ideal, $\bigcap_\lambda J_\lambda$ is a radical ideal, so it suffices to show that $V(I_\pi R)$, namely the preimage of $X_\pi \cap U$, is reduced. Since $X_\pi$ is integral, $\Lbd/I_{X_\pi}$ is a domain. Moreover, the symmetrization map $\Spec R\to \Spec \Lbd$ is finite and generically \'etale (it is \'etale over the locus where the roots of each fixed color are pairwise distinct), so writing $R/I_{X_\pi} R=R\otimes_\Lbd (\Lbd/I_{X_\pi})$, it follows that the generic fiber of $R/I_{X_\pi} R$ over $\Lbd/I_{X_\pi}$ is \'etale over a point and hence reduced.

Now, suppose there exists a nilpotent element of $R/I_{X_\pi} R$. After tensoring with the fraction field of $\Lbd/I_{X_\pi}$ it must vanish, which means that it is killed by a non-zero element of $\Lbd/I_{X_\pi}$. But $R/I_{X_\pi} R$ is also free over $\Lbd/I_{X_\pi}$, so this is impossible. So $R/I_{X_\pi} R$ is reduced, showing that $I_{X_\pi} R$ is indeed $I_{X_\pi} R=\bigcap_\lambda J_\lambda$, i.e. $V(I_{X_\pi} R)$ is the scheme-theoretic union of the $V(J_\lambda)$.

For fixed $\beta$, denote by $I_{\pi, \beta}\subset R$ be the ideal defined by $\widehat \CI_\pi$ on the $\beta$-component. The color sequence $\beta$ orders the variables as follows: the $i$th occurrence of color $a$, read from left to right in $\beta$,
corresponds to $x_{a,i}$. Consider a group of type $[b,a]$, containing variables \[
x_{b,i_b},x_{b-1,i_{b-1}},\ldots,x_{a,i_a}.\] We say that this group is compatible with $\beta$ if these variables
occur in $\beta$ in the displayed order. They need not occur
consecutively. A grouping $\lambda$ is \textit{compatible} with $\beta$ if each of its
groups is compatible.

The elements $w\in S^\pi$ satisfying $w\pi=\beta$ describe exactly
these compatible groupings, so\[
I_{\pi,\beta}
=
\bigcap_{\lambda\text{ compatible with }\beta}J_\lambda.\] Suppose $x_{c,i}$ and $x_{c+1,j}$ belong to the same interval
group of a grouping $\lambda$ compatible with $\beta$. By compatibility with $\beta$, the variable of color $c+1$ occurs to the left of the
variable of color $c$:
\[
x_{c+1,j}, \ldots, x_{c,i}.
\]
On the other hand, the difference
\[
x_{c,i}-x_{c+1,j}
\]
occurs as a factor of $Q_\beta$ only when these variables occur in the
opposite order. 

Modulo $J_\lambda$, the variables in every interval group become one
variable, while variables belonging to different groups remain
independent. Hence every factor of $Q_\beta$ remains a non-zero
polynomial in the polynomial ring $R/J_\lambda$. Therefore
$Q_\beta\notin J_\lambda$. Since $J_\lambda$ is a prime ideal, we obtain
\[
(J_\lambda\colon Q_\beta)=J_\lambda.
\] By compatibility of intersections and colon ideals, we obtain \[(I_{\pi,\beta}\colon Q_\beta)=I_{\pi,\beta}.\] Since $R$ is a domain, for any $r\in R$ we have $I\cap rR = r(I\colon rR)$, so \[I_{\pi,\beta}\cap Q_\beta R=Q_\beta I_{\pi,\beta}.\] On the affine chart $U$, this implies the equality \[
\widehat{\CQ}\cap(\widehat{\CI}_\pi\otimes\widehat{\CP})=\widehat{\CQ}\otimes\widehat{\CI}_\pi.\] For $\lambda$ not compatible with $\beta$, the argument is very similar: In some interval factor, the variables do not occur in descending order. Consequently, for some adjacent colors $c$ and $c+1$, the $c$-colored variable occurs before the $(c+1)$-colored one, and this corresponding difference now is a factor of $Q_\beta$ and belong to $J_\lambda$. This means that we have \[(J_\lambda \colon Q_\beta) = R.\] Then, we obtain \[(I_{X_\pi} R\colon Q_\beta) = \bigcap_\lambda(J_\lambda\colon Q_\beta) = \bigcap_{\lambda \text{ compatible with }\beta}J_\lambda = I_{\pi, \beta}.\] So \[Q_\beta R\cap I_{X_\pi} R = Q_\beta(I_{X_\pi} R\colon Q_\beta) = Q_\beta I_{\pi, \beta}.\] Now, recall that
$\CP=\varpi_*\widehat{\CP}$ and $\CQ=\varpi_*\widehat{\CQ}.$
Over $U$, the sections of the pushforward decompose according to the
components of $\widehat Y$:
\[\Gamma(U,\varpi_*(\widehat{\CQ}\otimes\widehat{\CI}_\pi))=\bigoplus_\beta Q_\beta I_{\pi,\beta}\Pol_\beta.\]
On the other hand, $\CQ(U)=\bigoplus_\beta Q_\beta R\Pol_\beta$ and $(\CI_{X_\pi}\CP)(U)=\bigoplus_\beta I_{X_\pi} R\Pol_\beta.$ Hence
\[(\CQ\cap(\CI_{X_\pi}\CP))(U)=\bigoplus_\beta
(Q_\beta R\cap I_{X_\pi} R)\Pol_\beta=
\bigoplus_\beta Q_\beta I_{\pi,\beta}\Pol_\beta.\]
The two sheaves therefore agree over $U$:\[
\varpi_*(\widehat{\CQ}\otimes\widehat{\CI}_\pi)=\CQ\cap(\CI_{X_\pi}\CP).\] The same calculation applies after taking finite intersections over
all $\pi<\pi_k$. Indeed, by compatibility of intersections and colon ideals, and since
extensions of ideals from $\Lbd$ to $R$ commute with finite intersections ($R$ is flat over $\Lbd$), we have \[\widehat{\CQ}\cap (\widehat{\CI}_{<\pi_k}\otimes\widehat{\CP})=\widehat{\CQ}\otimes\widehat{\CI}_{<\pi_k}\] and \[\CQ_{\ge\pi_k}=\CQ\cap(\CI_{<\pi_k}\CP)\]
over $U$.

For $p\in\BP^1$, let $U_p\subset X$ be the open subset on which none of the divisors has $p$ in its support. These opens clearly cover $X$, and the preceding calculation is transported
from $U$ to any other $U_p$ by the diagonal $\SL_2$-action. Thus for each of the four desired equalities, both sides are globally defined
subsheaves of the same sheaf whose restrictions to an open cover agree. The result follows.
\end{proof}

\section{The KLR sheaf}
Just as the KLR algebra is a subalgebra of $\End_{\Sym}\Pol$, we will construct the KLR sheaf as an appropriate subsheaf of $\shend\CP$. 

\subsection{The sheaf corresponding to the algebra}
Let
\[\CE\coloneqq\{\phi\in \shend\CP:\phi(\CQ)\subseteq \CQ\},\]
namely the sheaf of endomorphisms of $\CP$ which preserve $\CQ$. By Lemma \ref{lem:SL2-equivariance}, $\CE$ is an
$\SL_2$-equivariant subsheaf of $\shend(\CP)$. Moreover, since
$\CP$ is locally free, $\shend(\CP)$ is torsion-free, and hence so is
its subsheaf $\CE$.
\begin{theorem}\label{thm:E=R}
    The sheaf $\CE$ deserves the name of the ``KLR sheaf'', namely
    \[\CE(U)=\CR_\alpha.\]
    The fact that $\Sym$ is central in $\CR_\alpha$ corresponds to the fact that $\CO_X(U)$ is central in $\CE(U)$. 
\end{theorem}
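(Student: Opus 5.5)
Over the dominant chart $U$, we have $\CP(U)=\bigoplus_\beta\Pol_\beta$, a free $\Lbd=\CO_X(U)$-module of rank $|\Seq|\cdot\prod_a n_a!$. By Proposition \ref{prop:Q as cap} (or directly from the definition), $\CQ(U)=\bigoplus_\beta Q_\beta\Pol_\beta$. Because $\shend\CP$ is locally free, sections over the affine $U$ are computed directly:
\[\CE(U)=\{\phi\in\End_{\Lbd}(\Pol_\alpha):\phi(Q)\subseteq Q\},\qquad Q=\bigoplus_\beta Q_\beta\Pol_\beta.\]
The task is therefore purely algebraic. We must show that the image of the faithful Khovanov--Lauda polynomial representation $\CR_\alpha\linj\End_{\Lbd}(\Pol_\alpha)$ is exactly this stabilizer of $Q$. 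The centrality claim then follows at once: $\Lbd$ acts by scalars on the $\Lbd$-module $\CP(U)$, and under the isomorphism it corresponds to $\Sym_\alpha\subset\CR_\alpha$.

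\textbf{Step 1: $\CR_\alpha\subseteq\CE(U)$.} It suffices to check the monoidal generators, namely dots, same-color crossings and different-color crossings, on each summand $Q_\beta\Pol_\beta$, using the polynomial action with the conventions fixed above.
\begin{itemize}
\item Dots commute with $Q_\beta$.
\item A same-color crossing $\psi_k e^\beta$ with $\beta_k=\beta_{k+1}=c$ acts by a Demazure operator $\pd$ in those two variables. The factors of $Q_\beta$ involving these two strings come in symmetric pairs $(x_{c,i}-x_{c\pm1,j})(x_{c,i+1}-x_{c\pm1,j})$, up to sign, so $Q_\beta$ is symmetric in the swap and Leibniz gives $\pd(Q_\beta f)=Q_\beta\pd f$.
\item A crossing of adjacent colors, sending $\beta$ to $s_k\beta$, acts either by the swap or by the swap times $(x_{c}-x_{c+1})$. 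One checks that $Q_{s_k\beta}$ equals $Q_\beta$ with the relevant factor removed or added, and the KLR convention multiplies by that factor in exactly the case where it is added.
\item Distant-color crossings merely relabel.
\end{itemize}
This is a finite sign and factor bookkeeping exercise.

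\textbf{Step 2: $\CE(U)\subseteq\CR_\alpha$.} This is the main obstacle. I would first note that the grading and $S_n$-equivariance reduce matters to a comparison of free modules, then proceed as follows.
\begin{itemize}
\item Write $\phi\in\CE(U)$ as a matrix of blocks $\phi_\beta^{\gamma}\colon\Pol_\beta\to\Pol_\gamma$. The condition $\phi(Q)\subseteq Q$ becomes $\phi_\beta^\gamma(Q_\beta\Pol)\subseteq Q_\gamma\Pol$.
\item After tensoring with $\operatorname{Frac}(\Lbd)$, every block is a combination $\sum_w f_w\,w$ of permutations with rational coefficients. Indeed $\End_{\Lbd}(\Pol)\otimes\operatorname{Frac}=\operatorname{Frac}(\Pol)\rtimes S$, block by block.
\item By the KL basis theorem, $\CR_\alpha$ has the basis $\psi_w y^{\mathbf k}e^\beta$. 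Using it, compare ranks and then compare degrees via graded characters.
\end{itemize}
Concretely, I would show that both $\CR_\alpha e^\beta$ and $\{\phi:\ldots\}$ are graded-free over $\Lbd$ with the same graded rank. For $\CE(U)$ this amounts to computing $\Hom_\Lbd(Q_\beta\Pol,Q_\gamma\Pol)\cap\Hom(\Pol_\beta,\Pol_\gamma)$.

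An alternative, which I would try first, is an induction on the number of colors and strings using the monoidal structure and the Mackey filtration. Restricted to blocks where $\gamma=w\beta$, the stabilizer condition should force the leading coefficient of $w$ to be divisible by the product of $(x_c-x_{c+1})$ over the adjacent pairs that $w$ uncrosses. This divisibility is precisely what distinguishes $\psi_w$ from $\tilde w$ in the Khovanov--Lauda faithful representation. Subtracting the appropriate element of $\CR_\alpha$ then lowers the Bruhat length.

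Finally, the $A_1$ identity $\NH_n=\End_{\Sym}\Pol$ handles the same-color part. This leaves the divisibility argument for mixed colors, in the spirit of the $\alpha_1+\alpha_2$ computation above where the $(x-y)$ entry appears, as the genuine content.
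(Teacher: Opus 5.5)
\textbf{Step 2 is the gap.} Your reduction to the stabilizer $\{\phi\in\End_\Lbd(\Pol_\alpha):\phi(Q)\subseteq Q\}$, the centrality remark, and Step~1 agree with the paper. In Step~2, however, you list three strategies, commit to none, and explicitly leave the divisibility argument, hedged with a ``should'', as the ``genuine content''. So the inclusion $\CE(U)\subseteq\CR_\alpha$ is not proved. You need neither a Mackey or monoidal induction nor graded-freeness of the stabilizer; an upper bound on its graded dimension suffices.

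Here is what is missing. Identify each block $\operatorname{Hom}_\Lbd(\Pol_\beta,\Pol_\gamma)$ with $\operatorname{Hom}_\Lbd(R,R)\cong\NH_{n_1}\otimes\cdots\otimes\NH_{n_N}$. This has the left $R$-basis $\{\pd_w\}_{w\in S_\alpha}$ in every block, not only in blocks with $\gamma=w\beta$. Filter by $\ell(w)$, and work with the $\pd_w$ rather than with permutations over $\operatorname{Frac}(\Lbd)$, where the integrality conditions become obscured.
\begin{itemize}
\item The twisted Leibniz rule gives $\pd_w(Q_\beta f)=w(Q_\beta)\pd_w(f)+\sum_{\ell(v)<\ell(w)}b_v\pd_v(f)$.
\item Since $\operatorname{Hom}_\Lbd(R,Q_\gamma R)=Q_\gamma\operatorname{Hom}_\Lbd(R,R)$, a stabilizing $\phi=\sum_w a_w\pd_w$ of top length $l$ satisfies $Q_\gamma\mid w(Q_\beta)a_w$, that is, $\frac{Q_\gamma}{\gcd(Q_\gamma,wQ_\beta)}\mid a_w$ for all $\ell(w)=l$.
\item The paper then counts. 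The $l$-th graded piece $\operatorname{gr}_l$ of the stabilizer injects into $\bigoplus_{\ell(w)=l}\frac{Q_\gamma}{\gcd(Q_\gamma,wQ_\beta)}R\,\pd_w$.
\item After the shift $q^{r(\gamma)-r(\beta)}$ and the identity $r(\gamma)+t_\gamma=r(\beta)+t_\beta$, the graded dimensions of these pieces sum over $w$ to exactly the graded dimension of $e^\gamma\CR_\alpha e^\beta$ given by Lemma~\ref{lem:dimklr}. Together with Step~1, this forces equality.
\end{itemize}

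\textbf{The subtract-and-induct variant.} Your version would also work and avoids the grading bookkeeping. But it needs a second fact: every leading term allowed by the divisibility is actually attained inside $\CR_\alpha$. Concretely, take a reduced $u$ with $u\beta=\gamma$ inducing $w\in S_\alpha$. Then $\psi_u e^\beta$ equals $\pm\frac{Q_\gamma}{\gcd(Q_\gamma,wQ_\beta)}\pd_w$ modulo shorter terms. This holds because each pair of strands crosses at most once. Hence the same-color crossings form a reduced word for $w$, and the adjacent-color crossings contribute exactly the factors indexed by $T_\gamma\setminus wT_\beta$, suitably relabelled. Left multiplication by dots then supplies an arbitrary $R$-multiple. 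Your proposal contains neither this leading-term computation nor the divisibility lemma, and together they are the whole content of the theorem.
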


For the rest of this section, let $\Lbd=\Sym$ and let $R=\Pol=\BC[x_{1,1},\cdotsc,x_{1,n_1},\cdotsc,x_{N,1},\cdotsc,x_{N,n_N}]$. Let $S_\alpha=S_{n_1}\times\cdots\times S_{n_N}$. 
\begin{lemma}\label{lem:dimklr}
    The graded dimension of the KLR $e^\gamma \CR_\alpha e^\beta$ is
    \[\dim e^\gamma \CR_\alpha e^\beta = \dim R\cdot q^{-|T_\beta|-|T_\gamma|}\sum_{w\in S_\alpha} q^{-2\ell(w)}\cdot q^{2|T_\gamma\cup w T_\beta|}.\]
\end{lemma}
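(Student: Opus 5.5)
The plan is to start from the Khovanov--Lauda basis theorem: $e^\gamma\CR_\alpha e^\beta$ is a free module over $R=\Pol$ (dots placed at the bottom, say), with basis the diagrams $\psi_{\hat w}e^\beta$. Here $w$ ranges over the permutations $\hat w\in S_n$ carrying $\beta$ to $\gamma$, and one reduced expression is fixed for each. Such permutations are exactly those preserving the relative order of strings of each color. They are therefore in bijection with $S_\alpha=S_{n_1}\times\cdots\times S_{n_N}$, via $w_a\in S_{n_a}$, which records which $a$-string on the bottom ends up as which $a$-string on the top. The graded dimension is then $\dim R$ times $\sum_w q^{\deg(\psi_{\hat w}e^\beta)}$, so it suffices to prove the identity
\[\deg(\psi_{\hat w}e^\beta) = -2\ell(w)-|T_\beta|-|T_\gamma|+2|T_\gamma\cup wT_\beta|\]
for each $w$, where $\ell(w)=\sum_a\ell(w_a)$.

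The degree of a minimal diagram is a sum over pairs of strings that cross. A minimal diagram crosses each pair at most once, and a pair crosses iff its relative order in $\beta$ differs from its relative order in $\gamma$. Same-colored pairs crossing contribute $-2$ each; these pairs are exactly the inversions of $w_a$, giving $-2\ell(w)$. Pairs of colors $a$ and $a\pm1$ contribute $+1$ if they cross. Distant colors contribute $0$. It remains to show that the number of crossing adjacent-color pairs equals $2|T_\gamma\cup wT_\beta|-|T_\beta|-|T_\gamma|$.

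For this, label each adjacent-color pair $\{(a,i),(a+1,j)\}$ of strings by its labels on top, i.e.\ in $\gamma$, after transporting bottom labels through $w$. The pair lies in $T_\gamma$ iff the $a$-string is left of the $(a+1)$-string on top, and it lies in $wT_\beta$ iff the same holds on the bottom. The pair crosses iff it lies in exactly one of $T_\gamma$ and $wT_\beta$. So the number of crossing pairs is $|T_\gamma\triangle wT_\beta| = 2|T_\gamma\cup wT_\beta|-|T_\gamma|-|wT_\beta|$. Finally, $|wT_\beta|=|T_\beta|$ because $w$ acts bijectively on labels, which gives the identity.

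The main obstacle is bookkeeping rather than mathematics. One must check that the convention for $wT_\beta$ (relabeling by $w_a$ applied to the bottom indices) matches the direction in which $\hat w$ transports strings. If it does not, one replaces $w$ by $w^{-1}$, which is harmless since $\ell(w)=\ell(w^{-1})$ and the sum runs over all of $S_\alpha$. One should also confirm the sign conventions: same-color crossings have degree $-2$, and adjacent crossings have degree $+1$ regardless of orientation, as in the degree table above.
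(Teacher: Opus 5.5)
Your proposal is correct and is essentially the paper's proof. Both use the Khovanov--Lauda basis, the bijection with $S_\alpha$, the factor $-2\ell(w)$ from same-color crossings, and a symmetric-difference count of the adjacent-color crossings; the paper transports to bottom labels and writes $T_\beta\triangle w^{-1}T_\gamma$, while you transport to top labels and write $T_\gamma\triangle wT_\beta$, which has the same size.

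One wording slip: the permutations carrying $\beta$ to $\gamma$ are those sending each string to a position of the same color, not those preserving the relative order of same-colored strings (only the minimal one does that). Your bijection with $S_\alpha$ and the inversion count for $w_a$ already use the correct description, so the argument is unaffected.
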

\begin{proof}
    Since $\CR_\alpha$ has a basis consisting of elements $y^m\psi_u e^\beta$ for $u\in S_n$ and $m=(m_1,\cdotsc,m_n)$, we may compute $\dim e^\gamma \CR_\alpha e^\beta$ as the dimension of $\BC\{y^m\}$ times the dimension of the space of crossings $\psi_u$ from $e^\beta$ to $e^\gamma$, the former of which is accounted for by $\dim R$. The permutations $u\in S_n$ with $\psi_u=e^\gamma \psi_u e^\beta$ are in bijection with $w\in S_\alpha$, as the order of the colors is prescribed already, and the grading is contributed only by 1-color and adjacent 2-color crossings. For each $w$, the corresponding $u$ has $\ell(w)$ 1-color crossings, contributing a factor $q^{-2\ell(w)}$ to the dimension count. The adjacent 2-color crossings occur precisely when either $((a,i),(a+1,j))\in T_\beta$ but $w((a,i),(a+1,j))\not\in T_\gamma$, in which case we have monoidally $\psi_1 e_{a,a+1}$ somewhere, or $w((a,i),(a+1,j))\in T_\gamma$ but $((a,i),(a+1,j))\not\in T_\beta$, in which case we have $\psi_1 e_{a+1,a}$. The set of such pairs is given by the symmetric difference $T_\beta\Delta w^{-1} T_\gamma=(T_\beta\cup w^{-1} T_\gamma)\sm (T_\beta\cap w^{-1} T_\gamma)$. Either way, this further contributes a factor of $q^{|T_\beta\Delta w^{-1} T_\gamma|}=q^{-|T_\beta|-|T_\gamma|+2|T_\beta\cup w^{-1} T_\gamma|}$. Summing over all $w$, we have the desired formula.
\end{proof}

\begin{proof}[Proof of Theorem \ref{thm:E=R}]
    It suffices to show that $\CR_{\alpha}$ is precisely the subspace $E$ of $\End_{\Sym}(\Pol)$ preserving $\bigoplus_\beta Q_\beta\cdot\Pol_\beta$. 

    One direction is easy, namely $\CR_\alpha\subseteq E$. It suffices to show that generators of $\CR$ lie in $E$. But this is clear: a 1-color crossing $1\otimes \psi_1 e_{kk}\otimes 1$ clearly preserves $\Pol_\beta$ and therefore also $Q_\beta\cdot\Pol_\beta$, since for the 1-color crossing to exist the 2 strings of color $k$ must be adjacent, whereupon $Q_\beta$ must be symmetric in those two variables. The crossing of adjacent colors $a\in e^{s_i\beta}\CR e^\beta$ is one of two cases,
    \[a=\begin{cases}
        1\otimes \psi_1 e_{\beta_i,\beta_i+1}\otimes 1\colon Q_\beta\cdot 1_\beta\lmto Q_\beta\cdot 1_{s_i\beta}\in Q_{s_i\beta}\cdot \Pol_{s_i\beta}&\te{if }\beta_{i+1}=\beta_i+1,\\
        1\otimes \psi_1 e_{\beta_i,\beta_i-1}\otimes 1\colon Q_\beta\cdot 1_\beta\lmto (y_i-y_{i+1}) Q_\beta\cdot 1_{s_i\beta}=Q_{s_i\beta} \cdot 1_{s_i\beta}\in Q_{s_i\beta}\cdot \Pol_{s_i\beta}&\te{if }\beta_{i+1}=\beta_i-1
    \end{cases},\]
    where $Q_\beta=Q_{s_i\beta}\cdot (y_i-y_{i+1})$ in the first case and $Q_{s_i\beta}=Q_\beta\cdot (y_i-y_{i+1})$ in the second.
    It is clear that crossings of distant colors preserve $\bigoplus_\beta Q_\beta\cdot\Pol_\beta$, as they act on polynomials only by changing the labeling color sequence. 

    As $\CE$ was a subsheaf of $\shend\CP$, it is torsion-free and finite-rank over $\CO_X$, so that $E=\CE(U)$ is torsion-free and finite-rank over $\Lbd=\CO_X(U)$. 

    To show $\CR_\alpha=E$ we now compare the graded dimensions. Namely, we will show that $\dim e^\gamma \CR_\alpha e^\beta\ge \dim E_{\beta\gamma}$, where 
    \[E_{\beta\gamma}=\{\phi\in\hom_\Lbd(\Pol_\beta,\Pol_\gamma):\phi(Q_\beta\cdot\Pol_\beta)\subseteq Q_\gamma\cdot \Pol_\gamma\}.\] 
    For convenience, let $F_{\beta\gamma}=\{\phi\in\hom_\Lbd(R,R):\phi(Q_\beta\cdot R)\subseteq Q_\gamma\cdot R\}$, so that
    \[E_{\beta\gamma}=q^{r(\gamma)-r(\beta)}F_{\beta\gamma}.\] 
    Let $H\coloneqq \hom_\Lbd(R,R)\cong\NH_{n_1}\otimes\cdots\otimes\NH_{n_N}$, so that elements of $H$ can naturally be written as $\sum_{w\in S_\alpha}a_w\pd_w$. There is a natural filtration $(F_{\beta\gamma})_l$ on $F_{\beta\gamma}$ inherited from the filtration on $H$ defined by letting $H_l$ be the space spanned by elements $\sum_{\ell(w)\le l}a_w\pd_w$; since
    \[\pd_w (Q_\beta\cdot f) = w(Q_\beta)\pd_w(f)+\sum_{\ell(v)<\ell(w)}b_v\pd_v(f),\]
    to say $\phi=\sum_w a_w\pd_w\in(F_{\beta\gamma})_l$ is precisely to say
    \[Q_\gamma\mid w(Q_\beta) a_w\quad\te{for each }\ell(w)=l,\] 
    i.e. 
    \[\frac{Q_\gamma}{\gcd(Q_\gamma,w Q_\beta)}\Bigm| a_w\quad\te{for each }\ell(w)=l.\] 
    Hence there is an injection
    \[\gr_l F_{\beta\gamma} \linj \bigoplus_{\ell(w)=l} \frac{Q_\gamma}{\gcd(Q_\gamma,w Q_\beta)}R\cdot \pd_w.\] 
    Note 
    \begin{align*}
        \deg \frac{Q_\gamma}{\gcd(Q_\gamma,w Q_\beta)}&=2|T_\gamma|-2|T_\gamma\cap w T_\beta|\\
        &=2|T_\gamma|-2(|T_\gamma|+|wT_\beta|-|T_\gamma\cup wT_\beta|)\\
        &=-2|T_\beta|+2|T_\gamma\cup wT_\beta|,
    \end{align*}
    so that summing over $l$ we have
    \[\dim F_{\beta\gamma}\le \dim R\cdot \sum_{w\in S_\alpha} q^{-2|T_\beta|+2|T_\gamma\cup wT_\beta|}q^{-2\ell(w)}.\]
    Since $r(\gamma)+|T_\gamma|=r(\beta)+|T_\beta|$, we have $r(\gamma)-r(\beta)-2|T_\beta|+2|T_\gamma\cup wT_\beta|=-|T_\gamma|-|T_\beta|+2|T_\gamma\cup wT_\beta|$, so that
    \[\dim E_{\beta\gamma}\le \dim R\cdot q^{-|T_\beta|-|T_\gamma|}\sum_{w\in S_\alpha} q^{2|T_\gamma\cup w T_\beta|}q^{-2\ell(w)}=\dim e^\gamma \CR_\alpha e^\beta,\] 
    as desired.

\end{proof}

\begin{remark}
    A previous iteration of this proof used a dimension comparison on an Ext group instead, by using the following lemma, which we record in case it can be useful for applying the program in this paper to other settings.
    \begin{lemma}\label{lem:dimlemma}
    Suppose $\Lbd$ is a graded ring and $V,V',W,W'$ are graded finite-rank modules over $\Lbd$ with $W\subseteq V$ and $W'\subseteq V'$. Let $V$ be projective over $\Lbd$. Let $E\subseteq \hom_\Lbd(V,V')$ be the subspace defined by
    \[E=\{\phi\in\hom_\Lbd(V,V'): \phi(W)\subseteq W'\}.\] 
    Then
    \begin{align*}
        \dim E&=\dim \hom_\Lbd(V,V')-\dim\hom_\Lbd(W,V'/W')+\dim\Ext_\Lbd^1(V/W,V'/W')\\
        &=\rnk V' \rnk V^* \dim\Lbd- \rnk W^*(\rnk V'-\rnk W')\dim\Lbd+\dim\Ext_\Lbd^1(V/W,V'/W').
    \end{align*}
\end{lemma}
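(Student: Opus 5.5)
The plan is to derive both equalities from the left-exact sequence of $\Hom$ for the short exact sequence $0\to W\to V\to V/W\to 0$, and then read off dimensions.

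First, I would identify $E$ with a kernel. Composing $\phi\in\hom_\Lbd(V,V')$ with the inclusion $W\hookrightarrow V$ and the projection $V'\to V'/W'$ gives a map
\[\rho\colon \hom_\Lbd(V,V')\lto\hom_\Lbd(W,V'/W').\]
By definition $E=\ker\rho$, since $\phi(W)\subseteq W'$ exactly when $\phi\rvert_W$ vanishes modulo $W'$. So $\dim E=\dim\hom(V,V')-\dim\operatorname{im}\rho$, and the work is to compute $\operatorname{coker}\rho$.

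Next, I would factor $\rho$ as
\[\hom(V,V')\xrightarrow{a}\hom(V,V'/W')\xrightarrow{b}\hom(W,V'/W').\]
Applying $\hom(-,V'/W')$ to the sequence for $W\subseteq V$ yields
\[\hom(V,V'/W')\xrightarrow{b}\hom(W,V'/W')\lto\Ext^1(V/W,V'/W')\lto\Ext^1(V,V'/W')=0,\]
where the last term vanishes because $V$ is projective. Hence $\operatorname{coker} b\cong\Ext^1(V/W,V'/W')$. Projectivity of $V$ also makes $a$ surjective, so $\operatorname{im}\rho=\operatorname{im} b$. Therefore
\[\dim E=\dim\hom(V,V')-\dim\hom(W,V'/W')+\dim\Ext^1(V/W,V'/W'),\]
which is the first line of the statement. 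All maps are graded, so this holds as an identity of graded dimensions, provided the graded pieces are finite-dimensional. That finiteness follows from finite rank over $\Lbd$, assuming, as in the application, that $\Lbd$ is connected graded with finite-dimensional pieces.

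For the second line, I would rewrite the two $\hom$ terms in terms of ranks. Since $V$ is projective of finite rank, $\hom(V,V')\cong V^*\otimes_\Lbd V'$, giving the term $\rnk V'\rnk V^*\dim\Lbd$. For $\hom(W,V'/W')$ the intended reading is $\rnk W^*\cdot\rnk(V'/W')\dim\Lbd$ with $\rnk(V'/W')=\rnk V'-\rnk W'$. Here "$\rnk$" is interpreted as a graded rank polynomial, and this step tacitly requires $W$ projective and $V'/W'$ free. In the setting of interest, with $W=Q\cdot\Pol$ free and $V'/W'$ appropriately free, this holds; in general I would either add that hypothesis or interpret the term through graded Hilbert series.

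The main obstacle is this second equality. Matching the $\hom(W,V'/W')$ term with the rank formula is not automatic: $V'/W'$ is often torsion (for instance $\Pol/Q\Pol$), and then the rank formula fails. I would therefore state it under the freeness or projectivity assumptions just described. The first equality is routine homological algebra.
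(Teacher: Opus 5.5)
Your argument for the first line is the same as the paper's. Both factor $\phi\mapsto\bigl(W\hookrightarrow V\xrightarrow{\phi}V'\twoheadrightarrow V'/W'\bigr)$ through $\hom_\Lbd(V,V'/W')$. Both use projectivity of $V$ to make the first map surjective and to kill $\Ext^1_\Lbd(V,V'/W')$. This identifies the cokernel of the composite with $\Ext^1_\Lbd(V/W,V'/W')$, and one then takes graded dimensions.

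Your diagnosis of the second line is off. The hypothesis you propose adding, that $V'/W'$ be free, would exclude exactly the intended case $V'=\Pol$, $W'=Q\cdot\Pol$. Nothing about $V'/W'$ is needed.

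\begin{itemize}
\item If $W$ is graded free, then for every graded module $M$, torsion or not, $\hom_\Lbd(W,M)$ is a direct sum of grading shifts of $M$, one per basis element of $W$. Hence $\dim\hom_\Lbd(W,M)=\rnk W^*\cdot\dim M$. (For finitely generated graded modules over a connected graded ring such as a polynomial ring, graded free is the same as projective.)
\item Take $M=V'/W'$ with $V',W'$ free. Additivity of graded dimension along $0\to W'\to V'\to V'/W'\to 0$ gives $\dim(V'/W')=(\rnk V'-\rnk W')\dim\Lbd$.
\end{itemize}

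The point you missed is that $\rnk V'-\rnk W'$ is a difference of graded rank polynomials. It is not the ungraded rank of the torsion quotient, which is zero. For example, take $\Lbd=\BC[x]$ with $\deg x=2$, $V'=\Lbd$, and $W'=x\Lbd$. Then $(1-q^2)\dim\Lbd=1=\dim\BC$, as it should be.

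So the second line holds once ``finite-rank'' is read as ``graded free of finite rank'' for $V,V',W,W'$; the first term likewise uses $V'$ free. Your fallback of reading the formula through graded Hilbert series is not a weakening: it is exactly what the formula says. The paper also leaves this step unstated.
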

\begin{proof}
    Consider the composition of maps
    \[\hom_\Lbd(V,V')\oslto{f} \hom_\Lbd(V,V'/W')\oslto{g} \hom_\Lbd(W,V'/W'),\] 
    where $f$ is induced by the quotient $V'\lsurj V'/W'$ and $g$ is restriction to $W$. Note that $E=\ker gf$. Hence we have the exact sequence
    \[0\lto E\lto \hom_\Lbd(V,V')\lto \hom_\Lbd(W,V'/W')\lto \coker gf\lto 0.\] 

    Applying the exact functor $\hom_\Lbd(V,\sq)$ to $0\lto W'\lto V'\lto V'/W'$ and applying the functor $\rhom_\Lbd(\sq,V'/W')$ to $0\lto W\lto V\lto V/W\lto 0$ show that 
    \begin{align*}
        \coker f&=\ker(\Ext_\Lbd^1(V,W')\lto \Ext_\Lbd^1(V,V')),\\
        \coker g&=\ker(\Ext_\Lbd^1(V/W,V'/W')\lto \Ext_\Lbd^1(V,V'/W')).
    \end{align*} 
    But $V$ is projective, so we know $\coker f=0$ and $\coker g=\Ext_\Lbd^1(V/W,V'/W')$. As $\coker f\lto \coker gf\lto \coker g\lto 0$, we then know
    \[\coker gf=\Ext_\Lbd^1(V/W,V'/W').\]

    Now taking the alternating sum of dimensions of the earlier exact sequence gives the claim.
\end{proof}
\end{remark}

\subsection{The geometric monoidal product}\label{sec:geometric monoidal}
Let $\CE$ be the coherent KLR sheaf on $X$ corresponding to $\alpha$, and similarly for $\CE'$ on $X'$ and $\CE''$ on $X''$ corresponding to $\alpha'$ and $\alpha''$, where if $\alpha=\sum_a n_a\alpha_a$ and $\alpha'=\sum_a n'_a\alpha_a$, we let $\alpha''=\alpha+\alpha'=\sum_a n''_a\alpha_a$ and $n''_a=n_a+n'_a.$ We wish to construct a monoidal product.

Write $(X,\CP,\CQ,\CE)$, $(X',\CP',\CQ',\CE')$, and
$(X'',\CP'',\CQ'',\CE'')$ for the spaces and sheaves associated with
$\alpha,\alpha'$, and $\alpha''$, respectively. Let
\[c\colon X\times X'\lto X''\] 
be defined by applying the concatenation map per color, namely sending $\BP^{n_a}\times\BP^{n_a'}\lto \BP^{n_a''}.$ Note that this map is finite.

Next, set \[\mathcal L_{\alpha,\alpha'}\coloneqq \boxtimes_{a=1}^N \CO_{\BP^{n_a}}(n'_a-n'_{a-1})\] on $X$ and \[\mathcal V_{\alpha,\alpha'}\coloneqq(\CP\otimes\mathcal L_{\alpha,\alpha'})\boxtimes\CP'\]on $X\times X'$.

Recall that $\wh Y''=\bigsqcup_{\gamma\in\Seq_{\alpha''}}Y''_\gamma$
is, by definition, a disjoint union of copies of $Y''$ indexed by
color sequences. Thus any union of these components is open and closed. Denote by $\wh Y''_{\alpha,\alpha'}\subseteq\wh Y''$ the (open and closed) union of the components indexed by $\Seq_{\alpha,\alpha'}$, which is the subset of $\Seq_{\alpha''}$ obtained by concatenating sequences from $\Seq_\alpha$ and $\Seq_{\alpha'}$. The subset $\wh Y''_{\alpha,\alpha'}$ geometrically records the ``concatenated idempotent'' $\sum_{\beta\in\Seq_\alpha,\beta'\in\Seq_{\alpha'}}e^{\beta\beta'},$ since the usual KLR monoidal product sends $e^\beta\otimes e^{\beta'}$ to $e^{\beta\beta'}$ and acts by zero on the other summands.

Define \[\CP''_{\alpha,\alpha'}\coloneqq \varpi''_*(\wh\CP''|_{\wh Y''_{\alpha,\alpha'}})\] and write  \[\CP''=\CP''_{\alpha,\alpha'}\oplus\CP''_{\mathrm{rest}}.\] \begin{lemma}\label{lem:monoidal-polynomial-sheaf}
There is a canonical $\SL_2$-equivariant isomorphism
\[\theta_{\alpha,\alpha'}\colon c_*\mathcal V_{\alpha,\alpha'} \xrightarrow{\sim}\CP''_{\alpha,\alpha'}.\]
\end{lemma}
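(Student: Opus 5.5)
The plan is to reproduce the one-color argument of Section \ref{sec:1color monoidal} component by component on the disjoint-union covers. Let $\wh{\sha}\colon Y\times Y'\xrightarrow{\sim} Y''$ be color-wise concatenation of the ordered $\BP^1$-factors: the $a$-variables of $Y$ become $x''_{a,1},\dots,x''_{a,n_a}$, and those of $Y'$ become $x''_{a,n_a+1},\dots,x''_{a,n''_a}$. Since $\Seq_\alpha\times\Seq_{\alpha'}\to\Seq_{\alpha,\alpha'}$, $(\beta,\beta')\mapsto\beta\beta'$, is a bijection (the first $n=\sum n_a$ letters determine $\beta$), $\wh{\sha}$ assembles into an isomorphism
\[\wh{\sha}\colon \wh Y\times\wh Y'\xrightarrow{\sim}\wh Y''_{\alpha,\alpha'},\]
sending the $(\beta,\beta')$ component to the $\beta\beta'$ component. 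By construction it satisfies
\[\varpi''\circ\wh{\sha}=c\circ(\varpi\times\varpi').\]
It is $\SL_2$-equivariant for the diagonal actions, as are $c$, $\varpi$ and $\varpi'$, by Lemma \ref{lem:SL2-equivariance} and the argument of Lemma \ref{lem: finiteness_mu}.

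The key computation is a component-wise identification of line bundles:
\[\wh{\sha}^*\big(\wh\CP''|_{\beta\beta'}\big)\cong\big(\wh\CP|_\beta\otimes\varpi^*\mathcal L_{\alpha,\alpha'}\big)\boxtimes\wh\CP'|_{\beta'}.\]
I will compare twists factor by factor. Take the $i$-th $a$-string of $\beta\beta'$ with $i\le n_a$, so that it lies in $\beta$. Its twist in $\wh\CP''$ is $n''_a-i-r_{a,i}(\beta\beta')$. The $(a-1)$-strings to its right consist of those to its right inside $\beta$ together with all $n'_{a-1}$ such strings of $\beta'$, so $r_{a,i}(\beta\beta')=r_{a,i}(\beta)+n'_{a-1}$. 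The twist is therefore
\[\big(n_a-i-r_{a,i}(\beta)\big)+\big(n'_a-n'_{a-1}\big).\]
The second bracket is exactly the degree that $\varpi^*\mathcal L_{\alpha,\alpha'}$ contributes to each $a$-factor, using $\pi^*\CO_{\BP^{n_a}}(1)\cong\CO(1,\dots,1)$. Now take the $(n_a+j)$-th $a$-string, which lies in $\beta'$. Its twist is
\[n''_a-n_a-j-r_{a,j}(\beta')=n'_a-j-r_{a,j}(\beta'),\]
since no string of $\beta$ lies to its right. This is its twist in $\wh\CP'|_{\beta'}$.

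I then push forward along $\varpi''\circ\wh{\sha}=c\circ(\varpi\times\varpi')$. The projection formula gives
\[(\varpi\times\varpi')_*\big((\wh\CP\otimes\varpi^*\mathcal L)\boxtimes\wh\CP'\big)\cong(\CP\otimes\mathcal L)\boxtimes\CP'=\mathcal V_{\alpha,\alpha'}.\]
Here I use that pushforward along a product of finite maps commutes with $\boxtimes$, via flat base change or directly on affines. Applying $c_*$ then yields $c_*\mathcal V_{\alpha,\alpha'}\cong\varpi''_*\wh{\sha}_*\wh{\sha}^*(\wh\CP''|_{\wh Y''_{\alpha,\alpha'}})=\CP''_{\alpha,\alpha'}$, and I take $\theta_{\alpha,\alpha'}$ to be this composite.

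I expect the main obstacle to be canonicity and $\SL_2$-equivariance of the line bundle identification, rather than the twist count. An isomorphism of line bundles on a connected projective variety such as $Y''$ is unique only up to a scalar. To pin it down, I will normalize it on the dominant chart $\wh U$ by matching the standard frames. This makes $\theta$ the identity on polynomials after the variable relabelling $\Pol_{\alpha''}\cong\Pol_\alpha\otimes\Pol_{\alpha'}$, as in the proof of Theorem \ref{thm:A1-monoidal}. Equivariance then holds for two reasons. The identifications $\CO(k)\boxtimes\CO(l)$ and $\pi^*\CO(1)\cong\CO(1,\dots,1)$ are isomorphisms of $\SL_2$-linearized bundles, because the projection formula and pullback maps are functorial. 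Moreover, $\SL_2$ has no nontrivial characters, so any two $\SL_2$-linearizations of the same line bundle on a connected projective variety coincide. Hence the composite is $\SL_2$-equivariant, and no scalar ambiguity can spoil equivariance.
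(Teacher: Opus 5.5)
Your proposal is correct and follows the paper's proof essentially step for step. You use the same concatenation isomorphism $\wh\sha\colon \wh Y\times\wh Y'\xrightarrow{\sim}\wh Y''_{\alpha,\alpha'}$ and the same factor-by-factor twist count: the $\beta$-block gains $n'_a-n'_{a-1}$ and the $\beta'$-block is unchanged. You then conclude as the paper does, via the projection formula and the identity $c\circ(\varpi\times\varpi')=\varpi''\circ\wh\sha$.

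Your closing argument goes slightly further than the paper. You note that $\SL_2$ has no nontrivial characters, so any isomorphism between $\SL_2$-linearized line bundles on a connected projective variety is automatically equivariant. This is valid, and it justifies more carefully what the paper only asserts as ``all of these calculations are $\SL_2$-equivariant.''
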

\begin{proof} Concatenation of color sequences, together with concatenation of the
ordered variables within each color, gives an $\SL_2$-equivariant
isomorphism
\[\wh\sha \colon\wh Y\times\wh Y'\xrightarrow{\sim}\wh Y''_{\alpha,\alpha'}.\]
It remains to compare the corresponding line bundles. 

Fix $\beta\in\Seq_\alpha$ and $\beta'\in\Seq_{\alpha'}$. For a color $a$ variable belonging to the first
block (i.e. the ones coming from $\beta$), passing from $\beta$ to $\beta\beta'$ increases the initial
same-color twist by $n'_a$. At the same time, every one of the
$n'_{a-1}$ strings of color $a-1$ belonging to $\beta'$ now lies to
its right. Thus its total twist changes by $n'_a-n'_{a-1}.$

For a variable belonging to the second block (i.e. from $\beta'$), its index is shifted by
$n_a$, while the number of $(a-1)$-strings to its right is unchanged;
hence its twist is exactly the same as in $\wh\CP'_{\beta'}$. Since the symmetrizing morphism pulls
$\CO_{\BP^{n_a}}(1)$ back to $\CO(1,\ldots,1)$, by using the earlier calculation (namely that every color $a$ variable from $\alpha$ gains an extra $n_a'-n_{a-1}'$ in degree and that nothing changes for the the variables in $\alpha'$) and taking the disjoint union over all $\beta,\beta'$ gives the isomorphism
\[\wh\sha^*(\wh\CP''|_{\wh Y''_{\alpha,\alpha'}})\cong
(\wh\CP\otimes\varpi^*\mathcal L_{\alpha,\alpha'})
\boxtimes\wh\CP'.
\] Set
\[q=\varpi\times\varpi' \text{ and }\mathcal F=(\wh\CP\otimes\varpi^*\mathcal L_{\alpha,\alpha'})\boxtimes\wh\CP'.\]
By the projection formula and compatibility of finite pushforward with
external products, we have
\[q_*\mathcal F
\cong
(\CP\otimes\mathcal L_{\alpha,\alpha'})\boxtimes\CP'
=
\mathcal V_{\alpha,\alpha'}.
\]
Since
$c\circ q=\varpi''_{\alpha,\alpha'}\circ\wh\sha$ and
$\wh\sha$ is an isomorphism, the preceding line bundle
identification gives \[c_*\mathcal V_{\alpha,\alpha'}\cong
\varpi''_{\alpha,\alpha',*}\wh\sha_*\mathcal F\cong
\varpi''_{\alpha,\alpha',*}(
\wh\CP''|_{\wh Y''_{\alpha,\alpha'}})=\CP''_{\alpha,\alpha'}.\]All of these calculations are $\SL_2$-equivariant, and so the result follows.\end{proof}

There is a natural morphism 
\begin{align*}
    \iota_{\alpha,\alpha'}\colon \CE\boxtimes\CE'&\longrightarrow\shend(\mathcal V_{\alpha,\alpha'})\\
    \phi\boxtimes\phi'&\longmapsto(\phi\otimes\id_{\mathcal L_{\alpha,\alpha'}})\boxtimes\phi'.
\end{align*}
Consider the composition \[c_*\shend(\mathcal V_{\alpha,\alpha'})\longrightarrow c_*\mathcal{H}om(c^*c_*\mathcal V_{\alpha,\alpha'}, \mathcal V_{\alpha,\alpha'})\simlto\shend(c_*\mathcal V_{\alpha,\alpha'}),\] where the first arrow is induced by pre-composition with the counit of the adjunction $c^*\dashv c_*.$ By Lemma \ref{lem:monoidal-polynomial-sheaf}, this becomes
\[c_*(\CE\boxtimes\CE')\longrightarrow\shend(\CP''_{\alpha,\alpha'}).\] Finally, extend an endomorphism of $\CP''_{\alpha,\alpha'}$ by zero on $\CP''_{\mathrm{rest}}$, which then gives rise to an $\SL_2$-equivariant morphism of sheaves of algebras
\[\Phi_{\alpha,\alpha'}\colon c_*(\CE\boxtimes\CE')\longrightarrow\shend(\CP'').\] 
\begin{theorem}\label{thm:A_N monoidal}
The morphism $\Phi_{\alpha,\alpha'}$ takes values in $\CE''$ and
therefore defines a $\SL_2$-equivariant morphism \[\otimes\colon c_*(\CE\boxtimes\CE')\longrightarrow\CE''.\]Moreover, these morphisms are associative, and $\CE_0\coloneqq \CO_{\Spec \BC}$ acts as their monoidal unit. 

On the dominant affine chart, this is the usual monoidal product
\[\CR_\alpha\otimes\CR_{\alpha'}\longrightarrow\CR_{\alpha+\alpha'}\]
given by horizontal concatenation of KLR diagrams.
\end{theorem}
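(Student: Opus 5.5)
The plan is to reduce everything to the dominant affine chart and then propagate the conclusions using torsion-freeness and $\SL_2$-equivariance, in the same way as Theorem \ref{thm:A1-monoidal} and Proposition \ref{prop:Q as cap}.

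\textbf{Step 1: the dominant chart.} First I compute $\Phi_{\alpha,\alpha'}$ over $U''$. Since the concatenation map $c$ adds divisors color by color, a divisor avoids $\infty$ exactly when both summands do. Hence $c^{-1}(U'')=U\times U'$, and $\mathcal L_{\alpha,\alpha'}$ is trivialized there by the frame at $\infty$.

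Under the isomorphism $\theta_{\alpha,\alpha'}$ of Lemma \ref{lem:monoidal-polynomial-sheaf}, and using $\wh\sha$ summand by summand, we obtain an identification
\[\CP''_{\alpha,\alpha'}(U'')\cong \bigoplus_{\beta,\beta'}\Pol_\beta\otimes_\BC\Pol'_{\beta'}.\]
Let $\phi\in\CE(U)=\CR_\alpha$ and $\phi'\in\CE'(U')=\CR_{\alpha'}$, using Theorem \ref{thm:E=R}. Then $\Phi(\phi\boxtimes\phi')$ acts by $\phi$ on the first block of variables and by $\phi'$ on the second, and by zero on $\CP''_{\mathrm{rest}}$.

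This is precisely how the horizontal concatenation $\phi\otimes\phi'\in\CR_{\alpha''}$ acts on the faithful polynomial representation. The point to check is that the generators of Khovanov--Lauda act locally: dots and crossings touch only the variables of the strands involved. The remaining factor is only the grading shift $q^{r(\beta)}$ on $\Pol_\beta$. By the twist bookkeeping in the proof of Lemma \ref{lem:monoidal-polynomial-sheaf}, it changes by the same amount on both source and target, so it cancels in $\shend$.

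Therefore $\Phi(\phi\boxtimes\phi')=\phi\otimes\phi'\in\CR_{\alpha''}=\CE''(U'')$. This gives both the statement that $\Phi$ lands in $\CE''$ over $U''$ and the identification with the usual monoidal product there.

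\textbf{Step 2: landing in $\CE''$ globally.} Next I show that $\Phi$ lands in $\CE''$ everywhere. The condition is that the image preserves $\CQ''$, and it suffices to check this on an open cover.

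I use the cover by the opens $U''_p$, $p\in\BP^1$, from the end of the proof of Proposition \ref{prop:Q as cap}. Note that $c^{-1}(U''_p)=U_p\times U'_p$. Choose $g\in\SL_2$ with $g\infty=p$. Everything in sight is $\SL_2$-equivariant: $c$, $\theta$, the counit map, extension by zero, and the sheaves $\CQ,\CQ',\CQ''$ (by Lemma \ref{lem:SL2-equivariance}). So translating by $g$ transports the Step 1 statement on $U''$ to the same statement on $U''_p$.

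A cleaner alternative is direct. On $\wh Y\times\wh Y'$, the product $Q_\beta Q'_{\beta'}$ divides $Q''_{\beta\beta'}$, and the extra factors $x_{a,i}-x'_{a+1,j}$ are functions on the second factor times the first. Hence $(\phi\otimes 1)\boxtimes\phi'$ sends
\[Q''_{\beta\beta'}\,\mathcal F \;=\; (\text{extra factor})\cdot Q_\beta Q'_{\beta'}\,\mathcal F\]
into itself. This holds locally everywhere, since $\phi$ and $\phi'$ are $\CO$-linear.

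\textbf{Step 3: associativity, unit, and equivariance.} Both iterated products $c_*(\CE\boxtimes\CE'\boxtimes\CE''')\to\CE''''$ are morphisms into a torsion-free sheaf. On the dense open $U\times U'\times U'''$ they agree by associativity of horizontal concatenation, so they agree everywhere.

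For the unit, $\CE_0=\CO_{\Spec\BC}$, and $c$ is the identity when one factor is $\Spec\BC$. The unit map sends $1$ to $\id$, which is immediate.

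$\SL_2$-equivariance holds because every constituent map is equivariant.

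\textbf{Main obstacle.} I expect the main difficulty to be Step 1: verifying carefully that the counit-induced map $c_*\shend(\mathcal V)\to\shend(c_*\mathcal V)$, composed with $\theta$, really matches the tensor action on $\Pol_\beta\otimes\Pol'_{\beta'}$ with the correct grading twists. I would try to settle this by working entirely upstairs on $\wh Y''_{\alpha,\alpha'}\cong\wh Y\times\wh Y'$, where $c_*$ becomes just $\varpi''_*\wh\sha_*$ and the line-bundle identification is explicit.
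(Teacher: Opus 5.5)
Your proposal is correct and follows the paper's proof essentially step for step. The steps are the same: identify $\Phi_{\alpha,\alpha'}$ over $U''$ (where $c^{-1}(U'')=U\times U'$ and $\mathcal L_{\alpha,\alpha'}$ is trivial) with horizontal concatenation via the faithful polynomial representation and Theorem \ref{thm:E=R}, transport the resulting containment in $\CE''$ to the $\SL_2$-translates $U''_p$, and get associativity from torsion-freeness of $\CE''$ plus agreement on a dense open. One caveat concerns your optional ``direct'' variant of Step 2: the individual factors $x_{a,i}-x'_{a+1,j}$ do not commute with $\phi$, which is only $\CO_X$-linear, so that argument needs a further observation. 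The full extra factor $\prod_a\prod_{i\le n_a}\prod_{j\le n'_{a+1}}(x_{a,i}-x'_{a+1,j})$ is independent of $(\beta,\beta')$ and $S_\alpha\times S_{\alpha'}$-invariant, hence locally a function on $X\times X'$ that commutes with $\phi\boxtimes\phi'$. This matters because $\phi\boxtimes\phi'$ sends the $\beta\beta'$-summand to other summands $\gamma\gamma'$, not ``into itself.''
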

\begin{proof}
    Let $\iota\colon\CQ''\hookrightarrow\CP''$ and $q\colon\CP''\twoheadrightarrow\CP''/\CQ''$ denote the inclusion and quotient maps. By definition, \[\CE''=\ker(\shend(\CP'')\longrightarrow\shom(\CQ'',\CP''/\CQ'')),\]where \[f\longmapsto q\circ f\circ\iota.\] Therefore, to show that $\Phi_{\alpha,\alpha'}$ takes values in $\CE''$, it is enough to prove that\[q\circ\Phi_{\alpha,\alpha'}(s)\circ\iota=0\]
for every local section $s$ of $c_*(\CE\boxtimes\CE')$.
Let $U''\subset X''$ be the dominant affine chart, and let $U\subset X$ and $U'\subset X'$ be the corresponding charts. We have
\[c^{-1}(U'')=U\times U'\] because these charts parametrize effective divisors supported on $\BA^1=\BP^1\setminus\{\infty\}$.

On $U$, the non-vanishing homogeneous coordinates $\E_{a,0}$ trivialize every factor of $\mathcal L_{\alpha,\alpha'}$, and hence
\[\mathcal L_{\alpha,\alpha'}|_U\cong\CO_U.\] Therefore Lemma \ref{lem:monoidal-polynomial-sheaf}, restricted to the
components indexed by $\beta$ and $\beta'$, gives
\[\wh\sha^*\widehat\CP''_{\beta\beta'}\cong\widehat\CP_\beta\boxtimes\widehat\CP'_{\beta'}.
\]
Taking sections over the corresponding affine charts identifies
\[\Pol_{\beta\beta'}\cong\Pol_\beta\otimes_{\BC}\Pol_{\beta'},\]
where the variables in the second factor are placed after those in the first factor.  Under this identification, the idempotent $e^{\beta\beta'}$ corresponds to $e^\beta\otimes e^{\beta'}.$ Moreover,
$\Phi_{\alpha,\alpha'}(x\otimes y)$ acts on this summand as the
external tensor product of the actions of $x$ and $y$.  Hence
\[e^{\gamma\gamma'}\Phi_{\alpha,\alpha'}(x\otimes y)e^{\beta\beta'}=
(e^\gamma\otimes e^{\gamma'})(x\otimes y)
(e^\beta\otimes e^{\beta'})=(e^\gamma x e^\beta)\otimes(e^{\gamma'}ye^{\beta'}).\] Moreover, any block not indexed by
concatenated sequences is zero. This is exactly the action on the
polynomial representation of the diagram obtained by placing the
diagrams for $x$ and $y$ side by side.  Therefore, using Theorem
\ref{thm:E=R}, the restriction of $\Phi_{\alpha,\alpha'}$ to $U''$
is precisely the usual KLR homomorphism
\[\CR_\alpha\otimes\CR_{\alpha'}\longrightarrow\CR_{\alpha''}.\]
Its image consequently lies in
$\CE''(U'')=\CR_{\alpha''}$, and hence
$\rho\circ\Phi_{\alpha,\alpha'}$ vanishes over $U''.$

For a closed point $p\in\BP^1$, let
\[U''_p=\{(D_1,\ldots,D_N)\in X''\colon p\notin\Supp(D_a)\text{ for every }a\}.\]The opens $U''_p$ cover $X''$, and each is an $\SL_2$-translate of $U''$. The morphisms $\rho$ and $\Phi_{\alpha,\alpha'}$ are $\SL_2$-equivariant. Therefore the vanishing over $U''$ transports to each $U''_p$. Hence \[\rho\circ\Phi_{\alpha,\alpha'}=0\] globally, proving that $\Phi_{\alpha,\alpha'}$ lands in $\CE''$.

For associativity, consider the two morphisms obtained from three
weights $\alpha_1,\alpha_2,\alpha_3$, after making the canonical
identifications between the two iterated pushforwards. On the
dominant affine chart, both are horizontal concatenation of three KLR
diagrams and hence agree. Their difference is a morphism into the
torsion-free sheaf $\CE_{\alpha_1+\alpha_2+\alpha_3}$ and vanishes on
a dense open subset, so it vanishes globally. The assertion about the
monoidal unit is clear by construction.
\end{proof}

\section{Modules over the KLR sheaf}
Let\[\CM_{\pi_k}\coloneqq\varpi_* (\wh\CQ_{\ge\pi_k}/\wh\CQ_{\ge\pi_{k-1}})\]
be a sheaf on $X$. This subsection leads up to proving that this sheaf is the ``Verma sheaf''. 

\begin{lemma}\label{lem:supp M in X}
    The ideal sheaf $I_{X_\pi}$ of $X_\pi$ annihilates $\CM_\pi$, and hence $\CM_\pi$ is a sheaf on $X_\pi$, i.e., $\Supp \CM_\pi\subseteq X_\pi$. 
\end{lemma}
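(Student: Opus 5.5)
The plan is to prove the annihilation upstairs on $\wh Y$ and then push forward along $\varpi$. Write $\pi=\pi_k$. By definition,
\[\wh\CQ_{\ge\pi_k}=\wh\CQ\otimes\wh\CI_{<\pi_k},\qquad \wh\CQ_{\ge\pi_{k-1}}=\wh\CQ\otimes\wh\CI_{<\pi_{k-1}}.\]
Since $\wh\CI_{<\pi_{k-1}}=\wh\CI_{\le\pi_k}=\wh\CI_{\pi_k}\cap\wh\CI_{<\pi_k}$, both are subsheaves of $\wh\CP$. Here $\wh\CQ$ is a line bundle on each component, so tensoring with it is exact and preserves inclusions and intersections of ideals. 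For $k=0$ I read $\wh\CQ_{\ge\pi_{-1}}$ via the same formula, $\wh\CQ\otimes\wh\CI_{\le\pi_0}$, and the argument is identical.

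\textbf{Step 1: pull back the ideal.} The $\CO_X$-module structure on $\CM_\pi=\varpi_*(\wh\CQ_{\ge\pi_k}/\wh\CQ_{\ge\pi_{k-1}})$ is given by $f\cdot s=\varpi^\#(f)s$. By Lemma \ref{lem:Yhat filt vs X filt}, the scheme-theoretic image of $\wh Y_\pi=V(\wh\CI_\pi)$ under $\varpi$ is $X_\pi$. By the definition of scheme-theoretic image, $\CI_{X_\pi}$ is the kernel of $\CO_X\to\varpi_*\CO_{\wh Y_\pi}$. Hence $\varpi^\#(\CI_{X_\pi})\subseteq\wh\CI_\pi$, that is, $\varpi^{-1}\CI_{X_\pi}\cdot\CO_{\wh Y}\subseteq\wh\CI_\pi$.

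\textbf{Step 2: compute upstairs.} Take a local section $f$ of $\CI_{X_\pi}$ and a local section $q\otimes i$ of $\wh\CQ\otimes\wh\CI_{<\pi_k}$. Then
\[\varpi^\#(f)\,(q\otimes i)=q\otimes\varpi^\#(f)i\in\wh\CQ\otimes\bigl(\wh\CI_\pi\cdot\wh\CI_{<\pi_k}\bigr)\subseteq\wh\CQ\otimes\bigl(\wh\CI_\pi\cap\wh\CI_{<\pi_k}\bigr)=\wh\CQ_{\ge\pi_{k-1}}.\]
So $\varpi^{-1}\CI_{X_\pi}$ kills the quotient $\wh\CQ_{\ge\pi_k}/\wh\CQ_{\ge\pi_{k-1}}$.

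\textbf{Step 3: push forward.} Since $\varpi$ is finite, hence affine, $\varpi_*$ is exact on quasi-coherent sheaves. Therefore $\CM_\pi=\varpi_*\wh\CQ_{\ge\pi_k}/\varpi_*\wh\CQ_{\ge\pi_{k-1}}$, and Step 2 gives $\CI_{X_\pi}\CM_\pi=0$. It follows that $\CM_\pi$ is an $\CO_X/\CI_{X_\pi}$-module, i.e.\ a coherent sheaf on $X_\pi$, and $\Supp\CM_\pi\subseteq X_\pi$.

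As a cross-check, the same conclusion follows downstairs from Proposition \ref{prop:Q as cap}. It gives $\CQ_{\ge\pi_k}=\CQ\cap\CI_{<\pi_k}\CP$, and multiplying by $\CI_{X_\pi}$ lands in
\[\CQ\cap\bigl(\CI_{X_\pi}\cap\CI_{<\pi_k}\bigr)\CP=\CQ_{\ge\pi_{k-1}}.\]

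The only step I expect to need care is Step 1: correctly translating "scheme-theoretic image" into the containment of ideals. This is just the universal property (the image ideal is $\ker(\CO_X\to\varpi_*\CO_{\wh Y_\pi})$), so it should not be a real obstacle.
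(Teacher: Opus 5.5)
Your argument is correct. Its main line differs from the paper's, and your ``cross-check'' is essentially the paper's proof.

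The paper works entirely downstairs. It combines Proposition \ref{prop:Q as cap} with exactness of $\varpi_*$ to write $\CM_\pi=(\CQ\cap\CI_{<\pi}\CP)/(\CQ\cap\CI_{\le\pi}\CP)$. It then notes that $\CI_{X_{\pi_k}}\cdot\CI_{\le\pi_{k+1}}\subseteq\CI_{X_{\pi_k}}\cap\CI_{\le\pi_{k+1}}=\CI_{\le\pi_k}$.

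Your upstairs route avoids Proposition \ref{prop:Q as cap}, whose proof needs the reducedness of $V(I_{X_\pi}R)$ and the colon-ideal computation with $Q_\beta$. In its place you use only the softer containment $\varpi^\#(\CI_{X_\pi})\subseteq\wh\CI_\pi$. This is just the statement that $\wh Y_\pi\to X$ factors through $X_\pi$, i.e.\ one inclusion of Lemma \ref{lem:Yhat filt vs X filt}. It can also be seen directly: on each component, $\wh Y_\pi$ is the reduced union of the diagonal loci, and each of these maps to $X$ through $\mu_\pi$.

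What the paper's route buys is an intrinsic description of $\CM_\pi$ as a subquotient of $\CP$ on $X$. Since Proposition \ref{prop:Q as cap} is needed elsewhere anyway (for instance in Lemma \ref{lem:Qk is submod of Q}), the lemma then costs the paper two lines.

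Your reading of the $k=0$ case is harmless. Every cable of $\pi_0$ is a single string and every $\beta$ lies in $S^{\pi_0}\cdot\pi_0$, so $\wh\CI_{\pi_0}=0$ on every component. Hence $\wh\CQ\otimes\wh\CI_{\le\pi_0}=0$, consistent with the filtration starting at $0$. In any case $\CI_{X_{\pi_0}}=0$, so that case is trivial.
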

\begin{proof}
    By Proposition \ref{prop:Q as cap}, and by the exactness of $\varpi_*$, we have
    \[\CM_\pi=\mfrac{\CQ\cap\CI_{<\pi}\CP}{\CQ\cap \CI_{\le\pi}\CP}.\] 
    Let $\pi=\pi_k$. Now note $\CI_{X_{\pi_k}}\cdot \CI_{\le\pi_{k+1}}\subseteq \CI_{X_{\pi_k}}\cap\CI_{\le\pi_{k+1}}=\CI_{\le\pi_k}$.
\end{proof}

\begin{lemma}\label{lem:Qk is submod of Q}
    We have $\CE\cdot (\varpi_*(\wh\CQ\otimes \wh\CI_\pi))\subseteq \varpi_*(\wh\CQ\otimes \wh\CI_\pi)$. Intersecting over all $\pi\le \pi_k$ gives $\CE\cdot \CQ_{\ge\pi}\subseteq \CQ_{\ge\pi}$. In particular, $\CQ_{\ge\pi}(U)$ and $\varpi_*(\wh\CQ\otimes \wh\CI_\pi)(U)$ are $\CR_\alpha$-submodules of $\CQ(U)$. 

\end{lemma}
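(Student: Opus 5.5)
The plan is to use Proposition \ref{prop:Q as cap}, which rewrites the sheaf in question as
\[\varpi_*(\wh\CQ\otimes\wh\CI_\pi)=\CQ\cap(\CI_{X_\pi}\CP)\]
inside $\CP$. It then suffices to show that each of the two pieces $\CQ$ and $\CI_{X_\pi}\CP$ is stable under $\CE$. Stability of $\CQ$ is the definition of $\CE$. Since $\CE\subseteq\shend_{\CO_X}(\CP)$ consists of $\CO_X$-linear endomorphisms, for local sections $f\in\CI_{X_\pi}$, $p\in\CP$ and $\phi\in\CE$ we have $\phi(fp)=f\phi(p)\in\CI_{X_\pi}\CP$. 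So $\CI_{X_\pi}\CP$ is stable under all of $\shend\CP$, and hence under $\CE$. A local section lying in both pieces is sent by $\phi$ into both, which gives
\[\CE\cdot\varpi_*(\wh\CQ\otimes\wh\CI_\pi)\subseteq\varpi_*(\wh\CQ\otimes\wh\CI_\pi).\]
The point of passing through Proposition \ref{prop:Q as cap} is that $\wh\CI_\pi$ lives upstairs on $\wh Y$, while endomorphisms of $\CP$ are only $\CO_X$-linear. Upstairs ideals need not be $\CE$-stable, but ideals pulled back from $X$ are.

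For the second assertion I would argue the same way with $\CI_{<\pi_k}$, or with $\CI_{\le\pi_k}$ depending on indexing conventions, in place of $\CI_{X_\pi}$. Proposition \ref{prop:Q as cap} gives
\[\CQ_{\ge\pi_k}=\CQ\cap(\CI_{<\pi_k}\CP),\]
and $\CI_{<\pi_k}$ is again an ideal sheaf of $\CO_X$, so the same two-line argument applies. Alternatively, one can intersect over the relevant $\pi$. Here one uses that finite intersections commute with $\varpi_*$ as subsheaves of $\CP$, and that $\wh\CQ\otimes\bigcap_r\wh\CI_{\pi_r}=\bigcap_r(\wh\CQ\otimes\wh\CI_{\pi_r})$ as subsheaves of $\wh\CP$. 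The latter holds because $\wh\CQ$ is a line bundle, so tensoring with it is twisting by an invertible sheaf. An intersection of $\CE$-stable subsheaves is $\CE$-stable, which gives the claim.

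Finally, evaluating on the dominant chart $U$ and invoking Theorem \ref{thm:E=R} ($\CE(U)=\CR_\alpha$) shows that $\CQ_{\ge\pi}(U)$ and $\varpi_*(\wh\CQ\otimes\wh\CI_\pi)(U)$ are $\CR_\alpha$-submodules of $\CQ(U)$. The only real content is Proposition \ref{prop:Q as cap}. Given it, I expect no serious obstacle beyond keeping the indexing of $<\pi_k$ versus $\le\pi_k$ consistent with the definition of $\CQ_{\ge\pi_k}$.
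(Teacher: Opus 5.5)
Your proposal is correct and follows essentially the same route as the paper. Like the paper, you use Proposition \ref{prop:Q as cap} to rewrite $\varpi_*(\wh\CQ\otimes\wh\CI_\pi)$ as $\CQ\cap(\CI_{X_\pi}\CP)$, note that $\CQ$ is $\CE$-stable by definition and $\CI_{X_\pi}\CP$ is stable by $\CO_X$-linearity, and pass to $\CQ_{\ge\pi_k}$ using that tensoring with the invertible sheaf $\wh\CQ$ commutes with finite intersections. Your alternative of applying the identity $\CQ_{\ge\pi_k}=\CQ\cap(\CI_{<\pi_k}\CP)$ directly is equally valid, and it avoids the slightly loose indexing ``$\pi\le\pi_k$'' in the statement.
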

\begin{proof}
    There is a short proof using Proposition \ref{prop:Q as cap}. Indeed, $\CE\cdot \CQ\subseteq\CQ$ by construction, and $\varpi_*(\wh\CQ\otimes \wh\CI_\pi)=\CQ\cap (\CI_\pi \CP)$, so by $\CO_X$-linearity we have $\CE\cdot (\CI_\pi\CP)\subseteq \CI_\pi\cdot \CE\cdot \CP\subseteq\CI_\pi\CP$. As $\wh\CQ$ is an invertible sheaf, tensoring with it is exact and commutes with all finite intersections, so we have $\bigcap(\wh\CQ\otimes\wh\CI_\pi)=\wh\CQ\otimes\bigcap \wh\CI_\pi$, and hence $\CE\cdot\CQ_{\ge\pi}\subseteq\CQ_{\ge\pi}$. The statements on the affine chart follow. 

\end{proof}

Let $\beta_-=(N^{n_N},(N-1)^{n_{N-1}},\cdotsc,1^{n_1})$ and let \[d_\alpha=\sum_{a=1}^{N-1}n_an_{a+1};\]  note that $d_\alpha=\deg 1_{\beta_-}$ is the degree of the idempotent $\beta_-$ in the polynomial representation, and note $r(\beta)+t_\beta=d_\alpha$ for any $\beta$. \begin{proposition}\label{prop:Q is antidom proj}
    Let $\beta_-=(N^{n_N},(N-1)^{n_{N-1}},\cdotsc,1^{n_1})$ be the color sequence going from $N$ down to $1$, and let $e_{\beta_-}=e_{n_N}(\NH)\otimes\cdots\otimes e_{n_1}(\NH)$ be the monoidal product of the primitive nil-Hecke idempotents, where $e_k(\NH)=\pd_{w_0}\ul y_k$. Then there is an isomorphism of left $\CR_\alpha$-modules
    \begin{align*}
        q^{d_\alpha}\CR_\alpha e_{\beta_-}&\simlto \CQ(U)\\
        r e_{\beta_-}&\lmto r 1_{\beta_-},
    \end{align*}
    where $1_{\beta_-}\in\Pol=\CP(U)$ is the vector of the polynomial representation.
\end{proposition}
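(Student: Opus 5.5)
The plan is to show that the map $re_{\beta_-}\mapsto r1_{\beta_-}$ is well defined, injective, and surjective, and that it has the stated degree shift. Throughout we use Theorem \ref{thm:E=R} to view $\CR_\alpha$ as the subalgebra $E\subseteq\End_\Lbd(\Pol)$ preserving $\CQ(U)=\bigoplus_\beta Q_\beta\Pol_\beta$.

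\textbf{Step 1: the target vector.} First observe $Q_{\beta_-}=1$. In $\beta_-$ every $(a+1)$-string lies to the left of every $a$-string, so $T_{\beta_-}=\emptyset$. Hence $1_{\beta_-}\in\CQ(U)$, and by Lemma \ref{lem:Qk is submod of Q} (with $\CE\cdot\CQ\subseteq\CQ$) the assignment $r\mapsto r1_{\beta_-}$ lands in $\CQ(U)$. The degree shift $q^{d_\alpha}$ is accounted for by $d_\alpha=\deg 1_{\beta_-}$.

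\textbf{Step 2: well-definedness and injectivity.} The map $r\mapsto r1_{\beta_-}$ is defined on all of $\CR_\alpha e^{\beta_-}$. To restrict to $\CR_\alpha e_{\beta_-}$, note that $e^{\beta_-}\Pol$ is the tensor product of nil-Hecke polynomial representations, on which $e_{\beta_-}$ acts as a projector. So I would instead define the map as $re_{\beta_-}\mapsto re_{\beta_-}\cdot 1_{\beta_-}$ and check that $e_{\beta_-}1_{\beta_-}=1_{\beta_-}$ up to the normalization. Here $\pd_{w_0}\ul y_k\cdot 1=\pd_{w_0}(y_1^{k-1}\cdots y_{k-1})=\pm1$, and the sign convention is fixed accordingly.

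For injectivity, suppose $re_{\beta_-}1_{\beta_-}=0$. Since $e_{\beta_-}\Pol_{\beta_-}=\Sym_{\beta_-}\cdot 1_{\beta_-}$ (the image of the primitive nil-Hecke idempotent is the symmetric polynomials times the lowest vector), $re_{\beta_-}$ kills $e_{\beta_-}\Pol$. Hence $re_{\beta_-}$ acts by zero on $\Pol$, and faithfulness of the polynomial representation (Khovanov--Lauda) gives $re_{\beta_-}=0$.

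\textbf{Step 3: surjectivity, the main obstacle.} Given $Q_\gamma f\,1_\gamma\in\CQ(U)$, we need $r\in\CR_\alpha$ with $r1_{\beta_-}=Q_\gamma f\,1_\gamma$. Using the characterization $\CR_\alpha=E$, it suffices to produce a $\Lbd$-linear endomorphism $\phi$ of $\Pol$ that preserves $\CQ(U)$ and sends $1_{\beta_-}\mapsto Q_\gamma f1_\gamma$. Then $\phi e_{\beta_-}$ works.

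A natural choice is to define $\phi$ to be zero on all components other than $\beta_-$. On $\Pol_{\beta_-}=R$ we take $\phi(g)=Q_\gamma\cdot \mathrm{(some\ }\Lbd\text{-linear map)}(g)$: pick $\psi\in\End_\Lbd(R)$ with $\psi(1)=f$, for instance a polynomial times $\pd_{w_0}$ composed with multiplication by $\ul y$ so that it sends $1$ to $f$ up to $\Lbd$-linearity. Then set $\phi=Q_\gamma\psi$, mapping into the $\gamma$-component. Since the image lies in $Q_\gamma\Pol_\gamma$, $\phi$ automatically preserves $\CQ(U)$, so $\phi\in E=\CR_\alpha$.

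The delicate point is constructing $\psi$ with $\psi(1)=f$ for arbitrary $f\in R$. I would take $\psi(g)=f\cdot\pd_{w_0}(\ul y\,g)$ with appropriate signs; this is $\Lbd$-linear and satisfies $\psi(1)=\pm f$. If instead the intended isomorphism is the tautological $re\mapsto r1$, the same argument applies verbatim.

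The graded dimension check is automatic from the grading-preserving bijection. As a sanity check, Lemma \ref{lem:dimklr} could be compared against $\sum_\gamma q^{\dots}\dim Q_\gamma R$.
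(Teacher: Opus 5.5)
Your proof is correct, but it follows a different route from the paper's. Your Step 1 (that $T_{\beta_-}=\emptyset$, hence $Q_{\beta_-}=1$) and the well-definedness check $e_{\beta_-}1_{\beta_-}=1_{\beta_-}$ are common to both arguments.

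\textbf{The paper's route.} Surjectivity is proved diagrammatically. Let $w_\beta$ be the minimal permutation with $w_\beta\beta_-=\beta$; it contains no same-color crossings. Then $\psi_{w_\beta}1_{\beta_-}=\pm Q_\beta 1_\beta$, and putting dots on top produces all of $Q_\beta R\,1_\beta$. Injectivity comes from a graded dimension count. Any $w$ with $w\beta_-=\beta$ factors as $w_\beta u$ with $u$ made of same-color crossings, and $\psi_u e_{\beta_-}=0$ for $u\neq 1$. The count then matches $\deg Q_\beta+r(\beta)=d_\alpha+\deg\psi_{w_\beta}$ using $r(\beta)+t_\beta=d_\alpha$.

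\textbf{Your route.} You obtain surjectivity from the hard inclusion $E\subseteq\CR_\alpha$ of Theorem \ref{thm:E=R}. Because $Q_{\beta_-}=1$, the stabilizer condition puts no constraint on maps out of $\Pol_{\beta_-}$, so the rank-one map $g\mapsto Q_\gamma f\,\pd_{w_0}(\ul y\,g)1_\gamma$ lies in $E$. You obtain injectivity from faithfulness of the polynomial representation, together with $e_{\beta_-}\Pol=\Lbd\,1_{\beta_-}$. That step also uses $\Lbd$-linearity of the action, which you leave implicit when passing from $r1_{\beta_-}=0$ to $r$ killing $\Lbd\,1_{\beta_-}$.

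\textbf{Comparison.} Your version is shorter and needs no diagrammatic sign checks or degree bookkeeping. It also makes the structural reason visible. Via Theorem \ref{thm:E=R}, $\CR_\alpha e^{\beta_-}$ is identified with $\hom_\Lbd(\Pol_{\beta_-},\CQ(U))$. Precomposing with $e_{\beta_-}$ cuts this down to $\hom_\Lbd(\Lbd\,1_{\beta_-},\CQ(U))\cong\CQ(U)$. The price is that you depend on the dimension-comparison proof of Theorem \ref{thm:E=R}. The paper's argument needs only the easy inclusion $\CR_\alpha\subseteq E$ and the KLR basis. It also exhibits explicit preimages $f\,\psi_{w_\beta}e_{\beta_-}$, and hence an explicit basis of $\CR_\alpha e_{\beta_-}$.
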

\begin{proof}
    Note that the nil-Hecke action on polynomials gives $e_{\beta_-}\cdot 1_{\beta_-}=1_{\beta_-}$, so that the map is well-defined.

    Let $w_\beta$ denote the minimal length permutation such that $w_\beta \beta_-=\beta$; in particular there are no 1-color crossings in $w_\beta$. Since each pair of adjacent strings of colors $a+1,a$ which are reversed by $w_\beta$ then contribute a factor of $(x_{a,i}-x_{a+1,j})$ for the appropriate $i,j$, we then have $\psi_{w_\beta}\cdot 1_{\beta_-}=\pm Q_\beta 1_{\beta}$. Multiplication by dots will commute with $w_\beta$ as there are no 1-color crossings. This shows the map is surjective.

    But any permutation $w$ with $w \beta_-=\beta$ can be written as $w=w_\beta\cdot u$, where $u e^{\beta_-}$ consists only of 1-color crossings; as we have chosen $e_k(\NH)=\pd_{w_0}\ul y_k$, we have $u e_{\beta_-}=0$. Hence the graded dimensions of the two sides can be counted: on the left we have $\dim R\cdot q^{d_\alpha}\sum_{\beta} q^{\deg \psi_{w_\beta}}$, and on the right we have $\dim R\cdot \sum_\beta q^{\deg Q_\beta}\cdot q^{r(\beta)}$. As $r(\beta)+t_\beta=d_\alpha$ and $t_\beta=\deg \psi_{w_\beta}$, we have $\deg Q_\beta+r(\beta)=d_\alpha+\deg\psi_{w_\beta}$ and hence the map must be an isomorphism.
\end{proof}

Recall that 
\[\CR_{\ge\pi_k}=\sum_{i=0}^k \CR_\alpha e^{\pi_i} \CR_\alpha\] 
are the cell ideals of $\CR_\alpha$, with $0\subset \CR_{\ge\pi_0}\subset\cdots\subset\CR_{\ge\pi_\ell}=\CR_\alpha$. Let
\[I_\pi=\wh\CI_\pi(\wh U).\] 
Let $P_-\coloneqq \CR_\alpha e_{\beta_-}$, and consider the filtrations
\begin{align*}
    P_k&=\CR_{\ge\pi_k} e_{\beta_-}=\CR_{\ge\pi_k}\cdot P_-,\\
    Q_k&=q^{-d_\alpha}(\wh \CQ\otimes\wh\CI_{<\pi_k})(\wh U)=\bigcap_{r=k+1}^\ell I_{\pi_r} P_-;
\end{align*}
the former $0\subset P_0\subset\cdots\subset P_\ell=P_-$ is clearly a filtration of $P_-$, and by Proposition \ref{prop:Q is antidom proj} we have the latter is as well, as $\wh\CI_{<\pi_\ell}=\CO$ and $q^{-d_\alpha}\varpi_*(\wh\CQ\otimes\wh\CI_\pi)(U)=q^{-d_\alpha}\CQ(U)\otimes \wh\CI_\pi(\wh U)=I_\pi P_-$. 
\begin{claim}\label{claim:M is Verma 1}
    One has $\tau\in S^\pi\cdot \pi\implies \tau\le\pi$. 
\end{claim}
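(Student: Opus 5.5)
The plan is to reduce the claim to a purely combinatorial statement about words and then prove that statement by induction on the number of cables.

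Write $\pi=\beta_1\beta_2\cdots\beta_r$ as a concatenation of cables, with $\beta_1\ge\beta_2\ge\cdots\ge\beta_r$ as positive roots. Each $\beta_s=[b_s,a_s]$ is identified with the word $(b_s,b_s-1,\cdots,a_s)$. Because $S_\pi$ only twists each cable internally, a minimal-length representative $w\in S^\pi$ is exactly a shuffle that preserves the internal left-to-right order of the letters of each cable. So $\tau=w\pi$ ranges over the shuffles of the words $\beta_1,\cdots,\beta_r$, and the claim becomes:
\begin{center}
\emph{the concatenation of words taken in weakly decreasing order is lexicographically maximal among all their shuffles,}
\end{center}
with the order on letters given by $1>2>\cdots>N$ and the convention (from $(1)>(21)>(2)$) that a proper prefix is smaller.

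First I will check that each root word $(b,b-1,\cdots,a)$ is a Lyndon-type word for this order. Concretely, it is strictly smaller than each of its proper suffixes $(c,\cdots,a)$ with $c<b$, because its first letter $b$ is smaller than $c$ in the order. This is the standard fact behind KLM's Lyndon-word description of the cells.

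Then I will prove the maximality statement by induction on $r$, or equivalently by looking at the first position $p$ where $\tau$ and $\pi$ differ. Since the prefixes $\tau_{<p}=\pi_{<p}$ agree as words, the letters of $\tau$ before $p$ can be re-matched greedily to cable letters. After re-matching, the cables $\beta_1,\cdots,\beta_{s-1}$ are fully consumed and $\beta_s$ is consumed up to the position just before $p$. Re-matching equal letters between cables changes the cable assignment, but not the word $\tau$ or its membership in the set of shuffles. So $\tau_p$ is the next unused letter of some cable $\beta_t$ with $t\neq s$, and we need $\tau_p<\pi_p$.
\begin{itemize}
\item If $\beta_t$ is untouched, then $\tau_p=b_t$. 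Here $b_t\ge b_s\ge\pi_p$ numerically, because $\beta_t\le\beta_s$ forces the first letters to satisfy $b_t\ge b_s$. So $\tau_p\le\pi_p$ in the order, with equality only in the tie case discussed below.
\item If $\beta_t$ is partially consumed, then greedy matching would have assigned the earlier letters to $\beta_s$ instead. This contradicts the choice of $p$ after re-matching.
\end{itemize}

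The main obstacle is the tie $\tau_p=\pi_p$ numerically. This happens when $p$ is the start of $\beta_s$ and a later cable $\beta_t$ has the same top letter $b_t=b_s$. Since $\beta_t\le\beta_s$ and the two share a first letter, $\beta_t$ is a prefix of $\beta_s$. In this case I would swap the labels of $\beta_s$ and $\beta_t$ along the common prefix, which again does not change $\tau$, and continue comparing. If the swap cannot be completed, the shorter cable $\beta_t$ ends first, and the letter of $\tau$ that follows must come from a cable later than $\beta_s$. That letter is then smaller than the continuation of $\beta_s$ in $\pi$, by the suffix property from the first step. This exchange is the standard argument that the decreasing concatenation of Lyndon words is the largest shuffle, and I expect making the re-matching bookkeeping precise to be the only delicate point. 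Once it is done, every $\tau\in S^\pi\cdot\pi$ is either equal to $\pi$ or first differs from it with a smaller letter, so $\tau\le\pi$.
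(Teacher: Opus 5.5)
Your route is correct in substance, but it differs from the paper's. The paper settles the claim in one line using KLR module theory. A word $\tau\in S^\pi\cdot\pi$ is a word of $\Delta_\pi$, and $\Delta_\pi$ has only composition factors $L_\sigma$ with $\sigma\le\pi$. What is really used there is that some composition factor $L_\sigma$ has $\tau$ among its words, and that every word of $L_\sigma$ is $\le\sigma$, so $\tau\le\sigma\le\pi$.

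You instead prove the underlying combinatorics directly. Root words $(b,b-1,\dots,a)$ are Lyndon for the order $1>2>\cdots>N$ with proper prefixes smaller, and the weakly decreasing concatenation of Lyndon words is the lexicographically largest of their shuffles. The paper's argument is short, but it imports the affine highest weight structure and the description of highest words of simples, which in the KLR literature rests on this same Lyndon shuffle lemma. Yours is self-contained and elementary. It also covers every shuffle $\tau$, not only those that happen to be words of root partitions (the case used in the next claim).

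The step you flag as delicate is where the root-word structure actually enters, and it needs a real argument. Agreement of $\tau_{<p}$ and $\pi_{<p}$ as words does not by itself allow re-matching. For arbitrary words it fails: take cables $xy\ge yx$ with $x>y$ and $\tau=xyxy$.

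One way to close it is induction on positions $i<p$. Maintain a \emph{valid} assignment of all of $\tau$ whose positions $<i$ carry exactly the cable letters of $\pi_{<i}$. Suppose $\tau_i$ is assigned to $\beta_t$ with $t\ne s$, where $\pi_i$ lies in $\beta_s$.
\begin{itemize}
\item Then $t>s$ and $\beta_t$ is untouched, so $\pi_i=\tau_i=b_t\ge b_s\ge\pi_i$ numerically forces $b_t=b_s=\pi_i$.
\item Hence $\beta_s$ is also untouched, and $\beta_t$ is a prefix of $\beta_s$.
\item For each common letter, give $\beta_s$ the earlier and $\beta_t$ the later of its two positions. This is again a valid assignment, it now places position $i$ in $\beta_s$, and it leaves positions $<i$ unchanged.
\end{itemize}
A literal label swap along the common prefix is not valid in general. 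For example, take $\beta_s=(3,2,1)$, $\beta_t=(3,2)$ and $\tau=33212$ with $\beta_t$ at positions $1,5$.

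With this invariant, your ``partially consumed'' case is vacuous. The ``tie'' also cannot occur at the first difference $p$ itself, since $\tau_p\ne\pi_p$ there by definition; it is exactly the situation the re-matching absorbs. After that, your first bullet gives $\tau_p=b_t>\pi_p$ numerically, i.e.\ $\tau_p<\pi_p$ in the order.
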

\begin{proof}
    This follows from the module theory of KLR: if $\tau\in S^\pi\cdot\pi$, then $L_\tau$ appears as a composition factor in $\Delta_\pi$, so necessarily $\tau\le\pi$.
\end{proof}

\begin{claim}\label{claim:M is Verma 2}
    For each $k$, $P_k\subseteq Q_k$. 
\end{claim}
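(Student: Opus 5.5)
Since $Q_k=\bigcap_{r=k+1}^\ell I_{\pi_r}P_-$ and $P_k=\sum_{i\le k}\CR_\alpha e^{\pi_i}\CR_\alpha e_{\beta_-}$, and each $I_{\pi_r}P_-$ is a left $\CR_\alpha$-submodule of $P_-\cong q^{-d_\alpha}\CQ(U)$ by Lemma \ref{lem:Qk is submod of Q}, it suffices to show the following: for every $i\le k<r$ we have $e^{\pi_i}P_-\subseteq I_{\pi_r}P_-$. Indeed, then $\CR_\alpha e^{\pi_i}\CR_\alpha e_{\beta_-}=\CR_\alpha\cdot e^{\pi_i}P_-\subseteq I_{\pi_r}P_-$ for all such $i,r$, and intersecting over $r$ and summing over $i$ gives $P_k\subseteq Q_k$.

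Under the identification of Proposition \ref{prop:Q is antidom proj}, $e^{\pi_i}P_-$ is the $\beta$-component with $\beta=\pi_i$ (viewed as a color sequence) of $q^{-d_\alpha}\CQ(U)$, namely $Q_{\pi_i}\Pol_{\pi_i}$. Meanwhile the $\pi_i$-component of $I_{\pi_r}P_-$ is $Q_{\pi_i}I_{\pi_r,\pi_i}\Pol_{\pi_i}$, where $I_{\pi_r,\pi_i}$ is the restriction of $\wh\CI_{\pi_r}$ to the component indexed by $\pi_i$. That restriction is the intersection over $w\in S^{\pi_r}$ with $w\pi_r=\pi_i$, and is $\CO$ (the whole ring) when no such $w$ exists. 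So the claim reduces to the combinatorial statement that $\pi_i\notin S^{\pi_r}\cdot\pi_r$ whenever $\pi_i>\pi_r$.

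This is exactly the contrapositive of Claim \ref{claim:M is Verma 1}: if $\pi_i\in S^{\pi_r}\cdot\pi_r$ then $\pi_i\le\pi_r$, contradicting $i<r$. Hence $I_{\pi_r,\pi_i}=R$ and the $\pi_i$-component of $I_{\pi_r}P_-$ is all of $Q_{\pi_i}\Pol_{\pi_i}=e^{\pi_i}P_-$.

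The main point to be careful about is this identification of $e^{\pi_i}P_-$ with the $\pi_i$-component of $\CQ(U)$. The idempotent $e^{\pi}$ acts on $\Pol=\bigoplus_\beta\Pol_\beta$ as the projection onto the $\beta=\pi$ summand, and the isomorphism of Proposition \ref{prop:Q is antidom proj} is $\CR_\alpha$-linear, so this holds. It also requires that the identification of $Q_k$ with $\bigcap I_{\pi_r}P_-$ be componentwise, which it is, since $\wh\CI$ is defined componentwise on $\wh Y$.
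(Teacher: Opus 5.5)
Your proposal is correct and is essentially the paper's own argument. Both use Claim~\ref{claim:M is Verma 1} to see that for $\pi\ge\pi_k>\pi_r$ the $\pi$-component of $\wh\CI_{\pi_r}$ is the empty intersection $\CO$, so that $e^{\pi}P_-\subseteq I_{\pi_r}P_-$; both then combine the fact that $I_{\pi_r}P_-$ is a left $\CR_\alpha$-submodule (Lemma~\ref{lem:Qk is submod of Q}) with $\CR_{\ge\pi_k}=\sum_{i\le k}\CR_\alpha e^{\pi_i}\CR_\alpha$ and intersect over $r>k$. Your version only makes explicit the componentwise identification of $e^{\pi}P_-$ inside $q^{-d_\alpha}\CQ(U)$, which the paper leaves implicit.
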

\begin{proof}
    Let $\pi_r<\pi_k$, i.e. $r\ge k+1$. Claim \ref{claim:M is Verma 1} tells us that all $\pi\ge\pi_k$, $\pi\not\in S^{\pi_r}\cdot \pi_r$. From Lemma \ref{lem:Qk is submod of Q} and Proposition \ref{prop:Q is antidom proj} we know that $q^{-d_\alpha}\varpi_*(\wh Q\otimes\wh\CI_{\pi_r})(U)=I_{\pi_r} P_-$ is a $\CR_\alpha$-submodule, so the quotient $P_-/I_{\pi_r}P_-$ receives a $\CR_\alpha$-action. Since our convention for $I_{\pi_r}$ is that the empty intersection gives $\CO$, we know
    \[e^{\pi}\cdot P_-/I_{\pi_r}P_-=0;\] 
    as $\CR_{\ge\pi_k}$ is generated by $e^\pi$ for all $\pi\ge\pi_k$, it follows that
    \[\CR_{\ge\pi_k}\cdot P_-\subseteq I_{\pi_r} P_-.\] 
    Intersecting over all $r\ge k+1$ gives the desired claim.
\end{proof}
Let $s_\pi=\sum_{[b,a]} \binom{m_{[b,a]}(\pi)}{2}$. We take as definition $\Delta_\pi\coloneqq q^{-s_\pi} C_\pi$, where $C_\pi=\CR_{\ge\pi}/\CR_{>\pi} e_\pi$ ($e_\pi\coloneqq \psi_\pi \ul y^\pi e^\pi$) is the left cell module; this grading shift is so that the unique simple head of $\ol\Delta_\pi$ is self-dual.
\begin{lemma}
    We have
    \[P_k/P_{k-1}\cong q^{t_\pi+s_\pi}\Delta_{\pi_k}\]
    as left $\CR_\alpha$-modules. Recall that $t_\pi=|T_\pi|$ is the number of pairs $((a,i),(b,j))$ such that $(a,i)\prec_\pi(b,j)$, and $s_\pi=\sum_{[b,a]} \binom{m_{[b,a]}}{2}$.
\end{lemma}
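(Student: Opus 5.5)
We want $P_k/P_{k-1}\cong q^{t_\pi+s_\pi}\Delta_{\pi_k}$ with $\pi=\pi_k$, $P_k=\CR_{\ge\pi_k}e_{\beta_-}$. The plan is to write down a natural surjection from the cell module onto the subquotient and then show it is an isomorphism by comparing graded dimensions using the KLM basis.

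First I identify the cell module. By the KLM basis theorem, $\CR_{\ge\pi}/\CR_{>\pi}$ has basis the images of $c^\pi_{u,w,\lbd}=\psi_u f_\lbd\ul y^\pi\psi_\pi\ul y^\pi\psi_w^\dag$. The left cell module $C_\pi=(\CR_{\ge\pi}/\CR_{>\pi})e_\pi$ is then free over $\Lbd_\pi$ with basis indexed by $u\in S^\pi$, via $\psi_u f_\lbd e_\pi$. Its graded dimension is $\dim\Lbd_\pi\cdot\sum_{u\in S^\pi}q^{\deg(\psi_u e_\pi)}$. The cellular structure also gives the bimodule factorization
\[\CR_{\ge\pi}/\CR_{>\pi}\cong C_\pi\otimes_{\Lbd_\pi}C_\pi^{\dag}.\]

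Next I build the map. We have
\[P_k/P_{k-1}=(\CR_{\ge\pi}/\CR_{>\pi})\otimes_{\CR}P_-\text{-image}=C_\pi\otimes_{\Lbd_\pi}\big(C^\dag_\pi\, e_{\beta_-}\big)\]
modulo the relevant image. Concretely, the map $C_\pi\to P_k/P_{k-1}$, $x e_\pi\mapsto x\,g\,e_{\beta_-}$, works for a suitable element $g=\ul y^\pi\psi_{w}^\dag e_{\beta_-}$. Here $w$ is the minimal element taking $\beta_-$ to the sequence $\pi$; it has no one-color crossings by the construction in Proposition \ref{prop:Q is antidom proj}. The right module $e_\pi\CR e_{\beta_-}$ modulo $\CR_{>\pi}$ should be free of rank one over $\Lbd_\pi$. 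The reason is that among $\psi_w^\dag$ with $w\in S^\pi$, only the one with trivial one-color part survives against $e_{\beta_-}$, since one-color crossings kill $e_{\beta_-}$. Hence $P_k/P_{k-1}$ is the image of $C_\pi$, and the grading shift is $\deg g$ corrected by $s_\pi$. That degree should be $\deg\psi_{w_\pi}=t_\pi$ together with the $\ul y^\pi$ and $\psi_\pi$ contributions, which combine to give the $s_\pi$ normalization. I would verify this with the grading bookkeeping, using $d_\alpha=r(\beta)+t_\beta$.

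For injectivity I use the second filtration. By Claim \ref{claim:M is Verma 2}, $P_k\subseteq Q_k$, and both filtrations exhaust $P_-$. So
\[\sum_k\dim P_k/P_{k-1}=\dim P_-=\sum_k\dim Q_k/Q_{k-1}.\]
By the surjection, $\dim P_k/P_{k-1}\le q^{t_\pi+s_\pi}\dim\Delta_\pi$. It therefore suffices to show that $\sum_\pi q^{t_\pi+s_\pi}\dim\Delta_\pi=\dim P_-$. This follows from the PBW-type character identity in KLM/BKM: $P_-$ has a standard filtration, and $[P_-:\Delta_\pi]$ is computed via graded reciprocity from the $\beta_-$-weight space of $\ol\Delta_\pi$. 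That weight space is one-dimensional in the appropriate degree, because $\beta_-$ is compatible with exactly one grouping modulo $S_\pi$. Equality of total dimensions together with the termwise inequalities then forces each surjection to be an isomorphism.

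The main obstacle is the precise grading shift $t_\pi+s_\pi$, and checking that the $\beta_-$-weight space of $\ol\Delta_\pi$ really has multiplicity $q^{t_\pi+s_\pi}$. If the reciprocity computation proves too messy, I would fall back on a direct count. In that approach I would compute $\dim Q_k/Q_{k-1}=\dim\varpi_*$ of the $Q_\beta$-twisted ideal quotients, using the prime decomposition of $I_{X_\pi}R$ from Proposition \ref{prop:Q as cap}. Each compatible grouping contributes a polynomial ring of dimension $\dim\Lbd_\pi$-free rank $|S^\pi$-orbit$|$.
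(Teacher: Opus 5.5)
Your starting point is the right one, and it is what the paper's terse proof appeals to. Since $e_{\beta_-}$ is idempotent and $\CR_{>\pi}$ is a two-sided ideal, $P_k/P_{k-1}=(\CR_{\ge\pi}/\CR_{>\pi})e_{\beta_-}$ exactly. The cell bimodule factorization then already gives an \emph{isomorphism} $P_k/P_{k-1}\cong C_\pi\otimes_{\Lbd_\pi}(e_\pi\ol\CR e_{\beta_-})$. Everything therefore reduces to showing $e_\pi\ol\CR e_{\beta_-}\cong q^{t_\pi}\Lbd_\pi$ as left $\Lbd_\pi$-modules, and this is exactly where the proposal breaks.

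Your element $g=\ul y^\pi\psi_w^\dag e_{\beta_-}$ is not a generator, because the factor $\ul y^\pi$ has degree $2s_\pi$. Already for $\alpha=2\alpha_1$ (so $e_\pi=e_{\beta_-}=\psi_1y_1$, $w=1$) one has $e_\pi g=\psi_1y_1^2\psi_1y_1=(y_1+y_2)e_\pi$. So right multiplication by it on $C_\pi=\NH_2\psi_1y_1$ is multiplication by $y_1+y_2$, which is not surjective. Two smaller errors: with $w\beta_-=\pi$ you need $\psi_w$ rather than $\psi_w^\dag$, and the map must be $c\mapsto cg$ with $g\in e_\pi\CR e_{\beta_-}$ to be well defined on $C_\pi$.

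Your degree bookkeeping is also off. The contributions of $\ul y^\pi$ and $\psi_\pi$ cancel, so $\deg e_\pi=0$, and $s_\pi$ enters only through the normalization $\Delta_\pi=q^{-s_\pi}C_\pi$. The generator is the image of $e_\pi\psi_we_{\beta_-}$. It uses only adjacent-colour crossings, one for each pair in $T_\pi$, so it has degree $t_\pi$. This gives $P_k/P_{k-1}\cong q^{t_\pi}C_\pi=q^{t_\pi+s_\pi}\Delta_\pi$ directly, matching the paper's ``$s_\pi$ from the normalization, $t_\pi$ from the choice of idempotent''.

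Your reason for rank one (``only one $\psi_w^\dag$ survives against $e_{\beta_-}$'') is only heuristic. Here is a clean argument:
\begin{itemize}
\item $e_\pi\ol\CR e_{\beta_-}$ is a graded direct summand of the free $\Lbd_\pi$-module $e_\pi\ol\CR$, hence graded free.
\item Its rank is $1/\prod_c n_c!$ times that of the $e^{\beta_-}$-part, since $e^{\beta_-}$ is a sum of $\prod_c n_c!$ orthogonal idempotents, each equivalent to $e_{\beta_-}$ up to a grading shift.
\item The rank of the $e^{\beta_-}$-part can be read off (via the anti-involution) from the character of $\Delta_\pi$. It is the number of shuffles of the cable words $(b,b-1,\dots,a)$ giving $\beta_-$, namely $\prod_c n_c!$, because every cable word is decreasing.
\end{itemize}

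With that in hand, your surjection-plus-dimension-count step is superfluous, and as set up it is not sound. Presupposing a standard filtration of $P_-$ with known graded multiplicities essentially presupposes the lemma. Moreover, the graded multiplicity cannot be read off from the degree of the $e_{\beta_-}$-part of $\ol\Delta_\pi$. For $\alpha=2\alpha_1$ that part sits in degree $-1$ while the multiplicity is $q$; for $\alpha=\alpha_1+\alpha_2$, $\pi=(1)(2)$, both are $q$. Also, the $e^{\beta_-}$-weight space of $\ol\Delta_\pi$ has dimension $\prod_c n_c!$; only its $e_{\beta_-}$-part is one-dimensional.

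The shift $t_\pi+s_\pi$ is the actual content of the statement beyond ``up to shift''. You leave it unverified and your bookkeeping for it is incorrect, so the proposal as written does not prove the lemma.
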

\begin{proof}
    This is also clear from the module theory of KLR. As $P_k=\CR_{\ge\pi_k}\cdot P_-$, we have $P_k/P_{k-1}=(\CR_{\ge\pi_k}/\CR_{\ge \pi_{k-1}})e_{\beta_-}$ is isomorphic to $\Delta_{\pi_k}$ up to grading shift. The shift $s_\pi$ is standard from KLR theory, e.g. Section 2.4 of \cite{kleshchev2013affine}. The additional shift by $t_\pi$ comes from the choice of idempotent.
    
\end{proof}

By Claim \ref{claim:M is Verma 2}, we may construct a map
\[\vphi\colon q^{t_\pi+s_\pi}\Delta_{\pi_k}=P_k/P_{k-1}\lto Q_k/Q_{k-1}=q^{-d_\alpha}\CM_{\pi_k}(U)\]
commuting with the action of $\Lbd=\CO_X(U)$. 
\begin{proposition}\label{prop:avatars of verma inject}
    The above map is injective.
\end{proposition}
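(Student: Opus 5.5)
Injectivity of $\vphi$ is equivalent to $P_k\cap Q_{k-1}=P_{k-1}$. The inclusion $P_{k-1}\subseteq P_k\cap Q_{k-1}$ is Claim \ref{claim:M is Verma 2}, so the content is the reverse inclusion. I would prove it with a torsion argument over $\Lbd=\CO_X(U)$: show that the source of $\vphi$ is torsion-free over a suitable quotient ring, and that $\vphi$ is injective generically.

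First I would pin down the module structure of the source. By the affine cellular structure of KLM, $\Delta_{\pi_k}$ is a free module over $\Lbd_{\pi_k}=\Sym^{\pi_k}$, with basis $\{\psi_u\ul y^{\pi}\psi_\pi\ul y^\pi e_{\beta_-}\}_{u\in S^\pi}$ (suitably shifted). The $\Lbd$-action on $\Delta_{\pi_k}$ factors through the algebra map $\Lbd\to\Lbd_{\pi_k}$. Geometrically, this map is $\mu_{\pi_k}^*\colon\CO_X(U)\to\CO_{Z_{\pi_k}}(\mu_{\pi_k}^{-1}U)$, since symmetric functions in all strings restrict, on a thick cable, to symmetric functions in the cable variables. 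By Lemma \ref{lem: finiteness_mu}, $\Lbd_{\pi_k}$ is a domain that is finite over $\Lbd/I_{X_{\pi_k}}$. Hence $\Delta_{\pi_k}$ is torsion-free as a $\Lbd/I_{X_{\pi_k}}$-module, and any nonzero kernel of the $\Lbd$-linear map $\vphi$ would be nonzero after localizing at the generic point $\eta$ of $X_{\pi_k}\cap U$. It therefore suffices to prove that $\vphi_\eta$ is injective.

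For the generic statement, I would work on the open locus $V\subset X_{\pi_k}$ from the proof of Lemma \ref{lem: finiteness_mu}: divisors belonging to different interval occurrences are disjoint and reduced. Over $V$, no other $X_{\pi_r}$ with $r>k$ passes through a generic point in a way that would interfere. I expect to need $X_{\pi_k}\not\subseteq X_{\pi_r}$ for $r\ne k$, which a dimension count should give: $\dim X_\pi=\sum m_{[b,a]}$ distinguishes partitions in the relevant direction, and otherwise a direct divisor argument should work. Over $\wh U$ localized at $\eta$, the ideals $I_{\pi_r}$ for $r>k$ become the unit ideal on the components lying over $\eta$. So $(Q_k/Q_{k-1})_\eta$ is $(P_-/I_{\pi_k}P_-)_\eta$, i.e. $P_-$ restricted to the diagonal loci $V(J_\lambda)$ for groupings $\lambda$ compatible with the color sequences, as in the proof of Proposition \ref{prop:Q as cap}. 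Then I would compare ranks over the function field $K$ of $Z_{\pi_k}$, which contains $\Lbd_{\pi_k}$. On the source side, $\Delta_{\pi_k}\otimes K$ has dimension $|S^{\pi_k}|$. On the target side, the number of compatible components over $\eta$ (modulo the cable symmetry) should also be $|S^{\pi_k}|$ times the rank of $\Pol^{\pi}$ over $\Sym^\pi$ restricted appropriately. So it is enough to show that $\vphi_\eta$ is surjective, or that the images of the basis vectors $\psi_u\ul y^\pi\psi_\pi\ul y^\pi\cdot 1_{\beta_-}$ are $K$-linearly independent.

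For linear independence, I would evaluate on the components: the image of the $u$-th basis vector restricted to the diagonal locus indexed by $u'\in S^{\pi}$ should be triangular in the Bruhat order. It would vanish unless $u'\le u$ and be a nonzero rational function when $u'=u$. The diagonal entry is essentially $\pd_{w_0}$ on each cable applied to $\ul y^\pi$, times the product of distinct-cable differences, all nonzero generically on $V$. \textbf{This triangularity computation is the main obstacle.} I would first try reducing it via the monoidal structure (Theorem \ref{thm:A_N monoidal}) to a single-root block $e^{n\beta}$, where the computation is a nil-Hecke one, plus a check that the minimal coset crossings $\psi_u$ act as permutations of components up to the nonvanishing factors $Q$.
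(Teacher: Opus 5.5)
Your reduction is sound and matches how the paper finishes. $P_k/P_{k-1}\cong q^{t_\pi+s_\pi}\Delta_\pi$ is free over the domain $\Lbd_\pi\supseteq\Lbd/I_{X_\pi}$, hence torsion-free, so it suffices to show $\vphi_\eta$ is injective. At $\eta$ the ideals $I_{\pi_r}$ with $r>k$ become trivial by Lemma \ref{lem:generic points of strata}. Only $r>k$ is needed; for $r<k$ the non-containment you wrote can fail, e.g. $X_{(21)}$ is the diagonal inside $X_{(1)(2)}=\BP^1\times\BP^1$.

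The gap is generic injectivity itself. The rank equality is only asserted, and the triangularity you propose cannot hold as formulated. The loci $V(J_\lambda)$ over $\eta$ correspond to \emph{unlabeled} compatible groupings. So as soon as $\pi$ has a repeated root, several $u'\in S^\pi$ index the same locus, and each locus contributes a field of degree $\prod_{[b,a]}m_{[b,a]}!$ over $K_\pi$ rather than one scalar. For $\NH_2$ and $\pi=(1)(1)$ one has $I_\pi=0$ and a single locus (all of $\Spec R$), and the two basis vectors go to $x_1$ and $1$ (up to sign). Nothing vanishes on any locus. Even after splitting over the fraction field of the cable variables, the $2\times 2$ matrix has all entries nonzero, and its nonsingularity is a Vandermonde phenomenon ($\det=\pm(z_1-z_2)$), not triangularity. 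So the step you flag as the main obstacle is not just undone; it is set up in a way that cannot work directly.

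The idea you are missing is generic simplicity, Lemma \ref{lem:verma is generically simple}, which makes any explicit computation unnecessary.
\begin{itemize}
\item For $\tau<\pi$ pick $f_\tau\in I_{X_\tau}\setminus I_{X_\pi}$. It kills $\CR_{\ge\tau}/\CR_{>\tau}$ but is a unit at $\eta$, so all lower cell layers die and $\ol\CR_\eta=(\CR_{\ge\pi}/\CR_{>\pi})_\eta=\ol\CR_\eta e_\pi\ol\CR_\eta$.
\item Since $e_\pi\ol\CR_\eta e_\pi=K_\pi$, Morita theory gives $\ol\CR_\eta\cong\Mat_{|S^\pi|}K_\pi$, so $\Delta_{\pi,\eta}$ is simple.
\item The target is nonzero: $(Q_k/Q_{k-1})_\eta=(P_-/I_\pi P_-)_\eta\neq 0$, because $\Supp(P_-/I_\pi P_-)=X_\pi\cap U$ by Lemma \ref{lem:Yhat filt vs X filt}.
\item $\vphi_\eta$ is surjective, because $\CR_{\ge\pi}\cdot Q_k\subseteq\CR_{\ge\pi}\cdot P_-=P_k$, while $(\CR_{\ge\pi}/\CR_{>\pi})_\eta$ is all of $\ol\CR_\eta\ni 1$.
\end{itemize}
A surjection from a simple module onto a nonzero module is an isomorphism, so $\vphi_\eta$ is injective and your torsion-freeness step finishes the proof. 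This argument also produces the equality of generic ranks, rather than presupposing it.
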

We need some lemmas to help us prove this. Let $K_\pi=\Frac\Lbd_\pi$. By \cite{kleshchev2013affine} we know $\Delta_\pi$ is free of rank $|S^\pi|$ as a right $\Lbd_\pi$-module. Moreover, as $\CR_{>\pi}$ kills $\Delta_\pi$ and all dots on the same $\pi$-cable are identified with each inside $\CR/\CR_{>\pi}$, we know $I_{X_\pi}=\CI_{X_\pi}(U)$ kills $\Delta_\pi$. As 
\[I_{X_\pi}\cdot\Delta_\pi=0,\]
we know the action of $\Lbd\coloneqq \CO_X(U)$ on $\Delta_\pi$ factors through $\Lbd/I_{X_\pi}$:
\[\mu_\pi^\# \colon \Lbd/I_{X_\pi}\lto \Lbd_\pi.\] 
Geometrically this corresponds to the inclusion $\CO_{X_\pi}\subset \mu_{\pi,*} \CO_{Z_\pi}$. By Lemma \ref{lem: finiteness_mu}, we have
\[K_\pi=\Frac\Lbd_\pi=\Frac(\Lbd/I_{X_\pi}).\]
By Lemma \ref{lem:supp M in X}, $I_{X_\pi}$ also kills $\CM_{\pi}(U)$, so the map $\vphi$ is $\Lbd/I_{X_\pi}$-linear. 
\begin{lemma}\label{lem:generic points of strata}
    Let $\eta_\pi$ be the generic point of $X_\pi$. Then $\tau<\pi\implies \eta_\pi\not\in X_\tau$. 
\end{lemma}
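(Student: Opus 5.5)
Since $X_\pi$ and $X_\tau$ are integral closed subschemes of $X$ (Lemma \ref{lem: finiteness_mu}), $\eta_\pi\in X_\tau$ is equivalent to $X_\pi\subseteq X_\tau$. So I will prove the contrapositive: if $\eta_\pi\in X_\tau$, then $\tau\ge\pi$.

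\emph{Step 1: reading off $\pi$ at the generic point.} By the birationality argument in Lemma \ref{lem: finiteness_mu}, $\eta_\pi$ lies over the open locus where the divisors $D_{[b,a],t}$ of the cables are reduced and pairwise disjoint. I will work with a geometric point $\bar\eta$ over $\eta_\pi$, or equivalently with a general closed point of $X_\pi$. Its support on $\BP^1$ then consists of distinct points $p$. Each $p$ comes from exactly one cable, of type $[b,a]$, so the set of colors $c$ with $p\in\Supp D_c$ is exactly the interval $[b,a]$, and each such color has multiplicity $1$ at $p$. The multiset of these intervals, taken over all points of the support, is exactly $\pi$.

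\emph{Step 2: containment forces refinement.} Suppose $\bar\eta\in X_\tau$. Since $\mu_\tau$ is finite, hence surjective onto its closed image, $\bar\eta$ has a preimage $(D'_{[d,c]})\in Z_\tau$ with $D_c=\sum_{[d,c']\ni c}D'_{[d,c']}$. Fix a support point $p$ with interval $[b,a]$. Every $\tau$-cable divisor containing $p$ must have its whole interval inside $[b,a]$. Moreover, the multiplicity-one condition forces each color of $[b,a]$ to be covered exactly once. Hence the $\tau$-intervals occurring at $p$ partition $[b,a]$ into consecutive subintervals. Summing over all $p$ shows that $\tau$ is obtained from $\pi$ by splitting some of its roots $[b,a]$ into $[b,c],[c-1,\dots],\dots,[\dots,a]$.

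\emph{Step 3: refinement is increasing in the order.} It remains to show that splitting a root never decreases the lexicographic order, and strictly increases it when nontrivial. This is the step I expect to need the most care with conventions. Recall that $\alpha_1>\alpha_2>\cdots$, so as sequences a smaller color index is larger. I would reduce to a single elementary split $[b,a]\mapsto [b,c]+[c-1,a]$ with $c>a$ and induct.

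The piece $[c-1,a]=(c-1,\dots,a)$ starts with $c-1<b$. In the root order it therefore dominates $[b,a]$, and in fact every root whose top index is $\ge c$. Write both partitions as descending concatenations of root sequences. Up to the position where $[b,a]$ (or a root equal to or smaller than it) appears in $\pi$, the new partition inserts $[c-1,a]$ earlier than that position, or at the same position, in front of something starting with a larger index. I would compare the two sorted lists block by block. At the first block where they differ, $\tau$ has a root with smaller top index, or the same top but shorter, e.g. $[b,c]$ versus $[b,a]$. One checks that this makes $\tau$'s sequence larger.

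Hence $\tau>\pi$, contradicting $\tau<\pi$. As a sanity check I would verify the $A_2$ and $A_3$ examples, such as $(1)(2)>(21)$ and the incomparable-support pair $(1)(32)$, $(21)(3)$.
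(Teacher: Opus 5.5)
Your Steps 1 and 2 are sound. At a general closed point of $X_\pi$, each support point carries a single $\pi$-cable $[b,a]$, with multiplicity one in every $D_c$ for $c\in[b,a]$. So any $\tau$-decomposition of that point partitions each such $[b,a]$ into consecutive $\tau$-intervals, and $X_\pi\subseteq X_\tau$ forces $\tau$ to refine $\pi$.

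\textbf{The flaw is in Step 3, where the key comparison is stated backwards.} You say that at the first differing block $\tau$ has ``a root with smaller top index, or the same top but shorter, e.g.\ $[b,c]$ versus $[b,a]$,'' and that this makes $\tau$ larger.
\begin{itemize}
\item With $1>2>\cdots$, a block $(b,\dots,c)$ followed by a root of top $\ge b$ \emph{loses} to $(b,\dots,c,c-1,\dots)$ at the entry after $c$. So ``same top but shorter'' makes $\tau$ smaller, not larger.
\item Moreover, $[b,c]$ versus $[b,a]$ is never the first difference, since $[c-1,a]>[b,a]$ is inserted strictly earlier.
\item Your phrase ``in front of something starting with a larger index'' also misses the case where $\pi$ has $[c-1,a']$ with $a'>a$ at that spot. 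That is exactly where the prefix issue arises.
\end{itemize}

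\textbf{The repair is short.} Let $\rho_1\ge\rho_2\ge\cdots$ be the roots of $\pi$, and let $i$ be the first index with $\rho_i<[c-1,a]$; it exists because $[b,a]<[c-1,a]$. Since $[b,c]<[c-1,a]$ and all copies of $[b,a]$ sit at positions $\ge i$, the sorted list of $\tau$ begins $\rho_1,\dots,\rho_{i-1},[c-1,a]$. There are two cases.
\begin{itemize}
\item If the top of $\rho_i$ exceeds $c-1$, then $\tau$ wins at the first entry of block $i$.
\item Otherwise $\rho_i=[c-1,a']$ with $a'>a$. After the common prefix $c-1,\dots,a'$, the sequence of $\tau$ reads $a'-1$. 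The sequence of $\pi$ reads the top of $\rho_{i+1}$, which exists because $[b,a]$ occurs later. That top is $\ge c-1>a'-1$ since $\rho_{i+1}\le\rho_i$, so $\tau$ wins again.
\end{itemize}
Both concatenations have length $\sum_a n_a$, so the first differing entry decides. Hence every nontrivial elementary split strictly increases the order, and your induction goes through.

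\textbf{Comparison with the paper.} With this fix your proof is correct, but it differs from the paper's. The paper never shows that $\tau$ refines $\pi$. Instead it takes $[b,a]$ to be the largest root with $m_{[b,a]}(\tau)\ne m_{[b,a]}(\pi)$ and shows $m_{[b,a]}(\tau)<m_{[b,a]}(\pi)$. Balancing the coefficient of $\alpha_a$, it deduces that the number $\sum_{[d,c]\ni a,b+1}m_{[d,c]}$ of cables containing both colors $a$ and $b+1$ is strictly larger for $\tau$. Since $\deg\gcd(D_a,D_{b+1})$ is at least the $\tau$-count on all of $X_\tau$, but equals the $\pi$-count at a general point of $X_\pi$, the claim follows.

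The paper's route needs only one numerical invariant and no pointwise decomposition. Yours, together with the easy converse, gives a sharper fact: $X_\pi\subseteq X_\tau$ exactly when $\tau$ refines $\pi$. So the lexicographic order is a linear extension of the refinement (equivalently, closure) order.
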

\begin{proof}
    First let us show that
    \[\tau<\pi\implies\existss b>a: \sum_{[d,c]\ni b,a}m_{[d,c]}(\tau)>\sum_{[d,c]\ni b,a}m_{[d,c]}(\pi).\] 
    We will exhibit such a pair. Let $[b,a]$ (with $b\ge a$) be the largest root partition such that $m_{[b,a]}(\tau)\neq m_{[b,a]}(\pi)$. Then it is not hard to check that $\tau<\pi$ forces $m_{[b,a]}(\tau)<m_{[b,a]}(\pi)$. Letting $\delta_{[b,a]}=m_{[b,a]}(\tau)-m_{[b,a]}(\pi)$, since $\tau,\pi$ are root partitions of the same $\alpha$, matching the coefficients in front of $\alpha_a$ gives $\sum_{[d,c]\ni a}\delta_{[d,c]}=0$. By the choice of $a,b$, we know $\delta_{[d,c]}=0$ if $d<b$ or if $c<a$. Hence this equation becomes
    \[\delta_{[b,a]}+\sum_{\substack{d>b\\c\le a}}\delta_{[d,c]}=0,\]
    which by $m_{[b,a]}(\tau)<m_{[b,a]}(\pi)$ means
    \[\sum_{\substack{d>b\\c\le a}} \delta_{[d,c]}>0.\]
    Note that we must have $b<N$, since else the sum is empty. Now note $[d,c]\ni a,b+1$ precisely when $d>b$ and $c\le a$. Hence the choice of $b'=b+1,a$ (where $b'>a$) gives $\sum_{[d,c]\ni b',a}m_{[d,c]}(\tau)>\sum_{[d,c]\ni b',a}m_{[d,c]}(\pi)$.

    Recall that points of $X_\pi$ are tuples of length $n_a$ effective divisors on $\BP^1$, one for each color, such that the divisors $D_a,D_b$ for colors $a,b$ share at least $\sum_{[d,c]\ni a,b}m_{[d,c]}(\pi)$ points in common. Hence a general point of $X_\pi$ corresponds to a tuple of divisors such that exactly $\sum_{[d,c]\ni a,b}m_{[d,c]}(\pi)$ points are shared by $D_a,D_b$. In order for $\eta_\pi$ to lie on $X_\tau$, we would hence need $\sum_{[d,c]\ni a,b}m_{[d,c]}(\pi)\ge \sum_{[d,c]\ni a,b}m_{[d,c]}(\tau)$, for all $b>a$. However we have seen in the previous paragraph that this is impossible.



\end{proof}

Following \cite{kleshchev2013affine}, let us denote $\ol\CR=\CR/\CR_{>\pi}$, and let $e_\pi=\psi_\pi\ul y^\pi e^\pi$. Then by \cite[Theorem 5.23]{kleshchev2013affine}, we have 
    \begin{align*}
        e_\pi\ol\CR e_\pi&\cong\Lbd_\pi,\\
        \ol\CR e_\pi\ol\CR&\cong \CR_{\ge\pi}/\CR_{>\pi},\\
        \ol\CR e_\pi&\cong q^{s_\pi}\Delta_\pi,
    \end{align*}
    Let $\eta=\eta_\pi$ be the generic point of $X_\pi$. We denote $\sq_\eta=\sq\otimes_\Lbd K_\pi$; for instance $\ol\CR_{\eta}=\ol\CR\otimes_\Lbd K_\pi$, where the tensor could equivalently be taken over $\Lbd/I_{X_\pi}$.

\begin{lemma}\label{lem:verma is generically simple}
    $\Delta_{\pi}$ is generically simple; that is, $\Delta_{\pi,\eta}=\Delta_{\pi}\otimes_{\Lbd_\pi}K_\pi$ is simple over $\ol\CR_\eta=(\CR_\alpha/\CR_{>\pi})\otimes_{\Lbd} K_\pi$. We have 
    \[\ol\CR_\eta=\ol\CR_\eta e_\pi \ol\CR_\eta = (\CR_{\ge\pi}/\CR_{>\pi})_\eta,\]
    and the cellular form on $\Delta_\pi$ is nondegenerate after base changing to $K_\pi$. In particular $(\CR_{\ge\pi}/\CR_{>\pi})_\eta$ acts faithfully on $\Delta_{\pi,\eta}$. 
\end{lemma}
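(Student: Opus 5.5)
The plan is to work entirely with the Kleshchev--Loubert--Miemietz description of $\ol\CR=\CR_\alpha/\CR_{>\pi}$ quoted just above: $e_\pi\ol\CR e_\pi\cong\Lbd_\pi$, $\ol\CR e_\pi\ol\CR\cong\CR_{\ge\pi}/\CR_{>\pi}$, and $\ol\CR e_\pi\cong q^{s_\pi}\Delta_\pi$, with $\Delta_\pi$ free of rank $|S^\pi|$ over $\Lbd_\pi$ with basis $\psi_u e_\pi$, $u\in S^\pi$. Since $I_{X_\pi}$ kills $\Delta_\pi$ and $K_\pi=\Frac\Lbd_\pi=\Frac(\Lbd/I_{X_\pi})$ by Lemma \ref{lem: finiteness_mu}, the localization $\Delta_{\pi,\eta}$ is a $K_\pi$-vector space of dimension $|S^\pi|$.

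The first step is the cellular form. Define $\langle\psi_u e_\pi,\psi_w e_\pi\rangle\in\Lbd_\pi$ by $e_\pi\psi_u^\dagger\psi_w e_\pi=\langle\psi_u e_\pi,\psi_w e_\pi\rangle e_\pi$ in $e_\pi\ol\CR e_\pi\cong\Lbd_\pi$. The goal is to show its Gram matrix $G$ has nonzero determinant in $\Lbd_\pi$. I would order $S^\pi$ by Bruhat/length and show that $G$ is triangular up to lower terms, with diagonal entries of the form $\pm\prod(z_s-z_t)^{k_{st}}$. Here the $z_s$ are the cable variables of $\Lbd_\pi$, and the factors come from adjacent-colour double crossings between distinct cables, evaluated via the relation for double crossings of colours $i,i\pm1$. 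The most concrete route is to compute this in the faithful polynomial representation, using Theorem \ref{thm:E=R}.

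The alternative route, which I expect to be cleaner, is generic semisimplicity. Over the open locus of $X_\pi$ where the cable points are pairwise distinct, the quotient $\Pol/I_\pi$ splits as a product over cables. KLR intertwiners between distinct cables are invertible after inverting $z_s-z_t$, so $\Delta_{\pi,\eta}$ is the induction from $\CR_{\ge\pi}$ restricted to the cable idempotent; after localization this is a matrix algebra over $K_\pi$. Either way, nondegeneracy means $\det G\neq0$, and therefore $\det G$ is invertible in $K_\pi$.

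Given nondegeneracy, the remaining statements follow formally. For any nonzero $v\in\Delta_{\pi,\eta}$, nondegeneracy gives some $w$ with $\langle\psi_w e_\pi,v\rangle\ne0$. Then $e_\pi\psi_w^\dagger v=c\,e_\pi$ with $c\in K_\pi^\times$, so $e_\pi\in\ol\CR_\eta v$ and hence $\ol\CR_\eta v=\Delta_{\pi,\eta}$, which gives simplicity. Nondegeneracy also lets one write $e_{\pi}$-matrix units $\psi_u e_\pi\cdot(\text{dual basis})^\dagger$ inside $\ol\CR_\eta e_\pi\ol\CR_\eta$, and these give $(\CR_{\ge\pi}/\CR_{>\pi})_\eta\cong\Mat_{|S^\pi|}(K_\pi)=\End_{K_\pi}(\Delta_{\pi,\eta})$, which is faithfulness. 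For $\ol\CR_\eta=\ol\CR_\eta e_\pi\ol\CR_\eta$, I would note that the ideal $\ol\CR_\eta e_\pi\ol\CR_\eta$ is now a unital algebra with unit $1_\eta'=\sum_u \psi_u e_\pi(\cdot)^\dagger$. It remains to show $1-1'_\eta$ vanishes, that is, $(\ol\CR/\ol\CR e_\pi\ol\CR)_\eta=(\CR/\CR_{\ge\pi})_\eta=0$. By the cellular filtration, $\CR/\CR_{\ge\pi}$ has cell layers indexed by $\tau<\pi$, each killed by $I_{X_\tau}$. By Lemma \ref{lem:generic points of strata}, $\eta_\pi\notin X_\tau$, so some element of $I_{X_\tau}$ is a unit at $\eta_\pi$ and each layer dies after $\otimes_\Lbd K_\pi$. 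Hence $(\CR/\CR_{\ge\pi})_\eta=0$.

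The main obstacle is the first step, proving $\det G\neq0$. I would first try the localization/intertwiner argument in the polynomial representation restricted to the component of $\wh Y$ with generic distinct cable points.
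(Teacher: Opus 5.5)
\textbf{There is a genuine gap: the load-bearing step is not proved, and the missing idea is that nondegeneracy is an output, not an input.} Everything in your argument rests on nondegeneracy of the form ($\det G\neq 0$), which you do not establish. Neither sketch works as stated.

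The triangularity heuristic fails as soon as $\pi$ contains a repeated root, because same-colour double crossings vanish instead of producing factors $z_s-z_t$. Take $\alpha=2\alpha_1$, $\pi=(1)(1)$, so $\ol\CR=\NH_2$. Use the $\dagger$-symmetric generator $v=y_1\psi_1y_1$ of $\Delta_\pi$, with basis $v,\psi_1v$. Then $\langle\psi_1v,\psi_1v\rangle=v^\dagger\psi_1^2v=0$, $\langle v,\psi_1v\rangle=1$ and $\langle v,v\rangle=y_1+y_2$, so $G=\begin{pmatrix}y_1+y_2&1\\1&0\end{pmatrix}$. This is nondegenerate for nil-Hecke duality reasons and has a zero diagonal entry. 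It is not Bruhat-triangular with $\prod(z_s-z_t)^{k_{st}}$ on the diagonal.

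Your formula $e_\pi\psi_u^\dagger\psi_we_\pi$ is also not the cellular form unless $e_\pi^\dagger=e_\pi$, and that fails for $e_\pi=\psi_\pi\ul y^\pi e^\pi$. The cellular form is $(xe_\pi)^\dagger(ye_\pi)$.

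The intertwiner sketch hits the same identical-cable problem. Swapping two identical cables needs $z_s-z_t$ inverted, but it is antisymmetric and so not in $K_\pi=\Frac\Lbd_\pi$. The nil-Hecke idempotent built into $e_\pi$ also has to be handled. Finally, ``the induction from $\CR_{\ge\pi}$ restricted to the cable idempotent'' is not a defined construction.

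\textbf{How to repair it.} Your own final paragraph already supplies what is needed, so run it first. The vanishing $(\CR/\CR_{\ge\pi})_\eta=0$ uses only Lemma \ref{lem:generic points of strata} and the fact that $I_{X_\tau}$ kills the $\tau$-layer; it never uses the form.
\begin{itemize}
\item Localize at the prime $I_{X_\pi}$ (this is exact), then pass to $K_\pi$. This gives an isomorphism $\ol\CR_\eta\cong(\CR_{\ge\pi}/\CR_{>\pi})_\eta=\ol\CR_\eta e_\pi\ol\CR_\eta$, so $e_\pi$ is a full idempotent of $\ol\CR_\eta$.
\item Since $\mu_\pi$ is finite and birational (Lemma \ref{lem: finiteness_mu}), $e_\pi\ol\CR_\eta e_\pi\cong\Lbd_\pi\otimes_\Lbd K_\pi=K_\pi$, which is a field.
\item Morita theory then gives $\ol\CR_\eta\cong\End_{K_\pi}(\ol\CR_\eta e_\pi)\cong\Mat_{|S^\pi|}(K_\pi)$. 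Hence $\Delta_{\pi,\eta}=q^{-s_\pi}\ol\CR_\eta e_\pi$ is simple, and $(\CR_{\ge\pi}/\CR_{>\pi})_\eta$ acts faithfully on it.
\item The radical of the contravariant cellular form is a submodule. It is proper because the form is nonzero: it pairs $e_\pi$ with $\ul y^\pi\psi_\pi\ul y^\pi e^\pi$ to give $1$. By simplicity the radical is zero, so the form is nondegenerate.
\end{itemize}
This is exactly the paper's argument, and no Gram determinant is ever computed. Your deductions from nondegeneracy (simplicity, matrix units) are correct but become unnecessary.
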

\begin{proof}
    Fix $\pi=\pi_k$ and let $\eta=\eta_\pi$.  By Lemma \ref{lem:generic points of strata}, for $\tau<\pi$ we have $\eta_\pi\not\in X_\tau$, so that $I_{X_\tau}\not\subseteq I_{X_\pi}$, so that we can find $f_\tau\in I_{X_\tau}\sm I_{X_\pi}$. We have seen that $I_{X_\tau}\cdot\Delta_\tau=0$, so $f_\tau\cdot \CR_{\ge\tau}/\CR_{>\tau}=0$; but $f_\tau$ is nonzero and hence invertible after tensoring up to $K_\pi$. Hence $\CR_{\ge\tau}/\CR_{>\tau}$ must be zero after tensoring to $K_\pi$,
    \[\tau<\pi\implies (\CR_{\ge\tau}/\CR_{>\tau})_\eta=0.\] In fact, as $f_\tau$ is invertible after localizing at $I_{X_\pi}$, we have \[\tau<\pi\implies (\CR_{\ge\tau}/\CR_{>\tau})_{I_{X_\pi}}=0.\]

    As 
    \[\mfrac{\ol\CR}{\ol\CR e_\pi\ol\CR}\cong \CR/\CR_{\ge\pi}\] 
    is filtered with associated gradeds $\CR_{\ge\tau}/\CR_{>\tau}$ for $\tau<\pi$, and since tensoring is right exact and $(\CR_{\ge\tau}/\CR_{>\tau})_\eta=0$, we know
    \[(\CR/\CR_{\ge\pi})_\eta=0.\] Similarly, as localization is exact and $(\CR_{\ge\tau}/\CR_{>\tau})_{I_{X_\pi}}=0$, we know \[(\CR/\CR_{\ge\pi})_{I_{X_\pi}}=0.\] 
    Since $0\lto \CR_{\ge\pi}/\CR_{>\pi}\lto \ol\CR\lto \CR/\CR_{\ge\pi}\lto 0$, we may localize at $I_{X_\pi}$, which is exact, to get $(\CR_{\ge\pi}/\CR_{>\pi})_{I_{X_\pi}}\cong \ol\CR_{I_{X_\pi}}$. Tensoring up to $K_\pi$, this forces $\ol\CR_\eta\cong (\CR_{\ge\pi}/\CR_{>\pi})_\eta=(\ol\CR e_\pi\ol\CR)_\eta$, so that
    \[\ol\CR_\eta=\ol\CR_\eta e_\pi\ol\CR_\eta.\] 
    Hence $e_\pi$ is a full idempotent of $\ol\CR_\eta$. Moreover, 
    \[e_\pi \ol\CR_\eta e_\pi =(\Lbd_\pi)_\eta=K_\pi,\] 
    so that by Morita theory we have
    \[\ol\CR_\eta\cong \End_{K_\pi}(\ol\CR_\eta e_\pi)=\Mat_{|S^\pi|} K_\pi,\] 
    where $\ol\CR_\eta e_\pi$ is of rank $|S^\pi|$ over $K_\pi$. Hence $(\Delta_\pi)_\eta = q^{-s_\pi}\ol\CR_\eta e_\pi$ is simple over $\ol\CR_\eta$. 

    The cellular form after base change is given by
    \[\Delta_{\pi,\eta}\times\Delta_{\pi,\eta}\lto \Delta_{\pi,\eta}^\top\times\Delta_{\pi,\eta}\lto K_\pi,\] 
    where the first map is given by the flipping-upside-down involution and the last map is given by multiplication. As the radical is a $\ol\CR_\eta$-submodule and $\Delta_{\pi,\eta}$ is simple over $\ol\CR_\eta$, the form is either nondegenerate or identically zero; but it cannot be identically zero as $\wan{e_\pi;\ul y^\pi \psi_\pi \ul y^\pi e^\pi}=1$. Hence it is nondegenerate. 
\end{proof}

\begin{proof}[Proof of Proposition \ref{prop:avatars of verma inject}]
    Let $\pi=\pi_k$, $\eta=\eta_\pi$. First let us show that $$(Q_k/Q_{k-1})_{\eta}=(Q_k/Q_{k-1})\otimes_{\Lbd/I_{X_\pi}} K_{\pi}\neq 0.$$
    Recall that $\wh Y_\tau\subset \wh Y$ denotes the subscheme cut out by $\wh I_\tau$. Proposition \ref{prop:Q is antidom proj} and the exactness of $\varpi_*$ imply
    \[P_-/I_\tau P_-\cong \varpi_*(\wh\CQ\otimes\CO_{\wh Y_\tau})(U);\]
    Lemma \ref{lem:Yhat filt vs X filt} then gives
    \[\Supp P_-/I_\tau P_-= X_\tau\cap U.\] 
    Lemma \ref{lem:generic points of strata} says $r>k$ has $\eta\not\in X_{\pi_r}$, so $(I_{\pi_r}\cdot P_-)_\eta=(P_-)_\eta$. As localization commutes with finite intersections, we then have
    \begin{align*}
        (Q_k)_\eta&=\bigcap_{r=k+1}^\ell (I_{\pi_r}\cdot P_-)_\eta=\bigcap_{r=k+1}^\ell (P_-)_\eta=(P_-)_\eta,\\
        (Q_{k-1})_\eta&=(I_\pi P_-)_\eta\cap (P_-)_\eta = (I_\pi P_-)_\eta,
    \end{align*}
    so that right exactness of tensor products gives
    \[(Q_k/Q_{k-1})_\eta=(P_-/I_{\pi_k} P_-)_\eta\neq 0,\] 
    where the last equality follows from $\Supp P_-/I_\pi P_-=X_\pi\cap U$.

    Second let us show that the localized map
    \[\vphi_\eta\colon (P_k/P_{k-1})_{\eta}\lsurj (Q_k/Q_{k-1})_{\eta}\] 
    is surjective. Indeed, note that $\CR_{>\pi}\cdot Q_k\subseteq \CR_{\ge\pi_{k-1}}\cdot P_-=P_{k-1}\subseteq Q_{k-1}$, so that $\ol\CR=\CR/\CR_{>\pi}\actson Q_k/Q_{k-1}$, so that $\ol\CR_\eta\actson (Q_k/Q_{k-1})_\eta$. If $r\in R_{\ge\pi},\ q\in Q_k$, then $rq\in \CR_{\ge\pi}\cdot P_-=P_k$, so that
    \[(\CR_{\ge\pi}/\CR_{>\pi})_\eta\cdot (Q_k/Q_{k-1})_\eta\subseteq\Img\vphi_\eta.\] 
    However, we have seen in the course of Lemma \ref{lem:verma is generically simple} that $(\CR_{\ge\pi}/\CR_{>\pi})_\eta=\ol\CR_\eta$; hence $\Img\vphi_\eta=(Q_k/Q_{k-1})_\eta$.

    As $\vphi_\eta$ is a surjection from a simple onto a nonzero space, it must be an isomorphism and in particular injective. As localization is exact, we have $\ker(\vphi)\otimes_{\Lbd/I_{X_{\pi_k}}}K_{\pi_k}=(\ker(\vphi))_{\eta}=\ker(\vphi_\eta)=0$, so that $\ker\vphi$ has torsion over $\Lbd/I_{X_{\pi_k}}$. Yet $\ker\vphi$ is also a submodule of the torsion-free $\Lbd/I_{X_{\pi_k}}$-module $P_k/P_{k-1}$, and hence $\ker\vphi=0$.  
\end{proof}

We are now ready to see why the sheaf $\CM_\pi$ should be called the ``Verma sheaf''.
\begin{theorem}\label{thm:M is Verma on affine}
    Let $\pi=\pi_k$. The sheaf $\CM_{\pi}$ recovers the Verma modules. Namely, the affine sections recover
    \[\Phi_{\pi}\colon\CM_{\pi}(U)\simlto q^{d_\alpha+t_\pi+s_\pi}\Delta_{\pi}\]
    as $(\CR_\alpha,\Lbd_{\pi})$-bimodules.
    In particular, under the natural identification
\[\Lbd_{\pi}\cong \Gamma(\mu_{\pi}^{-1}(U),\CO_{Z_{\pi}})=\mu_{\pi,*}\CO_{Z_\pi}(U),\]the right $\Lbd_{\pi}$-action on $\Delta_{\pi}$ is compatible
with ordinary scalar multiplication from $\CO_{X_{\pi}}$, i.e. for $f\in\Gamma(X_{\pi}\cap U,\CO_{X_{\pi}})$ and $m\in\CM_{\pi}(U),$ we have 
\[
\Phi_{\pi}(fm)=\Phi_{\pi}(m)\,\mu_{\pi}^{\#}(f).\]
\end{theorem}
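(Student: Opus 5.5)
The plan is to upgrade the injection of Proposition \ref{prop:avatars of verma inject} to an isomorphism by a graded dimension count, and then read off the bimodule structure. We already have an $\CR_\alpha$-linear, $\Lbd/I_{X_\pi}$-linear injection
\[\vphi\colon q^{t_\pi+s_\pi}\Delta_{\pi}=P_k/P_{k-1}\linj Q_k/Q_{k-1}=q^{-d_\alpha}\CM_{\pi}(U).\]
Its inverse (after the shift by $q^{d_\alpha}$) will be $\Phi_\pi$. So the main task is surjectivity.

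\textbf{Surjectivity by counting.} Sum over all $k$. The filtration $P_0\subset\cdots\subset P_\ell=P_-$ and the filtration $Q_0\subset\cdots\subset Q_\ell$ end at the same module. By Proposition \ref{prop:Q is antidom proj}, $Q_\ell=q^{-d_\alpha}\CQ(U)\cong P_-$. At the bottom, $P_{-1}=0$. We also need $Q_{-1}=0$: this is $\bigcap_{r=0}^\ell I_{\pi_r}P_-=0$, which holds because $P_-$ is torsion-free over $R$ componentwise and $\bigcap_r \wh\CI_{\pi_r}$ is a nonzero... but not zero. So instead I would compare $Q_k/Q_{k-1}$ termwise and avoid the bottom issue.

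The additivity of graded dimensions then gives
\[\sum_k \dim P_k/P_{k-1}=\dim P_-=\dim Q_\ell-\dim Q_{-1}\ge\sum_k \dim Q_k/Q_{k-1}.\]
Combined with $\dim P_k/P_{k-1}\le\dim Q_k/Q_{k-1}$ for each $k$ (from injectivity), this forces equality in every degree. Hence each $\vphi$ is bijective, provided $Q_{-1}=0$ at the relevant place.

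If $Q_{-1}\neq 0$, I would instead check the last step directly. Here $Q_{-1}=\bigcap_{\text{all }r}I_{\pi_r}P_-$, and by Proposition \ref{prop:Q as cap} this equals $\CQ\cap\CI_{\le\pi_0}\CP$ on $U$. Since $X_{\le\pi_0}=X$ by Lemma \ref{lem:Yhat filt vs X filt} ($X_{\pi_\ell}$ is all of $X$), $\CI_{\le\pi_0}=0$, so $Q_{-1}=0$. This closes the count. All graded pieces are finite-dimensional because everything is finitely generated over the positively graded $\Lbd$, so the dimension argument is legitimate.

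\textbf{Bimodule compatibility.} The left $\CR_\alpha$-linearity of $\Phi_\pi$ is inherited from $\vphi$. For the right $\Lbd_\pi$-action, $\Delta_\pi$ is a right $\Lbd_\pi$-module, and we transport this structure to $\CM_\pi(U)$ via $\Phi_\pi$. It remains to check that $\CO_{X_\pi}(U)$ acts through $\mu_\pi^\#$. The map $\vphi$ is $\Lbd$-linear, and $\Lbd$ acts on $\Delta_\pi$ through $\mu_\pi^\#\colon\Lbd/I_{X_\pi}\to\Lbd_\pi$. The latter is the statement that a symmetric function in the color-$c$ dots becomes, in $\ol\CR$, the corresponding product over cables containing $c$, since dots on a cable are identified modulo $\CR_{>\pi}$. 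Hence $\Phi_\pi(fm)=\Phi_\pi(m)\mu_\pi^\#(f)$.

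The identification $\Lbd_\pi\cong\mu_{\pi,*}\CO_{Z_\pi}(U)$ uses $\mu_\pi^{-1}(U)=\prod\BA^{m_{[b,a]}}$, whose coordinate ring is $\bigotimes\Sym_{m_{[b,a]}}=\Lbd_\pi$. The main obstacle is confirming that the KLM identification of the $\Lbd$-action on $\Delta_\pi$ really matches $\mu_\pi^\#$ on generators, that is, that elementary symmetric functions in color-$c$ dots map to products of the cable polynomials $f_{[b,a]}$. I would verify this on the $\Lbd_\pi$-basis vector $e_\pi$ using the dot-sliding relations within each cable.
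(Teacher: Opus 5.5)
Your proposal is correct and follows the paper's proof: the injection $\vphi$ becomes an isomorphism by squeezing graded dimensions between the two filtrations $P_\bullet\subseteq Q_\bullet$ of $P_-$. The right $\Lbd_\pi$-compatibility then reduces, as in the paper, to the generating-function identity sending $\prod_i(t+x_{c,i})$ to the product, over the cables containing $c$, of the cable polynomials, which holds because dots on a cable are identified in $\ol\CR$.

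Two small corrections. The stratum equal to all of $X$ is $X_{\pi_0}$ (there $\mu_{\pi_0}$ is the identity), not $X_{\pi_\ell}$. Also, you never needed $Q_{-1}=0$: the identity $\sum_k\dim Q_k/Q_{k-1}=\dim Q_\ell-\dim Q_{-1}\le\dim P_-$ already runs in the direction your squeeze argument requires.
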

\begin{proof}
    Proposition \ref{prop:avatars of verma inject} gives an injection $q^{t_\pi+s_\pi}\Delta_{\pi}\linj q^{-d_\alpha}\CM_\pi(U)$. As $P_\blt,Q_\blt$ are filtrations of $P_-$, we have the equality of graded dimensions
    \[\dim P_-=\sum_k \dim P_k/P_{k-1}=\sum_k \dim Q_k/Q_{k-1}.\] 
    However Proposition \ref{prop:avatars of verma inject} implies a priori that each $\dim P_k/P_{k-1}\le\dim Q_k/Q_{k-1}$; hence the dimensions must be equal, and the injection must be an isomorphism:
    \[q^{t_\pi+s_\pi}\Delta_{\pi}\simlto q^{-d_\alpha}\CM_\pi(U).\] 

    So far we have matched the $\CR_\alpha$-action. Let $\CM_\pi(U)\ractson \Lbd_\pi$ via the isomorphism $\Phi$, in which case $\Phi$ is a morphism of bimodules by definition. Recall $\ol\CR=\CR/\CR_{>\pi}$; as $\Lbd_\pi=e_\pi \ol\CR e_\pi$ and in this quotient all dots on the same cable are identified, we have that this action sends $\e_{c,i}\in\Lbd$ to 
    \[\e_i(z_{[b,a],j}:[b,a]\ni c,\ j\in[1,m_{[b,a]}]),\]
    where $z_{[b,a],j}$ is the variable on the $j$-th $[b,a]$-cable. It remains to check this action is compatible with the natural $\CM_\pi(U)\ractson \CO_{X_\pi}(U)=\Lbd/I_{X_\pi}$ action, i.e. that this agrees with $\mu_\pi^\#$. Let $\f_{[b,a],i}=e_i(z_{[b,a],i}:i\in[1,m_{[b,a]}])$, and let
    \begin{align*}
        F_{[b,a]}(t)&=\prod_{i=1}^{m_{[b,a]}} (t+z_{[b,a],i})=\sum_{i=0}^{m_{[b,a]}}\f_{[b,a],i}t^{m_{[b,a]}-i},\\
        E_c(t)&=\prod_{i=1}^{n_c} (t+x_{c,i})=\sum_{i=0}^{n_c}\e_{c,i} t^{n_c-i}.
    \end{align*}
    Then, since $\mu_\pi(D)_c=\sum_{[b,a]\ni c} D_{[b,a]}$, we know
    \[\mu_\pi^\# E_c(t)=\prod_{[b,a]\ni c}F_{[b,a]}(t).\] 
    Comparing the coefficient of $t^{n_c-i}$ gives
    \[\mu_\pi^\#\e_{c,i}=\sum_{\substack{(i_{[b,a]})_{[b,a]\ni c}:\\0\le i_{[b,a]}\le m_{[b,a]}\\ \sum_{[b,a]\ni c} i_{[b,a]}=i}}\prod_{[b,a]\ni c}\f_{[b,a],i_{[b,a]}};\] 
    as the outer sum is over all ways to split the $i$ powers across the different cables containing $c$, this is equal to 
    \[\mu_\pi^\#(\e_{c,i})=\e_i(z_{[b,a],j}:[b,a]\ni c,\ j\in[1,m_{[b,a]}]),\] 
    as claimed.
\end{proof}

\begin{corollary}
    The sheaf $\CQ_{\ge\pi}$ recovers the cellular filtration pieces of the antidominant projective. In particular, 
    \[\CQ_{\ge\pi}(U)\cong q^{d_\alpha}\CR_{\ge\pi} e_{\beta_-}.\]
\end{corollary}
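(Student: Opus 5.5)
The corollary should follow from the proof of Theorem \ref{thm:M is Verma on affine} with essentially no new input. The idea is to show that the two filtrations $P_\bullet$ and $Q_\bullet$ of $P_-$ coincide term by term. By definition $Q_k=q^{-d_\alpha}(\wh\CQ\otimes\wh\CI_{<\pi_k})(\wh U)$ and $\CQ_{\ge\pi_k}=\varpi_*(\wh\CQ\otimes\wh\CI_{<\pi_k})$. Since $\varpi^{-1}(U)=\wh U$, we get $\CQ_{\ge\pi_k}(U)=q^{d_\alpha}Q_k$. On the other side, $P_k=\CR_{\ge\pi_k}e_{\beta_-}$. So the corollary amounts to $P_k=Q_k$ inside $P_-\cong q^{-d_\alpha}\CQ(U)$, the last identification being Proposition \ref{prop:Q is antidom proj}.

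I would prove $P_k=Q_k$ by induction on $k$, starting from the inclusion $P_k\subseteq Q_k$ of Claim \ref{claim:M is Verma 2}.

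\textbf{Top end.} At $k=\ell$ both sides equal $P_-$, since $\wh\CI_{<\pi_\ell}=\CO$ and $\CR_{\ge\pi_\ell}=\CR_\alpha$.

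\textbf{Dimension count.} The proof of Theorem \ref{thm:M is Verma on affine} shows that the induced map $\vphi\colon P_k/P_{k-1}\to Q_k/Q_{k-1}$ is injective, by Proposition \ref{prop:avatars of verma inject}. It also shows the graded dimensions agree for every $k$, since the quotients of both filtrations have the same total sum $\dim P_-$ and each term satisfies $\le$. Hence $\vphi$ is an isomorphism for every $k$.

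\textbf{Induction.} At the bottom, $P_{-1}=Q_{-1}=0$. For the first nonzero level, $Q_{-1}$ should correspond to $\wh\CI_{\le\pi_0}$, and one checks (or takes as the convention) that the first graded piece is $Q_0$ itself. Now assume $P_{k-1}=Q_{k-1}$. The quotient map $P_k/P_{k-1}\to Q_k/Q_{k-1}$ is an isomorphism and $P_k\subseteq Q_k$, so the five lemma applied to the map of short exact sequences gives $P_k=Q_k$.

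The degree shift $q^{d_\alpha}$ is then just the one already present in Proposition \ref{prop:Q is antidom proj}.

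\textbf{Main obstacle.} The only delicate point is bookkeeping. One must confirm that the dimension equality gives isomorphisms levelwise rather than only in total. It does, because each graded dimension inequality is coefficientwise and the sums agree, so each term must agree. One must also confirm that the indexing conventions at the ends of the filtrations match, namely $P_{-1}=Q_{-1}=0$. Everything else is already contained in the proof of Theorem \ref{thm:M is Verma on affine}.
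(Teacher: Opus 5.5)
Your proposal is correct and is essentially the paper's argument. The paper's one-line proof says that, as in the proof of Theorem \ref{thm:M is Verma on affine}, the levelwise isomorphisms $P_k/P_{k-1}\cong Q_k/Q_{k-1}$ force the inclusion $P_k\subseteq Q_k$ of Claim \ref{claim:M is Verma 2} to be an equality; your induction (equivalently, summing graded dimensions over $r\le k$) makes that explicit, and the bottom case $Q_{-1}=0$ is not a convention but holds because $\wh\CI_{\pi_0}=0$, since every cable of $\pi_0$ is a single string and every $\beta\in\Seq$ lies in $S^{\pi_0}\cdot\pi_0$.
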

\begin{proof}
    A similar argument to the proof of Theorem \ref{thm:M is Verma on affine} shows that in fact the inclusion $P_k\linj Q_k$ of Claim \ref{claim:M is Verma 2} must be an isomorphism.
\end{proof}

\begin{proposition}\label{prop:Mpi-pushforward} The $\mathcal{O}_{X_{\pi_k}}$-module structure on $\mathcal{M}_{\pi_k}$ extends uniquely to a $\mathcal{B}_{\pi_k}$-module structure. Moreover, $\mathcal{M}_{\pi_k}$ is locally free over $\mathcal{B}_{\pi_k}$. Let the corresponding sheaf on $Z_{\pi_k}$ be denoted $\widetilde{\CM}_{\pi_k}$. 

Then, there is a canonical isomorphism \[(\mu_{\pi_k})_*\widetilde{\CM}_{\pi_k}\cong \CM_{\pi_k}\] and $\mathcal{\widetilde{\CM}}_{\pi_k}$ is moreover locally free.

Consequently, we also have \[\Supp \CM_{\pi_k} = X_{\pi_k}.\]
\end{proposition}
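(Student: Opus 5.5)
Since $\mu_{\pi_k}\colon Z_{\pi_k}\to X_{\pi_k}$ is finite and birational with $Z_{\pi_k}$ the normalization (Lemma \ref{lem: finiteness_mu}), $\CB_{\pi_k}=\mu_{\pi_k,*}\CO_{Z_{\pi_k}}$ is a finite $\CO_{X_{\pi_k}}$-algebra contained in the sheaf of rational functions of $X_{\pi_k}$. The plan is to work chart by chart: first on the dominant chart $U$ using Theorem \ref{thm:M is Verma on affine}, and then transport to the cover $\{U_p\}_{p\in\BP^1}$ by the diagonal $\SL_2$-action. This uses that $\CM_{\pi_k}$ is $\SL_2$-equivariant (Lemma \ref{lem:SL2-equivariance} together with exactness of $\varpi_*$) and that $\mu_{\pi_k}$ is equivariant.

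On $U$, Theorem \ref{thm:M is Verma on affine} identifies $\CM_{\pi_k}(U)$ with a shift of $\Delta_{\pi_k}$, compatibly with $\mu_{\pi_k}^\#\colon\Lbd/I_{X_{\pi_k}}\to\Lbd_{\pi_k}$. The $\Lbd_{\pi_k}=\CB_{\pi_k}(U)$-module structure therefore exists, and by \cite{kleshchev2013affine} it is free of rank $|S^{\pi_k}|$. For uniqueness, note that $\CM_{\pi_k}(U)$ is torsion-free over $\Lbd/I_{X_{\pi_k}}$, since it is free over $\Lbd_{\pi_k}$, which is a domain finite over $\Lbd/I_{X_{\pi_k}}$. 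Any two $\CB$-actions extending the $\CO_{X_\pi}$-action must then agree after tensoring with $K_\pi=\Frac(\Lbd/I_{X_\pi})=\Frac\Lbd_\pi$. This is because $\Lbd_\pi\subset K_\pi$ and $b=f/g$ with $g\in\Lbd/I_{X_\pi}$ forces $g\cdot(b\cdot m)=f\cdot m$, and torsion-freeness then gives agreement before tensoring. This uniqueness is what lets the chart-wise structures glue. On each $U_p=g\cdot U$, one transports the structure by $g$; on overlaps the two structures agree by uniqueness, again because $\CM$ is torsion-free over the integral scheme $X_\pi$ there. So $\CM_{\pi_k}$ becomes a $\CB_{\pi_k}$-module that is locally free on the cover.

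Since $\mu_{\pi_k}$ is affine (finite), quasicoherent $\CB_{\pi_k}$-modules on $X_{\pi_k}$ are equivalent to quasicoherent $\CO_{Z_{\pi_k}}$-modules via $\mu_*$. So there is a unique $\widetilde\CM_{\pi_k}$ with $\mu_{\pi_k,*}\widetilde\CM_{\pi_k}\cong\CM_{\pi_k}$ canonically. Over $\mu^{-1}(U_p)$ its sections form a free module over $\CO_{Z}(\mu^{-1}(U_p))$, so $\widetilde\CM_{\pi_k}$ is locally free of rank $|S^{\pi_k}|$.

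Finally, $\Supp\CM_{\pi_k}\subseteq X_{\pi_k}$ by Lemma \ref{lem:supp M in X}. Conversely, $\widetilde\CM_{\pi_k}$ is locally free of positive rank on the integral $Z_{\pi_k}$, so it has full support. Since $\mu_{\pi_k}$ is surjective onto $X_{\pi_k}$ and finite, the support of the pushforward is $\mu(Z_{\pi_k})=X_{\pi_k}$.

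The main obstacle is the gluing and uniqueness step, specifically checking torsion-freeness of $\CM_{\pi_k}$ over $\CO_{X_{\pi_k}}$ on each $U_p$. The first thing to try is deducing it on $U$ from freeness over $\Lbd_{\pi_k}$ and then transporting it by $\SL_2$, as above.
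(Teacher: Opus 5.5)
Your proposal is correct and follows the paper's proof essentially step for step. You obtain the $\CB_{\pi_k}$-structure and freeness on $U$ from Theorem \ref{thm:M is Verma on affine}, transport it to the translates $U_p$ by $\SL_2$-equivariance, glue using the uniqueness argument (clear denominators, then use torsion-freeness), and use the affine equivalence $Z_{\pi_k}=\underline{\Spec}\,\CB_{\pi_k}$ to produce $\widetilde\CM_{\pi_k}$. Beyond the paper, you correctly make explicit two points it leaves implicit: torsion-freeness over $\CO_{X_{\pi_k}}$ (from freeness over the domain $\Lbd_{\pi_k}\supseteq\Lbd/I_{X_{\pi_k}}$), and the equality $\Supp\CM_{\pi_k}=X_{\pi_k}$ (from $\widetilde\CM_{\pi_k}$ having positive rank on $Z_{\pi_k}$).
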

\begin{proof}
    By Theorem \ref{thm:M is Verma on affine}, restricted to $U$, we know that $\CM_{\pi_k}$ is equipped with a $\mathcal{B}_{\pi_k}$-action that extends that of the natural $\CO_{X_{\pi_k}}$-action. Since $\Delta_{\pi_k}$ is finite free over $\Lbd_{\pi_k}$, it is also clear that $\CM_{\pi_k}$ is locally free over $\CB_{\pi_k}$ on this chart. 

    Now, for any closed point $p\in\BP^1$, let \[U_p=\{(D_1,\ldots, D_N)\in X : p\not\in \Supp D_a \text{ for all }a.\}\] Then, the union of $U_p$ over $p$ covers $X$ and each is a translate of $U$ under the diagonal $\SL_2$-action. Therefore, by $\SL_2$-equivariance, it is clear that $U_p\cap X_{\pi_k}$ also inherits the $\SL_2$-action and local freeness, which shows that $\CM_{\pi_k}$ is locally free over $\CB_{\pi_k}$.

    Next, we show that the transported $\SL_2$-actions agree on overlaps. In fact, there can actually only be at most one $\mathcal{B}_{\pi_k}$-action that extends the ordinary scalar action: Working locally, take $b$ a meromorphic function of $X_{\pi_k}$, and clear denominators by choosing a non-zero local function $s$ such that $sb\in \CO_{X_{\pi_k}}$. Then, the difference between any two actions of $b$ is killed by $s$, which by torsion-freeness means that the two actions must be the same. 

    Finally, by Lemma \ref{lem: finiteness_mu}, we have $\mu_{\pi_k}\colon Z_{\pi_k}\to X_{\pi_k}$ is finite, hence affine, so \[Z_{\pi_k} = \underline{\Spec}_{X_{\pi_k}}\CB_{\pi_k}\]by \cite[Tag 01S8]{stacks}. Also, by the proof of \cite[Tag 01SB]{stacks}, there is an equivalence between quasicoherent modules over $\CO_{Z_{\pi_k}}$ and quasicoherent modules over $\CB_{\pi_k}$ with \[\widetilde{\CM}_{\pi_k}\coloneqq \mu_{\pi_k}^*\CM_{\pi_k}
\otimes_{\mu_{\pi_k}^*\CB_{\pi_k}}
\CO_{Z_{\pi_k}}\] such that there is a canonical isomorphism \[\mu_{\pi_k,*}\widetilde{\CM}_{\pi_k}\cong \CM_{\pi_k}.\] By construction and because $\CM_{\pi_k}$ is locally free, it is clear that $\widetilde{\CM}_{\pi_k}$ is also locally free. 
\end{proof}

It is moreover possible to recover the small Verma, or proper standard module.
\begin{theorem}\label{thm:small verma}
    The ``fiber at the origin'' recovers
    \[\big(\CM_{\pi}\otimes_{\CB_\pi} \mu_{\pi,*}\bk_0\big)(U)=\mu_{\pi,*}\big(\wt\CM_\pi\otimes_{\CO_{Z_\pi}}\bk_0\big)(U)=q^{d_\alpha+t_\pi+s_\pi}\ol\Delta_{\pi},\]
    where $\bk_0$ is the skyscraper at the origin of $Z_\pi$. 
\end{theorem}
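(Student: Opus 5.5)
The plan is to reduce everything to Theorem \ref{thm:M is Verma on affine} together with the standard description of proper standard modules, $\ol\Delta_\pi=\Delta_\pi\otimes_{\Lbd_\pi}\Lbd_\pi/\Lbd_\pi^+$, where $\Lbd_\pi^+$ is the augmentation ideal of positive-degree elements. This is the definition in \cite{kleshchev2013affine} (up to the same grading shift $s_\pi$).

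\textbf{Step 1: the first equality.} This is the projection formula for the finite (hence affine) morphism $\mu_\pi$. By Proposition \ref{prop:Mpi-pushforward}, $\CM_\pi\cong\mu_{\pi,*}\wt\CM_\pi$, and $\CB_\pi=\mu_{\pi,*}\CO_{Z_\pi}$. Under the equivalence between quasicoherent $\CO_{Z_\pi}$-modules and quasicoherent $\CB_\pi$-modules used in that proof, the tensor product $\wt\CM_\pi\otimes_{\CO_{Z_\pi}}\bk_0$ corresponds to $\mu_{\pi,*}\wt\CM_\pi\otimes_{\CB_\pi}\mu_{\pi,*}\bk_0$. One justifies this by checking it on affine opens of $X_\pi$, where it is the tautology $M\otimes_B k=M\otimes_B k$ for a $B$-module $M$.

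\textbf{Step 2: compute sections over $U$.} Since $\mu_\pi^{-1}(U)$ is affine with coordinate ring $\Lbd_\pi$, taking sections over $U$ is exact on quasicoherent sheaves and commutes with these tensor products. We need to see that the origin of $Z_\pi$ lies in $\mu_\pi^{-1}(U)$. The origin is the tuple of divisors $m_{[b,a]}\cdot[0]$, which is supported away from $\infty$, so this holds. Its ideal in $\Lbd_\pi$ is generated by the elementary symmetric functions $\f_{[b,a],i}$ with $i\ge1$, that is, by $\Lbd_\pi^+$. Therefore
\[\big(\CM_\pi\otimes_{\CB_\pi}\mu_{\pi,*}\bk_0\big)(U)=\CM_\pi(U)\otimes_{\Lbd_\pi}\Lbd_\pi/\Lbd_\pi^+.\]

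\textbf{Step 3: compare with the proper standard module.} Apply the bimodule isomorphism $\Phi_\pi$ of Theorem \ref{thm:M is Verma on affine}. Its compatibility statement $\Phi_\pi(fm)=\Phi_\pi(m)\mu_\pi^\#(f)$ ensures that the right $\Lbd_\pi$-action on $\CM_\pi(U)$ coming from the sheaf structure (via $\CB_\pi$) is exactly the cellular right action on $\Delta_\pi$. Hence
\[\CM_\pi(U)\otimes_{\Lbd_\pi}\Lbd_\pi/\Lbd_\pi^+\cong q^{d_\alpha+t_\pi+s_\pi}\,\Delta_\pi\otimes_{\Lbd_\pi}\Lbd_\pi/\Lbd_\pi^+=q^{d_\alpha+t_\pi+s_\pi}\,\ol\Delta_\pi,\]
and this is an isomorphism of left $\CR_\alpha$-modules because the tensor product is taken on the right.

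\textbf{Expected obstacle.} The delicate point is Step 3's identification of the $\CB_\pi$-action on $\CM_\pi(U)$. Proposition \ref{prop:Mpi-pushforward} defines this action abstractly, as the unique extension of the $\CO_{X_\pi}$-action. One has to confirm that it coincides with the transported $\Lbd_\pi$-action from Theorem \ref{thm:M is Verma on affine}; the uniqueness argument via torsion-freeness should settle this. A secondary check is that the grading conventions for $\ol\Delta_\pi$ in \cite{kleshchev2013affine} match the convention $\Delta_\pi=q^{-s_\pi}C_\pi$ adopted here, so that no extra shift appears.
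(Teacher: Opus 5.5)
Your proposal is correct and follows essentially the same route as the paper. The paper gets the first equality from the finiteness (hence affineness) of $\mu_\pi$, so that pushforward commutes with the tensor product, and the second by taking sections over $U$, invoking Theorem \ref{thm:M is Verma on affine}, and using $\Delta_\pi\otimes_{\Lbd_\pi}\BC=\ol\Delta_\pi$. Your extra checks (that the origin lies in $\mu_\pi^{-1}(U)$ with ideal $\Lbd_\pi^+$, and that the $\CB_\pi$-action agrees with the transported $\Lbd_\pi$-action) are details the paper leaves implicit; the latter already holds by construction in Proposition \ref{prop:Mpi-pushforward}.
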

\begin{proof}
    Since $\mu_\pi$ is finite (\ref{lem: finiteness_mu}), we have $\mu_{\pi,*}(\wt\CM_\pi\otimes_{\CO_{Z_\pi}}\bk_0)=\mu_{\pi,*}\wt\CM_\pi\otimes_{\mu_{\pi,*}\CO_{Z_\pi}} \mu_{\pi,*}\bk_0$, giving the first equality. Both of these in turn give the small Verma (up to shift) because $(\CM_{\pi}\otimes_{\CB_\pi} \mu_{\pi,*}\bk_0)(U)=\CM_\pi(U)\otimes_{\CB_\pi(U)} \BC=q^{d_\alpha+t_\pi+s_\pi}\Delta_\pi\otimes_{\Lbd_\pi} \BC=q^{d_\alpha+t_\pi+s_\pi}\ol\Delta_\pi$. 
\end{proof}

\section{The geometric cellular structure}
In fact, moreover the cellular filtration of \cite{kleshchev2013affine} can be recovered geometrically in this framework. Let
\[\CE_{\ge\pi_k}\coloneqq\{\phi\in \shend\CP:\phi(\CQ)\subseteq \CQ_{\ge\pi_k}\}.\]
Then the sheaves $\CE_{\ge\pi_k}$ are a geometric version of the cellular filtration of \cite{kleshchev2013affine}. To be more precise,
\begin{theorem}\label{thm:cellular E}
    We have
    \[\CE_{\ge \pi}(U)=\CR_{\ge\pi}.\] 
\end{theorem}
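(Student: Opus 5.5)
Let $E_{\ge\pi}\coloneqq\CE_{\ge\pi}(U)=\{\phi\in\End_{\Lbd}(\Pol):\phi(\CQ(U))\subseteq\CQ_{\ge\pi}(U)\}$, where we use that $\CP$ is locally free, so sections of $\shend\CP$ over the affine $U$ are $\End_\Lbd(\CP(U))$. The plan is to prove $\CR_{\ge\pi}\subseteq E_{\ge\pi}$ and then the reverse inclusion, using the antidominant projective $P_-$ together with the Corollary identifying $\CQ_{\ge\pi_k}(U)\cong q^{d_\alpha}\CR_{\ge\pi_k}e_{\beta_-}$, i.e.\ $P_k=Q_k$.

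\textbf{Forward inclusion.} Any $\phi\in\CR_{\ge\pi_k}$ lies in $\CR_\alpha=\CE(U)$ by Theorem \ref{thm:E=R}, so it preserves $\CQ(U)$. By Proposition \ref{prop:Q is antidom proj}, $\CQ(U)\cong q^{d_\alpha}P_-$ as left $\CR_\alpha$-modules. Hence
\[\phi\cdot\CQ(U)\cong\phi\cdot P_-\subseteq\CR_{\ge\pi_k}P_-=P_k=Q_k,\]
and $Q_k$ is $\CQ_{\ge\pi_k}(U)$ up to the grading shift. So $\CR_{\ge\pi}\subseteq E_{\ge\pi}$.

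\textbf{Reverse inclusion.} First note $E_{\ge\pi}\subseteq E_{\ge\pi_\ell}=\CE(U)=\CR_\alpha$, since $\CQ_{\ge\pi}\subseteq\CQ$. So it suffices to show that if $\phi\in\CR_\alpha$ satisfies $\phi P_-\subseteq P_k$, then $\phi\in\CR_{\ge\pi_k}$. Pass to $\ol\CR\coloneqq\CR_\alpha/\CR_{\ge\pi_k}$. The hypothesis says that the image $\bar\phi$ annihilates $P_-/P_k=\ol\CR e_{\beta_-}$, so $\bar\phi\,\ol\CR e_{\beta_-}=0$. It therefore suffices to show that $\ol\CR e_{\beta_-}$ is a faithful $\ol\CR$-module. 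Equivalently, the two-sided ideal generated by $e_{\beta_-}$ should be dense enough that the annihilator of $\ol\CR e_{\beta_-}$ in $\ol\CR$ vanishes.

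The plan for faithfulness is induction along the cell filtration of $\ol\CR$, whose subquotients are $\CR_{\ge\tau}/\CR_{>\tau}\cong\Delta_\tau\otimes_{\Lbd_\tau}\Delta_\tau^{\top}$ for $\tau<\pi_k$. Take a nonzero $\bar\phi$ and let $\tau$ be the largest cell in which it has a nonzero component. The filtration of $P_-/P_k$ has subquotients $P_r/P_{r-1}\cong q^{\bullet}\Delta_{\pi_r}$ for $r>k$. By Lemma \ref{lem:verma is generically simple}, $(\CR_{\ge\tau}/\CR_{>\tau})_{\eta_\tau}$ acts faithfully on $\Delta_{\tau,\eta_\tau}$. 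Since $\Delta_\tau$ is free over $\Lbd_\tau$, hence torsion-free, the leading component of $\bar\phi$ acts nonzero on $\Delta_\tau$, and $\Delta_\tau$ appears as the subquotient $P_r/P_{r-1}$ of $P_-/P_k$.

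\textbf{Main obstacle.} The difficulty is that $\bar\phi$ acts on $P_r/P_{r-1}$ through its image in $\CR/\CR_{>\tau}$. Its components in higher cells $\tau'>\tau$ vanish by the choice of $\tau$, so its action on $P_r/P_{r-1}$ is exactly that of its $\tau$-component. One must still ensure this nonzero action lifts to a nonzero action on $P_-/P_k$ itself, and not merely on a subquotient. Since $P_r=\CR_{\ge\tau}P_-$, we have $\bar\phi P_{r-1}\subseteq\CR_{>\tau}$-terms, which vanish for the relevant components, and $\bar\phi P_r\not\subseteq P_{r-1}$ gives $\bar\phi P_-\ne0$ modulo $P_k$. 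This contradicts the hypothesis, proving the reverse inclusion. The bookkeeping of this last step — that cells strictly above $\tau$ contribute nothing and that the $\tau$-layer of $\ol\CR$ maps injectively into $\End(P_r/P_{r-1})$ — is the step I expect to need the most care. A fallback is a graded-dimension comparison using $P_k=Q_k$.
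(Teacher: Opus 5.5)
Your proof is correct, but it closes the reverse inclusion differently from the paper.

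\textbf{How the two routes differ.} The paper gets $\CR_{\ge\pi_k}\subseteq\CE_{\ge\pi_k}(U)$ by noting that $\CR_{\ge\pi_k}$ kills each $\CQ_{\ge\pi_r}(U)/\CQ_{\ge\pi_{r-1}}(U)$, a shift of $\Delta_{\pi_r}$, for $r>k$, and then invoking idempotency of cell ideals. For the other direction it proves only that each layer map $\CR_{\ge\pi_k}/\CR_{\ge\pi_{k-1}}\to\CE_{\ge\pi_k}(U)/\CE_{\ge\pi_{k-1}}(U)$ is injective (Claim \ref{claim:cellular E 4}, resting on Claim \ref{claim:cellular E 3}). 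It then finishes with a graded-dimension squeeze over the two filtrations of $\CR_\alpha=\CE(U)$.

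You get the forward inclusion more cheaply from $P_k\subseteq Q_k$ (Claim \ref{claim:M is Verma 2}), with no idempotency needed. You replace the dimension count by a leading-cell descent: if $\phi\in\CR_{\ge\pi_r}\setminus\CR_{\ge\pi_{r-1}}$ with $r>k$, faithfulness of that layer on $P_r/P_{r-1}\cong q^{\bullet}\Delta_{\pi_r}$ gives $p\in P_r$ with $\phi p\notin P_{r-1}\supseteq P_k$.

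The key input is the same as the paper's. Your route makes the theorem transparent as faithfulness of $(\CR_\alpha/\CR_{\ge\pi_k})e_{\beta_-}$; the paper's squeeze gives the layer isomorphisms at the same time. You also do not need the Corollary $P_k=Q_k$. The same descent run on the $Q$-filtration, using $Q_r/Q_{r-1}\cong q^{-d_\alpha}\CM_{\pi_r}(U)$ from Theorem \ref{thm:M is Verma on affine}, is exactly Claim \ref{claim:cellular E 4} iterated for $r=\ell,\ell-1,\dots,k+1$.

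\textbf{Fixes needed in the write-up.} Once these are made, the argument goes through.
\begin{enumerate}
\item $\tau$ must be the cell with $\phi\in\CR_{\ge\tau}\setminus\CR_{>\tau}$, i.e.\ the \emph{smallest} cell in the order that appears in $\phi$. Then the components of $\phi$ in cells below $\tau$ vanish. Those in cells above $\tau$ need not vanish, but they lie in $\CR_{>\tau}=\CR_{\ge\pi_{r-1}}$, which acts by zero on $P_r/P_{r-1}$ because $\CR_{>\tau}P_-=P_{r-1}$. Taking $\tau$ ``largest'', as you wrote, would leave lower-cell components acting uncontrolled on $P_r/P_{r-1}$.
\item What you call the main obstacle is automatic. $P_{r-1}\subseteq P_r$ are $\CR_\alpha$-submodules and $P_k\subseteq P_{r-1}$, so $\phi P_-\subseteq P_k$ forces $\phi P_r\subseteq P_{r-1}$.
\item To pass from Lemma \ref{lem:verma is generically simple} to faithfulness on $\Delta_\tau$, the torsion-freeness you need is that of the layer $\CR_{\ge\tau}/\CR_{>\tau}\cong\Delta_\tau\otimes_{\Lbd_\tau}\Delta_\tau^\top$, which is free over $\Lbd_\tau$. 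This ensures the leading component survives localization at $\eta_\tau$; torsion-freeness of $\Delta_\tau$ is not the relevant fact. This is precisely Claim \ref{claim:cellular E 3}.
\end{enumerate}
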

To prove this, let us first develop some lemmas.
\begin{claim}\label{claim:cellular E 1}
    $\CE_{\ge\pi_k}$ is a filtration of $\CE$ by 2-sided ideals.
\end{claim}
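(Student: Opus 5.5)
The claim has three parts: the sheaves $\CE_{\ge\pi_k}$ are nested, they are two-sided ideal sheaves of $\CE$, and the filtration terminates in $\CE$. Each reduces to a statement about the subsheaves $\CQ_{\ge\pi_k}\subseteq\CQ$ established earlier.

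\textbf{Nestedness and endpoint.} I would first record the inclusions
\[\CQ_{\ge\pi_0}\subseteq\CQ_{\ge\pi_1}\subseteq\cdots\subseteq\CQ_{\ge\pi_\ell}=\CQ.\]
Since $\wh\CI_{<\pi_k}=\bigcap_{r=k+1}^{\ell}\wh\CI_{\pi_r}$, the intersection is over a smaller index set as $k$ grows, so $\wh\CI_{<\pi_k}\subseteq\wh\CI_{<\pi_{k+1}}$. Tensoring with the invertible sheaf $\wh\CQ$ preserves inclusions, and pushing forward along the finite map $\varpi$ is exact. This gives $\CQ_{\ge\pi_k}\subseteq\CQ_{\ge\pi_{k+1}}$. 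At $k=\ell$ we have $\wh\CI_{<\pi_\ell}=\CO$, so $\CQ_{\ge\pi_\ell}=\CQ$.

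Consequently $\CE_{\ge\pi_k}\subseteq\CE_{\ge\pi_{k+1}}$. Also $\CE_{\ge\pi_k}\subseteq\CE$, because $\phi(\CQ)\subseteq\CQ_{\ge\pi_k}\subseteq\CQ$. At the top, $\CE_{\ge\pi_\ell}=\{\phi:\phi(\CQ)\subseteq\CQ\}=\CE$ by definition.

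\textbf{Two-sided ideal property.} Let $\phi$ be a local section of $\CE_{\ge\pi_k}$ and $\psi$ a local section of $\CE$.

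For \emph{right} multiplication, $\phi\psi(\CQ)\subseteq\phi(\CQ)\subseteq\CQ_{\ge\pi_k}$, since $\psi$ preserves $\CQ$. This uses only the definition of $\CE$.

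For \emph{left} multiplication, $\psi\phi(\CQ)\subseteq\psi(\CQ_{\ge\pi_k})$. Lemma \ref{lem:Qk is submod of Q} gives $\CE\cdot\CQ_{\ge\pi}\subseteq\CQ_{\ge\pi}$, so this lies in $\CQ_{\ge\pi_k}$.

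It remains to note that each $\CE_{\ge\pi_k}$ is an $\CO_X$-submodule of $\CE$ and that all these conditions can be checked on local sections over arbitrary opens. This holds because $\CQ_{\ge\pi_k}$ is an $\CO_X$-submodule and composition is $\CO_X$-bilinear.

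\textbf{Main point.} The only nontrivial input is the left-ideal step, which rests on Lemma \ref{lem:Qk is submod of Q}. That lemma in turn uses Proposition \ref{prop:Q as cap}, identifying $\CQ_{\ge\pi}$ with $\CQ\cap\CI_{<\pi}\CP$, which is stable under $\CO_X$-linear endomorphisms preserving $\CQ$. Since both results are already available, I expect no real obstacle. The one thing to double-check is that Lemma \ref{lem:Qk is submod of Q} is stated for $\CQ_{\ge\pi}$ with the same indexing convention $\wh\CI_{<\pi_k}$ used in defining $\CE_{\ge\pi_k}$.
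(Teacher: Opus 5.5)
Your proof is correct and follows the paper's argument. The right-ideal property comes directly from the definition of $\CE$, and the left-ideal property comes from Lemma \ref{lem:Qk is submod of Q}; that lemma holds for any finite intersection of the $\wh\CI_\pi$, so your indexing concern is harmless. The nestedness and the equality $\CE_{\ge\pi_\ell}=\CE$, which you check explicitly, are left implicit in the paper.
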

\begin{proof}
    By Lemma \ref{lem:Qk is submod of Q}, we have $(\CE\cdot \CE_{\ge\pi_k})\cdot\CQ\subseteq\CE\cdot\CQ_{\ge\pi_k}\subseteq\CQ_{\ge\pi_k}$, which shows $\CE_{\ge\pi_k}$ is a left ideal; and $(\CE_{\ge\pi_k}\cdot\CE)\cdot\CQ\subseteq\CE_{\ge\pi_k}\cdot\CQ\subseteq\CQ_{\ge\pi_k}$ is obvious, which shows $\CE_{\ge\pi_k}$ is a right ideal.
\end{proof}

\begin{claim}\label{claim:cellular E 2}
    We have a natural inclusion $\CR_{\ge\pi_k}\subseteq \CE_{\ge\pi_k}(U)$.
\end{claim}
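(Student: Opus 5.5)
We must show $\CR_{\ge\pi_k}\subseteq\CE_{\ge\pi_k}(U)$, i.e. every $r\in\CR_{\ge\pi_k}$, viewed as an endomorphism of $\Pol=\CP(U)$ through Theorem \ref{thm:E=R}, sends $\CQ(U)$ into $\CQ_{\ge\pi_k}(U)$. The plan is to rewrite both sides in terms of the antidominant projective $P_-=\CR_\alpha e_{\beta_-}$, where the needed inclusion is already Claim \ref{claim:M is Verma 2}.

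First, Proposition \ref{prop:Q is antidom proj} gives the isomorphism of left $\CR_\alpha$-modules $q^{d_\alpha}P_-\simlto\CQ(U)$, $re_{\beta_-}\mapsto r1_{\beta_-}$. In particular every element of $\CQ(U)$ has the form $s\cdot 1_{\beta_-}$ with $s\in\CR_\alpha$. Second, Proposition \ref{prop:Q as cap} shows that $\CQ_{\ge\pi_k}(U)=\bigcap_{r>k}\varpi_*(\wh\CQ\otimes\wh\CI_{\pi_r})(U)$. Under the same isomorphism this is exactly $q^{d_\alpha}Q_k=q^{d_\alpha}\bigcap_{r=k+1}^\ell I_{\pi_r}P_-$, which is the identification already used when defining $Q_k$.

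Now take $r\in\CR_{\ge\pi_k}$ and $q=s\cdot1_{\beta_-}\in\CQ(U)$. Then $r\cdot q=(rs)\cdot 1_{\beta_-}$. Since $\CR_{\ge\pi_k}$ is a two-sided ideal, $rs\in\CR_{\ge\pi_k}$, so $rse_{\beta_-}\in\CR_{\ge\pi_k}e_{\beta_-}=P_k$. By Claim \ref{claim:M is Verma 2}, $P_k\subseteq Q_k$, and therefore $r\cdot q\in\CQ_{\ge\pi_k}(U)$. This shows $r(\CQ(U))\subseteq\CQ_{\ge\pi_k}(U)$.

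To conclude that $r$ is a section of $\CE_{\ge\pi_k}$ over $U$, note that $U$ is affine and all sheaves involved are quasi-coherent. The condition $\phi(\CQ)\subseteq\CQ_{\ge\pi_k}$ for a section $\phi\in\shend(\CP)(U)$ can therefore be checked on $U$-sections, so $\CE_{\ge\pi_k}(U)=\{\phi\in\End_\Lbd(\Pol):\phi(\CQ(U))\subseteq\CQ_{\ge\pi_k}(U)\}$.

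The only step needing care is the module identification in the second paragraph. One has to check that the pushforward of $\wh\CQ\otimes\wh\CI_{\pi_r}$ over $U$ equals $I_{\pi_r}$ acting componentwise on $\bigoplus_\beta Q_\beta\Pol_\beta$, which is how $I_{\pi_r}P_-$ was defined, and that the grading shift $q^{d_\alpha}$ is consistent on both sides. This is bookkeeping already implicit in the definition of $Q_k$.
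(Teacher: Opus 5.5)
Your proof is correct, and it takes a more direct route than the paper's.

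\textbf{The paper's route.} The paper argues through Theorem \ref{thm:M is Verma on affine}:
\begin{itemize}
\item For $r>k$, the ideal $\CR_{\ge\pi_k}\subseteq\CR_{>\pi_r}$ annihilates $\Delta_{\pi_r}$.
\item These Vermas are the subquotients $\CQ_{\ge\pi_r}(U)/\CQ_{\ge\pi_{r-1}}(U)$, so $\CR_{\ge\pi_k}\cdot\CQ_{\ge\pi_r}(U)\subseteq\CQ_{\ge\pi_{r-1}}(U)$.
\item Iterating gives $\CR_{\ge\pi_k}^{\ell-k}\cdot\CQ(U)\subseteq\CQ_{\ge\pi_k}(U)$.
\item The power is removed using that the cell ideals of the affine quasi-hereditary algebra $\CR_\alpha$ are idempotent.
\end{itemize}

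\textbf{Your route.} You identify $\CQ(U)$ with $q^{d_\alpha}P_-$ as left $\CR_\alpha$-modules via Proposition \ref{prop:Q is antidom proj}. You identify $\CQ_{\ge\pi_k}(U)$ with $q^{d_\alpha}Q_k$; this is just the definition of $Q_k$, since $(\varpi_*\mathcal{F})(U)=\mathcal{F}(\wh U)$. Once both are in place, Claim \ref{claim:M is Verma 2}, namely $\CR_{\ge\pi_k}P_-=P_k\subseteq Q_k$, is literally the statement $\CR_{\ge\pi_k}\cdot\CQ(U)\subseteq\CQ_{\ge\pi_k}(U)$. You do not even need to write elements as $s\cdot 1_{\beta_-}$: the module isomorphism carries $P_k$ onto $\CR_{\ge\pi_k}\cdot\CQ(U)$.

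\textbf{What your approach buys.} It has a much shorter dependency chain. You need neither the Verma identification nor the idempotence of cell ideals. The Verma identification is itself proved through Proposition \ref{prop:avatars of verma inject}, which rests on Claim \ref{claim:M is Verma 2}, plus generic simplicity and a dimension count. So the paper's proof is in effect a detour back through the same claim. You also make explicit something the paper leaves implicit: membership in $\CE_{\ge\pi_k}(U)$ can be tested on $U$-sections because $U$ is affine and all sheaves involved are quasi-coherent.

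\textbf{What the paper's approach buys.} Its argument has a structural form: it only uses that $\CQ(U)$ carries a filtration whose subquotients are the Vermas in the correct order. It therefore applies verbatim to any module with such a standard filtration. It does not rely on the identification with the projective $P_-$, nor on $\wh\CI_{\pi_r}$ being the unit ideal on the components indexed by $\pi\ge\pi_k$.
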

    \begin{proof}
    Indeed, from KLR theory/cellular theory, we know that $\CR_{\ge\pi_k}$ kills $\Delta_{\pi_r}$ for $r>k$; but by Theorem \ref{thm:M is Verma on affine} such Vermas are $\CQ_{\ge\pi_r}(U)/\CQ_{\ge\pi_{r-1}}(U)$, so that $\CR_{\pi_k}\cdot\CQ_{\ge\pi_r}(U)\subseteq\CQ_{\ge\pi_{r-1}}(U)$ for $r>k$. Applying this several times gives us $\CR_{\ge\pi_k}^{\ell-k}\cdot \CQ (U)\subseteq\CQ_{\ge\pi_k}(U)$; however, as KLR is affine quasihereditary, all cell ideals are idempotent, so this implies $\CR_{\ge\pi_k}\cdot \CQ (U)\subseteq\CQ_{\ge\pi_k}(U)$. By definition of $\CE_{\ge\pi_k}$ this gives the claim.
\end{proof}

\begin{claim}\label{claim:cellular E 3}
    The natural action map $$\CR_{\ge\pi_k}/\CR_{\ge\pi_{k-1}}\linj \End_{\Lbd_{\pi_k}}(\Delta_{\pi_k})$$ is injective.
\end{claim}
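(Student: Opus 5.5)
The plan is to reduce the claim to the generic faithfulness statement in Lemma \ref{lem:verma is generically simple}, and then to come back from the generic point using torsion-freeness of the cell subquotient.

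Fix $\pi=\pi_k$, so that $\CR_{\ge\pi_{k-1}}=\CR_{>\pi}$, and write $\ol\CR=\CR/\CR_{>\pi}$.

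\textbf{Step 1 (torsion-freeness).} First I will record that the cell subquotient $\CR_{\ge\pi}/\CR_{>\pi}$ is torsion-free over $\Lbd/I_{X_\pi}$.

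By \cite[Theorem 5.23]{kleshchev2013affine}, $\CR_{\ge\pi}/\CR_{>\pi}\cong\ol\CR e_\pi\ol\CR$. By the KLM basis theorem, its images of $c^\pi_{u,w,\lbd}$ form a basis. Hence it is free as a module over $\Lbd_\pi$, acting through the middle slot, i.e. $r\cdot c^\pi_{v,w,\lbd}=\psi_v r f_\lbd\ul y^\pi\psi_\pi\ul y^\pi\psi_w^\dag$.

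The central subring $\Lbd=\CO_X(U)$ acts on this subquotient through $\Lbd/I_{X_\pi}$. As in the discussion preceding Lemma \ref{lem:verma is generically simple}, in $\ol\CR$ all dots on a cable are identified, so $\Lbd$ acts via $\mu_\pi^\#\colon\Lbd/I_{X_\pi}\to\Lbd_\pi$. This map is injective, since $\mu_\pi$ is finite and birational by Lemma \ref{lem: finiteness_mu} and both rings are domains. A free $\Lbd_\pi$-module is therefore torsion-free over the subring $\Lbd/I_{X_\pi}$.

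\textbf{Step 2 (generic faithfulness).} Let $x\in\CR_{\ge\pi}/\CR_{>\pi}$ act by zero on $\Delta_\pi$.

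The action map is $\Lbd$-linear, and $\Delta_{\pi,\eta}=\Delta_\pi\otimes_{\Lbd}K_\pi$. So after base change to $K_\pi$, the element $x\otimes1\in(\CR_{\ge\pi}/\CR_{>\pi})_\eta$ acts by zero on $\Delta_{\pi,\eta}$. Lemma \ref{lem:verma is generically simple} says that $(\CR_{\ge\pi}/\CR_{>\pi})_\eta$ acts faithfully on $\Delta_{\pi,\eta}$; this is where the generic simplicity and the Morita identification $\ol\CR_\eta\cong\Mat_{|S^\pi|}K_\pi$ enter. Hence $x\otimes 1=0$.

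\textbf{Step 3 (descending from $\eta$).} Since $K_\pi=\Frac(\Lbd/I_{X_\pi})$, the vanishing $x\otimes1=0$ means that some nonzero $s\in\Lbd/I_{X_\pi}$ satisfies $sx=0$. By Step 1 this forces $x=0$, which gives injectivity.

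The compatibility of the $\Lbd$-action with $\mu_\pi^\#$ is already recorded in the proof of Theorem \ref{thm:M is Verma on affine}. The main point needing care is therefore Step 1. One must check that the middle $\Lbd_\pi$-action on $\ol\CR e_\pi\ol\CR$ is genuinely free, which holds because the $c^\pi_{u,w,\lbd}$ give a basis of the cell layer. One must also check that the central $\Lbd$-action factors through this middle action via $\mu_\pi^\#$. For the latter, I would slide symmetric polynomials in dots through the crossings $\psi_u$ modulo $\CR_{>\pi}$, using centrality of $\Lbd$ in $\CR_\alpha$ together with the identification $e_\pi\ol\CR e_\pi\cong\Lbd_\pi$.
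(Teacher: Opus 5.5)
Your proposal is correct and follows essentially the same route as the paper: base change to the generic point $\eta_\pi$ of $X_\pi$, invoke the generic faithfulness from Lemma \ref{lem:verma is generically simple} (indeed $\ol\CR_\eta\cong\End_{K_\pi}(\Delta_{\pi,\eta})$), and conclude that the kernel is torsion inside the free cell layer $\CR_{\ge\pi}/\CR_{>\pi}$, hence zero. Your Step 1 just spells out, via the injectivity of $\mu_\pi^\#$, why torsion over $\Lbd/I_{X_\pi}$ and torsion over $\Lbd_\pi$ coincide, a point the paper leaves implicit.
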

\begin{proof}
    Let $\pi=\pi_k$. As $\Delta_\pi$ is finite free over $\Lbd_\pi$, we have
    \[\End_{K_\pi}(\Delta_{\pi,\eta})=(\End_{\Lbd_\pi}(\Delta_\pi))_\eta.\] 
    By Lemma \ref{lem:verma is generically simple}, we have an isomorphism 
    \[\ol\CR_\eta\simlto \End_{K_\pi}(\Delta_{\pi,\eta})=(\End_{\Lbd_\pi}(\Delta_\pi))_\eta\] 
    where the map comes from the action map $\rho$. Since $(\ker\rho)_\eta=0$, $\ker\rho$ must be a torsion $\Lbd_\pi$-submodule of $\CR_{\ge\pi}/\CR_{>\pi}$, which is free over $\Lbd_\pi$; hence $\ker\rho=0$. 
\end{proof}

\begin{claim}\label{claim:cellular E 4}
    The natural maps $$\CR_{\ge\pi_k}/\CR_{\ge\pi_{k-1}}\linj \CE_{\ge\pi_k}(U)/\CE_{\ge\pi_{k-1}}(U)$$ constructed from inclusions of Claim \ref{claim:cellular E 2} are injections. Hence we have the inequality of graded dimensions $$\dim \CR_{\ge\pi_k}/\CR_{\ge\pi_{k-1}}\le\dim \CE_{\ge\pi_k}(U)/\CE_{\ge\pi_{k-1}}(U).$$
\end{claim}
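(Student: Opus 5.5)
The claim says that $\CR_{\ge\pi_k}\cap\CE_{\ge\pi_{k-1}}(U)=\CR_{\ge\pi_{k-1}}$. The inclusion $\supseteq$ is Claim \ref{claim:cellular E 2} applied at $k-1$. So the only content is $\subseteq$: an element of $\CR_{\ge\pi_k}$ that sends $\CQ(U)$ into $\CQ_{\ge\pi_{k-1}}(U)$ must already lie in $\CR_{\ge\pi_{k-1}}$. Once this is shown, the map on quotients is injective, and the graded dimension inequality follows at once, since all the pieces are graded with finite-dimensional graded components.

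To prove $\subseteq$, take $x\in\CR_{\ge\pi_k}$ with $x\cdot\CQ(U)\subseteq\CQ_{\ge\pi_{k-1}}(U)$, and write $\pi=\pi_k$.

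First, restrict $x$ to the submodule $\CQ_{\ge\pi}(U)\subseteq\CQ(U)$, which is preserved by $\CE$ by Lemma \ref{lem:Qk is submod of Q}. We get
\[x\cdot \CQ_{\ge\pi}(U)\subseteq\CQ_{\ge\pi_{k-1}}(U).\]
This says that $x$ acts by zero on the quotient
\[\CQ_{\ge\pi}(U)/\CQ_{\ge\pi_{k-1}}(U)=\CM_\pi(U),\]
which Theorem \ref{thm:M is Verma on affine} identifies, up to a grading shift, with $\Delta_\pi$ as a left $\CR_\alpha$-module.

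Second, the action of $\CR_{\ge\pi}$ on $\Delta_\pi$ factors through $\CR_{\ge\pi}/\CR_{>\pi}$, since $\CR_{>\pi}=\CR_{\ge\pi_{k-1}}$ kills $\Delta_\pi$. So the class $\bar x$ of $x$ in $\CR_{\ge\pi}/\CR_{>\pi}$ acts by zero on $\Delta_\pi$. This action lands in $\End_{\Lbd_\pi}(\Delta_\pi)$, because Theorem \ref{thm:M is Verma on affine} identifies the actions as bimodules. By the injectivity in Claim \ref{claim:cellular E 3}, $\bar x=0$, that is, $x\in\CR_{\ge\pi_{k-1}}$.

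The one point that needs care is that Claim \ref{claim:cellular E 3} is being applied to the module $\CM_\pi(U)$ rather than to the abstract $\Delta_\pi$. This is handled by the isomorphism $\Phi_\pi$ of Theorem \ref{thm:M is Verma on affine}, which is $\CR_\alpha$-equivariant, so the kernel of the action on $\CM_\pi(U)$ equals the kernel of the action on $\Delta_\pi$. There is no real obstacle here; the argument just combines Theorem \ref{thm:M is Verma on affine} with Claim \ref{claim:cellular E 3}.
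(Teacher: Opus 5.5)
Your proposal is correct and follows the paper's proof. You identify the kernel as $(\CR_{\ge\pi_k}\cap\CE_{\ge\pi_{k-1}}(U))/\CR_{\ge\pi_{k-1}}$, observe that such an element kills $\CQ_{\ge\pi_k}(U)/\CQ_{\ge\pi_{k-1}}(U)=\CM_{\pi_k}(U)\cong\Delta_{\pi_k}$ (up to shift), and conclude from the faithfulness of $\CR_{\ge\pi_k}/\CR_{\ge\pi_{k-1}}$ on $\Delta_{\pi_k}$, with the appeal to Claim \ref{claim:cellular E 3} made more explicit than in the paper.
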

\begin{proof}
    The kernel of such a map is $(\CR_{\ge\pi_k}\cap \CE_{\ge\pi_{k-1}}(U))/\CR_{\ge\pi_{k-1}}$, where the intersection is taken in $\CE_{\ge\pi_k}(U)/\CR_{\ge\pi_{k-1}}$. Suppose there an element $r$ in this kernel; as $r$ is the image of something in $\CE_{\ge\pi_{k-1}}(U)$, we have $r\cdot\CQ_{\ge\pi_k}\subseteq\CQ_{\ge\pi_{k-1}}$, and hence $r$ kills $\CQ_{\ge\pi_k}/\CQ_{\ge\pi_{k-1}}=\CM_{\pi_k}(U)=\Delta_{\pi_k}$. On the other hand, $r\in\CR_{\ge\pi_k}/\CR_{\ge\pi_{k-1}}$, so if it were nonzero then it must not kill $\Delta_{\pi_k}$. Hence $r$ must be zero.
\end{proof}
\begin{proof}[Proof of Theorem \ref{thm:cellular E}]
    Let $\pi=\pi_k$. We have that $\CR_{\ge\pi_k}$ and $\CE_{\ge\pi_k}(U)$ give filtrations of $\CR_\alpha=\CE(U)$, so that
    \[\dim \CR_\alpha=\sum_k \dim \CR_{\ge\pi_k}/\CR_{\ge\pi_{k-1}}=\sum_k \dim \CE_{\ge\pi_k}(U)/\CE_{\ge\pi_{k-1}}(U).\]
    But we already knew $\dim \CR_{\ge\pi_k}/\CR_{\ge\pi_{k-1}}\le\dim \CE_{\ge\pi_k}(U)/\CE_{\ge\pi_{k-1}}(U)$ from Claim \ref{claim:cellular E 4}; hence those inequalities are all equalities, and the injections 
    \[\CR_{\ge\pi_k}/\CR_{\ge\pi_{k-1}}\simlto \CE_{\ge\pi_k}(U)/\CE_{\ge\pi_{k-1}}(U)\]
    from \ref{claim:cellular E 4} are isomorphisms. 

    Then $\dim \CE_{\ge\pi_k}(U)=\sum_{r=0}^k \dim \CE_{\ge\pi_r}(U)/\CE_{\ge\pi_{r-1}}(U)=\sum_{r=0}^k\dim\CR_{\ge\pi_r}/\CR_{\ge\pi_{r-1}}=\dim\CR_{\ge\pi_k}$, so that the inclusions $\CR_{\ge\pi_k}\subseteq \CE_{\ge\pi_k}(U)$ are equalities. This concludes.
\end{proof}

\begin{corollary}
    Each $\CE_{\ge\pi}$ contains an idempotent $e^\pi\in\CE_{\ge\pi}$ and hence $\CE_{\ge\pi}^2=\CE_{\ge\pi}$. 
\end{corollary}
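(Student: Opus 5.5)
The plan is to produce the idempotent $e^\pi$ as an honest section of $\CE_{\ge\pi}$ over the dominant chart, and then get idempotence of the ideal from the fact that $\CE_{\ge\pi}$ is a two-sided ideal containing an idempotent that generates it locally. Since the statement is about sheaves, I will read $e^\pi\in\CE_{\ge\pi}$ as $e^\pi\in\CE_{\ge\pi}(U)$. The identity $\CE_{\ge\pi}^2=\CE_{\ge\pi}$ is an equality of subsheaves of $\CE$, where $\CE_{\ge\pi}^2$ is the image of $\CE_{\ge\pi}\otimes\CE_{\ge\pi}$ under multiplication.

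Step one is immediate. By Theorem \ref{thm:cellular E}, $\CE_{\ge\pi}(U)=\CR_{\ge\pi}$, and the KLR idempotent $e^\pi$ lies in $\CR_{\ge\pi}=\sum_{i\le k}\CR_\alpha e^{\pi_i}\CR_\alpha$ by definition. The inclusion $\CE_{\ge\pi}^2\subseteq\CE_{\ge\pi}$ holds because $\CE_{\ge\pi}$ is an ideal (Claim \ref{claim:cellular E 1}). For the reverse inclusion on $U$, note that $\CR_{\ge\pi}$ is generated as a two-sided ideal by idempotents $e^{\pi_i}$, each of which lies in $\CR_{\ge\pi}$. Any element $\sum a\,e^{\pi_i}b$ can therefore be written as $\sum (a e^{\pi_i})(e^{\pi_i}b)$, which is a product of two elements of $\CR_{\ge\pi}$. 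Hence $\CR_{\ge\pi}^2=\CR_{\ge\pi}$, so the two subsheaves agree on $U$.

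The main obstacle is passing from $U$ to all of $X$, because $e^\pi$ need not extend to a global section. I will handle this as in the proofs of Proposition \ref{prop:Q as cap} and Theorem \ref{thm:A_N monoidal}. The sheaves $\CE_{\ge\pi}$ are $\SL_2$-equivariant, since $\CQ$ and $\CQ_{\ge\pi}$ are (Lemma \ref{lem:SL2-equivariance}), and multiplication is equivariant. Consequently $\CE_{\ge\pi}^2$ is an equivariant subsheaf. On each translate $U_p=gU$, the translated element $g\cdot e^\pi$ is an idempotent in $\CE_{\ge\pi}(U_p)$, and by the same argument the two subsheaves agree on $U_p$. Since the $U_p$ cover $X$, the two subsheaves coincide. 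If one prefers to avoid images of tensor products, the equality on a cover already suffices, because both sides are subsheaves of the same sheaf $\CE$.
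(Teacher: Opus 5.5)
Your argument for $\CE_{\ge\pi}^2=\CE_{\ge\pi}$ is correct, but you globalize a different object than the paper does. The paper extends the idempotent $e^\pi$ itself to a global section of $\CE_{\ge\pi}$, either via the Laurent diagrammatics of Theorem \ref{thm:laurent KLR} or via an $\SL_2$-globalization trick as in Proposition \ref{prop:Mpi-pushforward}. It then treats idempotence of the ideal as following from the affine statement. You instead carry the affine argument, or equivalently the equality of subsheaves, from $U$ to the translates $U_p=gU$ by equivariance.

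For the second clause your route is the more careful one. You note that what is actually needed is that $\CR_{\ge\pi_k}$ is generated by the idempotents $e^{\pi_i}$, $i\le k$, all of which lie in $\CR_{\ge\pi_k}$; containing a single idempotent would not force $I^2=I$. Moreover, even with global idempotents in hand, some local-to-global step like yours is needed to see that they generate $\CE_{\ge\pi}$ on charts other than $U$. The paper leaves this point implicit.

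The thing to fix is your remark that $e^\pi$ ``need not extend to a global section.'' This is false, and it leads you to prove the first clause only in the weaker form $e^\pi\in\CE_{\ge\pi}(U)$, whereas the paper asserts a global idempotent. The fix is short:
\begin{itemize}
\item Since $\wh Y=\bigsqcup_\beta Y$, you have $\CP=\bigoplus_\beta\varpi_*\wh\CP_\beta$, and $\CQ$ and $\CQ_{\ge\pi}$ split along the same decomposition.
\item So $e^\pi$ is simply the projection onto the summand indexed by the color sequence of $\pi$. It preserves $\CQ$, hence lies in $\CE(X)$.
\item It is $\SL_2$-invariant, because the linearization of $\wh\CP$ is componentwise.
\item Its restriction to $U$ lies in $\CE_{\ge\pi}(U)=\CR_{\ge\pi}$. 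Your own translation argument then puts its restriction to every $U_p$ in $\CE_{\ge\pi}(U_p)$, so $e^\pi\in\CE_{\ge\pi}(X)$.
\end{itemize}
Alternatively, Claim \ref{claim:M is Verma 1} shows that no $\tau<\pi$ has $\pi\in S^\tau\cdot\tau$. Hence $\wh\CI_{<\pi}$ is the unit ideal on the $\pi$-component, and $e^\pi(\CQ)\subseteq\CQ_{\ge\pi}$ holds directly. With this line added, your proof gives the full statement.
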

\begin{proof}
    On the affine level we already know $\CE_{\ge\pi}(U)=\CR_{\ge\pi}$ contains the idempotent $e^\pi$, so $\CE_{\ge\pi}(U)^2=\CE_{\ge\pi}(U)$. It is easy to see that $e^\pi$ can be extended to a global section, for example by using the diagrammatics in Theorem \ref{thm:laurent KLR}; alternatively one could use a $\SL_2$-globalization trick, similarly to the proof of Proposition \ref{prop:Mpi-pushforward}.
\end{proof}

Let
\[\CM_{\pi}^\top=\shom_{\CE/\CE_{>\pi}}(\CM_{\pi},\CE_{\ge\pi}/\CE_{>\pi}),\] and recall that $\Delta_\pi^\tau\coloneqq q^{s_\pi}e_\pi \CR/\CR_{>\pi}$ is the right (big) Verma module.
\begin{proposition}\label{prop:M top is right Verma}
    $\CM_{\pi}^\top(U)=q^{-d_\alpha-t_\pi-s_\pi}\Delta_{\pi}^\top$ recovers the right Verma module of KLR.
\end{proposition}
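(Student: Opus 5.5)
The plan is to reduce everything to the dominant affine chart $U$ and then use the affine cellular theory of \cite{kleshchev2013affine}. Sections of a sheaf-Hom over the affine open $U$ are computed as module Homs, since all sheaves involved are coherent and $U$ is affine. Let $\ol\CR=\CR_\alpha/\CR_{>\pi}$. By Theorem \ref{thm:cellular E} we have $(\CE/\CE_{>\pi})(U)=\ol\CR$, using exactness of sections on an affine, and $(\CE_{\ge\pi}/\CE_{>\pi})(U)=\CR_{\ge\pi}/\CR_{>\pi}$. By Theorem \ref{thm:M is Verma on affine}, $\CM_\pi(U)\cong q^{d_\alpha+t_\pi+s_\pi}\Delta_\pi$. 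Hence
\[\CM_\pi^\top(U)\cong \hom_{\ol\CR}\big(q^{d_\alpha+t_\pi+s_\pi}\Delta_\pi,\ \CR_{\ge\pi}/\CR_{>\pi}\big)=q^{-d_\alpha-t_\pi-s_\pi}\hom_{\ol\CR}\big(\Delta_\pi,\ \ol\CR e_\pi\ol\CR\big),\]
which is a right $\ol\CR$-module via right multiplication on the target. It therefore suffices to identify $\hom_{\ol\CR}(\Delta_\pi,\ol\CR e_\pi\ol\CR)$ with $\Delta_\pi^\top=q^{s_\pi}e_\pi\ol\CR$ as right modules.

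Write $\Delta_\pi=q^{-s_\pi}\ol\CR e_\pi$, using \cite[Theorem 5.23]{kleshchev2013affine}. A left module map out of the cyclic module $\ol\CR e_\pi$ is determined by the image $m$ of $e_\pi$, and $m$ must satisfy $m=e_\pi m$. This gives
\[\hom_{\ol\CR}(\ol\CR e_\pi,\ol\CR e_\pi\ol\CR)\cong e_\pi\ol\CR e_\pi\ol\CR=e_\pi\ol\CR,\]
where the last equality holds because $e_\pi\in\ol\CR e_\pi\ol\CR$. This identification is right $\ol\CR$-linear. Tracking the shift $q^{-s_\pi}$ on the source produces $q^{s_\pi}e_\pi\ol\CR=\Delta^\top_\pi$. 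Combining with the previous paragraph gives $\CM_\pi^\top(U)=q^{-d_\alpha-t_\pi-s_\pi}\Delta_\pi^\top$.

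The main obstacle is the first step: making sure the sheaf-Hom is taken in the correct category. The $\CE/\CE_{>\pi}$-module structure on $\CM_\pi$ must restrict on $U$ to the $\ol\CR$-module structure transported by the isomorphism $\Phi_\pi$ of Theorem \ref{thm:M is Verma on affine}, and the compatibility $\CE_{>\pi}\cdot\CM_\pi=0$ must hold. Both follow from Lemma \ref{lem:Qk is submod of Q}, the definition $\CE_{>\pi}(\CQ)\subseteq\CQ_{>\pi}$, and Theorem \ref{thm:cellular E}. If any gap appears in how $\CR_{>\pi}$ acts, I would first check that $\CR_{>\pi}$ kills $\CM_\pi(U)$ directly from the definition of $\CE_{\ge\pi_{k-1}}$.

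A secondary point is that the graded shifts must be handled consistently. Homs out of $q^a M$ acquire the shift $q^{-a}$, and this is what yields the stated total $-d_\alpha-t_\pi-s_\pi$.
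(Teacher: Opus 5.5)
Your proposal is correct and follows the paper's own argument. You restrict to $U$ and use Theorems \ref{thm:cellular E} and \ref{thm:M is Verma on affine} to rewrite $\CM_\pi^\top(U)$ as $\hom_{\ol\CR}(q^{d_\alpha+t_\pi+s_\pi}\Delta_\pi,\ol\CR e_\pi\ol\CR)$, then evaluate at $e_\pi$ to get $e_\pi\ol\CR e_\pi\ol\CR=e_\pi\ol\CR$. Your tracking of the $q^{\pm s_\pi}$ shifts, via $\Delta_\pi=q^{-s_\pi}\ol\CR e_\pi$ and $\Delta_\pi^\top=q^{s_\pi}e_\pi\ol\CR$, is more careful than the paper's one-line computation. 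You also make explicit the check that $\CE_{>\pi}$ kills $\CM_\pi$, which the paper leaves implicit.
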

\begin{proof}
    We make free use of the affine identifications we have made so far. Taking affine sections, we have $\CM_\pi(U)^\top=\hom_{\CR/\CR_{>\pi}}(q^{d_\alpha+t_\pi+s_\pi}\Delta_\pi,\CR_{\ge\pi}/\CR_{>\pi})=q^{-d_\alpha-t_\pi-s_\pi}\hom_{\ol\CR}(\ol\CR e_\pi, \ol\CR e_\pi\ol\CR)=q^{-d_\alpha-t_\pi-s_\pi}e_\pi\ol\CR=q^{-d_\alpha-t_\pi-s_\pi}\Delta_\pi^\top$. 
\end{proof}

We can now get the geometric version of the cell decomposition theorem:
\begin{theorem}
    $\CE_{\ge\pi}/\CE_{>\pi}$ is also supported on $X_{\le\pi}$, and we have
    \[\CE_{\ge\pi}/\CE_{>\pi}\cong \CM_{\pi}\otimes_{\CB_{\pi}} \CM_{\pi}^\top\] 
    as sheaves on $X_{\le\pi}$. 
\end{theorem}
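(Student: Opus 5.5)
The plan is to construct a global comparison map, show it is an isomorphism on the dominant chart $U$ using the affine identifications already established, and then spread this to all of $X$ by the $\SL_2$-translation argument used in Theorem \ref{thm:A_N monoidal} and Proposition \ref{prop:Mpi-pushforward}. The support statement comes first.

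\emph{Support.} By Claim \ref{claim:cellular E 1}, $\CE_{\ge\pi}/\CE_{>\pi}$ is a sheaf of $\CE$-bimodules. On $U$, Theorem \ref{thm:cellular E} identifies it with $\CR_{\ge\pi}/\CR_{>\pi}$. By \cite[Theorem 5.23]{kleshchev2013affine} this is $\ol\CR e_\pi\ol\CR\cong \Delta_\pi\otimes_{\Lbd_\pi}\Delta_\pi^\top$ up to shift. It is therefore killed by $I_{X_\pi}$, since $I_{X_\pi}$ kills $\Delta_\pi$ as noted before Lemma \ref{lem:verma is generically simple}. Hence its support on $U$ lies in $X_\pi\cap U\subseteq X_{\le\pi}\cap U$. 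The annihilator ideal of $\CE_{\ge\pi}/\CE_{>\pi}$ is an $\SL_2$-stable ideal sheaf, by Lemma \ref{lem:SL2-equivariance} and the $\SL_2$-equivariance of the filtration. The opens $U_p$ are $\SL_2$-translates of $U$, and $X_\pi$ is $\SL_2$-stable by Lemma \ref{lem: finiteness_mu}. So the annihilator contains $\CI_{X_\pi}$ on every $U_p$, hence globally, and the support statement follows.

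\emph{The comparison map.} Consider the evaluation pairing
\[
\CM_\pi\otimes_{\CO}\CM_\pi^\top\lto \CE_{\ge\pi}/\CE_{>\pi},\qquad m\otimes f\lmto f(m).
\]
Strictly, one should take $\CM_\pi^\top$ with its right $\CE$-action coming from the bimodule structure on the target, and pair as $m\otimes f\mapsto f(m)$ in the appropriate order. The key check is that this map descends to the tensor product over $\CB_\pi$: we need $f(bm)=(fb)(m)$, where $fb:=f\circ b$ uses the $\CB_\pi$-action on $\CM_\pi$ from Proposition \ref{prop:Mpi-pushforward}. This is automatic if we define the right $\CB_\pi$-action on $\CM_\pi^\top$ by precomposition. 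With that definition the map is $\CE$-bilinear and $\CB_\pi$-balanced, so it gives
\[
\Psi\colon \CM_\pi\otimes_{\CB_\pi}\CM_\pi^\top\lto \CE_{\ge\pi}/\CE_{>\pi}.
\]

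\emph{Isomorphism on $U$.} By Theorem \ref{thm:M is Verma on affine} and Proposition \ref{prop:M top is right Verma}, the sections of $\Psi$ over $U$ become, after cancelling the shifts $q^{\pm(d_\alpha+t_\pi+s_\pi)}$,
\[
\ol\CR e_\pi\otimes_{e_\pi\ol\CR e_\pi}e_\pi\ol\CR\lto \ol\CR e_\pi\ol\CR .
\]
This is the multiplication map, which is an isomorphism by \cite[Theorem 5.23]{kleshchev2013affine}; this is exactly the affine cellular structure. Here the compatibility $\Lbd_\pi=e_\pi\ol\CR e_\pi\cong \CB_\pi(U)$ is the last part of Theorem \ref{thm:M is Verma on affine}. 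One caveat: $\CM_\pi^\top(U)$ was computed in Proposition \ref{prop:M top is right Verma} from global $\shom$ evaluated on $U$. Since $\CM_\pi$ is coherent, $\shom$ commutes with restriction to opens, so this is fine.

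\emph{Globalizing, and the main obstacle.} Since $\Psi$ is $\SL_2$-equivariant, every object involved carries compatible $\SL_2$-linearizations, and the $U_p$ cover $X$, being an isomorphism on $U$ makes $\Psi$ an isomorphism on each $U_p$, hence globally. I expect the main obstacle to be this equivariance. One must verify that $\CM_\pi^\top$ and the $\CB_\pi$-module structure on $\CM_\pi$ are $\SL_2$-equivariant, so that the translation argument applies. For $\CB_\pi$ this follows from Lemma \ref{lem: finiteness_mu} together with the uniqueness of the $\CB_\pi$-extension in Proposition \ref{prop:Mpi-pushforward}, since the uniqueness forces the translated action to agree with the original. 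For $\CM_\pi^\top$ it follows because $\shom$ of equivariant sheaves is equivariant.
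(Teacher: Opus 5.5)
Your proposal is correct and follows the paper's proof. Both arguments identify the evaluation map $\CM_\pi\otimes_{\CB_\pi}\CM_\pi^\top\to\CE_{\ge\pi}/\CE_{>\pi}$ over $U$ with the KLM affine cellular isomorphism $\Delta_\pi\otimes_{\Lbd_\pi}\Delta_\pi^\top\cong\CR_{\ge\pi}/\CR_{>\pi}$, and then spread it to the $\SL_2$-translates $U_p$ by equivariance. The only difference is in the support claim: you prove it directly, by showing the $\SL_2$-stable annihilator contains $\CI_{X_\pi}$, whereas the paper reads it off at the end from the right-hand side being a $\CB_\pi$-module and hence killed by $\CI_{X_\pi}$. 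You are also more explicit than the paper about the equivariance checks.
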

\begin{proof}
    Let $\pi=\pi_k$. By Theorem \ref{thm:M is Verma on affine}, we have compatible identifications $\CM_{\pi_k}(U)\cong\Delta_{\pi_k}$ and $\CB_{\pi_k}(U)\cong\Lbd_{\pi_k}.$ Moreover, by Proposition \ref{prop:M top is right Verma}, we have $\CM_{\pi_k}^{\top}(U)\cong\Delta_{\pi_k}^{\top},$ and under these identifications the restriction of the evaluation morphism \[\operatorname{ev}_k\colon\CM_{\pi_k}\otimes_{\CB_{\pi_k}}\CM_{\pi_k}^{\top}
\to\CE_{\ge \pi_k}/\CE_{\ge \pi_{k-1}},\]is the affine cellular isomorphism \[\Delta_{\pi_k}\otimes_{\Lbd_{\pi_k}}\Delta_{\pi_k}^{\top}\xrightarrow{\sim}\CR_{\geq\pi_k}/\CR_{\geq\pi_{k-1}}.\] For every $p\in \BP^1$, recall $U_p$ from the proof of Proposition \ref{prop:Mpi-pushforward}, which is in particular an $\SL_2$-translate of $U$. Equivariance therefore shows that
$\operatorname{ev}_k$ is an isomorphism over every $U_p$. Since the
opens $U_p$ cover $X$, the evaluation map $\operatorname{ev}$ is an isomorphism
globally.

Finally, the right-hand side is scheme-theoretically supported on
$X_{\pi_k}$, and therefore on $X_{\leq\pi_k}$. The same is consequently
true of the left-hand side.
\end{proof}

\section{Action by $\sl_2$ and Witt}\label{subsect:sl2actionKLR}
In this section we give some diagrammatic consequences of our work thus far. We begin by recovering some known phenomena.

\subsection{The Elias-Qi $\sl_2$-action}
As the sheaves $\wh\CQ_{\ge\pi_k}$ and $\wh\CP$ are all built out of line bundles on products of $\BP^1$'s, they automatically receive an action of $\sl_2$ (cf. Lemma \ref{lem:SL2-equivariance}). There is then a natural induced $\sl_2$-action on their pushforwards $\CP$ and $\CQ_{\ge\pi_k}$ on $X$. As the sheaf $\CE$ is built from these sheaves, it also admits a $\sl_2$-action.
\begin{theorem}\label{thm:A_N sl2 action}
    The natural action $\sl_2\actson \CE$ arising from the action $\sl_2\actson \wh\CP$ agrees with the Elias-Qi action, i.e. $\sl_2\actson \CE(U)=\CR_\alpha$. 

    In fact, the geometric cell ideals $\CE_{\ge\pi}$ are $\sl_2$-subsheaves of $\CE$, and hence $\CR_{\ge\pi}\subseteq \CR_\alpha$ are $\sl_2$-submodules of KLR.
\end{theorem}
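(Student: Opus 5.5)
The plan mirrors the proof of Theorem \ref{thm:sl2action}: reduce the check to monoidal generators using equivariance of the monoidal product, then compute by hand on $\wh\CP(\wh U)$. By Lemma \ref{lem:SL2-equivariance}, $\wh\CP$, $\wh\CQ$ and $\wh\CQ_{\ge\pi_k}$ are $\SL_2$-linearized. Since $\varpi$ is equivariant, $\CP$, $\CQ$ and $\CQ_{\ge\pi_k}$ carry compatible $\SL_2$-structures. Hence $\shend\CP$ is $\SL_2$-equivariant.

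The subsheaves $\CE$ and $\CE_{\ge\pi_k}$ are kernels of the equivariant maps
\[\shend\CP\to\shom(\CQ,\CP/\CQ)\quad\text{and}\quad \shend\CP\to\shom(\CQ,\CP/\CQ_{\ge\pi_k}),\]
so they are $\SL_2$-subsheaves. Differentiating gives $\sl_2$-actions by derivations $(X\cdot\phi)=[X,\phi]$ on sections over any open set, in particular over $U$. This already gives the second paragraph of the theorem, once the first is known together with Theorem \ref{thm:cellular E}, which identifies $\CE_{\ge\pi}(U)=\CR_{\ge\pi}$.

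For the identification with Elias--Qi, observe that both actions are by derivations. By Theorem \ref{thm:A_N monoidal}, the geometric monoidal product is $\SL_2$-equivariant and recovers horizontal concatenation on $U$. The Elias--Qi action is also compatible with the monoidal structure: it acts on $x\otimes y$ by $X\cdot x\otimes y+x\otimes X\cdot y$. The only subtlety is the twist $\mathcal L_{\alpha,\alpha'}$, but it is trivialized equivariantly enough on $U$ that it should contribute only to $h$ by a scalar, which cancels in the commutator. It therefore suffices to compare the two actions on the generators: a dot, a same-color crossing, adjacent-color crossings in both orders, and distant crossings, all with at most two strings.

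The dot and the same-color crossing are exactly the nil-Hecke computations already carried out in Theorem \ref{thm:sl2action}. For the two-color generators, with $\alpha=\alpha_a+\alpha_{a+1}$, work in the explicit model of Section~\ref{sect:2color}: $\wh\CP$ has twists $\CO(0,-1)$ on the sequence $(a+1,a)$ and $\CO(0,0)$ on $(a,a+1)$. Using $e\mapsto -x^2\pd_x+nx$, $h\mapsto 2x\pd_x-n$, $f\mapsto\pd_x$ on each factor, evaluate $[X,\psi]$ on the basis vectors $1_{21},1_{12}$ and read off diagrams, for instance recovering $e\cdot\psi=-\psi\cdot(\text{red dot})$ as claimed there. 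Distant crossings act by relabeling with no twist and should be killed by $e$ and $f$ and have $h$-weight $0$, matching Elias--Qi. Finally, compare each result with the formulas of \cite{EQ23}, after possibly fixing their parameter choices and our sign conventions for the adjacent crossings.

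The main obstacle I expect is this normalization check. The twists $-r_{a,i}(\beta)$ were reverse-engineered, so I must verify that the grading-shift scalars from $h$ on source and target cancel correctly for crossings between different idempotents. Concretely, $h$ acts on $1_\beta$ by $-\sum(\text{twists})$, and the weight of $\psi\colon e^\beta\to e^{\beta'}$ must come out to its KLR degree. I would check this via the identity $r(\beta)+t_\beta=d_\alpha$, which shows that the difference of twists equals $\deg\psi$. I would also check that the $e$-action produces precisely the Elias--Qi coefficients rather than a different member of their family.
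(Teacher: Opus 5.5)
Your proposal is correct and follows essentially the paper's proof. You reduce to monoidal generators via $\SL_2$-equivariance of the geometric monoidal product and reuse the nil-Hecke computations of Theorem~\ref{thm:sl2action}. You then compute $[X,\psi]$ on $1_{12},1_{21}$ in the model $\CP=\CO(0,-1)1_{21}\oplus\CO(0,0)1_{12}$ and on the untwisted distant model. Finally, you deduce stability of $\CE_{\ge\pi}$, hence of $\CR_{\ge\pi}=\CE_{\ge\pi}(U)$, from the equivariance of $\CQ_{\ge\pi}$.

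One small correction: the twist $\mathcal L_{\alpha,\alpha'}$ does not only shift $h$ by a scalar. On the first block it also adds multiplication by $\sum_a (n'_a-n'_{a-1})\sum_{i\le n_a}x_{a,i}$ to $e$, as in the $L_1$ case of the proof of Theorem~\ref{thm:halfwittacts}. This is harmless, because $\phi\mapsto\phi\otimes\id_{\mathcal L_{\alpha,\alpha'}}$ is an equivariant isomorphism $\shend\CP\cong\shend(\CP\otimes\mathcal L_{\alpha,\alpha'})$, so the twist drops out of every commutator.
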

\begin{proof}
    As the geometric monoidal product is $\SL_2$-equivariant, it suffices to check on monoidal generators. 

    The 1-color case has already been checked in Theorem \ref{thm:sl2action}. To check the multi-color case, it remains to check the action on the multi-color monoidal generators. Keeping the conventions from the proof of \ref{thm:sl2action}, we use algebraic symbols for vectors in the polynomial representation and diagrams for elements of the algebra $\CE$. 

    Let us first do nearby crossings. For $\alpha=\alpha_1+\alpha_2$ the sheaf $\CP$ involved is $\CO(0,-1)1_{21}\oplus\CO(0,0)1_{12}$, where the idempotent is added for clarity. Then one can compute for instance (we choose the only non-trivial examples here to demonstrate) that $e\cdot 1_{21}=(e\cdot_{\CO(0)} 1)\boxtimes 1+1\boxtimes (e\cdot_{\CO(-1)} 1)=0+1\boxtimes (-y)=-y 1_{21}$ and $h\cdot 1_{21}=(h\cdot_{\CO(0)} 1)\boxtimes 1+1\boxtimes (h\cdot_{\CO(-1)} 1)=0+1=1_{21}$. In all, the relevant computations are that
    \begin{align*}
        e\cdot 1_{12}&=h\cdot 1_{12}=f\cdot 1_{12}=0,\\
        e\cdot 1_{21}&=-y1_{21},\\
        h\cdot 1_{21}&=1_{21},\\
        f\cdot 1_{21}&=0.
    \end{align*}
    It is not hard to check that $e((x-y)1_{12})=(-x^2+y^2)1_{12}$, either by using the Leibniz rule or by computing directly. 

    To check the action on diagrams, it again suffices to check the action on a basis of the representation over the center. In this case there is only basis element for each idempotent. 
    \begin{align*}
        (e\cdot\tikzcrossbr)(1_{12})&=e\cdot\tikzcrossbr(1_{12})-\tikzcrossbr(e\cdot 1_{12})=e\cdot 1_{21}-0=-y1_{21},\\
        &\hspace*{275pt}\implies e\cdot\tikzcrossbr\:=-\:\begin{diagram}
            \draw[black](0,0)--(1.5,1.5);
            \draw[red](1.5,0)--(0,1.5);
            \fill[red] (0.35,1.15) circle(6pt);
        \end{diagram}\:;\\
        (h\cdot\tikzcrossbr)(1_{12})&=h\cdot\tikzcrossbr(1_{12})-\tikzcrossbr(h\cdot 1_{12})=h\cdot 1_{21}-0=1_{21},\\
        &\hspace*{275pt}\implies h\cdot\tikzcrossbr\:=\:\tikzcrossbr\:;\\
        (f\cdot\tikzcrossbr)(1_{12})&=f\cdot\tikzcrossbr(1_{12})-\tikzcrossbr(f\cdot 1_{12})=f\cdot 1_{21}-0=0,\\
        &\hspace*{275pt}\implies f\cdot \tikzcrossbr\:=0.
    \end{align*}
    For the other crossing, 
    \begin{align*}
        (e\cdot\tikzcrossrb)(1_{21})&=e\cdot\tikzcrossrb(1_{21})-\tikzcrossrb(e\cdot 1_{21})=e\cdot ((x-y)1_{12})-\tikzcrossrb(-y1_{21})\\
        &=(-x^2+y^2)1_{12}-(-y)(x-y)1_{12}=-x(x-y)1_{12}\\
        &\hspace*{275pt}\implies e\cdot\tikzcrossrb\:=-\:\begin{diagram}
            \draw[red](0,0)--(1.5,1.5);
            \draw[black](1.5,0)--(0,1.5);
            \fill[black] (0.35,1.15) circle(6pt);
        \end{diagram}\:;\\
        (h\cdot\tikzcrossrb)(1_{21})&=h\cdot\tikzcrossrb(1_{21})-\tikzcrossrb(h\cdot 1_{21})=h\cdot((x-y)1_{21})-\tikzcrossrb(1_{21}),\\
        &=(2x-2y)1_{21}-(x-y)1_{21}=(x-y)1_{21}\\
        &\hspace*{275pt}\implies h\cdot\tikzcrossrb\:=\:\tikzcrossrb\:;\\
        (f\cdot\tikzcrossrb)(1_{21})&=f\cdot\tikzcrossrb(1_{21})-\tikzcrossrb(f\cdot 1_{21})=f\cdot ((x-y)1_{21})-0=0,\\
        &\hspace*{275pt}\implies f\cdot \tikzcrossrb\:=0.
    \end{align*}

    Finally let us do the distant crossing case. Let black and blue correspond to distant colors; WLOG let black be $\alpha_1$ and blue be $\alpha_3$. For $\alpha=\alpha_1+\alpha_3$ the sheaf $\CP$ is $\CO(0,0)1_{31}\oplus \CO(0,0)1_{13}$, and one can compute easily that
    \begin{align*}
        e\cdot 1_{13}&=h\cdot 1_{13}=f\cdot 1_{13}=0,\\
        e\cdot 1_{31}&=h\cdot 1_{31}=f\cdot 1_{31}=0.
    \end{align*}
    It follows easily from this and imitating the above computations that
    \[e\cdot \tikzcrossdistant\: = h\cdot \tikzcrossdistant\: = f\cdot \tikzcrossdistant\: =0 .\] 

    This is precisely the action of Elias-Qi.

    That the cellular ideals are preserved follows from the $\sl_2$-stability of $\CQ_{\ge\pi}$. 
\end{proof}


\subsection{A Witt action}
In fact, this can be taken further. Recall that the Witt algebra $\Wlie$ is the Lie algebra generated by $L_n$ for $n\in\BZ$, with Lie bracket structure given by
\[[L_m,L_n]=(m-n)L_{m+n}.\] 
It is worth noting that $L_{-1},L_0,L_1$ generate a copy of $\sl_2$ inside the Witt, identified via 
\[L_{-1}=-f,\qquad L_0=-\frac{1}{2}h,\qquad L_1=e.\] 
In \cite{lauda2025action} it was shown that the ``positive half'' of this algebra, namely $\Wlie_{\ge -1}$, acts upon the KLR algebras, and in particular upon $\NH_n$. We will also give a Witt algebra action on KLR. For type $A_1$, i.e. for nil-Hecke, this action recovers the action of \cite{lauda2025action}. However for multi-colors the action appears to be different; indeed, the action of \cite{lauda2025action} for 2-color KLR does not even agree with that of \cite{EQ23}. 

In for example \cite[Equation 4.10]{schlichenmaier2016krichever}, there is an action $\Wlie\actson \CO_{\BP^1}(k)(U\cap U^-)$, given by 
\[L_n\cdot f=-x^{n+1}\pd_x f+\frac{k}{2}(n+1)x^nf.\]
If one is willing to restrict to just the half Witt, $\Wlie_{\ge -1}$, then this action exists on $\CO_{\BP^1}(k)(U)$. 


\begin{proposition}\label{prop:halfwitt preserves Qk}
    There is a natural action $\Wlie_{\ge-1}\actson \CP(U)$ induced from the coherent geometry; under this action, the subspaces $\CQ_{\ge\pi}(U)$ are $\Wlie_{\ge-1}$-submodules.

    Similarly there is an action of the whole Witt algebra $\Wlie\actson \CP(U\cap U^-)$ induced from the coherent geometry; under this action, the subspaces $\CQ_{\ge\pi}(U\cap U^-)$ are similarly $\CW$-submodules
\end{proposition}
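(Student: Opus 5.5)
The plan is to work upstairs on $\wh Y$, where everything is built from line bundles on products of $\BP^1$, and push down along $\varpi$.

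\textbf{Defining the action.} On each factor $\BP^1$ with twist $k$, let $L_n$ act on $\CO_{\BP^1}(k)$ over $U$ (respectively over $U\cap U^-$) by
\[L_n\cdot f=-x^{n+1}\pd_x f+\tfrac{k}{2}(n+1)x^nf.\]
This uses $n\ge -1$ over $U$, and all $n\in\BZ$ over $U\cap U^-$. Since the Witt bracket relations hold factorwise, on a box product $\wh\CP_\beta=\CO(k_1)\boxtimes\cdots\boxtimes\CO(k_n)$ we let $L_n$ act by the Leibniz sum of these operators over the factors. Concretely, on $\wh\CP_\beta(\wh U)=R$,
\[L_n=\sum_i\Bigl(-x_i^{n+1}\pd_{x_i}+\tfrac{k_i(\beta)}{2}(n+1)x_i^n\Bigr).\]
The commutator of two such operators in the same variable gives the Witt bracket. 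Operators in different variables commute, so this is a representation of $\Wlie_{\ge-1}$, respectively of $\Wlie$ on $R[x_i^{\pm1}]$.

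Pushing forward along $\varpi$ just regards the same space $\bigoplus_\beta\wh\CP_\beta(\wh U)$ as $\CP(U)$. The operators are first-order differential operators. They preserve $\Lbd=\Sym$, because the derivation part $-\sum_i x_i^{n+1}\pd_{x_i}$ is $S_\alpha$-symmetric, so they are compatible with the $\CO_X$-structure up to this derivation of $\Lbd$. This gives the action on $\CP(U)$, and on $\CP(U\cap U^-)$ after inverting the variables. One should check that $U\cap U^-$ upstairs is $\Spec\Pol^\times$ on each component, which follows from the description of the charts.

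\textbf{Preservation of $\CQ_{\ge\pi}$.} By Proposition \ref{prop:Q as cap}, $\CQ_{\ge\pi_k}=\varpi_*(\wh\CQ\otimes\wh\CI_{<\pi_k})$. On the $\beta$-component its sections over $\wh U$ are
\[Q_\beta\cdot\bigcap_{r>k}I_{\pi_r,\beta}\subseteq R.\]
An intersection of submodules is a submodule, so it suffices to show that each $Q_\beta I$ is preserved, with $I$ an intersection of the ideals $J_\lambda$ generated by differences $x_{a,i}-x_{b,j}$.

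\textbf{Derivation part.} Write $D_n=-\sum_i x_i^{n+1}\pd_{x_i}$. For each difference,
\[D_n(x_i-x_j)=-(x_i^{n+1}-x_j^{n+1})\in(x_i-x_j),\]
and the same holds for Laurent $n<-1$, since $x_i^{m}-x_j^{m}$ is divisible by $x_i-x_j$ in $\Pol^\times$. Hence $D_n$ preserves each prime ideal $J_\lambda$, and therefore every intersection $I$.

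For the product, the Leibniz rule gives
\[D_n(Q_\beta g)=Q_\beta\Bigl(\sum_{\text{factors}}\tfrac{D_n(x_i-x_j)}{x_i-x_j}\Bigr)g+Q_\beta D_n(g),\]
and each quotient $\tfrac{D_n(x_i-x_j)}{x_i-x_j}$ is a polynomial (respectively a Laurent polynomial). So $D_n(Q_\beta I)\subseteq Q_\beta I$.

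\textbf{Multiplicative part.} The zeroth-order term multiplies by the function $\sum_i\tfrac{k_i}{2}(n+1)x_i^n$. This function lies in $R$ for $n\ge-1$ (for $n=-1$ it is zero) and in $\Pol^\times$ in the Laurent case. Since $I$ is an ideal, multiplication by it preserves $Q_\beta I$.

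\textbf{Main obstacle.} The only genuinely delicate point is the half-integer coefficients $k/2$ and whether this prescription is really ``induced from the coherent geometry.'' It must agree with the $\sl_2$-action of Theorem \ref{thm:A_N sl2 action} under $L_{-1}=-f$, $L_0=-\tfrac12h$, $L_1=e$. I would check this directly on a single $\CO(k)$: the formula gives
\[L_1=-x^2\pd_x+kx=e,\qquad L_0=-x\pd_x+\tfrac k2=-\tfrac12h,\qquad L_{-1}=-\pd_x=-f.\]
It then extends to box products by the Leibniz rule.

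With this, the preservation argument above is uniform in $n$. The statement for $U\cap U^-$ follows by the same computation after localizing, because Proposition \ref{prop:Q as cap}'s description commutes with localization at the variables.
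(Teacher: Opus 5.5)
Your proposal is correct and follows essentially the same route as the paper. You define $L_n$ on each factor $\CO_{\BP^1}(k)$ by $-x^{n+1}\partial_x+\frac{k}{2}(n+1)x^n$ and sum over the factors of each $\wh\CP_\beta$; the derivation part sends each $x_{a,i}-x_{b,j}$ into the ideal it generates, and the zeroth-order part is multiplication by a (Laurent) polynomial, so $\CQ_{\ge\pi}$ is preserved. Your extra checks (the Witt bracket, the Leibniz treatment of $Q_\beta$ instead of the paper's $\CQ\cap\CI_{<\pi}\CP$ description, and localization to $U\cap U^-$) fill in details the paper leaves implicit. One small misattribution: $\CQ_{\ge\pi_k}=\varpi_*(\wh\CQ\otimes\wh\CI_{<\pi_k})$ is the definition, not Proposition~\ref{prop:Q as cap}.
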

\begin{proof}
    For the operators $L_{-1},L_0,L_1$ this is already known due to the $\sl_2$-action, so it suffices to check $L_n$ for $n\ge 2$. The argument is identical however. 

    Let
    \[d_{a,i}(\beta)=n_a-i-r_{a,i}(\beta)\] 
    be the $i$-th twist of the $a$-th color in the $\beta$-th copy of $\wh\CP$. Then the action $\Wlie_{\ge-1}\actson \CP(U)$ is given by
    \[L_n\lmto \sum_\beta \pr*{-\sum_{a,i} x_{a,i}^{n+1}\pd_{x_{a,i}}+\frac{n+1}{2}\sum_{a,i}d_{a,i}(\beta)x_{a,i}^n}1_\beta.\] 
    Note that the action of this on polynomials preserves all ideals of form $\wan{x_{a,i}-x_{b,j}}$. Indeed, the first term acts by
    \[\pr*{-\sum_{c,k}x_{c,k}^{n+1}\pd_{x_{c,k}}}(x_{a,i}-x_{b,j})=-x_{a,i}^{n+1}+x_{b,j}^{n+1}=-(x_{a,i}-x_{b,j}) \h_n(x_{a,i},x_{b,j}),\]
    while the second term acts simply by multiplication. As $\CQ(U)$ and $\CI_{<\pi}\CP$ are built from intersections or products of ideals generated by such diagonal terms, it follows that $L_n$ preserves $\CQ_{\ge\pi}(U)$.

    For the whole Witt action, the argument is identical.
\end{proof}

\begin{theorem}\label{thm:halfwittacts}
    There is a natural action $\Wlie_{\ge -1}\actson \CR_\alpha$ induced from the coherent geometry, respecting the cellular filtration. It acts as follows:
    \[L_{-1}=-f,\qquad L_0=-\frac{1}{2}h,\qquad L_1=e.\] 
    For $n\ge -1$, the action is locally given by
    \begin{align*}
        L_n\cdot \tikzdot\:&=-\:\begin{diagram}
            \draw(0,0)--(0,1.5);
            \fill (0,0.75) circle (6pt);
            \node at (1.25,1) {\tiny$n+1$};
        \end{diagram},\\
        L_n\cdot \tikzcrossing\: &= \frac{n+1}{2}\pr*{\:\begin{diagram}
            \draw(0,0)--(1.5,1.5);
            \draw(0,1.5)--(1.5,0);
            \fill (1.15,0.35) circle(6pt);
            \node at (1.7,0.6) {\tiny $n$};
        \end{diagram}\!-\:\begin{diagram}
                \draw(0,0)--(1.5,1.5);
                \draw(0,1.5)--(1.5,0);
                \fill (1.15,1.15) circle(6pt);
                \node at (1.7,0.9) {\tiny $n$};
            \end{diagram}\!\!}+\sum_{i+j=n}\begin{diagram}
            \draw(0,0)--(1.5,1.5);
            \draw(0,1.5)--(1.5,0);
            \fill[black] (0.35,1.15) circle(6pt);
            \fill[black] (1.15,1.15) circle(6pt);
            \node at (1.7,0.9) {\tiny $j$};
            \node at (-0.2,0.9) {\tiny $i$};
        \end{diagram},\\
        L_n\cdot\tikzcrossbr\:&=-\frac{n+1}{2}\begin{diagram}
            \draw[black](0,0)--(1.5,1.5);
            \draw[red](0,1.5)--(1.5,0);
            \fill[red] (0.35,1.15) circle(6pt);
            \node at (-0.2,0.9) {\tiny $n$};
        \end{diagram}\:,\\
        L_n\cdot \tikzcrossrb\:&=\frac{n+1}{2}\begin{diagram}
            \draw[red](0,0)--(1.5,1.5);
            \draw[black](0,1.5)--(1.5,0);
            \fill[red] (1.15,1.15) circle(6pt);
            \node at (1.7,0.9) {\tiny $n$};
        \end{diagram}\:-\sum_{i+j=n} \begin{diagram}
            \draw[red](0,0)--(1.5,1.5);
            \draw[black](0,1.5)--(1.5,0);
            \fill[black] (0.35,1.15) circle(6pt);
            \fill[red] (1.15,1.15) circle(6pt);
            \node at (1.7,0.9) {\tiny $j$};
            \node at (-0.2,0.9) {\tiny $i$};
        \end{diagram},\\
        L_n\cdot \tikzcrossdistant \:&=0.
    \end{align*}
\end{theorem}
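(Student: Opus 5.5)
The plan is to define the action on $\CE(U)=\CR_\alpha$ by the commutator formula, exactly as in the $\sl_2$ case. Proposition \ref{prop:halfwitt preserves Qk} gives an action of $\Wlie_{\ge-1}$ on $\CP(U)=\bigoplus_\beta \Pol_\beta$ by the operators
\[D_n=\sum_\beta\Big(-\sum_{a,i}x_{a,i}^{n+1}\pd_{x_{a,i}}+\tfrac{n+1}{2}\sum_{a,i}d_{a,i}(\beta)x_{a,i}^n\Big)1_\beta .\]
These are first-order differential operators, and they are linear over $\BC$ but not over $\Lbd$. We set $L_n\cdot\phi=[D_n,\phi]$ for $\phi\in\End_\BC(\CP(U))$. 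This is a Lie action because $D_n$ itself gives a Lie action on $\CP(U)$; that is a standard check: the vector fields $-x^{n+1}\pd_x$ satisfy the Witt bracket, and the multiplicative parts produce the correct cocycle-free correction, as in \cite{schlichenmaier2016krichever}.

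First we check that $[D_n,\phi]$ is again $\Lbd$-linear. For $s\in\Lbd$ we have $[D_n,s]=-\sum x_{a,i}^{n+1}\pd_{x_{a,i}}(s)$. This lies in $\Lbd$, since the vector field is symmetric in the variables of each color, and it is independent of $\beta$. So $[[D_n,\phi],s]=[D_n,[\phi,s]]-[\phi,[D_n,s]]=0$.

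Next we check that $[D_n,\phi]$ preserves $\CQ(U)$ and each $\CQ_{\ge\pi}(U)$. If $\phi(\CQ_{\ge\pi}(U))\subseteq\CQ_{\ge\pi'}(U)$, then $[D_n,\phi]$ maps $\CQ_{\ge\pi}(U)$ into $\CQ_{\ge\pi'}(U)$, because both $\CQ_{\ge\pi}(U)$ and $\CQ_{\ge\pi'}(U)$ are $D_n$-stable by Proposition \ref{prop:halfwitt preserves Qk}. Using Theorem \ref{thm:E=R} and Theorem \ref{thm:cellular E}, this shows that $\CR_\alpha$ and the ideals $\CR_{\ge\pi}$ are $\Wlie_{\ge-1}$-stable. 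Each $L_n$ acts by derivations, since commutators do. For $n=-1,0,1$ the operator $D_n$ is the differentiated $\SL_2$ action under the stated identifications, so Theorem \ref{thm:A_N sl2 action} gives the comparison with Elias--Qi.

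The explicit formulas come from three facts. First, $L_n$ is a derivation. Second, $L_n$ is compatible with the monoidal product: the operator $D_n$ on $\CP''_{\alpha,\alpha'}$ restricts to the sum of the operators on the two factors. The only subtlety is the twist $\CL_{\alpha,\alpha'}$, which shifts the $d_{a,i}$ on the first block by a constant per color; the multiplicative term this adds is $\Lbd$-central on that block, and it commutes with $\phi\otimes\id$. Third, the statement is then reduced to the generators, where each case is a small polynomial computation on a basis over the center.

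\emph{Dot.} On $\Pol_1$ we have $d=0$, so $[D_n,x]=-x^{n+1}$.

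\emph{Same-color crossing.} Use the basis $\{1,x_1\}$ with twists $(1,0)$, or compute $[D_n,\pd]$ directly on all polynomials. The Demazure operator gives
\[\pd(x_1^{n+1}\pd_1 f+x_2^{n+1}\pd_2 f)-(\text{same with }\pd f)=\sum_{i+j=n}x_1^ix_2^j\,\pd f\]
up to sign, by the divided difference of $x^{n+1}$. The twist term contributes $\tfrac{n+1}{2}[x_1^n,\pd]$, and this yields the dots-of-power-$n$ difference.

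\emph{Adjacent-color crossings.} These use the sheaf $\CO(0,-1)1_{21}\oplus\CO(0,0)1_{12}$, so $d=(0,-1)$ on $1_{21}$ and $(0,0)$ on $1_{12}$. One applies $D_n$ to $1_{21}$, to $(x-y)1_{12}$, and so on, using the identity $-x^{n+1}+y^{n+1}=-(x-y)\sum_{i+j=n}x^iy^j$.

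\emph{Distant crossings.} All twists are $0$, and the crossing commutes with symmetric vector fields, so the action is $0$.

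The main obstacle is bookkeeping. One must confirm that the monoidal compatibility genuinely holds for $n\ge2$, given the twist $\CL_{\alpha,\alpha'}$; I would argue it is harmless because a constant twist contributes an $\Lbd$-central multiplication that cancels in commutators. One must also get the signs and the $\tfrac{n+1}{2}$ coefficients right in the adjacent-color computation.
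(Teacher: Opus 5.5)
Your proposal is correct and follows the paper's proof. You reduce to monoidal generators via the Leibniz rule, noting that the twist by $\mathcal L_{\alpha,\alpha'}$ only adds a central multiplication that drops out of the commutator; you get well-definedness and stability of the cell ideals from Proposition \ref{prop:halfwitt preserves Qk}; and you compute directly on the dot and the three kinds of crossings. Your explicit checks that $[D_n,\phi]$ stays $\Lambda$-linear and that the $D_n$ satisfy the Witt bracket fill in steps the paper leaves implicit.
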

\begin{proof}
    Let $\CE,\CE'$ be coherent KLR data associated to $\alpha,\alpha'$. Let $\CE''$ be the image of the monoidal product, so that $\alpha''=\alpha+\alpha'$; in what follows, $\sq,\sq',\sq''$ will refer to data attached to those three sheaves, respectively. Let
    \[l_a=n_a'-n_{a-1}'\]
    be the twists of the sheaf $\CL_{a,a'}$ from Section \ref{sec:geometric monoidal}, and let $\rho=\rho_\alpha$ denote the action of $L_n$ on $\CP=\CP_\alpha$ given in the proof of Proposition \ref{prop:halfwitt preserves Qk}, and let $d_{a,i}(\beta)=n_a-i-r_{a,i}(\beta)$ in that same proof. Note that $d_{a,i}''(\beta\beta')=d_{a,i}(\beta)+l_a$ for $i\in[1,n_a]$, and $d_{a,n_a+j}''(\beta\beta')=d_{a,j}'(\beta')$ for $j\in[1,n_a']$. Under the natural identification of $\Pol_{\beta\beta'}=\Pol_\beta\otimes\Pol_{\beta'}$, we then have
    \[\rho''(L_n)_{\beta\beta'}=\rho(L_n)_\beta\otimes 1+1\otimes\rho'(L_n)_{\beta'}+\frac{n+1}{2}\sum_{a,i}l_a x_{a,i}^n;\]
    as the last term is multiplication by a polynomial in the center of KLR, it commutes with multiplication by KLR diagrams and disappears in the Leibniz rule bracket defining the action of $L_n$ on KLR diagrams:
    \[L_n\cdot (x\otimes x')=(L_n\cdot x)\otimes x'+x\otimes (L_n\cdot x')+\Big[\times \frac{n+1}{2}\sum_{a,i}l_a x_{a,i}^n,x\otimes x'\Big]=(L_n\cdot x)\otimes x'+x\otimes (L_n\cdot x').\]
    Hence the action is well-behaved with respect to monoidal product, and it suffices to check on monoidal generators.


    That the action is well-defined and preserves the cellular ideals follows from Proposition \ref{prop:halfwitt preserves Qk}.The rest is again a computation. Let us first do the 1-color generators. Recall that $\wh\CP$ for 1 string is $\CO(0)$, and for 2 strings it is $\CO(1,0)\oplus \CO(0,0)$. Let us first check the action of the (half-)Witt on the basis vectors of the polynomial representation:
    \begin{align*}
        L_n\cdot 1_1&=0\\
        &\hspace*{275pt}\implies L_n\cdot 1_1=0,\\ 
        L_n\cdot 1_{11}&=(L_n\cdot_{\CO(1)} 1)\boxtimes 1+1\boxtimes (L_n\cdot_{\CO(0)} 1)=\frac{n+1}{2}x^n\boxtimes1+0\\
        &\hspace*{275pt}\implies L_n\cdot 1_{11}=\frac{n+1}{2}x_1^n 1_{11},\\ 
        L_n\cdot x_11_{11}&=(L_n\cdot_{\CO(1)} x_1)\boxtimes 1+x_1\boxtimes (L_n\cdot_{\CO(0)} 1)=(-x^{n+1}+\frac{n+1}{2}x^{n+1})\boxtimes 1+0\\
        &\hspace*{275pt}\implies L_n\cdot x_11_{11}=\frac{n-1}{2}x_1^{n+1}1_{11}.
    \end{align*}
    Now to check the action on diagrams we use the Leibniz rule for morphisms, just as before. Let $\h_n(x,y)=\sum_{i+j=n}x^iy^j$ denote the complete homogeneous symmetric polynomial. 
    \begin{align*}
        (L_n\cdot \tikzdot)(1_1)&=L_n\cdot \tikzdot(1_1)-\tikzdot(L_n\cdot 1_1)=L_n\cdot x1_1=-x^{n+1}1_1\\
        &\hspace*{250pt}\implies L_n\cdot\tikzdot\:=-\:\begin{diagram}
            \draw(0,0)--(0,1.5);
            \fill (0,0.75) circle (6pt);
            \node at (1.25,1) {\tiny$n+1$};
        \end{diagram},\\
        (L_n\cdot \tikzcrossing)(1_{11})&=L_n\cdot \tikzcrossing(1_{11})-\tikzcrossing(L_n\cdot 1_{11})\\
        &=0-\tikzcrossing\bigpr{\frac{n+1}{2}x_1^n1_{11}}=-\frac{n+1}{2}\h_{n-1}(x_1,x_2)1_{11},\\
        (L_n\cdot \tikzcrossing)(x_11_{11})&=L_n\cdot \tikzcrossing(x_11_{11})-\tikzcrossing(L_n\cdot x_11_{11})\\
        &=L_n\cdot 1_{11}-\tikzcrossing\bigpr{\frac{n-1}{2}x_1^{n+1}1_{11}}=\pr*{\frac{n+1}{2}x_1^n-\frac{n-1}{2}\h_n(x_1,x_2)}1_{11}\\
        &\hspace*{250pt}\implies L_n\cdot\tikzcrossing\:=\frac{n+1}{2}\pr*{\:\begin{diagram}
            \draw(0,0)--(1.5,1.5);
            \draw(0,1.5)--(1.5,0);
            \fill (1.15,0.35) circle(6pt);
            \node at (1.7,0.6) {\tiny $n$};
        \end{diagram}\!-\:\begin{diagram}
                \draw(0,0)--(1.5,1.5);
                \draw(0,1.5)--(1.5,0);
                \fill (1.15,1.15) circle(6pt);
                \node at (1.7,0.9) {\tiny $n$};
            \end{diagram}\!\!}\\
            &\hspace*{370pt}+\sum_{i+j=n}\begin{diagram}
            \draw(0,0)--(1.5,1.5);
            \draw(0,1.5)--(1.5,0);
            \fill[black] (0.35,1.15) circle(6pt);
            \fill[black] (1.15,1.15) circle(6pt);
            \node at (1.7,0.9) {\tiny $j$};
            \node at (-0.2,0.9) {\tiny $i$};
        \end{diagram},
    \end{align*}
    where the last implication must be checked. It suffices to check that the claimed action $L_n\cdot\tikzcrossing$ acts as expected on the vectors $1_{11}$ and $x_11_{11}$. As $\psi_1(1_{11})=0$, it is clear that only the first term survives the action on $1_{11}$ and
    \[\pr*{\frac{n+1}{2}\pr*{\:\begin{diagram}
            \draw(0,0)--(1.5,1.5);
            \draw(0,1.5)--(1.5,0);
            \fill (1.15,0.35) circle(6pt);
            \node at (1.7,0.6) {\tiny $n$};
        \end{diagram}\!-\:\begin{diagram}
                \draw(0,0)--(1.5,1.5);
                \draw(0,1.5)--(1.5,0);
                \fill (1.15,1.15) circle(6pt);
                \node at (1.7,0.9) {\tiny $n$};
            \end{diagram}\!\!}+\sum_{i+j=n}\begin{diagram}
            \draw(0,0)--(1.5,1.5);
            \draw(0,1.5)--(1.5,0);
            \fill[black] (0.35,1.15) circle(6pt);
            \fill[black] (1.15,1.15) circle(6pt);
            \node at (1.7,0.9) {\tiny $j$};
            \node at (-0.2,0.9) {\tiny $i$};
        \end{diagram}}\cdot 1_{11}=-\frac{n+1}{2}\h_{n-1}(x_1,x_2)1_{11}.\]
    For the action on $x_11_{11}$, one may check that
    \begin{align*}
        \frac{n+1}{2}\:\begin{diagram}
            \draw(0,0)--(1.5,1.5);
            \draw(0,1.5)--(1.5,0);
            \fill (1.15,0.35) circle(6pt);
            \node at (1.7,0.6) {\tiny $n$};
        \end{diagram}\!\cdot x_11_{11}&=\frac{n+1}{2}x_1x_2\psi_1x_2^{n-1}\cdot 1_{11}=-\frac{n+1}{2}(x_1x_2)\h_{n-2}1_{11},\\
        -\frac{n+1}{2}\:\begin{diagram}
                \draw(0,0)--(1.5,1.5);
                \draw(0,1.5)--(1.5,0);
                \fill (1.15,1.15) circle(6pt);
                \node at (1.7,0.9) {\tiny $n$};
            \end{diagram}\!\cdot x_11_{11}&=-\frac{n+1}{2}x_2^n,\\
        \pr*{\sum_{i+j=n}\begin{diagram}
            \draw(0,0)--(1.5,1.5);
            \draw(0,1.5)--(1.5,0);
            \fill[black] (0.35,1.15) circle(6pt);
            \fill[black] (1.15,1.15) circle(6pt);
            \node at (1.7,0.9) {\tiny $j$};
            \node at (-0.2,0.9) {\tiny $i$};
        \end{diagram}}\cdot x_11_{11}&=\h_n(x_1,x_2),
    \end{align*}
    so that summing up we find
    \[\pr*{\frac{n+1}{2}\pr*{\:\begin{diagram}
            \draw(0,0)--(1.5,1.5);
            \draw(0,1.5)--(1.5,0);
            \fill (1.15,0.35) circle(6pt);
            \node at (1.7,0.6) {\tiny $n$};
        \end{diagram}\!-\:\begin{diagram}
                \draw(0,0)--(1.5,1.5);
                \draw(0,1.5)--(1.5,0);
                \fill (1.15,1.15) circle(6pt);
                \node at (1.7,0.9) {\tiny $n$};
            \end{diagram}\!\!}+\sum_{i+j=n}\begin{diagram}
            \draw(0,0)--(1.5,1.5);
            \draw(0,1.5)--(1.5,0);
            \fill[black] (0.35,1.15) circle(6pt);
            \fill[black] (1.15,1.15) circle(6pt);
            \node at (1.7,0.9) {\tiny $j$};
            \node at (-0.2,0.9) {\tiny $i$};
        \end{diagram}}\cdot x_11_{11}=\pr*{-\frac{n+1}{2}(x_1x_2)\h_{n-2}(x_1,x_2)-\frac{n+1}{2}x_2^n+\h_n(x_1,x_2)}1_{11}.\]
    Now it is an easy identity of symmetric functions to see that $-\frac{n+1}{2}(x_1x_2)\h_{n-2}(x_1,x_2)-\frac{n+1}{2}x_2^n+\h_n(x_1,x_2)=\frac{n+1}{2}x_1^n-\frac{n-1}{2}\h_n(x_1,x_2)$. Hence the action on 1-color is as claimed.

    Let us second check the nearby 2-color crossings. Recall that in this case $\CP=\CO(0,-1)_{21}\oplus\CO(0,0)_{12}$. Let us check the action of $L_n$ on some vectors:
    \begin{align*}
        L_n\cdot 1_{12}&=(L_n\cdot_{\CO(0)} 1)\boxtimes 1+1\boxtimes (L_n\cdot_{\CO(0)} 1)=0\\
        &\hspace*{275pt}\implies L_n\cdot 1_{12}=0,\\ 
        L_n\cdot 1_{21}&=(L_n\cdot_{\CO(0)} 1)\boxtimes 1+1\boxtimes (L_n\cdot_{\CO(-1)} 1)=0+1\boxtimes \pr*{-\frac{n+1}{2}y^n}\\
        &\hspace*{275pt}\implies L_n\cdot 1_{21}=-\frac{n+1}{2}y^{n}1_{21}.
    \end{align*}
    Now let us check the action on diagrams. 
    \begin{align*}
        (L_n\cdot \tikzcrossbr)(1_{12})&=L_n\cdot \tikzcrossbr(1_{12})-\tikzcrossbr(L_n\cdot 1_{12})=L_n\cdot 1_{21}=-\frac{n+1}{2}y^n1_{21}\\
        &\hspace*{275pt}\implies L_n\cdot \tikzcrossbr=-\frac{n+1}{2}\begin{diagram}
            \draw[black](0,0)--(1.5,1.5);
            \draw[red](0,1.5)--(1.5,0);
            \fill[red] (0.35,1.15) circle(6pt);
            \node at (-0.2,0.9) {\tiny $n$};
        \end{diagram}\:,\\ 
        (L_n\cdot \tikzcrossrb)(1_{21})&=L_n\cdot \tikzcrossrb(1_{21})-\tikzcrossrb(L_n\cdot 1_{21})=L_n\cdot (x-y)1_{12}-\tikzcrossbr\pr*{-\frac{n+1}{2}y^n1_{21}}\\
        &=(-x^{n+1}+y^{n+1})1_{12}+\frac{n+1}{2}y^n(x-y)1_{12}=\bigpr{-\h_n(x,y)+\frac{n+1}{2}y^n}(x-y)1_{12}\\
        &\hspace*{275pt}\implies L_n\cdot \tikzcrossrb=\frac{n+1}{2}\begin{diagram}
            \draw[red](0,0)--(1.5,1.5);
            \draw[black](0,1.5)--(1.5,0);
            \fill[red] (1.15,1.15) circle(6pt);
            \node at (1.7,0.9) {\tiny $n$};
        \end{diagram}\:\\
        &\hspace*{360pt}-\sum_{i+j=n} \begin{diagram}
            \draw[red](0,0)--(1.5,1.5);
            \draw[black](0,1.5)--(1.5,0);
            \fill[black] (0.35,1.15) circle(6pt);
            \fill[red] (1.15,1.15) circle(6pt);
            \node at (1.7,0.9) {\tiny $j$};
            \node at (-0.2,0.9) {\tiny $i$};
        \end{diagram}.\\ 
    \end{align*}
    
    Let us finally check the distant 2-color crossings. Recall that in this case $\CP=\CO(0,0)_{31}\oplus\CO(0,0)_{13}$. It is easy to compute that
    \[L_n\cdot 1_{13}=L_n\cdot 1_{31}=0,\]
    and so
    \begin{align*}
        (L_n\cdot \tikzcrossdistant)(1_{13})&=L_n\cdot \tikzcrossdistant(1_{13})-\tikzcrossdistant(L_n\cdot 1_{13})=L_n\cdot 1_{31}-0=0\\
        &\hspace*{250pt}\implies L_n\cdot \tikzcrossdistant=0.
    \end{align*}

    This concludes. 
\end{proof}
This recovers the action of \cite{lauda2025action} for 1-color KLR, with Witt sequence parameter $\mu_n=\frac{n+1}{4}$; however, note well that it does \textit{not} recover their action on multi-color KLR! Indeed, whereas we have
\[L_1\cdot\tikzcrossbr=-\:\begin{diagram}
    \draw(0,0)--(1.5,1.5);
    \draw[red](0,1.5)--(1.5,0);
    \fill[red] (0.35,1.15)circle(6pt);
\end{diagram}\:,\]
the action of \cite{lauda2025action} has
\[L_1\cdot_\te{GL}\tikzcrossbr=\frac{1}{2}\:\begin{diagram}
    \draw(0,0)--(1.5,1.5);
    \draw[red](0,1.5)--(1.5,0);
    \fill[black] (1.15,1.15)circle(6pt);
\end{diagram}\:-\frac{3}{2}\:\begin{diagram}
    \draw(0,0)--(1.5,1.5);
    \draw[red](0,1.5)--(1.5,0);
    \fill[red] (0.35,1.15)circle(6pt);
\end{diagram}\:.\]


\subsection{A full Witt action on the Laurent KLR algebra}\label{sec:fullWitt}
In order to obtain an action of the whole Witt algebra, we must pass to a smaller open set in $X$. Let $U^\times= U\cap U^-\subset X$, and let
\[\CR_\alpha^\times\coloneqq \CE(U\cap U^-).\] 
This is the ``Laurent KLR algebra''. 

The 1-color case of this was essentially worked out in \cite{KMZ}, but perhaps it is briefly worth saying something again in the proof of the following.

Recall that there were $2^N$ affine charts $X^\eps$ on the space $X^\circ$, depending on a choice of sign for each color, $\eps=(\eps_1,\cdotsc,\eps_N)$. By the identification of $\CE$ inside $\shend\CP$, we can give diagrammatics on each chart; the choice of sign will be reflected in the way we draw strings of each color, so that a straight regular string of color $i$ corresponds to $\eps_i=+$ (i.e. nonnegative powers of dots are allowed on strings of color $i$, as they have no poles on $X^\eps$), and a squiggly string of color $i$ corresponds to $\eps_i=-$ (i.e. nonpositive powers of dots are allowed on strings of color $i$, as they have no poles on $X^\eps$). 
\begin{theorem}\label{thm:laurent KLR}
    There is a Laurent KLR algebra $\CR^\times_\alpha=\CE(U^\times)$, which is cellular with cell ideals $\CR_{\ge\pi}^\times=\CE_{\ge\pi}(U^\times)$. It is diagrammatically generated monoidally by the usual KLR generators in addition to a ``negative dot'', namely 
    \begin{center}
\begin{tabular}{ c c c c c c }
 & $\begin{diagram}
    \draw (0,0)--(0,2);
    \node at (0,-0.5) {\tiny $i$};
    \draw (0,1) circle (6pt);
\end{diagram}$ & $\begin{diagram}
    \draw (0,0)--(0,2);
    \fill (0,1) circle (6pt);
    \node at (0,-0.5) {\tiny $i$};
\end{diagram}$ & 
$\begin{diagram}
    \draw (0,0)--(2,2);
    \draw (2,0)--(0,2);
    \node at (0,-0.5) {\tiny $i$};
    \node at (2,-0.5) {\tiny $i$};
\end{diagram}$ & 
$\begin{diagram}
    \draw (0,0)--(2,2);
    \draw (2,0)--(0,2);
    \node at (0,-0.5) {\tiny $i$};
    \node at (2,-0.5) {\tiny $i\pm 1$};
\end{diagram}$ & 
$\begin{diagram}
    \draw (0,0)--(2,2);
    \draw (2,0)--(0,2);
    \node at (0,-0.5) {\tiny $i$};
    \node at (2,-0.5) {\tiny $j$};
\end{diagram}$
\\[1em]
    $\mathrm{degree}$ & $-2$ & $2$ & $-2$ & \hspace*{-6.5pt}$1$ & $0$ 
\end{tabular}
\end{center}
where $|j-i|>1$. The local relations, in addition to the usual KLR relations, are
\begin{align*}
    \:\begin{raisediagram}[-5pt]
    \draw (0,0)--(0,2);
    \fill (0,0.6) circle (6pt);
    \draw (0,1.4) circle (6pt);
    \node at (0,-0.5) {\tiny $i$};
\end{raisediagram}\:&=\:\begin{raisediagram}[-5pt]
    \draw (0,0)--(0,2);
    \node at (0,-0.5) {\tiny $i$};
\end{raisediagram}\:=\:\begin{raisediagram}[-5pt]
    \draw (0,0)--(0,2);
    \draw (0,0.6) circle (6pt);
    \fill (0,1.4) circle (6pt);
    \node at (0,-0.5) {\tiny $i$};
\end{raisediagram}\:,\\
    \:\begin{raisediagram}[-5pt]
        \draw(0,0)--(2,2);
        \draw(0,2)--(2,0);
        \draw (0.5,1.5) circle (6pt);
        \node at (0,-0.5) {\tiny $i$};
        \node at (2,-0.5) {\tiny $i$};
    \end{raisediagram}\:-\:\begin{raisediagram}[-5pt]
        \draw(0,0)--(2,2);
        \draw(0,2)--(2,0);
        \draw (1.5,0.5) circle (6pt);
        \node at (0,-0.5) {\tiny $i$};
        \node at (2,-0.5) {\tiny $i$};
    \end{raisediagram}\:&=-\:\begin{raisediagram}[-5pt]
        \draw(0,0)--(0,2);
        \draw(2,0)--(2,2);
        \draw (0,1) circle (6pt);
        \draw (2,1) circle (6pt);
        \node at (0,-0.5) {\tiny $i$};
        \node at (2,-0.5) {\tiny $i$};
    \end{raisediagram}\:=\:\begin{raisediagram}[-5pt]
        \draw(0,0)--(2,2);
        \draw(0,2)--(2,0);
        \draw (0.5,0.5) circle (6pt);
        \node at (0,-0.5) {\tiny $i$};
        \node at (2,-0.5) {\tiny $i$};
    \end{raisediagram}\:-\:\begin{raisediagram}[-5pt]
        \draw(0,0)--(2,2);
        \draw(0,2)--(2,0);
        \draw (1.5,1.5) circle (6pt);
        \node at (0,-0.5) {\tiny $i$};
        \node at (2,-0.5) {\tiny $i$};
    \end{raisediagram}\:,\\
    \:\begin{raisediagram}[-5pt]
        \draw(0,0)--(2,2);
        \draw(0,2)--(2,0);
        \draw (0.5,1.5) circle (6pt);
        \node at (0,-0.5) {\tiny $i$};
        \node at (2,-0.5) {\tiny $j$};
    \end{raisediagram}\:&=\:\begin{raisediagram}[-5pt]
        \draw(0,0)--(2,2);
        \draw(0,2)--(2,0);
        \draw (1.5,0.5) circle (6pt);
        \node at (0,-0.5) {\tiny $i$};
        \node at (2,-0.5) {\tiny $j$};
    \end{raisediagram}\: \qquad (i\neq j),\\
    \:\begin{raisediagram}[-5pt]
        \draw(0,0)--(2,2);
        \draw(0,2)--(2,0);
        \draw (0.5,0.5) circle (6pt);
        \node at (0,-0.5) {\tiny $i$};
        \node at (2,-0.5) {\tiny $j$};
    \end{raisediagram}\:&=\:\begin{raisediagram}[-5pt]
        \draw(0,0)--(2,2);
        \draw(0,2)--(2,0);
        \draw (1.5,1.5) circle (6pt);
        \node at (0,-0.5) {\tiny $i$};
        \node at (2,-0.5) {\tiny $j$};
    \end{raisediagram}\:\qquad (i\neq j).
\end{align*}
The ``negative crossings'' have degrees 
\begin{center}
\begin{tabular}{ c c c c c }
 & $\begin{raisediagram}[-5pt]
        \draw[squigs](0,0)--(2,2);
        \draw[squigs](0,2)--(2,0);
        \node at (0,-0.5) {\tiny $i$};
        \node at (2,-0.5) {\tiny $i$};
    \end{raisediagram}$ & $\begin{raisediagram}[-5pt]
        \draw[dash pattern=on 3pt off 1.75pt](0,0)--(2,2);
        \draw[squigs](0,2)--(2,0);
        \node at (0,-0.5) {\tiny $i$};
        \node at (2,-0.5) {\tiny $i\pm 1$};
    \end{raisediagram}$ & 
$\begin{raisediagram}[-5pt]
        \draw[dash pattern=on 3pt off 1.75pt](0,0)--(2,2);
        \draw(0,2)--(2,0);
        \node at (0,-0.5) {\tiny $i$};
        \node at (2,-0.5) {\tiny $i\pm 1$};
    \end{raisediagram}$ & 
$\begin{raisediagram}[-5pt]
        \draw[dash pattern=on 3pt off 1.75pt](0,0)--(2,2);
        \draw[dash pattern=on 3pt off 1.75pt](0,2)--(2,0);
        \node at (0,-0.5) {\tiny $i$};
        \node at (2,-0.5) {\tiny $j $};
    \end{raisediagram}$
\\[1em]
    $\mathrm{degree}$ & $2$ & \hspace*{-6.5pt}$-1$ & \hspace*{-6.5pt}$1$ & $0$ 
\end{tabular}
\end{center}
where $|j-i|>1$ and the dashed line indicates either solid straight lines or squiggly lines, and the transition functions/relations are
\begin{align*}
    \:\begin{raisediagram}[-5pt]
        \draw[squigs](0,0)--(2,2);
        \draw[squigs](0,2)--(2,0);
        \node at (0,-0.5) {\tiny $i$};
        \node at (2,-0.5) {\tiny $i$};
    \end{raisediagram}\:&=-\:\begin{raisediagram}[-5pt]
        \draw(0,0)--(2,2);
        \draw(0,2)--(2,0);
        \fill (1.5,0.5) circle (6pt);
        \fill (0.5,1.5) circle (6pt);
        \node at (0,-0.5) {\tiny $i$};
        \node at (2,-0.5) {\tiny $i$};
    \end{raisediagram}\:,\\
    \:\begin{raisediagram}[-5pt]
        \draw(0,0)--(2,2);
        \draw[squigs](0,2)--(2,0);
        \node at (0,-0.5) {\tiny $i$};
        \node at (2,-0.5) {\tiny $i\pm 1$};
    \end{raisediagram}\:&=\:\begin{raisediagram}[-5pt]
        \draw(0,0)--(2,2);
        \draw(0,2)--(2,0);
        \draw (0.5,1.5) circle (6pt);
        \node at (0,-0.5) {\tiny $i$};
        \node at (2,-0.5) {\tiny $i\pm 1$};
    \end{raisediagram}\:,\\
    \:\begin{raisediagram}[-5pt]
        \draw[squigs](0,0)--(2,2);
        \draw(0,2)--(2,0);
        \node at (0,-0.5) {\tiny $i$};
        \node at (2,-0.5) {\tiny $i\pm 1$};
    \end{raisediagram}\:&=\:\begin{raisediagram}[-5pt]
        \draw(0,0)--(2,2);
        \draw(0,2)--(2,0);
        \node at (0,-0.5) {\tiny $i$};
        \node at (2,-0.5) {\tiny $i\pm1 $};
    \end{raisediagram}\:,\\
    \:\begin{raisediagram}[-5pt]
        \draw[squigs](0,0)--(2,2);
        \draw[squigs](0,2)--(2,0);
        \node at (0,-0.5) {\tiny $i$};
        \node at (2,-0.5) {\tiny $i\pm1 $};
    \end{raisediagram}\:&=\:\begin{raisediagram}[-5pt]
        \draw(0,0)--(2,2);
        \draw(0,2)--(2,0);
        \draw (0.5,1.5) circle (6pt);
        \node at (0,-0.5) {\tiny $i$};
        \node at (2,-0.5) {\tiny $i\pm1 $};
    \end{raisediagram}\:,\\
    \:\begin{raisediagram}[-5pt]
        \draw[squigs](0,0)--(2,2);
        \draw[squigs](0,2)--(2,0);
        \node at (0,-0.5) {\tiny $i$};
        \node at (2,-0.5) {\tiny $j $};
    \end{raisediagram}\:=\:\begin{raisediagram}[-5pt]
        \draw[squigs](0,0)--(2,2);
        \draw(0,2)--(2,0);
        \node at (0,-0.5) {\tiny $i$};
        \node at (2,-0.5) {\tiny $j $};
    \end{raisediagram}\:&=\:\begin{raisediagram}[-5pt]
        \draw(0,0)--(2,2);
        \draw[squigs](0,2)--(2,0);
        \node at (0,-0.5) {\tiny $i$};
        \node at (2,-0.5) {\tiny $j $};
    \end{raisediagram}\:=\:\begin{raisediagram}[-5pt]
        \draw(0,0)--(2,2);
        \draw(0,2)--(2,0);
        \node at (0,-0.5) {\tiny $i$};
        \node at (2,-0.5) {\tiny $j $};
    \end{raisediagram}\:\qquad(|j-i|>1).
\end{align*}
\end{theorem}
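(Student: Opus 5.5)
The plan is to work entirely inside $\End_{\Sym^\times}(\Pol^\times)$, where $\Pol^\times=\CP(U^\times)=\bigoplus_\beta \Pol^\times_\beta$ and $\Sym^\times=\CO_X(U^\times)$ is the ring of symmetric Laurent polynomials. By definition $\CE(U^\times)$ is the set of endomorphisms preserving $\CQ(U^\times)=\bigoplus_\beta Q_\beta\Pol^\times_\beta$. Since $U^\times\subset U$ is a principal open of $U$, obtained by inverting $\prod_a\e_{a,n_a}$, and $\CE$ is coherent, I would first establish
\[\CR^\times_\alpha=\CE(U^\times)=\CE(U)\otimes_{\Sym}\Sym^\times=\CR_\alpha\otimes_{\Sym}\Sym^\times,\]
using Theorem \ref{thm:E=R}. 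The same localization argument applied to $\CE_{\ge\pi}$, together with Theorem \ref{thm:cellular E}, gives $\CR^\times_{\ge\pi}=\CR_{\ge\pi}\otimes_\Sym\Sym^\times$. Cellularity then follows by base change of the KLM basis. One replaces $\Lbd_\pi$ by its localization at the image of $\prod_a\e_{a,n_a}$, which is the product of all cable variables. Flatness of localization preserves the filtration quotients $\Delta_\pi\otimes\Delta_\pi^\top$.

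For generators, $\Sym^\times$ is generated over $\Sym$ by the $\e_{a,n_a}^{-1}$. Moreover, $x_{a,i}^{-1}=\e_{a,n_a}^{-1}\prod_{j\ne i}x_{a,j}$ lies in $\CR_\alpha\otimes\Sym^\times$, so adjoining hollow dots $x^{-1}$ to the usual KLR generators generates $\CR^\times_\alpha$. Conversely, hollow dots generate the $\e^{-1}$'s, so the two presentations agree. The degree of the hollow dot is $-2$. The new local relations are forced by commutative algebra in $\Pol^\times$:
\begin{itemize}
\item The relation $xx^{-1}=1$ is immediate.
\item For same-color crossings, multiply the nil-Hecke relation $\psi x_2-x_1\psi=1$ by $x_1^{-1}$ and $x_2^{-1}$ on the appropriate sides. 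This yields $\psi x_2^{-1}-x_1^{-1}\psi=-x_1^{-1}x_2^{-1}$ and its mirror.
\item For $i\ne j$, crossings intertwine dots, hence also their inverses.
\end{itemize}
For a presentation statement, I would argue that these relations suffice. Any word can be rewritten, with all hollow dots moved to the bottom, as $\CR_\alpha\cdot\Sym^\times$. A spanning set of the form $\{\text{KLR basis}\}\cdot\{\text{Laurent monomials}\}$ modulo $\Sym$-relations then maps onto the free-over-$\Sym^\times$ basis, so it is a basis.

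Next I would treat the negative crossings. On each chart $X^\eps$, the algebra $\CE(X^\eps)$ is computed exactly like Theorem \ref{thm:E=R}, after applying the automorphism $x\mapsto x^{-1}$ on the colors with $\eps_a=-$. I expect this to hold because the twists and $Q_\beta$ transform by the standard $\BP^1$ transition function. This yields a KLR-type algebra whose generators are the ``negative'' crossings. Their images in $\End(\Pol^\times)$ are computed directly on the basis vectors $1_\beta$, exactly as in Section \ref{sect:2color}:
\begin{itemize}
\item The same-color negative crossing is $-x_1\psi x_2$, matching $\pd^-$ of \cite{KMZ}.
\item The mixed crossings are obtained by rewriting $x_{a,i}-x_{a+1,j}$ in the $x^{-1}$ coordinate. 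This produces the factor of a hollow dot, as in the displayed gluing formulas of Section \ref{sect:2color}.
\item The distant crossings are unchanged.
\end{itemize}
Degrees follow from the grading on $\Pol^\times$: on each chart the polynomial representation is graded via the twisted $h$-action, and the degree of an endomorphism is read off accordingly.

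The main obstacle is the exactness claim that no further relations are needed, i.e.\ the faithfulness/basis statement over $\Sym^\times$. I would handle it by the localization identity above rather than by diagrammatic reduction, since flat base change of the Khovanov–Lauda basis theorem gives it directly. A secondary nuisance is bookkeeping the signs in the mixed negative crossings, which I would fix by the $n_1=n_2=1$ computation and then extend by the monoidal structure of Theorem \ref{thm:A_N monoidal}.
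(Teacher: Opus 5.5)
Your proposal is essentially correct, and for the algebraic half of the statement it takes a different route from the paper. The paper's proof is purely local. It writes down the transition functions between the charts $X^\eps$ in three basic cases: two strands of one color, adjacent colors as in Section~\ref{sect:2color}, and distant colors. It then observes that the hollow-dot relations follow by multiplying KLR relations by inverse dots. Cellularity and the sufficiency of the listed relations are not argued separately there.

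You instead use two facts: $U^\times$ is the principal open $D(\prod_a\e_{a,n_a})$ of the affine chart $U$, and $\CE$, $\CE_{\ge\pi}$ are coherent, being kernels of maps of coherent sheaves. Theorems~\ref{thm:E=R} and~\ref{thm:cellular E} then give $\CR^\times_\alpha=\CR_\alpha\otimes_{\Sym}\Sym^\times$ and $\CR^\times_{\ge\pi}=\CR_{\ge\pi}\otimes_{\Sym}\Sym^\times$. This yields genuine proofs of two things the paper leaves implicit:
\begin{itemize}
\item Cellularity, by flat base change of the KLM layers with $\Lbd_\pi$ localized at the cable variables.
\item The presentation. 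On each idempotent, $y_k^{-1}$ equals $\e_{a,n_a}^{-1}$ times the product of the dots on the other $a$-strands, and $\e_{a,n_a}$ is central. So the algebra presented by the KLR generators, the hollow dots and $yy^{-1}=1=y^{-1}y$ is exactly the central localization. The hollow-dot crossing relations are therefore consequences, and no relations are missing.
\end{itemize}
Your route says nothing about the other charts. For the negative crossings you correctly fall back on the paper's method: frame computations at $n_1=n_2=1$, propagated monoidally.

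One claim in that second part is false and should be dropped. $\CE(X^\eps)$ is \emph{not} obtained from Theorem~\ref{thm:E=R} by substituting $x\mapsto x^{-1}$ in the colors with $\eps_a=-$ when two adjacent colors carry opposite signs.
\begin{itemize}
\item Suppose $\eps_a=-$ and $\eps_{a+1}=+$. In the local coordinates $u=x_{a,i}^{-1}$, $v=x_{a+1,j}$, the section $\Delta=X_1Y_0-X_0Y_1$ has local equation $1-uv$, not a difference of coordinates.
\item Already for $n_a=n_{a+1}=1$, the chart algebra is the ring of $2\times 2$ matrices over $R=\BC[u,v]$ with lower-left entry in $(1-uv)R$. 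Its center is $R$ and its non-Azumaya locus is $\{uv=1\}\cong\BG_m$, which is not a line. So it is not isomorphic to $\CR_{1,1}$, even ungraded; the paper remarks on this in Section~\ref{sect:2color}.
\item The substitution is legitimate only when no two adjacent colors have opposite signs. For the all-negative chart it is realized by the Weyl element of $\SL_2$, which identifies $\CE(U^-)$ with $\CR_\alpha$ up to reversing the grading.
\end{itemize}
So the mixed negative crossings must be defined directly as the local-frame generators, the ones acting through $1-uv$, and then compared with the positive ones on $U^\times$. That is exactly what your explicit computation does, so nothing downstream breaks.

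There is also a sign slip. In the paper's convention ($xy$ is $x$ stacked on top of $y$) the nil-Hecke relation is $x_1\psi-\psi x_2=1$. Multiplying by $x_1^{-1}$ on the left and $x_2^{-1}$ on the right gives $x_1^{-1}\psi-\psi x_2^{-1}=-x_1^{-1}x_2^{-1}$, which is the sign in the statement. Your starting relation $\psi x_2-x_1\psi=1$ produces the opposite sign.
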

\begin{proof}
    Let us check the 1-color diagrammatics. On the cover, $\wh\CP=\CO(1,0)\sqcup\CO(0,0)$. As in Section \ref{sect:2color}, we identify 
    \[\CO(a,b)(\wh U)=\BC[X_0,X_1,Y_0,Y_1]\]
    and embed $\CO(a,b)(\wh U)\linj \CO_\eta$ via $s\lmto X_0^{-a}Y_0^{-b}s$. Recall $\wh U$ is the chart where $X_0=Y_0=1$ and $\wh U^-$ is the chart where $X_1=Y_1=1$, and $x_1=X_1/X_0$ and $x_2=Y_1/Y_0$. Hence
    \begin{center}
        \begin{tikzcd}
            \wh\CP(\wh U)=\BC[x_1,x_2] \arrow[hookrightarrow]{r}&\BC(x_1,y_1)\arrow[hookleftarrow]{r}{\cdot x_1} &  \wh\CP(\wh U^-)=\BC[1/x_1,1/x_2]
        \end{tikzcd}
    \end{center}
    and the transition function is 
    \begin{align*}
        \wh\CP(\wh U)&\lto \wh\CP(\wh U^-)\\
        x_1&\lmto 1\\
        1&\lmto x_1^{-1}.
    \end{align*}
    Diagrammatically this means
    \begin{align*}
        \:
\begin{diagram}
  \draw[black](0,-0.5)--(0,1);
  \draw[black](2,-0.5)--(2,1);
  \draw[orange](-0.5,-0.5)--(2.5,-0.5);
  \draw[white](0,1)--(0,2);
  \fill[black](0,0.5)circle(6pt);
\end{diagram}\:&=\:
\begin{diagram}
  \draw[black,squigs](0,-0.5)--(0,1);
  \draw[black,squigs](2,-0.5)--(2,1);
  \draw[orange](-0.5,-0.5)--(2.5,-0.5);
  \draw[white](0,1)--(0,2);
\end{diagram}\:,\\
        \:\begin{diagram}
  \draw[black](0,-0.5)--(0,1);
  \draw[black](2,-0.5)--(2,1);
  \draw[orange](-0.5,-0.5)--(2.5,-0.5);
  \draw[white](0,1)--(0,2);
\end{diagram}\:&=\:
\begin{diagram}
  \draw[black,squigs](0,-0.5)--(0,1);
  \draw[black,squigs](2,-0.5)--(2,1);
  \draw[orange](-0.5,-0.5)--(2.5,-0.5);
  \draw[white](0,1)--(0,2);
  \draw[black](0,0.5)circle(6pt);
\end{diagram}\:
    \end{align*}
    As 
    \[\CE=\tbt{\CO(0)}{\CO(0)}{\CO(0)}{\CO(0)},\] 
    we can then compute the action on the polynomial representation to be
    \[0=\:\begin{diagram}
  \draw[black,squigs](0,0)--(2,2);
  \draw[squigs](2,0)--(0,2);
  \draw[black,squigs](0,-1)--(0,0);
  \draw[squigs](2,-1)--(2,0);
  \draw[orange](-0.5,-1)--(2.5,-1);
\end{diagram}\:=\:\begin{diagram}
  \draw[black,squigs](0,0)--(2,2);
  \draw[squigs](2,0)--(0,2);
  \draw[black](0,-1)--(0,0);
  \draw(2,-1)--(2,0);
  \fill (0,-0.5) circle(6pt);
  \draw[orange](-0.5,-1)--(2.5,-1);
\end{diagram}\:,\qquad 
\:
\begin{diagram}
  \draw[black](0,-0.5)--(0,1);
  \draw[black](2,-0.5)--(2,1);
  \draw[orange](-0.5,-0.5)--(2.5,-0.5);
  \draw[white](0,1)--(0,2);
  \fill[black](0,0.5)circle(6pt);
\end{diagram}\: = \:
\begin{diagram}
  \draw[black,squigs](0,-0.5)--(0,1);
  \draw[black,squigs](2,-0.5)--(2,1);
  \draw[orange](-0.5,-0.5)--(2.5,-0.5);
  \draw[white](0,1)--(0,2);
\end{diagram}\: =\:\begin{diagram}
  \draw[black,squigs](0,0)--(2,2);
  \draw[squigs](2,0)--(0,2);
  \draw[black,squigs](0,-1)--(0,0);
  \draw[squigs](2,-1)--(2,0);
  \draw[orange](-0.5,-1)--(2.5,-1);
  \draw (0,-0.5) circle(6pt);
\end{diagram}\:=\:\begin{diagram}
  \draw[black,squigs](0,0)--(2,2);
  \draw[squigs](2,0)--(0,2);
  \draw[black](0,-1)--(0,0);
  \draw(2,-1)--(2,0);
  \draw[orange](-0.5,-1)--(2.5,-1);
\end{diagram}\:,
\]
    which forces $\:\begin{raisediagram}[-5pt]
        \draw[squigs](0,0)--(2,2);
        \draw[squigs](0,2)--(2,0);
        \node at (0,-0.5) {\tiny $i$};
        \node at (2,-0.5) {\tiny $i$};
    \end{raisediagram}\:=-\:\begin{raisediagram}[-5pt]
        \draw(0,0)--(2,2);
        \draw(0,2)--(2,0);
        \fill (1.5,0.5) circle (6pt);
        \fill (0.5,1.5) circle (6pt);
        \node at (0,-0.5) {\tiny $i$};
        \node at (2,-0.5) {\tiny $i$};
    \end{raisediagram}\:$, as advertised.

    The adjacent 2-color case diagrammatic transition functions were worked out in Section \ref{sect:2color}. 
    
    In the distant 2-color case, as $\CP=\CO(0,0)\oplus\CO(0,0)$, it is easy to check that the diagrams behave as advertised.

    Now that we have checked the transition functions, the relations hold automatically. For instance to obtain that $y_1^{-1}\psi_1 e^{11}-\psi_1 y_2^{-1}e^{11}=-y_1^{-1}y_2^{-1}e^{11}$ one can simply take the relation $y_1\psi_1 e^{11}-\psi_1 y_2e^{11}=e^{11}$ and multiply on the left by $y_1^{-1}$ and on the right by $y_2^{-1}$. 
\end{proof}
\begin{example}
    In the case $\alpha=2\alpha_1+\alpha_2$, it is possible to manually check that the following subalgebra of KLR is isomorphic to the semisimplification:
    \[
\wt\CR_{2\alpha_1+\alpha_2}=\thbth{\hackcenter{\begin{tikzpicture}[scale=0.375]
    \draw[red] (1,0)--(1,2);
    \draw (2,0)--(3,2);
    \draw (3,0)--(2,2);
    \fill (2.2,1.6) circle (5pt);
\end{tikzpicture}}}{-\ \hackcenter{\begin{tikzpicture}[scale=0.375]
    \draw[red] (1,0)--(1,2);
    \draw (2,0)--(3,2);
    \draw (3,0)--(2,2);
    \fill (2.2,1.6) circle (5pt);
    \fill (2.8,0.4) circle (5pt);
\end{tikzpicture}}}{\hackcenter{\begin{tikzpicture}[scale=0.375]
    \draw[red] (2,0)--(1,2);
    \draw (1,0)--(3,2);
    \draw (3,0)--(2,2);
    \fill (2.2,1.6) circle (5pt);
\end{tikzpicture}}}{\hackcenter{\begin{tikzpicture}[scale=0.375]
    \draw[red] (1,0)--(1,2);
    \draw (2,0)--(3,2);
    \draw (3,0)--(2,2);
\end{tikzpicture}}}{-\ \hackcenter{\begin{tikzpicture}[scale=0.375]
    \draw[red] (1,0)--(1,2);
    \draw (2,0)--(3,2);
    \draw (3,0)--(2,2);
    \fill (2.8,0.4) circle (5pt);
\end{tikzpicture}}}{\hackcenter{\begin{tikzpicture}[scale=0.375]
    \draw[red] (2,0)--(1,2);
    \draw (1,0)--(3,2);
    \draw (3,0)--(2,2);
\end{tikzpicture}}}{\hackcenter{\begin{tikzpicture}[scale=0.375]
    \draw[red] (1,0)--(2,2);
    \draw (2,0)--(3,2);
    \draw (3,0)--(1,2);
\end{tikzpicture}}}{-\ \hackcenter{\begin{tikzpicture}[scale=0.375]
    \draw[red] (1,0)--(2,2);
    \draw (2,0)--(3,2);
    \draw (3,0)--(1,2);
    \fill (2.7,0.3) circle (5pt);
\end{tikzpicture}}}{\hackcenter{\begin{tikzpicture}[scale=0.375]
    \draw[red] (2,0) to[in=-150,out=150] (2,2);
    \draw (1,0)--(3,2);
    \draw (3,0)--(1,2);
\end{tikzpicture}}}
\oplus
\thbth{-\ \hackcenter{\begin{tikzpicture}[scale=0.375]
    \draw[red] (2,0) to[in=-30,out=30] (2,2);
    \draw (1,0)--(3,2);
    \draw (3,0)--(1,2);
\end{tikzpicture}}}{\hackcenter{\begin{tikzpicture}[scale=0.375]
    \draw (1,2)--(2,0);
    \draw[red] (2,2)--(3,0);
    \draw (3,2)--(1,0);
\end{tikzpicture}}}
{-\ \hackcenter{\begin{tikzpicture}[scale=0.375]
    \draw (1,2)--(2,0);
    \draw[red] (2,2)--(3,0);
    \draw (3,2)--(1,0);
    \fill (1.8,0.4) circle (5pt);
\end{tikzpicture}}}
{-\ \hackcenter{\begin{tikzpicture}[scale=0.375]
    \draw (2,2)--(1,0);
    \draw (1,2)--(3,0);
    \draw[red] (3,2)--(2,0);
    \fill (1.3,2-0.3) circle (5pt);
\end{tikzpicture}}}{\hackcenter{\begin{tikzpicture}[scale=0.375]
    \draw[red] (4,0)--(4,2);
    \draw (2,0)--(3,2);
    \draw (3,0)--(2,2);
    \fill (2.2,1.6) circle (5pt);
\end{tikzpicture}}}
{-\ \hackcenter{\begin{tikzpicture}[scale=0.375]
    \draw[red] (4,0)--(4,2);
    \draw (2,0)--(3,2);
    \draw (3,0)--(2,2);
    \fill (2.2,1.6) circle (5pt);
    \fill (2.8,0.4) circle (5pt);
\end{tikzpicture}}}
{-\ \hackcenter{\begin{tikzpicture}[scale=0.375]
    \draw (2,2)--(1,0);
    \draw (1,2)--(3,0);
    \draw[red] (3,2)--(2,0);
\end{tikzpicture}}}
{\hackcenter{\begin{tikzpicture}[scale=0.375]
    \draw[red] (4,0)--(4,2);
    \draw (2,0)--(3,2);
    \draw (3,0)--(2,2);
\end{tikzpicture}}}
{-\ \hackcenter{\begin{tikzpicture}[scale=0.375]
    \draw[red] (4,0)--(4,2);
    \draw (2,0)--(3,2);
    \draw (3,0)--(2,2);
    \fill (2.8,0.4) circle (5pt);
\end{tikzpicture}}}=\wt\CR^{(1)(21)}\oplus\wt\CR^{(1)(1)(2)}.\]
    The diagrammatics of Theorem \ref{thm:laurent KLR} show that the elements of the above matrix algebras
    \begin{align*}
        &E_{1,1}^{(1)(21)},E_{1,2}^{(1)(21)},E_{2,1}^{(1)(21)},E_{2,2}^{(1)(21)},E_{3,1}^{(1)(21)},E_{3,2}^{(1)(21)},\\
        &E_{3,3}^{(1)(21)}+E_{1,1}^{(1)(1)(2)},\\
        &E_{2,1}^{(1)(1)(2)},E_{2,2}^{(1)(1)(2)},E_{2,3}^{(1)(1)(2)},E_{3,1}^{(1)(1)(2)},E_{3,2}^{(1)(1)(2)},E_{3,3}^{(1)(1)(2)}
    \end{align*}
    are precisely the globally defined sections of $\CR_{2\alpha_1+\alpha_2}$. For instance, ${\hackcenter{\begin{tikzpicture}[scale=0.375]
    \draw[red] (1,0)--(2,2);
    \draw (2,0)--(3,2);
    \draw (3,0)--(1,2);
\end{tikzpicture}}}$ is a global section because 
\begin{align*}
    \:\begin{diagram}
    \draw[red] (1,0)--(2,2);
    \draw (2,0)--(3,2);
    \draw (3,0)--(1,2);
\end{diagram}\:
&= 
\:\begin{diagram}
    \draw[red,squigs] (1,0)--(2,2);
    \draw (2,0)--(3,2);
    \draw (3,0)--(1,2);
\end{diagram}\:\\
&=
\:\begin{diagram}
    \draw[red] (1,0)--(2,2);
    \draw[squigs] (2,0)--(3,2);
    \draw[squigs] (3,0)--(1,2) node[pos=0.85, circle, fill=black, inner sep=1.5pt] {} node[pos=0.5, circle, draw=black, inner sep=1.5pt] {} node[pos=0.15, circle, draw=black, inner sep=1.5pt] {} ;
\end{diagram}\:=
\:\begin{diagram}
    \draw[red] (1,0)--(2,2);
    \draw[squigs] (2,0)--(3,2);
    \draw[squigs] (3,0)--(1,2) node[pos=0.15, circle, draw=black, inner sep=1.5pt] {} ;
\end{diagram}\:\\
&=
\:\begin{diagram}
    \draw[red,squigs] (1,0)--(2,2);
    \draw[squigs] (2,0)--(3,2);
    \draw[squigs] (3,0)--(1,2) node[pos=0.15, circle, draw=black, inner sep=1.5pt] {} ;
\end{diagram}\:;
\end{align*}
each of these four local descriptions on the four local affine charts have no poles. So the core is 13-dimensional. We invite the reader to check that 
\[{\hackcenter{\begin{tikzpicture}[scale=0.375]
    \draw[red] (2,0)--(1,2);
    \draw (1,0)--(3,2);
    \draw (3,0)--(2,2);
    \fill (2.2,1.6) circle (5pt);
\end{tikzpicture}}}
,
{\hackcenter{\begin{tikzpicture}[scale=0.375]
    \draw[red] (2,0)--(1,2);
    \draw (1,0)--(3,2);
    \draw (3,0)--(2,2);
\end{tikzpicture}}}
,
{\hackcenter{\begin{tikzpicture}[scale=0.375]
    \draw[red] (2,0) to[in=-30,out=30] (2,2);
    \draw (1,0)--(3,2);
    \draw (3,0)--(1,2);
\end{tikzpicture}}},
{\hackcenter{\begin{tikzpicture}[scale=0.375]
    \draw (1,2)--(2,0);
    \draw[red] (2,2)--(3,0);
    \draw (3,2)--(1,0);
\end{tikzpicture}}},
{-\ \hackcenter{\begin{tikzpicture}[scale=0.375]
    \draw (1,2)--(2,0);
    \draw[red] (2,2)--(3,0);
    \draw (3,2)--(1,0);
    \fill (1.8,0.4) circle (5pt);
\end{tikzpicture}}}\] 
each have poles on some affine chart. So for this example we can see that the $\sl_2$-core strictly injects into the semisimplification, though we do not prove this in general in this paper.

Note that the core is 13-dimensional and not semisimple. Its Grothendieck group is rank 3, in particular bigger than that of the KLR. We also do not investigate the decategorification of the core in this paper, though it receives a monoidal product automatically from Theorem \ref{thm:A_N monoidal}. 
\end{example}

Now that we have restricted to $U\cap U^-$, the entire Witt algebra $\Wlie$ is able to act on $\CO_{\BP^1}(U\cap U^-)$ and therefore on $\CR_\alpha^\times=\CE(U\cap U^-)$. For convenience, let us define
\[\h_n(x,y)\coloneqq \frac{x^{n+1}-y^{n+1}}{x-y}=\begin{cases}
    \beau\sum_{\substack{i+j=n\\i,j\ge 0}} x^i y^j & n\ge 0\\
    0 & n=-1\\
    -\beau\sum_{\substack{i+j=n\\i,j<0}} x^i y^j & n\le -2
\end{cases}\ ;\]
then it is not hard to see that that the exact same computations as in the proof of Theorem \ref{thm:halfwittacts} gives the following.
\begin{theorem}\label{thm:fullwittacts}
    There is a full Witt algebra action on the Laurent KLR algebra, $\Wlie\actson \CR_\alpha^\times$, respecting the cellular filtration $\CR_{\ge\pi}^\times$. The action is locally given by, for any $n\in\BZ$,
    \begin{align*}
        L_n\cdot \tikzdot\:&=-\:\begin{diagram}
            \draw(0,0)--(0,1.5);
            \fill (0,0.75) circle (6pt);
            \node at (1.25,1) {\tiny$n+1$};
        \end{diagram},\\
        L_n\cdot \tikzcrossing\: &= \frac{n+1}{2}\pr*{\:\begin{diagram}
            \draw(0,0)--(1.5,1.5);
            \draw(0,1.5)--(1.5,0);
            \fill (1.15,0.35) circle(6pt);
            \node at (1.7,0.6) {\tiny $n$};
        \end{diagram}\!-\:\begin{diagram}
                \draw(0,0)--(1.5,1.5);
                \draw(0,1.5)--(1.5,0);
                \fill (1.15,1.15) circle(6pt);
                \node at (1.7,0.9) {\tiny $n$};
            \end{diagram}\!\!}+\h_n(y_1,y_2)\cdot\tikzcrossing,\\
        L_n\cdot\tikzcrossbr\:&=-\frac{n+1}{2}\begin{diagram}
            \draw[black](0,0)--(1.5,1.5);
            \draw[red](0,1.5)--(1.5,0);
            \fill[red] (0.35,1.15) circle(6pt);
            \node at (-0.2,0.9) {\tiny $n$};
        \end{diagram}\:,\\
        L_n\cdot \tikzcrossrb\:&=\frac{n+1}{2}\begin{diagram}
            \draw[red](0,0)--(1.5,1.5);
            \draw[black](0,1.5)--(1.5,0);
            \fill[red] (1.15,1.15) circle(6pt);
            \node at (1.7,0.9) {\tiny $n$};
        \end{diagram}\:-\h_n(y_1,y_2)\cdot\tikzcrossrb,\\
        L_n\cdot \tikzcrossdistant \:&=0.
    \end{align*}
    Here for instance if $n\ge 0$ then $\h_n(y_1,y_2)\cdot\tikzcrossrb=\sum_{i+j=n} \begin{diagram}
            \draw[red](0,0)--(1.5,1.5);
            \draw[black](0,1.5)--(1.5,0);
            \fill[black] (0.35,1.15) circle(6pt);
            \fill[red] (1.15,1.15) circle(6pt);
            \node at (1.7,0.9) {\tiny $j$};
            \node at (-0.2,0.9) {\tiny $i$};
        \end{diagram}$. 
\end{theorem}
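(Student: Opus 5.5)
The plan is to run the argument of Theorem \ref{thm:halfwittacts} and Proposition \ref{prop:halfwitt preserves Qk} again on the open set $U^\times=U\cap U^-$, where the Laurent vector fields $-x^{n+1}\pd_x$ are regular for every $n\in\BZ$.

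The first step is to define the action on $\CP(U^\times)$ summand by summand in $\beta$:
\[L_n\lmto \sum_\beta\Big(-\sum_{a,i}x_{a,i}^{n+1}\pd_{x_{a,i}}+\tfrac{n+1}{2}\sum_{a,i}d_{a,i}(\beta)x_{a,i}^n\Big)1_\beta .\]
On $\CO_{\BP^1}(k)(U\cap U^-)$ this is the Schlichenmaier action cited above. It is a Lie algebra action, since the bracket of the multiplication part is a cocycle that vanishes for multiplication operators. These operators also preserve $\Pol^\times_\beta$.

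The second step checks stability of the relevant subspaces. For any $n\in\BZ$ we have
\[-x_{a,i}^{n+1}+x_{b,j}^{n+1}=-(x_{a,i}-x_{b,j})\h_n(x_{a,i},x_{b,j}),\]
with $\h_n$ a Laurent polynomial as in the displayed definition. So every ideal $\wan{x_{a,i}-x_{b,j}}$ of $\Pol^\times$ is preserved. Hence $\CQ(U^\times)$ and all the $\CQ_{\ge\pi}(U^\times)$ are preserved, since they are built from such ideals by products and intersections. The intersection statement needs care: $\wh\CI_\pi$ and $Q_\beta$ commute with localizing to $U^\times$ because $\varpi$ is flat. By the Leibniz rule $(L_n\phi)(s)=L_n(\phi s)-\phi(L_n s)$, the algebra $\CE(U^\times)$ is then preserved, and likewise each $\CE_{\ge\pi}(U^\times)=\CR^\times_{\ge\pi}$. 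This gives an action by derivations that respects the cellular filtration.

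The third step is compatibility with the monoidal product. As in the proof of Theorem \ref{thm:halfwittacts}, the twist $\CL_{\alpha,\alpha'}$ adds only the term $\tfrac{n+1}{2}\sum l_a x_{a,i}^n$. For negative $n$ this is still central in $\CR^\times$ (it is a Laurent symmetric function), so it drops out of the bracket. Because of this, and because $\CR^\times_\alpha$ is generated monoidally by the KLR generators together with hollow dots (Theorem \ref{thm:laurent KLR}), it suffices to compute $L_n$ on the generators. The hollow dot is automatic, since $L_n$ is a derivation: $L_n(y^{-1})=-y^{-1}L_n(y)y^{-1}=y^{n-1}$.

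The last step repeats the evaluations on the basis vectors $1_{11}$, $x_11_{11}$, $1_{12}$ and $1_{21}$ with $n$ now arbitrary. Those computations used only $\h_n$ and the identity $x^{n+1}-y^{n+1}=(x-y)\h_n$, which hold verbatim for Laurent $\h_n$. The one point to redo is the symmetric-function identity used in the $1$-color crossing check. That identity was phrased with $\h_{n-2}$ and $\h_{n-1}$, so I would rewrite it uniformly through $(x-y)\h_n=x^{n+1}-y^{n+1}$ and verify it as an identity of rational functions. I expect this, together with tracking the sign conventions for $\h_n$ at $n\le -1$, to be the main (if mild) obstacle.
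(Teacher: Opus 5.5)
Your proposal is correct and takes the same route as the paper. The paper's proof says only that the computations of Theorem \ref{thm:halfwittacts} and Proposition \ref{prop:halfwitt preserves Qk} carry over verbatim to $U\cap U^-$, and your steps spell that out; in particular, the one identity you flag reduces to $\h_n(x,y)=x^n+y^n+xy\,\h_{n-2}(x,y)$, which holds for every $n\in\BZ$ once you clear the denominator $x-y$.
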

\begin{proof}
    The proof is the same computation as the proof of Theorem \ref{thm:halfwittacts}. 
\end{proof}

\section{The core is global sections}
The main objective of this section is to prove that the global sections recovers the core.
\begin{theorem}\label{thm:globalsectioniscore}
    Using the $\sl_2$-action from Section \ref{subsect:sl2actionKLR}, we have
    \[\CE(X)=\Cor_{\sl_2}(\CR_{\alpha}).\] 
\end{theorem}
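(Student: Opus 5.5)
We will imitate the one-color argument at the end of Part \ref{part:I}, replacing local freeness of $\CE_n$ by torsion-freeness of $\CE$ and the divisor $H_\infty$ by the complement $X\setminus U$. Throughout, we regard $\CR_\alpha=\CE(U)$ inside the generic stalk $\CE_\eta$. This makes sense because $\CE\subseteq\shend\CP$ is torsion-free, and hence $\CE(V)\to\CE_\eta$ is injective for every nonempty open $V$.

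\textbf{The inclusion $\CE(X)\subseteq\Cor_{\sl_2}(\CR_\alpha)$.} Since $X$ is projective and $\CE$ is coherent, $\CE(X)$ is finite-dimensional. Restriction to $U$ is injective by torsion-freeness, and it is $\sl_2$-equivariant: the $\sl_2$-action is obtained by differentiating the $\SL_2$-linearization (Lemma \ref{lem:SL2-equivariance}), so it acts by differential operators compatible with restriction. Hence $\CE(X)$ is a finite-dimensional $\sl_2$-submodule of $\CR_\alpha$, and therefore lies in the core.

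\textbf{The inclusion $\Cor_{\sl_2}(\CR_\alpha)\subseteq\CE(X)$.} Let $M\subseteq\CR_\alpha$ be a finite-dimensional $\sl_2$-submodule. Over $\BC$ it integrates to an $\SL_2$-representation, since $\SL_2$ is simply connected. The key step is to show that this integrated action agrees with the geometric translation action on meromorphic sections. That is, for $g\in\SL_2$ and $s\in M$, the element $g\cdot s\in M\subseteq\CE(U)$ should coincide inside $\CE_\eta$ with the pullback $g_*s$, which is regular on $gU$. This is the step I expect to be the main obstacle, since $U$ is not $\SL_2$-stable.

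To handle it, I would work on a neighbourhood of the identity. Consider the open set $W=\{(g,x):g^{-1}x\in U\}\subseteq \SL_2\times X$. The equivariant structure gives a section $\tilde s$ of $\mathrm{pr}_2^*\CE$ over $W$, namely $(g,x)\mapsto g\cdot s(g^{-1}x)$. Near $g=1$, its Taylor expansion in $g$ is governed by the infinitesimal action; this is the standard fact that orbit maps of locally finite vectors are algebraic. The map $g\mapsto g\cdot s$ into the finite-dimensional space $M$ is also algebraic. Both define sections over $W\cap(\SL_2\times U)$, which is connected, dense, and contains $\{1\}\times U$, and both satisfy the same first-order ODE along one-parameter subgroups with the same initial value. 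By uniqueness, and since torsion-freeness lets us compare sections at the generic point, they agree. I would follow the suggestion in the acknowledgements and use Tag 01PQ, which extends a section across a dense open of an integral scheme when it agrees with a section of a torsion-free sheaf, to make this comparison rigorous.

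\textbf{Covering $X$ by translates.} Granting the key step, we have $g^{-1}\cdot s\in M\subseteq\CE(U)$ for every $g$. Hence $s$ is regular on $gU=U_{g\infty}$, the locus of tuples of divisors avoiding $g\infty$. These opens $U_p$, $p\in\BP^1$, cover $X$, as was already used in the proof of Proposition \ref{prop:Q as cap}. The local sections agree on overlaps inside $\CE_\eta$, so they glue to a global section extending $s$. Therefore $M\subseteq\CE(X)$, which completes the proof.
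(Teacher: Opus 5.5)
Your proof is correct, and its core is the paper's argument specialized to this case. The paper deduces the theorem from Proposition \ref{prop: torsion_free_core_is_glob_sect}, stated for a connected $G$ acting on a normal projective $Y$, a torsion-free equivariant $\CF$, and an arbitrary closed $Z$. Its proof shows that a finite-dimensional $\mathfrak g$-submodule $M$ is $G$-stable inside $\CF_\eta$, by comparing Taylor expansions along one-parameter subgroups. Hence every translate of the non-regularity locus $S_s$ of $s\in M$ lies in $Z$, so $S_s\subseteq Z'=\bigcap_g gZ$.

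The paper then checks $Z'=\emptyset$ for $Z=X\setminus U$ by a Borel fixed-point argument: a nonempty closed $\SL_2$-stable subset of $Z$ would contain $(n_1[\infty],\dots,n_N[\infty])$, hence its orbit $\{(n_1[p],\dots,n_N[p])\}$, which meets $U$. Your observation that the translates $gU=U_{g\infty}$ cover $X$ is exactly the statement $\bigcap_g gZ=\emptyset$, obtained more directly. Your gluing inside $\CE_\eta$ amounts to extending $s$ across $S_s$. What the paper's general route buys is the case $Z'\neq\emptyset$, where local finiteness of $\CF(Y\setminus Z')$ no longer reduces to finite-dimensionality of global sections.

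That case is the only place Tag 01PQ enters. It identifies $\CF(Y\setminus Z')$ with $\varinjlim_n\operatorname{Hom}_{\CO_Y}(\CI_{Z'}^n,\CF)$, a union of finite-dimensional $G$-modules. It says nothing about comparing or extending sections, so it cannot make your identity-neighbourhood step rigorous. With $Z'=\emptyset$ you do not need it at all.

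For that step, argue directly instead:
\begin{enumerate}
\item The jets along $\{1\}\times U$ of your two sections of $\mathrm{pr}_2^*\CE$ are both obtained by applying $U(\sl_2)=\operatorname{Dist}(\SL_2)$ to $s$.
\item Hence, locally near $\{1\}\times U$, their difference lies in $\bigcap_k I^k\cdot\mathrm{pr}_2^*\CE$, where $I$ is the ideal of $\{1\}\times U$.
\item By the Krull intersection theorem and torsion-freeness, the difference vanishes there.
\item It therefore vanishes on the irreducible open $W\cap(\SL_2\times U)$.
\end{enumerate}
This is the same content as the paper's ``comparison of Taylor expansions''.
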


We prove a significantly more general result that may be useful in situations beyond KLR, namely:
\begin{proposition}\label{prop: torsion_free_core_is_glob_sect}
    Let $G$ be a connected algebraic group over $\BC$ acting algebraically on a normal projective variety $Y$. Let $\mathfrak{g}$ be the associated Lie algebra of $G$. Let $Z \subset Y$ be a proper closed subset, and define $Z'$ as the largest $G$-stable closed subset contained in $Z$ (explicitly, $Z' = \bigcap_{g\in G} gZ$). Let $\CF$ be a $G$-equivariant, torsion-free coherent sheaf on $Y$.
    
    Then the space of sections $\CF(Y \setminus Z)$ inherits a natural $\mathfrak{g}$-action, and the restriction map $\CF(Y \setminus Z') \to \CF(Y \setminus Z)$ is injective. Furthermore, the locally finite part of $\CF(Y \setminus Z)$ (the union of all its finite-dimensional $\mathfrak{g}$-submodules) is exactly the image of the restriction map $\CF(Y \setminus Z') \to \CF(Y \setminus Z)$.
    
    In particular, if $Z$ contains no non-empty $G$-stable closed subsets ($Z' = \emptyset$), the maximal finite-dimensional $\mathfrak{g}$-submodule of $\CF(Y \setminus Z)$ can be identified canonically with the space of global sections $\CF(Y)$.
\end{proposition}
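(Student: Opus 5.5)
We will follow the template of the one-color argument, upgraded to handle a general torsion-free sheaf and general closed subsets. First, the $\mathfrak g$-action: a $G$-equivariant structure on $\CF$ is an isomorphism $\phi\colon a^*\CF\simlto p^*\CF$ on $G\times Y$. Differentiating along the identity section (pulling back to $\Spec\BC[\epsilon]/\epsilon^2\times Y$ via a tangent vector $\xi\in\mathfrak g$) yields, for each $\xi$, a first-order differential operator $D_\xi$ on $\CF$ lifting the vector field $\xi_Y$. Being a sheaf map $\CF\to\CF$ (local operator), it acts on $\CF(V)$ for every open $V$, compatibly with restriction. Injectivity of $\CF(Y\setminus Z')\to\CF(Y\setminus Z)$ is immediate from torsion-freeness, since $Y$ is irreducible and $Y\setminus Z$ is a dense open subset, so a section vanishing there is supported on a proper closed subset, hence zero. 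This also lets us regard all spaces inside the generic stalk $\CF_\eta$.

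One inclusion: $\CF(Y\setminus Z')$ is the space of sections of a $G$-equivariant coherent sheaf over a $G$-stable open. Such a space is a rational $G$-module, hence locally finite (standard: a section $s$ gives $a^*s\in \CF(G\times (Y\setminus Z'))=\BC[G]\otimes$-type expansion; more carefully, use that $\CF(Y\setminus Z')=\varinjlim_n \Gamma(Y,\CF(nD))$-type filtration by finite-dimensional $G$-stable pieces when $Z'$ is a divisor support, or in general invoke that the coaction $\CF(V)\to\BC[G]\otimes\CF(V)$ exists for $G$-stable $V$, which makes every element lie in a finite-dimensional $G$-submodule). Its image is therefore a sum of finite-dimensional $\mathfrak g$-submodules of $\CF(Y\setminus Z)$.

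The reverse inclusion is the main step. Let $M\subset \CF(Y\setminus Z)$ be finite-dimensional $\mathfrak g$-stable. Since $G$ is connected over $\BC$, $M$ integrates to a $G$-action after passing to the simply-connected cover; to avoid this issue we argue directly: for $s\in M$, consider the locus $W_s\subset Y$ where $s$ extends, the largest open on which $s$ is a regular section of $\CF$ (well defined inside $\CF_\eta$ by torsion-freeness; this is where we use Tag 01PQ-type gluing of sections into the largest open of definition). The key claim is that the set of points where \emph{some} element of $M$ fails to be regular is $G$-stable. To see this, take a basis $s_1,\dots,s_r$ of $M$, and consider $\sigma=a^*s_i$ as a section of $a^*\CF\cong p^*\CF$ over $a^{-1}(Y\setminus Z)$. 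Finite-dimensionality of the $\mathfrak g$-span means the formal Taylor expansion of $a^*s_i$ in the $G$-direction at $e$ lies in $\widehat{\CO}_{G,e}\otimes M$; since the analytic (or étale-local) solution of the linear ODE system is $\exp(t\xi)$ acting on $M$, one gets $a^*s_i=\sum_j c_{ij}(g)\, p^*s_j$ near $\{e\}\times Y$ and then, by irreducibility of $G\times Y$ and torsion-freeness of $p^*\CF$, on all of $G\times Y$ as meromorphic sections. This is the expected obstacle: making the passage from infinitesimal to global precise, which I would do by working in $\CF_\eta\otimes$ and using that the $c_{ij}$ are regular on a neighborhood of $e$ then on the universal cover. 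Granting it, $g^*s$ is a combination of the $s_j$, so $g^{-1}(Y\setminus Z)$ lies in the regularity locus of $M$'s elements for every $g$ near $e$, hence (by connectedness, generated by such $g$) for all $g$. Thus each $s\in M$ is regular on $\bigcup_g g(Y\setminus Z)=Y\setminus Z'$, giving $M\subset\CF(Y\setminus Z')$. The final statement is the case $Z'=\emptyset$.
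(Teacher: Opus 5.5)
Your proposal is correct. For the substantive inclusion it is the paper's argument:
\begin{itemize}
\item a finite-dimensional $\mathfrak g$-submodule $M\subset\CF(Y\setminus Z)$ is stable under all $g$ in an analytic neighborhood of $e$, because translating $s\in M$ amounts to solving a linear ODE inside $M$;
\item it is then stable under $G$ by connectedness;
\item hence $gS_s\subset Z$ for all $g$, so $S_s\subset Z'$.
\end{itemize}
The paper does the ODE step one one-parameter subgroup at a time, via $\exp(t\xi)\cdot s=\exp(t\xi|_M)s$. You do it all at once with the formal expansion in $\widehat{\CO}_{G,e}\otimes M$.

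Your intermediate claim that $a^*s_i=\sum_j c_{ij}(g)\,p^*s_j$ extends to all of $G\times Y$ (or to the universal cover) is unjustified as stated. The $c_{ij}$ are only analytic germs at $e$, and such ODE solutions are not \'etale-local or algebraic in general. It is also unnecessary. What you actually use afterwards is $g\cdot M=M$ for $g$ near $e$, together with generation of $G(\BC)$ by such $g$, and that is exactly what the paper uses.

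Where you genuinely differ is the easy inclusion.
\begin{itemize}
\item The paper writes $\CF(Y\setminus Z')=\varinjlim_n\operatorname{Hom}(\CI_{Z'}^n,\CF)$ (Tag 01PQ) and uses projectivity of $Y$ to see that each term is a finite-dimensional $G$-module.
\item You instead use the coaction $\CF(V)\to\CO(G)\otimes\CF(V)$ on the $G$-stable Noetherian open $V=Y\setminus Z'$. Flat base change gives $\Gamma(G\times V,p^*\CF)=\CO(G)\otimes\Gamma(V,\CF)$, so every section lies in a finite-dimensional algebraic $G$-submodule whose derivative is the given $\mathfrak g$-action.
\end{itemize}
Your route needs no properness at this step, and it makes algebraicity of the action, hence compatibility with $\mathfrak g$, automatic. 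The paper's route instead gives an explicit exhaustion by finite-dimensional subrepresentations.

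Properness is still needed in either version for the last sentence of the statement. Calling $\CF(Y)$ the maximal finite-dimensional $\mathfrak g$-submodule requires $\CF(Y)$ to be finite-dimensional.
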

This is enough already to prove that global sections is the core.
\begin{proof}[Proof of Theorem \ref{thm:globalsectioniscore}]
Apply Proposition \ref{prop: torsion_free_core_is_glob_sect} with \[G=\SL_2,Y=X=\prod_{a=1}^N\BP^{n_a},\CF=\CE, \text{ and }Z=X\setminus U.\]Here $Z$ is the union of the hyperplanes on which at least one of the colored effective divisors contains $\infty$. By Lemma \ref{lem:SL2-equivariance}, the sheaf $\CE$ is $\SL_2$-equivariant and torsion-free. It remains to show that $Z$ contains no non-empty closed $\SL_2$-stable subset.

Suppose that $W\subseteq Z$ were such a subset. Since $W$ is projective, it contains a closed $\SL_2$-orbit. A closed orbit contains a point fixed by a Borel subgroup. For the upper-triangular Borel $B\subseteq\SL_2$, the unique $B$-fixed point of $\Sym^{n_a}\BP^1$ is the divisor $n_a[\infty]$. Hence the unique $B$-fixed point of $X$ is \[x_\infty=(n_1[\infty],\ldots,n_N[\infty]).\] It follows that $W$ contains the orbit\[\Delta=\{(n_1[p],\ldots,n_N[p])\colon p\in\BP^1\}.\]But for $p\ne\infty$, the corresponding point of $\Delta$ belongs to $U$, contradicting $W\subset Z$. Thus the largest $\SL_2$-stable closed subset contained in $Z$ is indeed empty.

Now, Proposition \ref{prop: torsion_free_core_is_glob_sect} gives 
\[\Cor_{\sl_2}(\CE(U))=\CE(X),\] so using $\CE(U)=\CR_\alpha$ proves the result.
\end{proof}

\begin{proof}[Proof of Proposition \ref{prop: torsion_free_core_is_glob_sect}]
    The $G$-equivariant structure on $\CF$ differentiates to a natural action of $\mathfrak{g}$, which induces a well-defined $\mathfrak{g}$-action on the sections $\CF(U)$ for any open set $U \subset Y$, including the open set $Y \setminus Z$.

    Since $Y$ is an integral scheme and $\CF$ is a torsion-free sheaf, $\CF$ possesses no non-zero sections supported on proper closed subsets. Consequently, for any open set $V$ satisfying $Y \setminus Z \subset V \subset Y$, the restriction map $\CF(V) \to \CF(Y \setminus Z)$ is injective. It follows that any section $s \in \CF(Y \setminus Z)$ has a uniquely defined maximal domain of definition $U_s \subset Y$ and a uniquely defined locus of non-regularity $S_s = Y \setminus U_s$ satisfying $S_s \subset Z$.

    Next, since $\CF$ is torsion-free and $Y$ is integral, the natural restriction map from the space of sections $\CF(Y \setminus Z)$ to the generic stalk $\CF_\eta$ is injective. Therefore, we may view any finite-dimensional $\mathfrak{g}$-submodule $M \subset \CF(Y \setminus Z)$ as a finite-dimensional $\BC$-subspace of $\CF_\eta$. 
    
    The $G$-equivariant structure of $\CF$ induces an action of $G$ on the infinite-dimensional $\BC$-vector space of rational sections $\CF_\eta$. We claim that $M$ is $G$-stable. Indeed, for any $\xi\in \mathfrak{g}$, comparison of Taylor expansions at $t=0$ gives \[\exp(t\xi)\cdot s = \exp(t\xi\rvert_M)s\] for $s\in M$ and $t$ near zero. So every one-parameter subgroup near the identity preserves $M$. Since $G(\BC)$ is connected, these one-parameter subgroups generate $G(\BC)$, and hence $M$ is $G$-stable.
    
    As a result, for any $s \in M$ and $g \in G$, the geometric translate $g \cdot s$ remains in $M$, meaning it remains regular on $Y \setminus Z$. Because the action of $g$ on $Y$ is an automorphism, the maximal domain of definition of the translated section $g \cdot s$ is exactly $g U_s$, and its locus of non-regularity is $S_{g \cdot s} = g S_s$. However, since $g \cdot s \in M \subset \CF(Y \setminus Z)$, its locus of non-regularity must also be contained in $Z$, yielding $g S_s \subset Z$.

    Now, since $g S_s \subset Z$ for all $g \in G$, it follows by inversion that $S_s \subset g^{-1}Z$ for all $g \in G$. Taking the intersection over all elements of the group, we obtain:
    \[
        S_s \subset \bigcap_{g \in G} g^{-1}Z = \bigcap_{g \in G} gZ = Z'.
    \]
    It follows that the maximal domain of definition $U_s = Y \setminus S_s$ contains $Y \setminus Z'$, meaning the section $s$ extends uniquely to a section in $\CF(Y \setminus Z')$. Consequently, any finite-dimensional $\mathfrak{g}$-submodule $M \subset \CF(Y \setminus Z)$ is contained entirely in the image of $\CF(Y \setminus Z')$. 

    Conversely, we must now demonstrate that every section in $\CF(Y \setminus Z')$ resides inside a finite-dimensional $\mathfrak{g}$-module. Equipping $Z'$ with the reduced induced scheme structure, we may express the space of sections on the open set $Y \setminus Z'$ using Tag 01PQ:
    \[
        \CF(Y \setminus Z') \simeq \varinjlim_{n \ge 1} \operatorname{Hom}_{\CO_Y}(\CI_{Z'}^n, \CF),
    \]
    where $\CI_{Z'}$ is the ideal sheaf defining $Z'$. 
    
    Because $Z'$ is $G$-stable, $\CI_{Z'}$ is canonically a $G$-equivariant coherent sheaf, and this equivariance naturally extends to its powers $\CI_{Z'}^n$. Because $Y$ is a projective variety, each space of global sheaf homomorphisms $\operatorname{Hom}_{\CO_Y}(\CI_{Z'}^n, \CF) = H^0(Y, \shom(\CI_{Z'}^n, \CF))$ is a finite-dimensional vector space. These finite-dimensional spaces are then equipped with the structure of $G$-modules.
    
    Finally, the transition maps in the colimit, as well as the natural restriction map $\CF(Y \setminus Z') \to \CF(Y \setminus Z)$, are $G$-equivariant. Therefore, $\CF(Y \setminus Z')$ arises as a union of finite-dimensional $G$-modules, and its image in $\CF(Y \setminus Z)$ forms the entirety of the locally finite part under the action of $\mathfrak{g}$. 
\end{proof}

\bibliographystyle{plain}
\bibliography{bibliography}

@online{stacks,
    author       = {{The Stacks Project Authors}},
    title        = {\textit{Stacks Project}},
    url = {https://stacks.math.columbia.edu},
    year         = {2024},
  }

@article {EQ23,
    AUTHOR = {Elias, Ben and Qi, You},
     TITLE = {Actions of {$sl_2$} on algebras appearing in
              categorification},
   JOURNAL = {Quantum Topol.},
  FJOURNAL = {Quantum Topology},
    VOLUME = {14},
      YEAR = {2023},
    NUMBER = {4},
     PAGES = {733--806},
      ISSN = {1663-487X,1664-073X},
   MRCLASS = {16W25 (17B60)},
  MRNUMBER = {4668562},
MRREVIEWER = {Zahra\ Riyahi},
       DOI = {10.4171/qt/181},
       URL = {https://doi.org/10.4171/qt/181},
}

@article{KL09,
  title={A diagrammatic approach to categorification of quantum groups {I}},
  author={Khovanov, Mikhail and Lauda, Aaron},
  journal={Represent. Theory},
  volume={13},
  number={14},
  pages={309--347},
  year={2009}
}

@article{rouquier20082,
  title={2 {K}ac-{M}oody algebras},
  author={Rouquier, Rapha{\"e}l},
note={\url{https://arxiv.org/abs/0812.5023}},
  year={2008},
    journal={},
}

@inproceedings{rouquier2012quiver,
  title={Quiver {H}ecke algebras and 2-{L}ie algebras},
  author={Rouquier, Rapha{\"e}l},
  booktitle={Algebra colloquium},
  volume={19},
  number={02},
  pages={359--410},
  year={2012},
  organization={World Scientific}
}

@article{lauda2025action,
  title={Action of the {W}itt algebra on categorified quantum groups},
  author={Grlj, Jernej and Lauda, Aaron D},
  journal={arXiv preprint arXiv:2507.01877},
  year={2025}
}

@article{kleshchev2013affine,
  title={Affine cellularity of {K}hovanov--{L}auda--{R}ouquier algebras in type {A}},
  author={Kleshchev, Alexander S and Loubert, Joseph W and Miemietz, Vanessa},
  journal={Journal of the London Mathematical Society},
  volume={88},
  number={2},
  pages={338--358},
  year={2013},
  publisher={Wiley Online Library}
}

@article{graham1996cellular,
  title={Cellular algebras},
  author={Graham, John J and Lehrer, Gustav I},
  journal={Inventiones mathematicae},
  volume={123},
  number={1},
  pages={1--34},
  year={1996},
  publisher={Springer}
}

@book{schlichenmaier2016krichever,
  title={Krichever-Novikov type algebras. An introduction},
  author={Schlichenmaier, Martin},
  volume={92},
  year={2016},
  publisher={American Mathematical Society Providence, RI}
}

@article{KLII,
   title={A diagrammatic approach to categorification of quantum groups {II}},
   volume={363},
   ISSN={0002-9947},
   url={http://dx.doi.org/10.1090/S0002-9947-2010-05210-9},
   DOI={10.1090/s0002-9947-2010-05210-9},
   number={5},
   journal={Transactions of the American Mathematical Society},
   publisher={American Mathematical Society (AMS)},
   author={Khovanov, Mikhail and Lauda, Aaron},
   year={2010},
   month=Nov, pages={2685–2700} }

@article{varagnolo2011canonical,
  title={Canonical bases and {KLR}-algebras.},
  author={Varagnolo, Michela and Vasserot, Eric},
  journal={Journal f{\"u}r die reine und angewandte Mathematik},
  volume={2011},
  number={659},
  year={2011}
}

@article{Kato_2014,
   title={Poincaré–{B}irkhoff–{W}itt bases and {K}hovanov–{L}auda–{R}ouquier algebras},
   volume={163},
   ISSN={0012-7094},
   url={http://dx.doi.org/10.1215/00127094-2405388},
   DOI={10.1215/00127094-2405388},
   number={3},
   journal={Duke Mathematical Journal},
   publisher={Duke University Press},
   author={Kato, Syu},
   year={2014},
   month=Feb }

@article{McNamaraKLRI,
url = {https://doi.org/10.1515/crelle-2013-0075},
title = {Finite dimensional representations of {K}hovanov–{L}auda–{R}ouquier algebras {I}: Finite type},
title = {},
author = {Peter J. McNamara},
pages = {103--124},
volume = {2015},
number = {707},
journal = {Journal für die reine und angewandte Mathematik (Crelles Journal)},
doi = {doi:10.1515/crelle-2013-0075},
year = {2015},
lastchecked = {2026-08-29}
}

@article{BrundanKleshchevMcNamara_2014,
   title={Homological properties of finite-type {K}hovanov–{L}auda–{R}ouquier algebras},
   volume={163},
   ISSN={0012-7094},
   url={http://dx.doi.org/10.1215/00127094-2681278},
   DOI={10.1215/00127094-2681278},
   number={7},
   journal={Duke Mathematical Journal},
   publisher={Duke University Press},
   author={Brundan, Jonathan and Kleshchev, Alexander and McNamara, Peter J.},
   year={2014},
   month=May }

@article{Kleshchev_2015,
   title={Affine highest weight categories and affine quasihereditary algebras},
   volume={110},
   ISSN={0024-6115},
   url={http://dx.doi.org/10.1112/plms/pdv004},
   DOI={10.1112/plms/pdv004},
   number={4},
   journal={Proceedings of the London Mathematical Society},
   publisher={Wiley},
   author={Kleshchev, Alexander S.},
   year={2015},
   month=Mar, pages={841–882} }

@article{kleshchevloubert2015affine,
  title={Affine cellularity of {K}hovanov--{L}auda--{R}ouquier algebras of finite types},
  author={Kleshchev, Alexander S and Loubert, Joseph W},
  journal={International Mathematics Research Notices},
  volume={2015},
  number={14},
  pages={5659--5709},
  year={2015},
  publisher={Oxford University Press}
}

@article{koenig2012affine,
  title={Affine cellular algebras},
  author={Koenig, Steffen and Xi, Changchang},
  journal={Advances in Mathematics},
  volume={229},
  number={1},
  pages={139--182},
  year={2012},
  publisher={Elsevier}
}

@article{qi2022symmetries,
  title={Symmetries of $\mathfrak{gl}_N$-foams},
  author={Qi, You and Robert, Louis-Hadrien and Sussan, Joshua and Wagner, Emmanuel},
  journal={arXiv preprint arXiv:2212.10106},
  year={2022}
}

@article{qi2023symmetries,
  title={Symmetries of equivariant {K}hovanov-{R}ozansky homology},
  author={Qi, You and Robert, Louis-Hadrien and Sussan, Joshua and Wagner, Emmanuel},
  journal={arXiv preprint arXiv:2306.10729},
  year={2023}
}

@article{guerinroz2025action,
  title={An action of the {W}itt algebra on {K}hovanov-{R}ozansky homology},
  author={Gu{\'e}rin, Alexis and Roz, Felix},
  journal={arXiv preprint arXiv:2501.19096},
  year={2025}
}

@article{khovanov2016positive,
  title={Positive half of the {W}itt algebra acts on triply graded link homology},
  author={Khovanov, Mikhail and Rozansky, Lev},
  journal={Quantum Topology},
  volume={7},
  number={4},
  pages={737--795},
  year={2016}
}

@misc{KMZ,
      title={Even and odd minimal categorifications of the nilpotent part of classical and quantum sl(2)}, 
      author={Khovanov, Mikhail and Martinez, Alvaro L. and Zhou, Fan},
      year={2026},
      eprint={2609.04567},
      archivePrefix={arXiv},
      primaryClass={math.QA},
      url={https://arxiv.org/abs/2609.04567}, 
      note={\url{https://arxiv.org/abs/2609.04567}}
}

@article{zhou2026bgg,
  title={B{GG} resolutions, {K}oszulity, and stratifications, part {II}: the {J}acobi-{T}rudi algebra},
  author={Zhou, Fan},
  journal={arXiv preprint arXiv:2605.09261},
  year={2026},
  note={\url{https://arxiv.org/abs/2605.09261}}
}

@article{zhou2024bgg,
  title={B{GG} Resolutions, {K}oszulity, and Stratifications, Part {I}: the nil{B}rauer Algebra},
  author={Zhou, Fan},
  journal={arXiv preprint arXiv:2402.06890},
  year={2024},
  note={\url{https://arxiv.org/abs/2402.06890}}
}

\end{document}